\theoremstyle{plain} 
\newtheorem{thm}{Theorem}[section]
\newtheorem{cor}[thm]{Corollary}
\newtheorem{prop}[thm]{Proposition}
\newtheorem{lem}[thm]{Lemma}
\theoremstyle{definition}
\newtheorem{defi}[thm]{Definition}
\newtheorem{remark}[thm]{Remark}
\newtheorem{question}{Question}
\newtheorem{ex}[thm]{Example}
\newcommand{\dom}{{\operatorname{dom}}}
\newcommand{\dep}{\operatorname{dp}}
\newcommand{\cone}{\operatorname{cone}}
\newcommand{\conv}{\operatorname{conv}}
\newcommand{\shi}{{\operatorname{Shi}}}
\newcommand{\op}{{\operatorname{op}}}
\newcommand{\precinv}{\mathrel{\ooalign{$\dot\prec$\cr
  \hidewidth\cr}}}
\author[M.~Dyer]{Matthew~Dyer}
\address{Department of Mathematics 
\\ 255 Hurley Building\\ University of Notre Dame \\
Notre Dame, Indiana 46556, U.S.A.}
\email{dyer.1@nd.edu}
\author[S. Fishel]{Susanna~Fishel}
\address[Susanna Fishel]{School of Mathematical and Statistical Sceinces\\
Arizona State University\\
Tempe AZ 85287, U.S.A.}
\email{sfishel1@asu.edu}
\author[C. Hohlweg]{Christophe~Hohlweg}
\address[Christophe Hohlweg]{Universit\'e du Qu\'ebec \`a Montr\'eal\\
LaCIM et D\'epartement de Math\'ematiques\\ CP 8888 Succ. Centre-Ville\\
Montr\'eal, Qu\'ebec, H3C 3P8\\ Canada}
\email{hohlweg.christophe@uqam.ca}
\urladdr{http://hohlweg.math.uqam.ca}
\author[A. Mark]{Alice~Mark}
\address[Alice Mark]{Vanderbilt University\\
Department of mathematics\\ Nashville, TN 37240, U.S.A.}
\email{alice.h.mark@vanderbilt.edu}
\title{Shi arrangements and low elements in Coxeter groups}
\keywords{Coxeter groups, low elements, Shi arrangements, Garside shadows}
\thanks{This work was partially supported by the Simons collaboration grants for Mathematicians \#359602 2$\backslash 709671$ held by Fisher and by the NSERC  grant {\em algebraic and geometric combinatorics of Coxeter groups} held by Hohlweg.}
\subjclass[2020]{Primary 20F55; secondary 17B22; 05E16; 51F15}
\begin{document}
\date{\today}
\maketitle

\begin{abstract}  Given an arbitrary Coxeter system $(W,S)$ and a nonnegative integer $m$, the $m$-Shi arrangement of $(W,S)$ is a subarrangement of the  Coxeter hyperplane arrangement of $(W,S)$. The classical Shi arrangement ($m=0$) was introduced in the case of affine Weyl groups by Shi to study Kazhdan-Lusztig cells for $W$. As two key results, Shi showed that each region of the Shi arrangement contains exactly one element of minimal length in $W$ and that the union of their inverses form a convex subset of the Coxeter complex. The set of $m$-low elements in $W$ were introduced to study the word problem of the corresponding Artin-Tits (braid) group and they turn out to produce automata to study the combinatorics of reduced words in $W$.

In this article, we generalize and extend Shi's results to any Coxeter system for any $m$: (1) the set of minimal length elements of the regions in a $m$-Shi arrangement is precisely the set of $m$-low elements, settling a conjecture of the first and third authors in this case; (2) the union of the inverses of the ($0$-)low elements form a convex subset in the Coxeter complex, settling a conjecture by the third author, Nadeau and Williams. 
\end{abstract}

\setcounter{tocdepth}{1}
\tableofcontents

\section{Introduction}   Let $(W,S)$ be a Coxeter system with length function $\ell:W\to \mathbb N$ and set of reflections 
$$
T=\bigcup_{w\in W}wSw^{-1}=\{s_\alpha\mid \alpha\in \Phi^+\},
$$
where $\Phi^+$ is a set of positive roots in a root system $\Phi$ for $(W,S)$.  As a discrete reflection group, $W$ acts on the {\em Tits cone} $\mathcal U(W,S)$, in which each reflection $s_\alpha\in T$ is uniquely determined by a hyperplane $H_\alpha$. The collection of hyperplanes 
$$
\mathcal A(W,S)=\{H_\alpha\mid \alpha\in \Phi^+\}
$$ 
is called the {\em Coxeter arrangement of $(W,S)$}.  The connected components of the complement of $\mathcal A(W,S)$ in $\mathcal U(W,S)$ are called {\em chambers} and the map $w\mapsto C_w$ is a bijection between  $W$ and the set of chambers; see for instance~\cite{Hu90} for more details. 

\smallskip
Let $m\in \mathbb N$. A positive root $\beta\in\Phi^+$ is {\em $m$-small\footnote{Small roots are called elementary roots in \cite{BrHo93,Fu12}}} if there are  at most $m$ parallel, or ultraparallel, hyperplanes separating $H_\beta$  from the fundamental chamber $C_e$ (not counting $H_\beta$). Denote by $\Sigma_m$ the set of $m$-small roots. For $m=0$, we simply denote $\Sigma=\Sigma_0$ the set of small roots. Small roots were introduced by Brink and Howlett to prove that any finitely generated Coxeter system is automatic~\cite{BrHo93}; a key and remarkable result in their article was to prove that $\Sigma$ is a finite set. Later,  Fu~\cite{Fu12} proved that $\Sigma_m$ is finite for all $m\in\mathbb N$. The sets of $m$-small roots are the building blocks of a family of regular automata that recognize the language of reduced words in $(W,S)$; see~\cite{Ed09,HoNaWi16,PaYa21}.

The {\em $m$-Shi arrangement $\shi_m(W,S)$ of $(W,S)$} is the hyperplane subarrangement of $\mathcal A(W,S)$:
 $$
 \shi_m(W,S) = \{H_\alpha\mid \alpha\in \Sigma_m \} .
 $$
 The regions of $\shi_m(W,S)$ are a  union of chambers and define therefore an equivalence relation $\sim_{m}$  on $W$. It is conjectured in \cite[Conjecture 2]{DyHo16} that each equivalence class under $\sim_{m}$ contains a unique minimal length element and that the set of these minimal length elements is the set of $m$-low elements. An element $w\in W$ is  {\em $m$-low} if the inversion set $\Phi(w)$ of $w$ is spanned by the $m$-small roots it contains. The set $L_m$  of $m$-low elements turns out to be a finite Garside shadow~\cite{DyHo16,Dy21}, which corresponds to a finite Garside family in the corresponding Artin-Tits group; see for instance~\cite{DDH14} and the references therein. 

 The following two theorems are the main results of this article: the first theorem settles \cite[Conjecture~2]{DyHo16} and the second settles~\cite[Conjecture~3]{HoNaWi16}.

\begin{thm}\label{thm:Main1} Let $(W,S)$ be a Coxeter system and $m\in \mathbb N$.
\begin{enumerate}
\item  Each region of $\shi_m(W,S)$ contains a unique element of minimal length. 
\item The set of the minimal length elements of $\shi_m(W,S)$ is equal to the set~$L_m$ of $m$-low elements.
\end{enumerate}
\end{thm}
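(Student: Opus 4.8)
The plan is to prove both parts simultaneously by identifying, for each region $R$ of $\shi_m(W,S)$, a distinguished element and showing it is both the unique minimal length element of $R$ and an $m$-low element, while conversely showing every $m$-low element arises this way. The natural candidate for the distinguished element is the element $w_R$ whose inversion set $\Phi(w_R)$ consists of exactly those positive roots $\beta$ with $H_\beta$ separating $R$ from $C_e$; the crux is that this set of roots is \emph{closed} (hence a genuine inversion set of some element of $W$) and that it is spanned by the $m$-small roots it contains. First I would recall the order-theoretic characterization: a subset $A\subseteq\Phi^+$ is $\Phi(w)$ for some $w\in W$ if and only if $A$ and $\Phi^+\setminus A$ are both closed under taking positive linear combinations lying in $\Phi^+$ (closed sets / biclosed sets), and that $\ell(w)$ counts $|\Phi(w)|$ when finite. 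The separation set of a region is automatically biclosed by convexity of $R$, so $w_R$ is well-defined whenever it has finite length; the infinite-length case (in which a region is separated from $C_e$ by infinitely many hyperplanes, all necessarily $m$-small) needs a limiting/direct-limit argument using finiteness of $\Sigma_m$ to control the situation.

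Second, to see $w_R$ is $m$-low I would argue that $\Phi(w_R)$ is spanned (in the closure operation) by $\Phi(w_R)\cap\Sigma_m$. The key input is that the hyperplanes $H_\beta$ bounding $R$ that lie in $\shi_m(W,S)$ —equivalently $\beta\in\Sigma_m$— already detect the region: since $R$ is a region of the $m$-Shi arrangement, it is cut out precisely by the $m$-small hyperplanes, so the non-$m$-small roots in $\Phi(w_R)$ must be ``redundant,'' i.e.\ forced by closure from the $m$-small ones together with the complementary biclosed set. Making this precise likely uses the rank-two reductions underlying the definition of $m$-smallness (the parallel/ultraparallel hyperplane count) to show that whenever $\beta\notin\Sigma_m$ separates $R$ from $C_e$, there are $m+1$ parallel or ultraparallel hyperplanes also separating $R$ from $C_e$, which in a dihedral reflection subgroup generate $\beta$ by closure. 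This is where I expect the main obstacle to lie: translating the geometric ``$m$ separating parallel/ultraparallel hyperplanes'' condition into the combinatorial closure statement $\beta\in\overline{\Phi(w_R)\cap\Sigma_m}$, uniformly over all (possibly non-affine, non-finite) Coxeter systems and arbitrary $m$.

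Third, for uniqueness of the minimal length element in $R$: any $w\in R$ has $\Phi(w)\supseteq \Phi(w_R)$ (it must cross every separating hyperplane of $R$), so $\ell(w)\ge\ell(w_R)$ with equality only if $\Phi(w)=\Phi(w_R)$, i.e.\ $w=w_R$; one must also check $w_R$ itself lies in $R$, not merely on its closure, which follows because no hyperplane of $\shi_m(W,S)$ other than the bounding ones can separate $C_{w_R}$ from $C_e$ (any such would be $m$-small and in $\Phi(w_R)$, contradicting that $\Phi(w_R)$ is the separation set of $R$). Finally, for the reverse inclusion $L_m\subseteq\{$minimal elements$\}$: given an $m$-low $w$, let $R$ be the region of $\shi_m(W,S)$ containing $C_w$; then $\Phi(w)\cap\Sigma_m$ equals the set of $m$-small roots separating $R$ from $C_e$, and since $w$ is $m$-low, $\Phi(w)=\overline{\Phi(w)\cap\Sigma_m}$ is determined by $R$, forcing $w=w_R$. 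Throughout, the finiteness of $\Sigma_m$ (Fu's theorem, cited in the excerpt) is used to ensure regions are well-behaved and the closure operations terminate; assembling these pieces into a single coherent induction — probably on $\ell(w_R)$ or on the weak order — is the remaining bookkeeping.
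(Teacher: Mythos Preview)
Your proposal has a genuine circular gap at its foundation. The claim that ``the separation set of a region is automatically biclosed by convexity of $R$'' is unjustified and is essentially equivalent to the theorem itself. Write $A_R=\bigcap_{w\in R}\Phi(w)$. It is easy to see that $A_R$ is closed (intersections of closed sets are closed), but co-closedness fails to follow from convexity of $R$: given $\alpha,\gamma\notin A_R$ witnessed by possibly different $w_\alpha,w_\gamma\in R$, there is no mechanism to produce a single $w\in R$ with $\alpha,\gamma\notin\Phi(w)$, which is what you would need. In fact, $A_R$ being biclosed is \emph{equivalent} to $R$ having a weak-order minimum, which is precisely what the theorem asserts; a posteriori one checks $A_R=\Phi(u)$ for the unique $m$-low $u\in R$, but you cannot invoke this at the outset.

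Your step 2 is also broken as stated. If $\beta\notin\Sigma_m$, the $m+1$ roots $\gamma_i$ it dominates are indeed all in $A_R$ and are $m$-small, but dominance places them all in a single infinite dihedral subgroup on \emph{one side} of $\beta$; they cannot cone-generate $\beta$, so the ``rank-two closure'' mechanism you describe does not apply. To get $\beta\in\cone_\Phi(\Sigma_m\cap A_R)$ you would need two $m$-small roots in $A_R$ flanking $\beta$ in some maximal dihedral subgroup where $\beta$ is not simple, and producing those is exactly the difficulty.

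The paper's route is quite different. It introduces a partial order $\preceq_w$ on the short inversions $\Phi^1(w)$ (built from maximal dihedral reflection subgroups) and proves a \emph{sandwich theorem}: the minimal elements of $(\Phi^1(w),\preceq_w)$ are the left descent roots $\Phi^L(w)$ and the maximal elements are the right descent roots $\Phi^R(w)$. Combined with bipodality of $\Sigma_m$ (which forces $\dep_\infty$ to be monotone along $\preceq_w$), this yields the clean characterization $w\in L_m\iff \Phi^R(w)\subseteq\Sigma_m$. The main theorem then drops out in two lines: the length-minimal $u$ in a region satisfies $\Sigma_m(ur)\neq\Sigma_m(u)$ for every $r\in D_R(u)$, forcing $-u(\alpha_r)\in\Sigma_m$, hence $\Phi^R(u)\subseteq\Sigma_m$, hence $u\in L_m$; the rest (uniqueness and $u\leq_R w$) follows since $\Phi(u)=\cone_\Phi(\Sigma_m(u))=\cone_\Phi(\Sigma_m(w))\subseteq\Phi(w)$.
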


An noticeable consequence of the above theorem and of the fact that $L_m$ is a Garside shadow is that if the join $z$ (under the right weak order) of two minimal elements of $\shi_m(W,S)$ exists, then $z$ is also the minimal element of a region of $\shi_m(W,S)$. 

\begin{thm}\label{thm:Main2} Let $(W,S)$ be a Coxeter system.  The union of the chambers $C_w$ for $w^{-1}\in L_0$ is a convex set. 
\end{thm}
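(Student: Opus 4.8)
The plan is to translate convexity into a closure property of the set $L_0$ and then verify that property using Theorem~\ref{thm:Main1} together with the structure theory of small roots. For $x\in W$ let $N(x)=\{\alpha\in\Phi^+\mid x^{-1}\alpha\in\Phi^-\}$ be the inversion set, equal to the set of hyperplanes of $\mathcal A(W,S)$ separating $C_x$ from $C_e$; each $N(x)$ is biclosed. Since $e\in L_0$ we have $C_e\in X:=\bigcup_{w^{-1}\in L_0}C_w$, and a union of chambers containing $C_e$ is convex in the Tits cone exactly when it is the intersection of the half-spaces of $\mathcal A(W,S)$ containing it. Unwinding this, the first step is to reduce Theorem~\ref{thm:Main2} to the implication
\[
N(y)\subseteq\bigcup_{v\in L_0}N(v^{-1})\ \Longrightarrow\ y^{-1}\in L_0,
\]
i.e.\ that a chamber separated from $C_e$ only by hyperplanes already separating some chamber of $X$ from $C_e$ lies in $X$; equivalently, that for $u^{-1},v^{-1}\in L_0$ every chamber on a minimal gallery from $C_u$ to $C_v$ is again of this form.

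Second, I would read both sides of this implication through small roots. Recall $w^{-1}\in L_0$ means the inversion set $\Phi(w^{-1})$ is the closure of the small roots it contains. The key subtlety — and why the inverse is forced in the statement — is that the chambers comprising $X$ are not the ones with ``nice'' inversion sets: that condition is imposed on the mirror chamber $C_{w^{-1}}$, and the bijection $C_w\leftrightarrow C_{w^{-1}}$ is not an automorphism of the Coxeter complex and respects neither convexity nor smallness of roots (smallness being governed by the Brink--Howlett dominance order, which is anchored at $C_e$). This is exactly where Theorem~\ref{thm:Main1} is needed: it replaces the global condition ``$w^{-1}$ is the minimal-length element of its $\shi_0$-region'' by data localized at the single chamber $C_{w^{-1}}$, which is what makes a reduction to dihedral subconfigurations conceivable.

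Third — the heart of the matter — I would attempt to establish the displayed implication by reducing to rank-two residues $xW_{\{s,t\}}$ and analysing, within each dihedral residue, the interplay of small roots, the dominance order, Fu's finiteness of $\Sigma_m$, the closure operation on biclosed sets, and the Garside-shadow closure properties of $L_0$ from \cite{DyHo16,Dy21}. The principal obstacle I expect here is controlling small roots under this reduction: a chamber $C_y$ in the convex hull of $X$ has an inversion set $\Phi(y^{-1})$ possibly strictly smaller than the ambient inversion sets, so the small roots witnessing lowness of the nearby elements of $L_0$ need not lie in $\Phi(y^{-1})$, and one must produce new small-root witnesses — showing in effect that the operation ``saturate a biclosed set by the roots its small roots force'' is compatible with passing to the convex hull. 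Once this is settled in rank two, global convexity follows formally. (This is considerably stronger than the remark following Theorem~\ref{thm:Main1}, where closure of $L_0$ under already-existing weak-order joins is immediate from the Garside-shadow property; convexity concerns the much coarser geometry of the whole Coxeter complex.)
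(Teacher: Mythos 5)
Your first two paragraphs only carry out a reformulation: convexity of $X=\bigcup_{w^{-1}\in L_0}C_w$ is equivalent to the implication $\Phi(y)\subseteq\bigcup_{v\in L_0}\Phi(v^{-1})\Rightarrow y^{-1}\in L_0$ (equivalently, closure under minimal galleries). That reduction is correct, and it is close in spirit to the paper's Corollary~\ref{cor:MainB}, except that the paper works not with all half-spaces containing $X$ but with the explicit finite set of boundary roots $\mathcal B_0=\{x^{-1}(\alpha_s)\mid x\in L_0,\ s\in S,\ sx\notin L_0\}$ and the Shi polyhedron $\mathscr S_0=\bigcap_{\beta\in\mathcal B_0}H_\beta^+$. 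The genuine gap is your third paragraph: the ``heart of the matter'' is announced as a plan (``I would attempt\dots'', ``the principal obstacle I expect\dots'', ``one must produce new small-root witnesses'') but never executed. No argument is given for the displayed implication, the claimed reduction to rank-two residues $xW_{\{s,t\}}$ is neither justified (you would at least need gallery-connectedness of $X$ and a local-to-global convexity criterion) nor carried out, and the witness problem you correctly identify --- that smallness of the relevant roots must be certified at $C_{y^{-1}}$ rather than at the neighbouring low chambers --- is exactly the difficulty, left unresolved.

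For comparison, the paper resolves it without any rank-two reduction, by a dominance-order propagation anchored at the boundary roots: the easy inclusion $\mathscr S_0\subseteq X$ is Lemma~\ref{lem:Conv1}, an induction using only suffix-closure of $L_0$ (the Garside-shadow property); for the hard inclusion one takes $u\in L_0$, $s\in S$ with $su\notin L_0$ and $\beta=u^{-1}(\alpha_s)\in\Phi(w^{-1})$, chooses $w$ of minimal length so that $\beta=-w^{-1}(\alpha_t)\in\Phi^R(w^{-1})$, invokes Proposition~\ref{cor:Main2} to get $r\in D_R(u)$ with $\alpha_s\prec_\dom -su(\alpha_r)$, and transports this relation by $wu^{-1}s$ using $W$-invariance of dominance (Proposition~\ref{prop:DomFu}) to obtain $\alpha_t\prec_\dom -w(\alpha_r)$ with $-w(\alpha_r)\in\Phi^R(w)$; Theorem~\ref{thm:Main} (the consequence of the sandwich Theorem~\ref{thm:BasisInversion} characterizing $L_m$ by $\Phi^R(w)\subseteq\Sigma_m$) then forces $w\notin L_0$. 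Note also that what the proof actually leans on is this characterization via right descent-roots, not Theorem~\ref{thm:Main1} itself as your second paragraph suggests. As it stands, your proposal identifies the right target statement but supplies none of the content needed to prove it.
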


These theorems are illustrated in Figures~\ref{fig:Aff1},~\ref{fig:Aff2}~and~\ref{fig:Hyp1}.  The proofs of these theorems are consequence of a {\em sandwich property} of {\em short inversion posets},  proven in~\S\ref{se:ShortInv}.  The first author showed in~\cite{Dy19} that the {\em inversion set $\Phi(w)$ of $w\in W$} is spanned by its {\em set of short inversions $\Phi^1(w)$}. We endow $\Phi^1(w)$ with a poset structure arising from the configuration of maximal dihedral reflection subgroups: $\alpha\precinv_w \beta$ if $\beta$ is not a simple root of the maximal dihedral reflection subgroup containing $\alpha,\beta\in \Phi^1(w)$, see \S\ref{se:PosetBasisInversion}. Then we prove that any short inversion $\beta\in \Phi^1(w)$ is {\em sandwiched} between a left-descent root and a right-descent root (that are naturally defined from the left and right descent sets of $w$); this is Theorem~\ref{thm:BasisInversion}, which is the core result of this article. 


Coxeter arrangements and their subarrangements such as $m$-Shi arrangements  may be naturally realized in various ambient spaces such as  {\em Tits cone} (see~\cite[\S5.13]{Hu90}), {\em Coxeter complexes} and  {\em Davis complexes} (see~\cite[\S3 and \S12.3]{AbBr08}), Euclidean and hyperbolic spaces,   {\em imaginary cones}~\cite{Dy19-1,DyHoRi16}, etc.  In order to facilitate a uniform treatment of conditions under which our results apply, we define  chambered sets in~\S\ref{se:CS}.   

A {\em chambered set} for a Coxeter system $(W,S)$ is defined  to be a pair $(U,C)$ of a $W$-set $U$ and a distinguished set $C$ of $W$-orbit representatives on $U$, called the fundamental chamber; the stabilizers of points of $U$ are required to be standard parabolic subgroups of $(W,S)$. In the setting of chambered sets,  one may define chambers, facets, hyperplanes, half-spaces, convex subsets etc,   and isolate simple conditions under which various favorable standard properties of these notions (such as that $W$ acts simply transitively on the chambers, or that the fundamental chamber is not contained in the union of the other chambers, or that the hyperplanes are in natural bijection with $T$, etc) hold.

 For certain examples, such as arrangements in the Tits cone,   Euclidean space or  hyperbolic space in~\S\ref{se:LowShi}  the properties we need for our results on $m$-Shi arrangements to hold are very well known and reference to chambered sets is unnecessary while reading \S\ref{se:LowShi}. For others, it is a  more delicate matter; the necessary properties may or may not hold in  Coxeter complexes, depending on whether they realized as simplicial complexes or chamber complexes, for example,  and they  hold in imaginary cones of irreducible, infinite,  finite rank Coxeter groups but not in general.  By stating our results in the context of chambered sets, we are able to supply simple uniform conditions whose validity determines whether the results apply in any particular situation without having to discuss the examples individually. In any case, a natural category of  chambered sets forms part of  rich  structure observed by the first author, involving various natural  monoidal categories and their actions, bicategories etc associated to Coxeter groups, and it is natural to present their definition initially in the context which motivated it and to which that structure has further applications.


In \S\ref{se:LowShi}, we introduce extended Shi arrangements  and we prove Theorem~\ref{thm:Main1} and  Theorem~\ref{thm:Main2}. Combinatorics of roots and reduced words are surveyed in \S\ref{se:Combin} while $m$-small roots and $m$-low elements are discussed in~\S\ref{se:Low}.

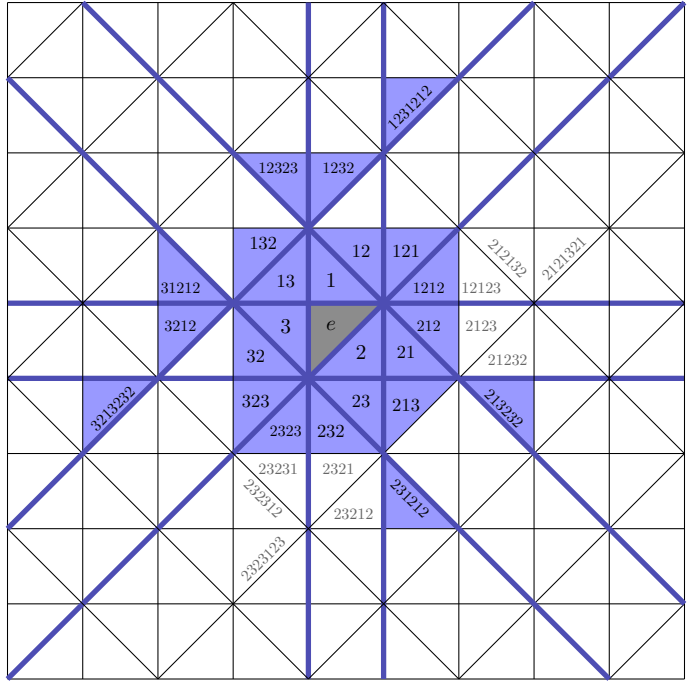
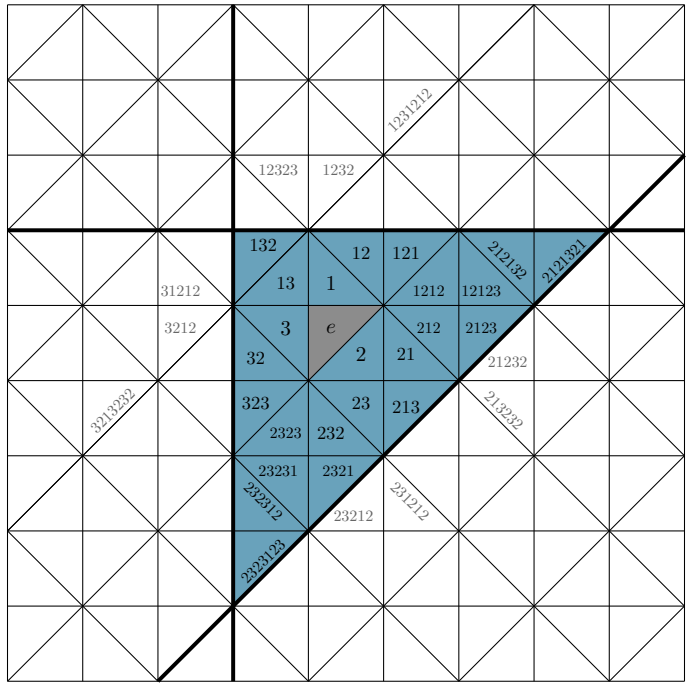
\begin{figure}
\centering
\subfigure[The classical ($m=0$) Shi arrangement. The chambers indexed by the $0$-low elements are shaded.]{
\begin{tikzpicture}[scale=1]
\path [fill=gray!90] (0,0) -- (1,1) --  (0,1) --  (0,0);
\path [fill=blue!40] (-1,1)--(0,2)--(2,2)--(2,0)--(1,-1)--(0,-1)--(0,0)--(1,1)--(-1,1);
\path [fill=blue!40] (0,2)--(-1,3)--(1,3)--(0,2);
\path [fill=blue!40] (1,3)--(1,4)--(2,4)--(1,3);
\path [fill=blue!40] (-1,1)--(0,1)--(0,-1)--(-1,-1)--(-1,1);
\path [fill=blue!40] (-1,1)--(-2,0)--(-2,2)--(-1,1);
\path [fill=blue!40] (-2,0)--(-3,0)--(-3,-1)--(-2,0);
\path [fill=blue!40] (1,-1)--(1,-2)--(2,-2)--(1,-1);
\path [fill=blue!40] (2,0)--(3,0)--(3,-1)--(2,0);
\path [fill=blue!40] (-1,1)--(-1,2)--(0,2)--(-1,1);
\node[scale=0.8] at (0.3,0.7) {$e$};
\node[scale=0.8]  at (0.3,1.3) {$1$};
\node[scale=0.8] at (0.7,0.35) {$2$};
\node[scale=0.8]  at (-0.3,0.7) {$3$};
\node[scale=0.7] at (-0.3,1.3) {$13$};
\node[scale=0.7]  at (-0.6,1.8) {$132$};
\node[scale=0.7] at (0.3,-0.7) {$232$};
\node[scale=0.7] at (0.7,-0.3) {$23$};
\node[scale=0.6] at (-0.3,-0.7) {$2323$};
\node[scale=0.7] at (-0.7,0.3) {$32$};
\node[scale=0.7] at (-0.7,-0.3) {$323$};
\node[scale=0.7] at (1.3,0.35) {$21$};
\node[scale=0.7] at (1.3,-0.35) {$213$};
\node[scale=0.6,rotate=-45] at (1.35,-1.6) {$231212$};
\node[scale=0.7]  at (0.7,1.7) {$12$};
\node[scale=0.7]  at (1.3,1.7) {$121$};
\node[scale=0.6]  at (1.6,1.2) {$1212$};
\node[scale=0.6]  at (1.6,0.7) {$212$};
\node[scale=0.6,rotate=-45] at (2.6,-0.4) {$213232$};
\node[scale=0.6]  at (-1.7,0.7) {$3212$};
\node[scale=0.6]  at (-1.7,1.2) {$31212$};
\node[scale=0.6] at (0.4,2.8) {$1232$};
\node[scale=0.6] at (-0.4,2.8) {$12323$};
\node[scale=0.6,rotate=45] at (1.35,3.6) {$1231212$};
\node[scale=0.6,rotate=45] at (-2.6,-0.4) {$3213232$};
\node[scale=0.6,color=black!60]  at (2.3,1.2) {$12123$};
\node[scale=0.6,color=black!60,rotate = -45]  at (2.65,1.6) {$212132$};
\node[scale=0.6,color=black!60,rotate = 45]  at (3.4,1.6) {$2121321$};
\node[scale=0.6,color=black!60]  at (2.3,0.7)  {$2123$};
\node[scale=0.6,color=black!60]  at (2.65,0.25)  {$21232$};
\node[scale=0.6,color=black!60]  at (0.4,-1.2) {$2321$};
\node[scale=0.6,color=black!60]  at (-0.4,-1.2) {$23231$};
\node[scale=0.6,color=black!60,rotate=-45]  at (-0.6,-1.6) {$232312$};
\node[scale=0.6,color=black!60]  at (0.6,-1.8) {$23212$};
\node[scale=0.6,color=black!60,rotate=45]  at (-0.6,-2.4) {$2323123$};
\draw (-4,-2) -- (5,-2); 
\draw (-4,-1) -- (5,-1);
\draw [line width=2pt,color=blue!40!gray](-4,0) -- (5,0);
\draw [line width=2pt,color=blue!40!gray](-4,1) -- (5,1);
\draw (-4,2) -- (5,2);
\draw (-4,3) -- (5,3); 
\draw (-4,4) -- (5,4);
\draw (-4,5) -- (5,5);
\draw (-4,-3) -- (5,-3);
\draw (-4,-4) -- (5,-4);
\draw (-2,-4) -- (-2,5);
\draw (-1,-4) -- (-1,5);
\draw [line width=2pt,color=blue!40!gray](0,-4) -- (0,5);
\draw [line width=2pt,color=blue!40!gray](1,-4) -- (1,5);
\draw (2,-4) -- (2,5);
\draw (-4,-4) -- (-4,5);
\draw (-3,-4) -- (-3,5);
\draw (3,-4) -- (3,5);
\draw (4,-4) -- (4,5);
\draw (5,-4) -- (5,5);
\draw (-4,4)--(-3,5);
\draw (-4,2)--(-1,5);
\draw (-4,0) -- (1,5);
\draw (-4,-2) -- (3,5);
\draw [line width=2pt,color=blue!40!gray](-4,-4) -- (5,5);
\draw (-2,-4) -- (5,3);
\draw (0,-4) -- (5,1);
\draw (2,-4)--(5,-1);
\draw (4,-4)--(5,-3);
\draw (3,5)--(5,3);
\draw (1,5)--(5,1);
\draw (-1,5)--(5,-1);
\draw[line width=2pt,color=blue!40!gray] (-3,5)--(5,-3);
\draw[line width=2pt,color=blue!40!gray](-4,4)--(4,-4);
\draw (-4,2)--(2,-4);
\draw (-4,0)--(0,-4);
\draw (-4,-2)--(-2,-4);
\draw[line width=2pt,color=blue!40!gray] (-4,-2)--(3,5);
\end{tikzpicture}
}\quad\qquad
\subfigure[The $0$-Shi polyhedron]{
\begin{tikzpicture}[scale=1]
\path [fill=cyan!40!gray] (-1,2)--(4,2)--(-1,-3)--(-1,2);
\path [fill=gray!90] (0,0) -- (1,1) --  (0,1) --  (0,0);
\node[scale=0.8] at (0.3,0.7) {$e$};
\node[scale=0.8]  at (0.3,1.3) {$1$};
\node[scale=0.8] at (0.7,0.35) {$2$};
\node[scale=0.8]  at (-0.3,0.7) {$3$};
\node[scale=0.7] at (-0.3,1.3) {$13$};
\node[scale=0.7]  at (-0.6,1.8) {$132$};
\node[scale=0.7] at (0.3,-0.7) {$232$};
\node[scale=0.7] at (0.7,-0.3) {$23$};
\node[scale=0.6] at (-0.3,-0.7) {$2323$};
\node[scale=0.7] at (-0.7,0.3) {$32$};
\node[scale=0.7] at (-0.7,-0.3) {$323$};
\node[scale=0.7] at (1.3,0.35) {$21$};
\node[scale=0.7] at (1.3,-0.35) {$213$};
\node[scale=0.6,rotate=-45,color=black!60] at (1.35,-1.6) {$231212$};
\node[scale=0.7]  at (0.7,1.7) {$12$};
\node[scale=0.7]  at (1.3,1.7) {$121$};
\node[scale=0.6]  at (1.6,1.2) {$1212$};
\node[scale=0.6]  at (1.6,0.7) {$212$};
\node[scale=0.6,rotate=-45,color=black!60] at (2.6,-0.4) {$213232$};
\node[scale=0.6,color=black!60]  at (-1.7,0.7) {$3212$};
\node[scale=0.6,color=black!60]  at (-1.7,1.2) {$31212$};
\node[scale=0.6,color=black!60] at (0.4,2.8) {$1232$};
\node[scale=0.6,color=black!60] at (-0.4,2.8) {$12323$};
\node[scale=0.6,rotate=45,color=black!60] at (1.35,3.6) {$1231212$};
\node[scale=0.6,rotate=45,color=black!60] at (-2.6,-0.4) {$3213232$};
\node[scale=0.6]  at (2.3,1.2) {$12123$};
\node[scale=0.6,rotate = -45]  at (2.65,1.6) {$212132$};
\node[scale=0.6,rotate = 45]  at (3.4,1.6) {$2121321$};
\node[scale=0.6]  at (2.3,0.7)  {$2123$};
\node[scale=0.6,color=black!60]  at (2.65,0.25)  {$21232$};
\node[scale=0.6]  at (0.4,-1.2) {$2321$};
\node[scale=0.6]  at (-0.4,-1.2) {$23231$};
\node[scale=0.6,rotate=-45]  at (-0.6,-1.6) {$232312$};
\node[scale=0.6,color=black!60]  at (0.6,-1.8) {$23212$};
\node[scale=0.6,rotate=45]  at (-0.6,-2.4) {$2323123$};
\draw (-4,-2) -- (5,-2); 
\draw (-4,-1) -- (5,-1);
\draw (-4,0) -- (5,0);
\draw (-4,1) -- (5,1);
\draw[line width=1.5pt] (-4,2) -- (5,2);
\draw (-4,3) -- (5,3); 
\draw (-4,4) -- (5,4);
\draw (-4,5) -- (5,5);
\draw (-4,-3) -- (5,-3);
\draw (-4,-4) -- (5,-4);
\draw (-2,-4) -- (-2,5);
\draw[line width=1.5pt] (-1,-4) -- (-1,5);
\draw (0,-4) -- (0,5);
\draw (1,-4) -- (1,5);
\draw (2,-4) -- (2,5);
\draw (-4,-4) -- (-4,5);
\draw (-3,-4) -- (-3,5);
\draw (3,-4) -- (3,5);
\draw (4,-4) -- (4,5);
\draw (5,-4) -- (5,5);
\draw (-4,4)--(-3,5);
\draw (-4,2)--(-1,5);
\draw (-4,0) -- (1,5);
\draw (-4,-2) -- (3,5);
\draw (-4,-4) -- (5,5);
\draw[line width=1.5pt] (-2,-4) -- (5,3);
\draw (0,-4) -- (5,1);
\draw (2,-4)--(5,-1);
\draw (4,-4)--(5,-3);
\draw (3,5)--(5,3);
\draw (1,5)--(5,1);
\draw (-1,5)--(5,-1);
\draw (-3,5)--(5,-3);
\draw (-4,4)--(4,-4);
\draw (-4,2)--(2,-4);
\draw (-4,0)--(0,-4);
\draw (-4,-2)--(-2,-4);
\draw (-4,-2)--(3,5);
\end{tikzpicture}
}
\caption{The ($0$-)Shi arrangement and the $0$-Shi polyhedron  for $\tilde{B}_2$.}\label{fig:Aff1}
\end{figure}

\begin{figure}
\centering
\subfigure[The $0$ and $1$-Shi arrangements. The chambers indexed by the $1$-low elements are shaded (with the chambers indexed by the  $0$-low elements in a darker shade).]{
\begin{tikzpicture}[scale=0.7]
\path [fill=gray!90] (0,0) -- (1,1) -- (0,1) -- (0,0);
\path [fill=blue!50] (-1,1)--(0,2)--(2,2)--(2,0)--(1,-1)--(0,-1)--(0,0)--(1,1)--(-1,1);
\path [fill=blue!50] (0,2)--(-1,3)--(1,3)--(0,2);
\path [fill=blue!50] (1,3)--(1,4)--(2,4)--(1,3);
\path [fill=blue!50] (-1,1)--(0,1)--(0,-1)--(-1,-1)--(-1,1);
\path [fill=blue!50] (-1,1)--(-2,0)--(-2,2)--(-1,1);
\path [fill=blue!50] (-2,0)--(-3,0)--(-3,-1)--(-2,0);
\path [fill=blue!50] (1,-1)--(1,-2)--(2,-2)--(1,-1);
\path [fill=blue!50] (2,0)--(3,0)--(3,-1)--(2,0);
\path [fill=blue!50] (-1,1)--(-1,2)--(0,2)--(-1,1);
\path [fill=blue!35] (1,-1)--(2,0)--(3,-1)--(2,-2)--(1,-1);
\path [fill=blue!35] (3,-1)--(4,-1)--(4,-2)--(3,-1);
\path [fill=blue!35] (5,-1)--(6,-1)--(6,-2)--(5,-1);
\path [fill=blue!35] (4,0)--(5,0)--(5,-1)--(4,0);
\path [fill=blue!35] (2,0)--(2,3)--(3,3)--(3,0)--(2,0);
\path [fill=blue!35] (4,2)--(5,3)--(5,2)--(4,2);
\path [fill=blue!35] (3,1)--(4,2)--(4,0)--(3,1);
\path [fill=blue!35] (0,2)--(1,3)--(2,3)--(2,2)--(0,2);
\path [fill=blue!35] (-2,-1)--(1,-1)--(1,-2)--(-2,-2)--(-2,-1);
\path [fill=blue!35] (0,-2)--(-1,-3)--(1,-3)--(0,-2);
\path [fill=blue!35] (1,-3)--(1,-4)--(2,-4)--(1,-3);
\path [fill=blue!35] (2,-4)--(2,-5)--(3,-5)--(2,-4);
\path [fill=blue!35] (2,-2)--(2,-3)--(3,-3)--(2,-2);
\path [fill=blue!35] (-1,-3)--(-2,-4)--(-1,-4)--(-1,-3);
\path [fill=blue!35] (-1,-1)--(-2,-1)--(-2,0)--(-1,1)--(-1,-1);
\path [fill=blue!35] (1,5)--(1,6)--(2,6)--(1,5);
\path [fill=blue!35] (2,6)--(2,7)--(3,7)--(2,6);
\path [fill=blue!35] (2,4)--(2,5)--(3,5)--(2,4);
\path [fill=blue!35] (0,4)--(-1,5)--(1,5)--(0,4);
\path [fill=blue!35] (0,4)--(1,4)--(1,3)--(0,4);
\path [fill=blue!35] (-1,3)--(-2,4)--(0,4)--(-1,3);
\path [fill=blue!35] (-1,5)--(-2,6)--(-1,6)--(-1,5);
\path [fill=blue!35] (0,2)--(-1,3)--(-2,3)--(-2,2)--(0,2);
\path [fill=blue!35] (-1,2)--(-1,1)--(-2,2)--(-1,2);
\path [fill=blue!35] (-3,1)--(-4,0)--(-4,2)--(-3,1);
\path [fill=blue!35] (-2,2)--(-3,1)--(-3,3)--(-2,2);
\path [fill=blue!35] (-3,1)--(-2,0)--(-3,0)--(-3,1);
\path [fill=blue!35] (-5,3)--(-4,2)--(-5,2)--(-5,3);
\path [fill=blue!35] (-5,-1)--(-4,0)--(-5,0)--(-5,-1);
\path [fill=blue!35] (-6,-2)--(-5,-1)--(-6,-1)--(-6,-2);
\path [fill=blue!35] (-4,-2)--(-3,-1)--(-4,-1)--(-4,-2);
\node[scale=0.7] at (0.3,0.7) {$e$};
\node[scale=0.7]  at (0.3,1.3) {$1$};
\node[scale=0.7] at (0.7,0.35) {$2$};
\node[scale=0.7]  at (-0.3,0.7) {$3$};
\node[scale=0.6] at (-0.3,1.3) {$13$};
\node[scale=0.6]  at (-0.6,1.8) {$132$};
\node[scale=0.6] at (0.3,-0.7) {$232$};
\node[scale=0.6] at (0.7,-0.3) {$23$};
\node[scale=0.5] at (-0.3,-0.7) {$2323$};
\node[scale=0.6] at (-0.7,0.3) {$32$};
\node[scale=0.6] at (-0.7,-0.3) {$323$};
\node[scale=0.6] at (1.3,0.35) {$21$};
\node[scale=0.6] at (1.3,-0.35) {$213$};
\node[scale=0.5,rotate=-45] at (1.35,-1.6) {$231212$};
\node[scale=0.6]  at (0.7,1.7) {$12$};
\node[scale=0.6]  at (1.3,1.7) {$121$};
\node[scale=0.5]  at (1.6,1.2) {$1212$};
\node[scale=0.5]  at (1.6,0.7) {$212$};
\node[scale=0.5,rotate=-45] at (2.6,-0.4) {$213232$};
\node[scale=0.5]  at (-1.7,0.7) {$3212$};
\node[scale=0.5]  at (-1.7,1.2) {$31212$};
\node[scale=0.5] at (0.4,2.8) {$1232$};
\node[scale=0.5] at (-0.4,2.8) {$12323$};
\node[scale=0.5,rotate=45] at (1.35,3.6) {$1231212$};
\node[scale=0.5,rotate=45] at (-2.6,-0.4) {$3213232$};
\draw (-6,7)--(7,7);
\draw (-6,6)--(7,6);
\draw (-6,6)--(7,6);
\draw (-6,-2) -- (7,-2); 
\draw[line width=2pt,color=blue!20!gray] (-6,-1) -- (7,-1);
\draw [line width=2pt,color=blue!40!gray](-6,0) -- (7,0);
\draw[line width=2pt,color=blue!40!gray](-6,1) -- (7,1);
\draw[line width=2pt,color=blue!20!gray](-6,2) -- (7,2);
\draw (-6,3) -- (7,3); 
\draw (-6,4) -- (7,4);
\draw (-6,5) -- (7,5);
\draw (-6,-3) -- (7,-3);
\draw (-6,-4) -- (7,-4);
\draw (-6,-5)--(7,-5);
\draw (-6,-6)--(7,-6);
\draw (7,-6)--(7,7);
\draw (6,-6)--(6,7);
\draw (-6,-6)--(-6,7);
\draw (-5,-6)--(-5,7);
\draw (-2,-6) -- (-2,7);
\draw [line width=2pt,color=blue!20!gray](-1,-6) -- (-1,7);
\draw [line width=2pt,color=blue!40!gray](0,-6) -- (0,7);
\draw [line width=2pt,color=blue!40!gray](1,-6) -- (1,7);
\draw [line width=2pt,color=blue!20!gray](2,-6) -- (2,7);
\draw (-4,-6) -- (-4,7);
\draw (-3,-6) -- (-3,7);
\draw (3,-6) -- (3,7);
\draw (4,-6) -- (4,7);
\draw (5,-6) -- (5,7);
\draw (-6,6)--(-5,7);
\draw (-6,4)--(-3,7);
\draw (-6,2)--(-1,7);
\draw (-6,0)--(1,7);
\draw[line width=2pt,color=blue!20!gray] (-6,-2) -- (3,7);
\draw[line width=2pt,color=blue!40!gray] (-6,-4)--(5,7);
\draw [line width=2pt,color=blue!40!gray](-6,-6) -- (7,7);
\draw [line width=2pt,color=blue!20!gray] (-4,-6) -- (7,5);
\draw (-2,-6) -- (7,3);
\draw (0,-6)--(7,1);
\draw (2,-6)--(7,-1);
\draw (4,-6)--(7,-3);
\draw (6,-6)--(7,-5);
\draw  (5,7)--(7,5);
\draw  (3,7)--(7,3);
\draw  (1,7)--(7,1);
\draw (-1,7)--(7,-1);
\draw[line width=2pt,color=blue!20!gray] (-3,7)--(7,-3);
\draw[line width=2pt,color=blue!40!gray] (-5,7)--(7,-5);
\draw[line width=2pt,color=blue!40!gray](-6,6)--(6,-6);
\draw[line width=2pt,color=blue!20!gray] (-6,4)--(4,-6);
\draw (-6,2)--(2,-6);
\draw (-6,0)--(0,-6);
\draw (-6,-2)--(-2,-6);
\draw (-6,-4)--(-4,-6);
\end{tikzpicture}
}\quad\qquad
\subfigure[The $1$-Shi polyhedron (with the $0$-Shi polyhedron inside)]{
\begin{tikzpicture}[scale=0.7]
\path [fill=lightgray] (-2,3)--(7,3)--(-2,-6)--(-2,3);
\path [fill=cyan!40!gray] (-1,2)--(4,2)--(-1,-3)--(-1,2);
\path [fill=white] (0,0) -- (1,1) --  (0,1) --  (0,0);
\path [fill=gray!90] (0,0) -- (1,1) --  (0,1) --  (0,0);

\draw (-6,7)--(7,7);
\draw (-6,6)--(7,6);
\draw (-6,-2) -- (7,-2); 
\draw (-6,-1) -- (7,-1);
\draw (-6,0) -- (7,0);
\draw (-6,1) -- (7,1);
\draw (-6,2) -- (7,2);
\draw[line width=1.5pt] (-6,3) -- (7,3); 
\draw (-6,4) -- (7,4);
\draw (-6,5) -- (7,5);
\draw (-6,-3) -- (7,-3);
\draw (-6,-4) -- (7,-4);
\draw (-6,-5)--(7,-5);
\draw (-6,-6)--(7,-6);
\draw (7,-6)--(7,7);
\draw (6,-6)--(6,7);
\draw (-6,-6)--(-6,7);
\draw (-5,-6)--(-5,7);
\draw[line width=1.5pt] (-2,-6) -- (-2,7);
\draw (-1,-6) -- (-1,7);
\draw  (0,-6) -- (0,7);
\draw  (1,-6) -- (1,7);
\draw (2,-6) -- (2,7);
\draw (-4,-6) -- (-4,7);
\draw (-3,-6) -- (-3,7);
\draw (3,-6) -- (3,7);
\draw (4,-6) -- (4,7);
\draw (5,-6) -- (5,7);
\draw (-6,6)--(-5,7);
\draw (-6,4)--(-3,7);
\draw (-6,2)--(-1,7);
\draw (-6,0)--(1,7);
\draw  (-6,-2) -- (3,7);
\draw  (-6,-4)--(5,7);
\draw (-6,-6) -- (7,7);
\draw  (-4,-6) -- (7,5);
\draw[line width=1.5pt] (-2,-6) -- (7,3);
\draw (0,-6)--(7,1);
\draw (2,-6)--(7,-1);
\draw (4,-6)--(7,-3);
\draw (6,-6)--(7,-5);
\draw  (5,7)--(7,5);
\draw  (3,7)--(7,3);
\draw  (1,7)--(7,1);
\draw (-1,7)--(7,-1);
\draw  (-3,7)--(7,-3);
\draw  (-5,7)--(7,-5);
\draw (-6,6)--(6,-6);
\draw (-6,4)--(4,-6);
\draw (-6,2)--(2,-6);
\draw (-6,0)--(0,-6);
\draw (-6,-2)--(-2,-6);
\draw (-6,-4)--(-4,-6);
\end{tikzpicture}
}
\caption{The $1$-Shi arrangement and the $1$-Shi polyhedron  for $\tilde{B}_2$.}\label{fig:Aff2}
\end{figure}


\begin{figure}
\centering
\subfigure[The $0$-Shi arrangement. The chambers indexed by the $0$-low elements are shaded. ]{
\resizebox{0.8\hsize}{!}{
\begin{tikzpicture}
	[scale=1,
	 q/.style={teal,line join=round},
	 racine/.style={blue},
	 racinesimple/.style={blue},
	 racinedih/.style={blue},
	 sommet/.style={inner sep=2pt,circle,draw=black,fill=blue,thick,anchor=base},
	 rotate=0]
 \tikzstyle{every node}=[font=\small]
\def\grosseursimple{0.025}
\node[anchor=center,inner sep=0pt] at (3.05,2.9) {\includegraphics[width=12cm]{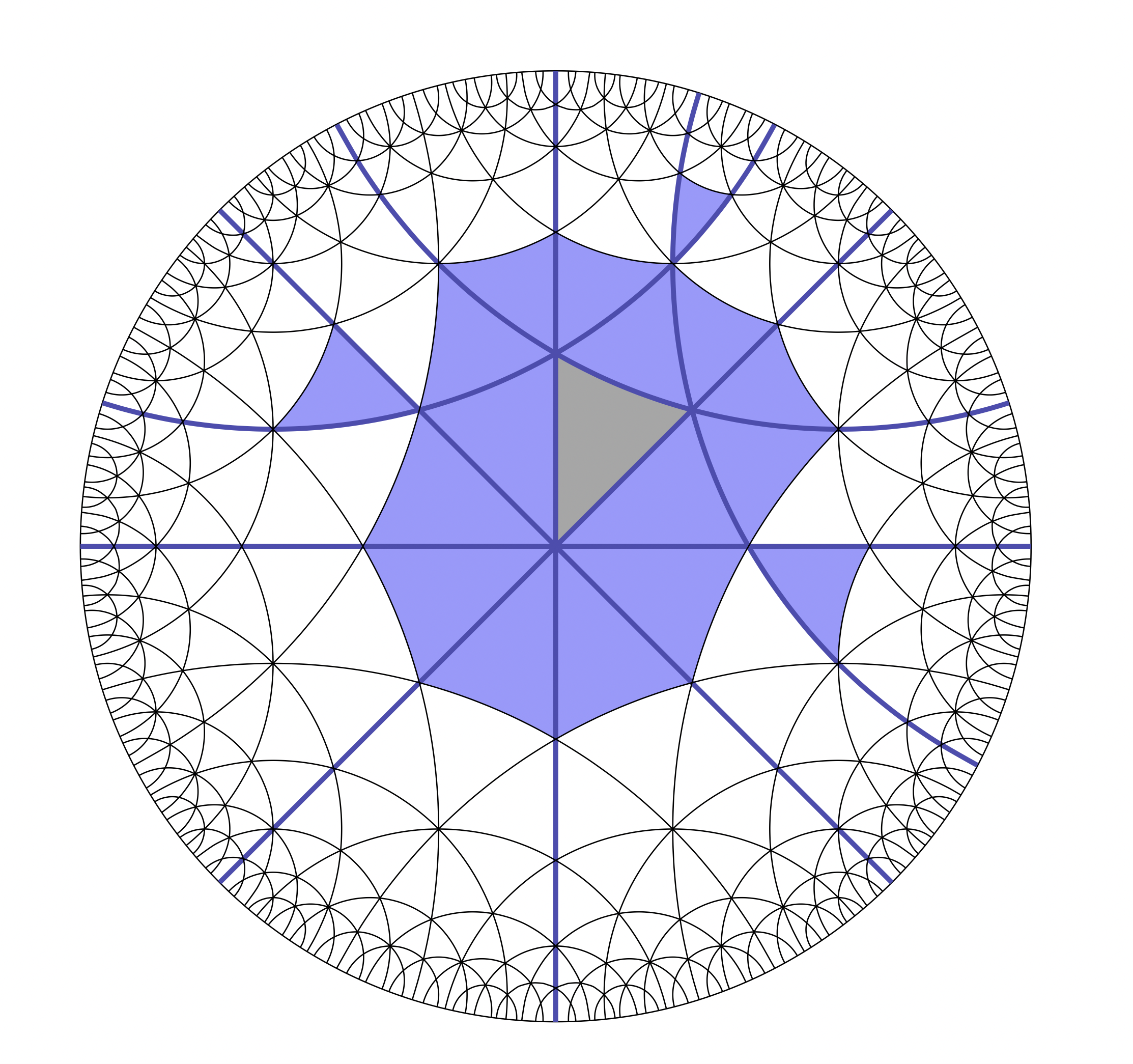}};
\coordinate (ancre) at (-3,6);

\node[sommet,label=above:$1$] (gamma) at ($(ancre)+(0.5,0.83)$) {};
\node[sommet,label=below right :$2$] (beta) at ($(ancre)+(1,0)$) {} edge[thick]  (gamma) ;
\node[sommet,label=below left:$3$] (alpha) at (ancre) {}
 edge[thick] node[auto,swap] {$4$} (beta)  edge[thick] (gamma)  ;

\node[scale=1] at (3.5,4) {$e$};
\node[scale=1]  at (3.8,5) {$1$};
\node[scale=1]  at (4.2,3.5) {$2$};
\node[scale=1] at (2.5,4) {$3$};
\node[scale=0.8] at (4.6,5) {$12$};
\node[scale=0.8] at (5.2,4.5) {$121$};
\node[scale=0.8] at (5,3.6) {$21$};
\node[scale=0.8] at (2.2,5) {$31$};
\node[scale=0.8] at (2.6,5.5) {$313$};
\node[scale=0.8] at (3.4,5.5) {$13$};
\node[scale=0.6,rotate=60] at (4.45,6.25) {$13232$};
\node[scale=0.8] at (2,3.3) {$32$};
\node[scale=0.8] at (2,2.5) {$323$};
\node[scale=0.8] at (2.5,1.75) {$3232$};
\node[scale=0.8] at (3.5,1.75) {$232$};
\node[scale=0.8] at (4,2.5) {$23$};
\node[scale=0.8] at (5.7,2.5) {$2131$};
\node[scale=0.8] at (0.8,4.5) {$3121$};
\node[scale=0.6,color=black!60]   at (2.25,0.75) {$32321$};
\node[scale=0.6,color=black!60] at (2.4,6.1) {$1312$};
\node[scale=0.6,color=black!60] at (5.7,4.8) {$1213$};


\end{tikzpicture}}}
\subfigure[The $0$-Shi polyhedron]{
\resizebox{0.8\hsize}{!}{
\begin{tikzpicture}
	[scale=1,
	 q/.style={teal,line join=round},
	 racine/.style={blue},
	 racinesimple/.style={blue},
	 racinedih/.style={blue},
	 sommet/.style={inner sep=2pt,circle,draw=black,fill=blue,thick,anchor=base},
	 rotate=0]
 \tikzstyle{every node}=[font=\small]
\def\grosseursimple{0.025}
\node[anchor=center,inner sep=0pt] at (2.9,3) {\includegraphics[width=12cm]{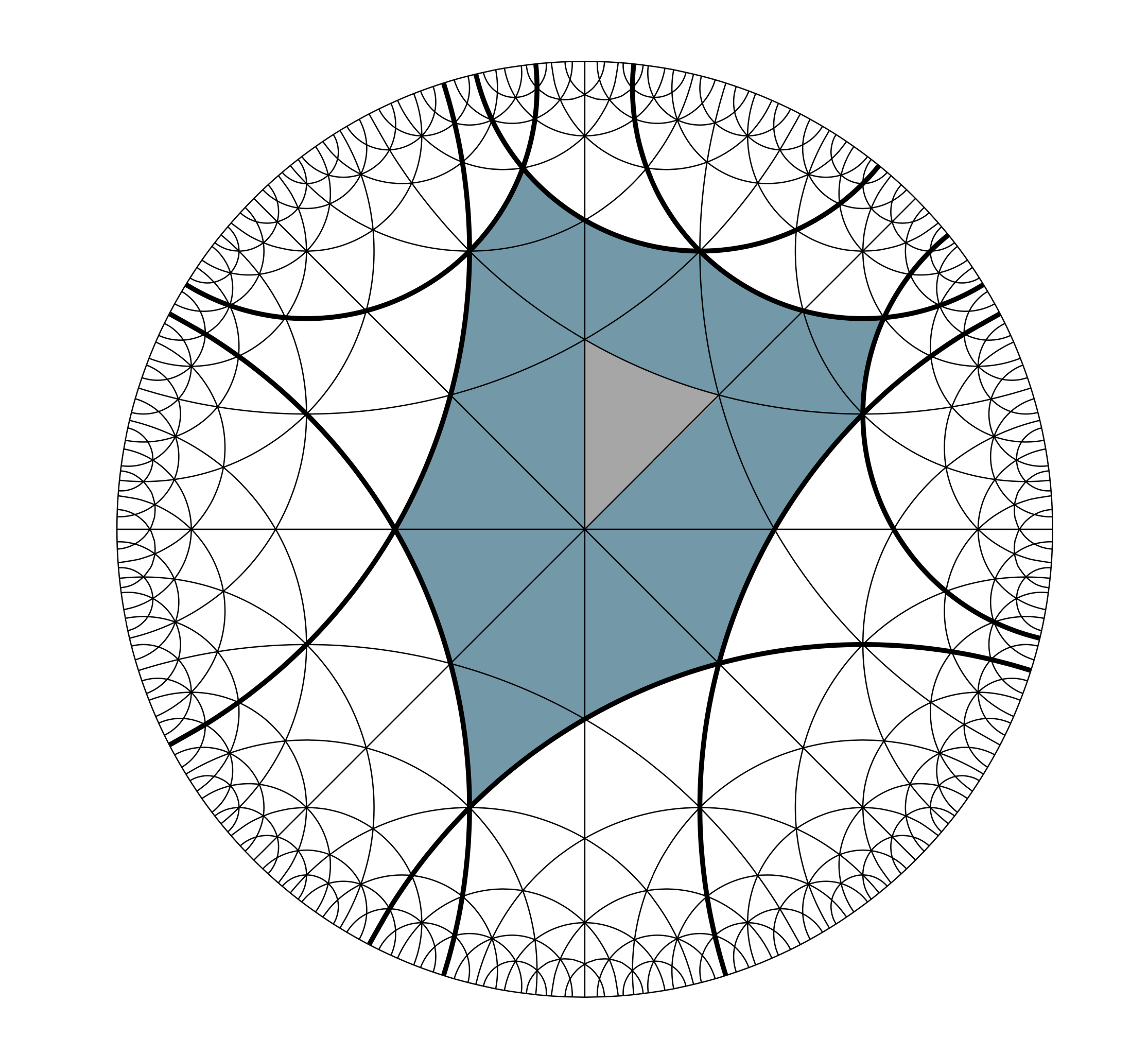}};

\node[scale=1] at (3.5,4) {$e$};
\node[scale=1]  at (3.8,5) {$1$};
\node[scale=1]  at (4.2,3.5) {$2$};
\node[scale=1] at (2.5,4) {$3$};
\node[scale=0.8] at (4.6,5) {$12$};
\node[scale=0.8] at (5.2,4.5) {$121$};
\node[scale=0.8] at (5,3.6) {$21$};
\node[scale=0.8] at (2.2,5) {$31$};
\node[scale=0.8] at (2.6,5.5) {$313$};
\node[scale=0.8] at (3.4,5.5) {$13$};
\node[scale=0.6,rotate=60,color=black!60] at (4.45,6.25) {$13232$};
\node[scale=0.8] at (2,3.3) {$32$};
\node[scale=0.8] at (2,2.5) {$323$};
\node[scale=0.8] at (2.5,1.75) {$3232$};
\node[scale=0.8] at (3.5,1.75) {$232$};
\node[scale=0.8] at (4,2.5) {$23$};
\node[scale=0.8,color=black!60] at (5.7,2.5) {$2131$};
\node[scale=0.8,color=black!60] at (0.8,4.5) {$3121$};
\node[scale=0.6]   at (2.25,0.75) {$32321$};
\node[scale=0.6] at (2.4,6.1) {$1312$};
\node[scale=0.6] at (5.7,4.8) {$1213$};


\end{tikzpicture}}}
\caption{The $0$-Shi arrangement and $0$-Shi polyhedron of the Coxeter system (a triangle group) with Coxeter graph given in the upper left.}
\label{fig:Hyp1}
\end{figure}

\medskip

Let us discuss a bit of history about the $m$-Shi arrangement.  In~\cite{Shi86,Shi88}, Shi introduced the  Shi arrangement $\shi(W,S)=\shi_0(W,S)$ in the case of irreducible affine Weyl groups to study Kazhdan-Lusztig cells for $W$.  Several surprising connections with Shi arrangements for affine Weyl groups have been studied, from {\em ad}-nilpotent ideals of Borel subalgebras~\cite{CePa02} to  Catalan arrangements~\cite{At04,ArRh11}; see also \cite{Fi19} and the references within. In~\cite{Shi88}, Shi  proves a conjecture by Carter on the number of sign-types of an affine Weyl group. In order to prove that conjecture, Shi enumerates the number of regions in $\shi_0(W,S)$, which turns out to amount to $(h+1\big)^n$, where $h$ is the Coxeter number of the underlying finite Weyl group $W_0$ of $(W,S)$ and $n$ is the rank of $W_0$.  In order to do so,  Shi proves that each region of the Shi arrangement contains a unique minimal element and that the union of the chambers corresponding to the inverses of those minimal elements is a convex subset of the Euclidean space. Theorems~\ref{thm:Main1}~and~\ref{thm:Main2} are a generalization of both results to arbitrary Coxeter systems.  Notice that in the case of affine Coxeter systems and for $m=0$, Theorem~\ref{thm:Main1} was proven by Chapelier-Laget and the second author in~\cite{ChHo22}, while for rank $3$ and $m=0$ it was proven by  Charles in~\cite{Chb20}. 

After the first version of this manuscript was published on the ArXiv, Przytycki and Yau informed us that they found, in the case $m=0$, a proof of Theorem~\ref{thm:Main1}(1) using the techniques Przytycki developed with Osajda to prove the biautomaticity of Coxeter groups~\cite{OsPr22}; they also prove in~\cite{PrYa23} that this set of minimal length elements for $\shi_0(W,S)$ form a Garside shadow (but not that it is equal to $L_0$).  

\smallskip
As far as we know, the {\em $m$-(extended) Shi arrangements} were defined for affine Coxeter systems by Athanasiadis~\cite[Theorem 4.3]{At00}. In that article, Athanasiadis proved for Coxeter systems of types $A$, $B$, $C$, $D$, $BC$ that the number of regions in $\shi_m(W,S)$ is  $\big((m+1)h+1\big)^n$ (the  {\em generalized Catalan numbers}). Extended Shi arrangements were further studied in~\cite{At04,At05,Yo04,Arm09}. In his thesis~\cite[Ch.~4]{Th15}, Thiel gives a direct proof by extending Shi's result to any $m$ in the case of an arbitrary affine Coxeter system; see also \cite[\S11]{Th16}. 

Theorem~\ref{thm:Main1} shows that Thiel's minimal elements for $\shi_m(W,S)$ are precisely the $m$-low elements. Moreover, our approach allow us to recover Thiel's results in \S\ref{se:Convex} as a direct consequence of the proof of Theorem~\ref{thm:Main2}.

\begin{thm}[Thiel~\cite{Th16}]\label{thm:Main3}  If $(W,S)$ is of affine type, then the union of the chambers $C_w$ for $w^{-1}\in L_m$ is a convex set. 
\end{thm}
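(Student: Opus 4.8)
The plan is to obtain Theorem~\ref{thm:Main3} by rerunning the proof of Theorem~\ref{thm:Main2} with the single place where the hypothesis $m=0$ enters replaced by the explicit description of the $m$-Shi arrangement available in affine type. Throughout I realize the affine Coxeter system in its Euclidean space $E$, so that $\mathcal{A}(W,S)$ is the family of affine hyperplanes $H_{\alpha,k}$ with $\alpha$ in an associated finite root system $\Phi_0^+$ and $k\in\mathbb Z$, the chamber $C_e$ is the fundamental alcove, and the $m$-Shi arrangement is the concrete band
\[
\shi_m(W,S)=\{\,H_{\alpha,k}\mid \alpha\in\Phi_0^+,\ -m\le k\le m+1\,\}.
\]

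\textbf{Step 1: reduce to a betweenness property.} A union of closed chambers $X\subseteq E$ is convex if and only if it is closed under betweenness of chambers: whenever $C_u,C_v\subseteq X$ and $C_x$ lies on a minimal gallery from $C_u$ to $C_v$, then $C_x\subseteq X$ — indeed a convex union of alcoves is an intersection of arrangement half-spaces, and conversely a straight segment between two alcoves traces a minimal gallery. In inversion-set terms, $C_x$ lies between $C_u$ and $C_v$ exactly when $\Phi(u)\cap\Phi(v)\subseteq\Phi(x)\subseteq\Phi(u)\cup\Phi(v)$. So it suffices to show: if $u^{-1},v^{-1}\in L_m$ and $C_x$ lies between $C_u$ and $C_v$, then $x^{-1}\in L_m$. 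Passing from the chambers $C_w$ (whose separating walls record the left inversions $\Phi(w)$) to the condition ``$w^{-1}\in L_m$'' (a condition on the right inversions $\Phi(w^{-1})$) costs some routine bookkeeping via $\Phi(w^{-1})=-w^{-1}\Phi(w)$, which I would settle first; the sandwich theorem is phrased for a single element and absorbs most of it.

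\textbf{Step 2: run the proof of Theorem~\ref{thm:Main2}.} That proof deduces the betweenness property from Theorem~\ref{thm:BasisInversion}: each short inversion $\beta\in\Phi^1(x^{-1})$ is $\precinv_{x^{-1}}$-sandwiched between a left-descent root and a right-descent root of $x^{-1}$, both of which lie in $\Phi^1(x^{-1})$; since $\Phi(x^{-1})$ is the $\mathbb R_{\ge0}$-span of $\Phi^1(x^{-1})$ (Dyer~\cite{Dy19}), it is enough to control these descent roots. Betweenness of $C_x$ forces the descent roots of $x^{-1}$ into $\Phi(u^{-1})\cup\Phi(v^{-1})$, and $m$-lowness of $u^{-1},v^{-1}$ expresses each of them as a nonnegative combination of $m$-small roots that are inversions of $u^{-1}$ or of $v^{-1}$. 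The argument closes up for general $m$ provided the $m$-small roots arising this way are again inversions of $x^{-1}$, i.e.\ provided $\Sigma_m$ and the cones $\cone(\Phi(w)\cap\Sigma_m)$ are ``convex enough.'' In affine type this is exactly what the band description of $\shi_m(W,S)$ above provides: for a fixed direction $\alpha$ the small hyperplanes $H_{\alpha,\bullet}$ are contiguous, so a nonnegative combination of $m$-small inversions cannot jump over a non-$m$-small hyperplane — and this is precisely the contiguity the proof of Theorem~\ref{thm:Main2} uses for $m=0$, where the range $0\le k\le 1$ makes it automatic. One then concludes $\Phi(x^{-1})\subseteq\cone(\Phi(x^{-1})\cap\Sigma_m)$, i.e.\ $x^{-1}\in L_m$, so $X_m:=\bigcup_{w^{-1}\in L_m}C_w$ is convex; tracing through the construction also pins $X_m$ down as an explicit polyhedron (a dilate of the fundamental alcove), recovering Thiel's description.

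\textbf{Main obstacle.} The delicate point is the compatibility flagged in Step~2: for $m=0$ the set $\Sigma$ of small roots automatically has the contiguity needed for the sandwich argument to close up in any Coxeter system, whereas for $m\ge 1$ the finite set $\Sigma_m$ can fail this outside affine type, and then $X_m$ need not be convex (which is why Theorem~\ref{thm:Main2} is stated only for $m=0$). So the real content is (i) establishing the band description $\shi_m(W,S)=\{H_{\alpha,k}\mid\alpha\in\Phi_0^+,\ -m\le k\le m+1\}$ in affine type — equivalently, the explicit list of $m$-small roots — and (ii) checking that this contiguity is exactly the property the proof of Theorem~\ref{thm:Main2} consumes. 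The reduction in Step~1 and the engine in Step~2, namely Theorem~\ref{thm:BasisInversion}, are already available and type-independent.
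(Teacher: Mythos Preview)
Your proposal has a genuine gap, and it also misdescribes what the paper's proof of Theorem~\ref{thm:Main2} actually does.

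\textbf{The gap.} In Step~2 you assert that ``betweenness of $C_x$ forces the descent roots of $x^{-1}$ into $\Phi(u^{-1})\cup\Phi(v^{-1})$.'' This is not justified and appears to be false. Betweenness of $C_x$ says $\Phi(u)\cap\Phi(v)\subseteq\Phi(x)\subseteq\Phi(u)\cup\Phi(v)$, a statement about left inversions. A right-descent root of $x^{-1}$ has the form $-x^{-1}(\alpha_s)$ with $s\in D_L(x)$; the map $\alpha_s\mapsto -x^{-1}(\alpha_s)$ depends on $x$, so there is no reason this root should lie in $\Phi(u^{-1})=-u^{-1}\Phi(u)$ or $\Phi(v^{-1})=-v^{-1}\Phi(v)$. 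The ``routine bookkeeping via $\Phi(w^{-1})=-w^{-1}\Phi(w)$'' you allude to is exactly where the argument breaks: three different elements $u,v,x$ give three different bijections, and betweenness on one side does not transfer to the other. Even granting your claim, being in $\Phi(u^{-1})$ with $u^{-1}\in L_m$ only says the root lies in $\cone_\Phi(\Sigma_m(u^{-1}))$, not that the root itself is $m$-small; your contiguity remark about bands does not close this, because the question is whether a single specific root $-x^{-1}(\alpha_s)$ has $\dep_\infty\le m$, not whether some cone avoids jumping a hyperplane.

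\textbf{How the paper actually proceeds.} The paper does not argue via betweenness at all. It defines the set $\mathcal B_m=\{g^{-1}(\alpha_s)\mid g\in L_m,\ s\in S,\ sg\notin L_m\}$ and the explicit convex polyhedron $\mathscr S_m=\bigcap_{\beta\in\mathcal B_m}H_\beta^+$, shows $\mathscr S_m\subseteq\bigcup_{w\in L_m}C_{w^{-1}}$ in general, and proves the reverse inclusion by showing: if $\Phi(w^{-1})\cap\mathcal B_m\neq\emptyset$ then $w\notin L_m$. For this last step one takes $\beta=u^{-1}(\alpha_s)\in\Phi(w^{-1})$ with $u\in L_m$, $su\notin L_m$, uses Proposition~\ref{cor:Main2} to find $r\in D_R(su)$ with $\dep_\infty(-su(\alpha_r))=m+1$, and then transports the chain of $m+1$ dominated roots by the element $wu^{-1}s$. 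The affine-specific input is not the band description of $\shi_m$ but Corollary~\ref{cor:Para}: in affine type the dominance order is a disjoint union of chains, so the $m+1$ roots dominated by $-su(\alpha_r)$ can be taken as an actual chain $\alpha_s\prec_\dom s(\beta_1)\prec_\dom\cdots\prec_\dom s(\beta_m)\prec_\dom -su(\alpha_r)$; after multiplication by $wu^{-1}s$ this remains a chain in dominance order with bottom $\alpha_t\in\Phi^+$, forcing all terms positive and hence $\dep_\infty(-w(\alpha_r))\ge m+1$, so $w\notin L_m$ by Theorem~\ref{thm:Main}. Theorem~\ref{thm:BasisInversion} enters only indirectly, through Theorem~\ref{thm:Main} and Proposition~\ref{cor:Main2}.
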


This result is not true for an indefinite Coxeter system, i.e., neither affine nor finite; for a counterexample see Figure~\ref{fig:Hyp2}. It could be interesting to classify all indefinite Coxeter systems for which  the union of the chambers $C_w$ for $w^{-1}\in L_m$ is a convex set for any $m\in\mathbb N$.
\smallskip

The $m$-Shi arrangement in affine Coxeter systems also appears in the work of Gunnells~\cite{Gu10} in relation to the study of Kazdan-Lusztig cells. It would be interesting to investigate the $m$-Shi arrangement in indefinite type in relation to Belolipetsky and Gunnells' subsequent work~\cite{Gu15}, and in doing so, to go back to the root of Shi's original motivation to introduce Shi arrangements.

\subsection*{Acknowledgements} The authors are grateful to Christian Stump for pointing out to us the reference \cite{Th15}, and many thanks to Vic Reiner and Nathan Williams for useful discussion on $m$-Shi arrangement in affine types. We would also like to thank the two anonymous referees for their valuable comments that have helped improve the final version of this article. 

The authors also warmly thanks James Parkinson for providing the TikZ files from \cite{PaYa21} that are used for the affine and hyperbolic Coxeter complex  in Figures~\ref{fig:Aff1},~\ref{fig:Aff2} and~\ref{fig:Hyp1}. This article would not have been illustrated as it is without his contribution. We also wish to thank Franco Saliola for helping us with the Python code for Figure~\ref{fig:Hyp2}, which is derived from Python code obtained on Wikipedia: {\tt https://en.wikipedia.org/wiki/Triangle\_group}.

\section{Coxeter combinatorics of words and roots}\label{se:Combin}

Fix a Coxeter system $(W,S)$ with length function $\ell:W\to\mathbb N$; the {\em rank of $(W,S)$} is the cardinality of $S$. We assume the reader is familiar with the basics of the theory of Coxeter groups; see for instance \cite{Hu90,BjBr05,AbBr08}. 

\subsection{Combinatorics of reduced words}

We say that a word $s_1\cdots s_k$ ($s_i\in S$) is a {\em reduced word for $w\in W$} if $w=s_1\cdots s_k$ and
$k = \ell(w)$. For $u,v,w\in W$, we adopt the following terminology:
\begin{itemize}
\item {\em $u$ is a prefix of $w$} if a reduced word for $u$ can be obtained as a prefix of a reduced word for $w$;
\item {\em $v$ is a suffix of $w$} if a reduced word for $u$ can be obtained as a suffix of a reduced word for $w$;
\item $w=uv$ is a {\em reduced product}  if $\ell(w)=\ell(u)+\ell(v)$.
\end{itemize}
Observe that for a product $w=uv$, with $w,u,v\in W$, we have the following equivalences: $w=uv$ is a reduced product if and only if $u$ is a prefix of $w$, if and only if $v$ is a suffix of $w$.  In this case, the concatenation of a reduced word for $u$ with a reduced word for $v$ gives a reduced word for $w$.  More generally, we say that:
\begin{itemize}
\item $w=u_1\cdots u_k$ is  a {\em reduced product} if $\ell(w)=\ell(u_1)+\cdots +\ell(u_k)$,~$u_i,w\in W$.
\end{itemize}
The empty product (the identity $e$) and the product of one element ($k=1$ above) are said to be reduced products. Obviously, a reduced product $w=u_1\cdots u_k$ is a reduced word if and only if $u_i\in S$ for all $1\leq i\leq k$. 

\subsection{Weak and Bruhat orders} This suffix/prefix terminology is best embodied by the {\em weak order}.   The {\em right weak order} is the poset $(W,\leq_R)$ defined by $u\leq_R w$ if $u$ is a prefix of~$w$. The right weak order gives a natural orientation of the right Cayley graph of $(W,S)$: for $w\in W$ and $s\in S$, we orient the edge $w\to ws$  if $w\leq_R ws$. The example of the infinite dihedral Coxeter group is given in Figure~\ref{fig:InfiniteDi}. 

Similarly, the {\em left weak order} is the poset $(W,\leq_L)$ defined by $v\leq_L w$ if $v$ is a suffix of~$w$, which induces a natural orientation of the left Cayley graph of $(W,S)$.

Finally, recall that the {\em Bruhat order} is the poset $(W,\leq)$ defined as follows: $u\leq w$ if and only if a word for $u$ can be obtained as a subword of a reduced word for $w$. The example of finite dihedral groups is given in Figure~\ref{fig:BruhatDi}.

\subsection{Root system and reflections} 
Recall that a quadratic space $(V,B)$ is a real vector space $V$, together with a symmetric bilinear form $B$. The group $O_B(V)$ is the group consisting of all invertible linear maps that preserve $B$. For any non-isotropic vector $\alpha\in V$, i.e., $B(\alpha,\alpha)\not = 0$, we associate the $B$-reflection $s_\alpha$ given by the formula 
$$
s_\alpha(v)=v-2\frac{B(\alpha,v)}{B(\alpha,\alpha)}\alpha,
$$ 
for all $v\in V$. A {\em geometric representation of $(W,S)$} is a faithful representation of $W$ as a subgroup of  $O_B(V)$, where $S$ is mapped into the set of $B$-reflections associated to a {\em simple system} $\Delta=\{\alpha_s\,|\, s\in S\}$ ($s=s_{\alpha_s}$). In a geometric representation of $(W,S)$, the $W$-orbit $\Phi=W(\Delta)$ is a {\em root system} with {\em positive roots} $\Phi^+=\cone_\Phi(\Delta)$ and {\em negative roots $\Phi^-=-\Phi^+$}, where $\cone(X)$ is the set of nonnegative linear combination of vectors in $X\subseteq V$ and $\cone_\Phi(X)=\cone(X)\cap \Phi$; see  \cite[\S1]{HoLaRi14} for more details.  For any $w\in W$ and $\beta\in \Phi$, we have $ws_\beta w^{-1}=s_{w(\beta)}$, which induces a bijection between $\Phi^+$ and the  {\em set of reflections} in $W$:
$$
T=\{s_\beta\mid \beta\in \Phi^+\}=\bigcup_{w\in W}wSw^{-1}.
$$

\subsection{Depth of positive roots}  The {\em depth statistic on $\Phi^+$} is the function $\dep:\Phi^+\to \mathbb N$ defined by:
$$
\dep(\beta)=\min\{\ell(g)\mid g(\beta)\in \Delta\}.
$$ 
Observe that $\dep(\alpha_s)=0$ for all $s\in S$. It is well-known, see for instance \cite[Lemma 4.6.2]{BjBr05}, that for $\beta\in \Phi^+$ and $s\in S$ we have:
\begin{equation}\label{eq:Depth}
\dep(s(\beta))=
\left\{
\begin{array}{ll}
\dep(\beta)-1&\textrm{if }B(\alpha_s,\beta)>0\\
\dep(\beta)&\textrm{if }B(\alpha_s,\beta)=0\\
\dep(\beta)+1&\textrm{if }B(\alpha_s,\beta)<0
\end{array}
\right.
\end{equation}

\begin{remark} (1) The depth statistic was originally defined in \cite{BrHo93}. In~\cite[\S4.6]{BjBr05}, the depth of simple roots $\Delta$ has value $1$, but it is more natural for the arguments given in this article to set the depth of a simple root to $0$ at $\Delta$; for instance in Proposition~\ref{prop:Palind0} below it allows us to consider $g\in W$ with $\ell(g)=\dep(\beta)$ (instead of depth minus $1$). 

\noindent (2) The depth of a positive root $\beta$ may be seen as a distance measuring how far $\beta$ is from $\Delta$ in the orbit $\Phi=W(\Delta)$. There are many  different depth statistics and they are not equivalent. In this article we also consider the $\infty$-depth. For more information on depth statistics, lengths and weak orders on root systems in general, see \cite[\S5.1]{DyHo16}
\end{remark}

\subsection{Inversion sets} The {\em  inversion set~$\Phi(w)$ of $w\in W$} is defined\footnote{The inversion set of $w\in W$ is sometimes denoted in the literature by $N(w)$ or inv$(w)$.} by:
$$
\Phi(w)=\Phi^+\cap w(\Phi^-)=\{\beta\in \Phi^{+}\, | \, \ell(s_{\beta}w)<\ell(w)\}. 
$$
Its cardinality is well-known to be $\ell(w)$.  Inversion sets allow a useful geometric interpretation of the right weak order; see  Figure~\ref{fig:InfiniteDi} for an illustration. 

\begin{prop}\label{prop:InvBasics} Let $w,u,v\in W$.
\begin{enumerate} 
\item If $w=s_1\cdots s_k$ is a reduced word for $w\in W$, then
$$
\Phi(w)=\{\alpha_{s_1},s_1(\alpha_{s_2}),\dots , s_1\cdots s_{k-1}(\alpha_{s_k})\}.
$$

\item If $w=uv$ is a reduced product, then $\Phi(w)=\Phi(u)\sqcup u(\Phi(v))$. 
\item The map $w\mapsto \Phi(w)$, from $(W,\leq_R)$ to ${{\{\Phi(w)\mid w\in W\}}}$ ordered by inclusion, is an order isomorphism.

\end{enumerate}
\end{prop}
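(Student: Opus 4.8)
The plan is to prove the three parts in order, bootstrapping everything from a single one-generator step. First I would record the identity $\Phi(ws)=\Phi(w)\sqcup\{w(\alpha_s)\}$, valid whenever $s\in S$ and $\ell(ws)>\ell(w)$. This drops out of the definition $\Phi(ws)=\Phi^+\cap ws(\Phi^-)$ once one uses the standard facts that $s(\Phi^-)=(\Phi^-\smallsetminus\{-\alpha_s\})\cup\{\alpha_s\}$ and that $\ell(ws)>\ell(w)$ is equivalent to $w(\alpha_s)\in\Phi^+$: then $ws(\Phi^-)=w(\Phi^-\smallsetminus\{-\alpha_s\})\sqcup\{w(\alpha_s)\}$, and intersecting with $\Phi^+$ recovers $\Phi(w)$ from the first piece (the removed vector $w(-\alpha_s)=-w(\alpha_s)$ is negative, hence discarded anyway) and the single new root $w(\alpha_s)$ from the second.

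Part (1) is then an induction on $k=\ell(w)$: the base case is $\Phi(e)=\varnothing$, and in the inductive step one writes $w=w's_k$ with $w'=s_1\cdots s_{k-1}$ a reduced word, applies the one-generator identity to get $\Phi(w)=\Phi(w')\sqcup\{w'(\alpha_{s_k})\}$, and substitutes the inductive description of $\Phi(w')$. Part (2) follows immediately: concatenating reduced words $s_1\cdots s_p$ for $u$ and $t_1\cdots t_q$ for $v$ yields a reduced word for $w=uv$, and splitting the list produced by part (1) at position $p$ displays $\Phi(w)$ as $\Phi(u)\cup u(\Phi(v))$; the union is disjoint by a cardinality count, since $|\Phi(w)|=\ell(w)=\ell(u)+\ell(v)=|\Phi(u)|+|u(\Phi(v))|$, using that $u$ acts bijectively on $\Phi$.

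For part (3), the implication $u\leq_R w\Rightarrow\Phi(u)\subseteq\Phi(w)$ is precisely part (2), since $u\leq_R w$ means $w=uv$ is a reduced product. The content is the converse. Assuming $\Phi(u)\subseteq\Phi(w)$, I would induct on $\ell(u)$, the case $\ell(u)=0$ being trivial. Otherwise choose a reduced word for $u$ starting with $s\in S$, so $u=su'$ is a reduced product and $\alpha_s\in\Phi(u)\subseteq\Phi(w)$; the latter forces $\ell(sw)<\ell(w)$, hence $w=sw'$ is a reduced product. Applying part (2) to $u=su'$ and to $w=sw'$ gives $\Phi(u)=\{\alpha_s\}\sqcup s(\Phi(u'))$ and $\Phi(w)=\{\alpha_s\}\sqcup s(\Phi(w'))$, so $\Phi(u)\subseteq\Phi(w)$ yields $\Phi(u')\subseteq\Phi(w')$; by induction $w'=u'v'$ is a reduced product, whence $w=su'v'=uv'$ with $\ell(w)=1+\ell(u')+\ell(v')=\ell(u)+\ell(v')$, i.e.\ $u\leq_R w$. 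Injectivity of $w\mapsto\Phi(w)$ is then automatic (from $\Phi(u)=\Phi(w)$ we get $\ell(u)=\ell(w)$ and $u\leq_R w$, so $u=w$), and the two-sided equivalence just proven is exactly the assertion that the map is an order isomorphism onto $\{\Phi(w)\mid w\in W\}$.

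I expect the one genuine obstacle to be the converse direction of part (3); everything else is bookkeeping with reduced words. The point of isolating the one-generator identity at the outset is that it lets the inductive peeling in part (3) be carried out simultaneously on $u$ and on $w$, which is what makes the argument go through cleanly.
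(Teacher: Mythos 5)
Your argument is correct. The paper does not prove this proposition itself — it is quoted as a classical fact with a citation to \cite[Proposition~2.1]{HoLa16} — and what you write is essentially the standard proof given there: the one-generator identity $\Phi(ws)=\Phi(w)\sqcup\{w(\alpha_s)\}$ for $\ell(ws)>\ell(w)$, induction along a reduced word for (1), concatenation plus a cardinality count for (2), and for the converse in (3) the simultaneous peeling of a common left descent $s$, where you correctly handle the only delicate point (that $\alpha_s\notin s(\Phi(u'))$ since $\Phi(u')\subseteq\Phi^+$), so the inclusion $\Phi(u')\subseteq\Phi(w')$ really does follow and the induction closes.
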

For a proof of this classical proposition see, for instance,~\cite[Proposition 2.1]{HoLa16}. 

\begin{figure}[h!]
\resizebox{.75\hsize}{!}{
\begin{tikzpicture}
	[scale=1,
	 pointille/.style={dashed},
	 axe/.style={color=black, very thick},
	 sommet/.style={inner sep=2pt,circle,draw=blue!75!black,fill=blue!40,thick,anchor=west}]
	 
\node[sommet]  (id)    [label=below:{\small{$e;\  \emptyset$}}]          at (0,0)    {};
\node[sommet]  (1)    [label=left:{\small{$\{\alpha_s\};\ s$}}]         at (-1,1)   {} edge[thick,<-] (id);
\node[sommet]  (2)    [label=right:{\small{$t;\ \{\alpha_t\}$}}]        at (1,1)    {} edge[thick,<-] (id);
\node[sommet]  (12)   [label=left:{\small{$\{\alpha_s, s(\alpha_t)\};\ st$}}]      at (-1,2)   {} edge[thick,<-] (1);
\node[sommet]  (21)   [label=right:{\small{$ts;\ \{\alpha_t, t(\alpha_s)\}$}}]     at (1,2)    {} edge[thick,<-] (2);
\node[sommet]  (121)  [label=left:{\small{$\{\alpha_s\}\sqcup s(\Phi(ts));\ sts$}}]    at (-1,3)    {} edge[thick,<-] (12);
\node[sommet]  (212)  [label=right:{\small{$tst;\ \{\alpha_t\}\sqcup t(\Phi(st))$}}]    at (1,3)    {} edge[thick,<-] (21);

\draw[pointille] (121) -- +(0,1);
\draw[pointille] (212) -- +(0,1);
\end{tikzpicture}}
\caption{\cite[Figure~1]{DyHo16} The right weak order on  the infinite dihedral group $\mathcal D_\infty$: the vertices are labelled by $w\in \mathcal D_\infty$  and its corresponding inversion set $\Phi(w)$.}
\label{fig:InfiniteDi}
\end{figure}
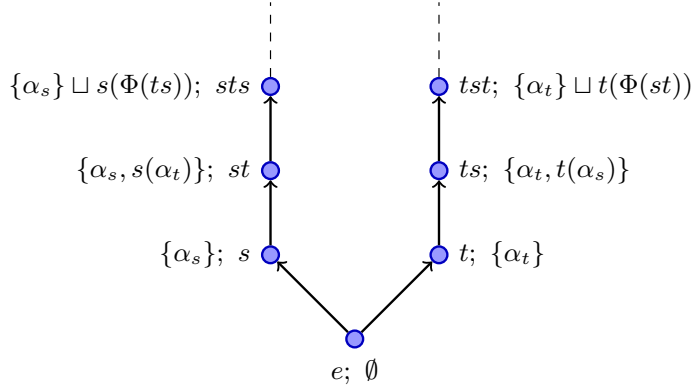
%

\subsection{Reflections and the depth of positive roots} We recall two results that provide palindromic reduced words for a reflection in $T$ in relation with the depth of the associated positive root. The following first proposition goes back to Springer~\cite[Proposition~1]{Sp82}, see also~\cite[Lemma 2.7]{Dy90}; we provide a proof for convenience.

\begin{prop}\label{prop:Palind-00} Let $t\in T$ and chose a reduced word $t=s_1\cdots s_m$. Then there is an integer $k$ such that $\ell(t)=m=2k+1$. Furthermore,  $t=s_1\cdots s_{k}s_{k+1}s_k\cdots s_1$ is a (palindromic) reduced word for $t$.
\end{prop}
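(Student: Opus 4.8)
The plan is to argue by induction on the length $m=\ell(t)$ of the reflection $t\in T$. Write $t=s_{\beta}$ with $\beta=\alpha_t\in\Phi^+$ the associated positive root, and proceed by induction on $\dep(\beta)$, which for a reflection coincides with $\tfrac{1}{2}(\ell(t)-1)$ once the statement is established; but \emph{a priori} we only know $\dep(\beta)$ is a nonnegative integer, so I would run the induction on $d:=\dep(\beta)$. The base case $d=0$ means $\beta\in\Delta$, so $\beta=\alpha_s$ for some $s\in S$, whence $t=s$ has reduced word of length $m=1=2\cdot 0+1$, which is trivially palindromic.

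For the inductive step, suppose $\dep(\beta)=d>0$. Choose any reduced word $t=s_1\cdots s_m$; I first claim $s_1 t s_1\ne t$, equivalently $s_1(\beta)\ne\beta$, hence $B(\alpha_{s_1},\beta)\ne 0$. Indeed $\beta\in\Phi(t)$ and since $s_1$ is the first letter of a reduced word for $t$ we have $\alpha_{s_1}\in\Phi(t)$; if $s_1(\beta)=\beta$ then $s_1$ and $s_\beta$ would commute, and one checks (using $\ell(s_\beta)=\ell(s_1 s_\beta s_1)$ with $\alpha_{s_1}\perp\beta$, so the product $s_1 s_\beta s_1$ has length $\le m$ yet equals the length-$m$ element $s_\beta$ with $s_1\in\Phi$-inversion) that $s_1 t=t s_1$ is \emph{not} a reduced word situation consistent with $s_1$ being an initial letter — more cleanly, $s_1 t = t s_1$ would force $\ell(s_1 t)=\ell(t s_1)$, and since $s_1 t < t$ (as $s_1\in D_L(t)$) we'd also get $t s_1 < t$, so $\beta=s_1(\beta)\notin\Phi(s_1)$... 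I would instead just invoke that a reflection $t$ with $s_1 t=ts_1$ and $s_1<t$ would make $s_1 t$ a shorter reflection-or-not; the cleanest route is: $s_1 t s_1$ is conjugate to $t$ hence a reflection, equal to $t$ iff $s_1(\beta)=\pm\beta$, and $s_1(\beta)=-\beta$ is impossible since $\beta\notin\Delta$. So $s_1(\beta)=\beta$; but then $t=s_1(s_1 t)$ with $\ell(s_1 t)=m-1$, and $s_1(s_1 t)s_1 = s_1 t s_1\cdot$... this is getting circular, so here is the honest version: by \eqref{eq:Depth}, $\dep(s_1(\beta))\in\{d-1,d,d+1\}$; if it equals $d$ then $B(\alpha_{s_1},\beta)=0$, and I will rule this out because $\alpha_{s_1}\in\Phi(t)$ forces $B(\alpha_{s_1},\beta)\ne 0$ for a reflection (otherwise $s_1 t s_1=t$, contradicting that $s_1 t$ and $t s_1$ both have length $<\ell(t)$, which is incompatible). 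I expect this lemma — that an initial letter of a reduced word for a reflection is never orthogonal to the root — to be the main obstacle, and it is exactly where Springer's/Dyer's arguments put their weight; I would lift it verbatim from \cite{Sp82,Dy90} if a short self-contained proof proves elusive.

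Granting that $B(\alpha_{s_1},\beta)>0$ (it cannot be $<0$, since $\alpha_{s_1}\in\Phi(t)$ means $\ell(s_1 t)<\ell(t)$), equation~\eqref{eq:Depth} gives $\dep(s_1(\beta))=d-1$. Now $s_1 t s_1=s_{s_1(\beta)}$ is a reflection with associated root of depth $d-1$, so by the inductive hypothesis $\ell(s_1 t s_1)=2(d-1)+1=2d-1$ and it admits a palindromic reduced word $s_1 t s_1 = r_1\cdots r_{d-1} r_d r_{d-1}\cdots r_1$. Conjugating back, $t = s_1\, r_1\cdots r_{d-1}\, r_d\, r_{d-1}\cdots r_1\, s_1$, which is a palindromic word of length $2d+1$; since $\ell(t)\le 2d+1$ and $\ell(t)\ge \ell(s_1 t s_1)-2 = 2d-1$, and parity considerations (every reflection has odd length, or: $\ell(t)$ and $2d+1$ differ by an even number because $t$ and $s_1 t s_1$ are conjugate via a single generator) force $\ell(t)=2d+1$, this word is reduced. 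Finally set $k=d$; then $m=\ell(t)=2k+1$ and relabelling $(r_1,\dots,r_{d-1},r_d)$ together with the outer $s_1$ as $(s_1,\dots,s_k,s_{k+1})$ exhibits the claimed palindromic reduced word $t=s_1\cdots s_k s_{k+1} s_k\cdots s_1$. The only genuinely delicate point is the non-orthogonality lemma above; everything else is bookkeeping with \eqref{eq:Depth} and a length/parity count.
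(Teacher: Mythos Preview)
Your argument has a genuine gap: it does not prove what the proposition actually asserts. The claim is that for the \emph{given} reduced word $t=s_1\cdots s_m$, the specific palindrome $s_1\cdots s_k s_{k+1} s_k\cdots s_1$ built from its first $k+1$ letters is a reduced word for $t$. Your induction only manufactures \emph{some} palindromic reduced word $s_1 r_1\cdots r_{d-1} r_d r_{d-1}\cdots r_1 s_1$, where the $r_i$ come from the inductive hypothesis applied to $s_1 t s_1$ and bear no relation to $s_2,\ldots,s_{k+1}$. The final ``relabelling'' step is not legitimate: you cannot simply rename the $r_i$ as $s_2,\ldots,s_{k+1}$. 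To make an inductive approach work you would need to feed the induction a specific reduced word for $s_1 t s_1$ whose first half is $s_2,\ldots,s_{k+1}$, but $s_1 t s_1 = s_2\cdots s_m s_1$ is a word of length $m$, not $m-2$, and there is no reason for $s_2\cdots s_{m-1}$ to equal $s_1 t s_1$ (that would force $s_m=s_1$).

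The paper's proof avoids this entirely. After the easy parity argument giving $m=2k+1$, it sets $x=s_1\cdots s_{k+1}$ and computes $\ell(tx)=\ell(s_m\cdots s_{k+2})\le k<\ell(x)$, so $\beta\in\Phi(x)$. By Proposition~\ref{prop:InvBasics}(1), $\beta=s_1\cdots s_{i-1}(\alpha_{s_i})$ for some $i\le k+1$, whence $t=s_1\cdots s_{i-1}s_i s_{i-1}\cdots s_1$; comparing lengths forces $i=k+1$. This directly ties the palindrome to the chosen reduced word, which is the whole point.

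As an aside, the ``non-orthogonality lemma'' you worry about is immediate and needs no appeal to \cite{Sp82,Dy90}: from $\alpha_{s_1}\in\Phi(t)$ we have $s_\beta(\alpha_{s_1})=\alpha_{s_1}-2B(\alpha_{s_1},\beta)\beta\in\Phi^-$; if $B(\alpha_{s_1},\beta)\le 0$ this vector would lie in $\cone(\Delta)$, a contradiction. But fixing this does not repair the main gap above.
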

\begin{proof} Let $\beta\in \Phi^+$ such that $t=s_\beta$. Since $\beta =w(\alpha_s)$ for some $w\in W$ and $s\in S$, we have $t=s_\beta=wsw^{-1}$. Therefore $\epsilon(t)=-1$, where $\epsilon:W\to \{\pm 1\}$ is the signature of $(W,S)$. Therefore $\ell(t)$ is odd since $\epsilon(t)=(-1)^{\ell(t)}$. Then there is $0\leq k<m$ such that $m=\ell(t)=2k+1$. 

First, set $x=s_1\cdots s_{k+1} \leq_R t$. Since the word $s_1\cdots s_{k+1}$  is reduced  (as a prefix of the given reduced word for $t$), we have:
\begin{eqnarray*}
\ell(tx)=\ell(t^{-1}x)&=&\ell(s_m\cdots s_1 s_1\cdots s_{k+1})=\ell(s_m\cdots s_{k+2})\\
&\leq &m-1-k = 2k+1-1-k=k \\
&<&k+1=\ell(x).
\end{eqnarray*}
Therefore $\beta\in \Phi(x)$. Now,  by Proposition~\ref{prop:InvBasics}~(1), there is $1\leq i\leq k+1$ such that $\beta = s_1 \cdots s_{i-1} (\alpha_{s_i})$. Therefore 
$
t=s_\beta = s_1 \cdots s_{i-1}s_is_{i-1}\cdots s_1,
$
implying $2k+1=\ell(t)\leq 2(i-1)+1\leq 2k+1$. So $i=k+1$ and $t=s_\beta = s_1 \cdots s_{k}s_{k+1}s_{k}\cdots s_1$ is a reduced word for $t=s_\beta$.
\end{proof}

The next consequence relates the length of a reflection with the depth of its associated positive root. 

\begin{prop}\label{prop:Palind0} Let $\beta\in \Phi^+$ and $g\in W$ such that $g(\beta)=\alpha_r\in \Delta$ with $r\in S$ and $\ell(g)=\dep(\beta)$.  Then $\ell(s_\beta)=2\dep(\beta)+1$ and the product $s_\beta=g^{-1}rg$ is reduced. 
\end{prop}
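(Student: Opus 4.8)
The plan is to pin down $\ell(s_\beta)$ by sandwiching it between $2\dep(\beta)+1$ from above and from below, the lower bound coming from the palindromic normal form of Proposition~\ref{prop:Palind-00}. First I would record the purely algebraic identity $s_\beta=g^{-1}rg$: since conjugation satisfies $ws_\gamma w^{-1}=s_{w(\gamma)}$, applying this with $w=g$ and $\gamma=\beta$ gives $gs_\beta g^{-1}=s_{g(\beta)}=s_{\alpha_r}=r$, hence $s_\beta=g^{-1}rg$. The subadditivity of the length function then immediately yields the upper bound
$$
\ell(s_\beta)\le \ell(g^{-1})+\ell(r)+\ell(g)=2\dep(\beta)+1,
$$
using $\ell(g)=\ell(g^{-1})=\dep(\beta)$.

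For the matching lower bound I would invoke Proposition~\ref{prop:Palind-00} to write $\ell(s_\beta)=2k+1$ together with a palindromic reduced word $s_\beta=s_1\cdots s_k\,s_{k+1}\,s_k\cdots s_1$. Setting $h:=s_1\cdots s_k$ (a reduced word, since it is a prefix of the one above), this reads $s_\beta=hs_{k+1}h^{-1}=s_{h(\alpha_{s_{k+1}})}$. Because $s_1\cdots s_{k+1}$ is also reduced (again a prefix), Proposition~\ref{prop:InvBasics}(1) shows $h(\alpha_{s_{k+1}})\in\Phi(s_1\cdots s_{k+1})\subseteq\Phi^+$; and since $\gamma\mapsto s_\gamma$ is a bijection from $\Phi^+$ onto $T$, the equality $s_\beta=s_{h(\alpha_{s_{k+1}})}$ with both $\beta$ and $h(\alpha_{s_{k+1}})$ positive forces $\beta=h(\alpha_{s_{k+1}})$. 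Hence $h^{-1}(\beta)=\alpha_{s_{k+1}}\in\Delta$ with $\ell(h^{-1})=k$, so by the definition of depth $\dep(\beta)\le k$.

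Combining the two estimates gives $2k+1=\ell(s_\beta)\le 2\dep(\beta)+1\le 2k+1$, so every inequality is an equality: $\dep(\beta)=k$ and $\ell(s_\beta)=2\dep(\beta)+1$. Finally, since $\ell(g^{-1}rg)=\ell(s_\beta)=2\dep(\beta)+1=\ell(g^{-1})+\ell(r)+\ell(g)$, the factorization $s_\beta=g^{-1}rg$ is a reduced product, which is the second assertion.

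I expect the only genuinely delicate step to be the passage from the palindromic word to the bound $\dep(\beta)\le k$: one must argue that the ``inner half'' $h$ of the palindrome carries $\beta$ onto a \emph{simple} root, and this hinges on the positivity of $h(\alpha_{s_{k+1}})$ (read off from Proposition~\ref{prop:InvBasics}(1)) together with the injectivity of $\gamma\mapsto s_\gamma$ on $\Phi^+$. Everything else is the triangle inequality for $\ell$ and bookkeeping with Proposition~\ref{prop:Palind-00}. (An alternative route by induction on $\dep(\beta)$, peeling off one letter of a reduced word for $g$ and tracking the sign of $B(\alpha_s,\beta)$ via \eqref{eq:Depth}, also works, but the palindrome argument is shorter and avoids having to verify that intermediate roots stay positive.)
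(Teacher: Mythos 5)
Your proposal is correct and follows essentially the same argument as the paper: both extract the midpoint of the palindromic reduced word from Proposition~\ref{prop:Palind-00} to get $\beta=s_1\cdots s_k(\alpha_{s_{k+1}})$ and hence $\dep(\beta)\le k$, then use the conjugation identity $s_\beta=g^{-1}rg$ to sandwich $2k+1=\ell(s_\beta)\le 2\dep(\beta)+1\le 2k+1$ and conclude both claims. The only difference is cosmetic: you make explicit the positivity of $h(\alpha_{s_{k+1}})$ and the injectivity of $\gamma\mapsto s_\gamma$ on $\Phi^+$, which the paper uses implicitly.
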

\begin{proof} On  the one hand, choose a reduced word $s_\beta = s_1\cdots s_k s_{k+1} s_k\cdots s_1$,  by Proposition~\ref{prop:Palind-00}. Therefore the prefix   $s_1\cdots s_k s_{k+1}$ is also a reduced word, hence $s_1\cdots s_k(\alpha_{s_{k+1}})\in \Phi^+$.  Since $s_\beta = s_{s_1\cdots s_k(\alpha_{s_{k+1}})}$, we have $\beta = s_1\cdots s_k(\alpha_{s_{k+1}})$ and $\dep(\beta)\leq k$. On the other hand, by assumption, we have $s_{\beta}=s_{g^{-1}(\alpha_r)}=g^{-1}rg$. So:
$$
2k+1=\ell(s_\beta)\leq 2\ell(g)+1=2\dep(\beta)+1\leq 2k+1
$$ 
Therefore we have $\ell(s_\beta)=2\dep(\beta)+1$ and the product $s_\beta=g^{-1}rg$ is reduced (and $\ell(g)=\dep(\beta)=k$).
\end{proof}

\begin{cor}\label{cor:Palind} Let $\beta\in \Phi^+$ and $g\in W$ such that $g(\beta)=\alpha_r\in \Delta$ with $r\in S$ and $\ell(g)=\dep(\beta)$.  If $b\in W$ and $s\in S$ are  such that $b<_L sb\leq_Lg$, then $\alpha_s \in \Phi(s_{b(\beta)})$ and $\ell(ss_{b(\beta)}s)=\ell(s_{b(\beta)})-2$. 
\end{cor}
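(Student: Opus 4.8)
The plan is to reduce both assertions to length computations for the reflections $s_{b(\beta)}$ and $s_{sb(\beta)}$ via Proposition~\ref{prop:Palind0}, and to link the two computations by the conjugation identity $w s_\alpha w^{-1}=s_{w(\alpha)}$. Taking $w=s$ gives $s\,s_{b(\beta)}\,s=s_{s(b(\beta))}=s_{sb(\beta)}$, so the second assertion is exactly $\ell(s_{sb(\beta)})=\ell(s_{b(\beta)})-2$, and (rewriting it as $s\,s_{b(\beta)}=s_{sb(\beta)}\,s$) the first assertion $\alpha_s\in\Phi(s_{b(\beta)})$ will follow from a length inequality once these lengths are known.

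First I would use the hypothesis $b<_L sb\le_L g$ to factor $g$. Since $sb$ is a suffix of $g$, write $g=h\cdot(sb)$ as a reduced product; as $\ell(sb)=\ell(b)+1$, comparing $\ell(g)=\ell(h)+\ell(b)+1$ with $\ell(g)=\ell(hsb)\le\ell(hs)+\ell(b)\le\ell(h)+1+\ell(b)$ forces $\ell(hs)=\ell(h)+1$, so $g=(hs)\cdot b$ is also a reduced product. I would then record that $b(\beta)$ and $sb(\beta)$ are positive roots: from $g(\beta)=\alpha_r$ we get $g^{-1}(\alpha_r)=\beta\in\Phi^+$, hence $\alpha_r\notin\Phi(g)$; since $\Phi(h)\subseteq\Phi(g)$ and $\Phi(hs)\subseteq\Phi(g)$ by Proposition~\ref{prop:InvBasics}(2), also $\alpha_r\notin\Phi(h)$ and $\alpha_r\notin\Phi(hs)$, which says precisely that $sb(\beta)=h^{-1}(\alpha_r)\in\Phi^+$ and $b(\beta)=(hs)^{-1}(\alpha_r)\in\Phi^+$.

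The heart of the argument is the observation that $hs$ is a \emph{depth-minimal} witness for $b(\beta)$ and $h$ one for $sb(\beta)$. Indeed $(hs)(b(\beta))=\alpha_r\in\Delta$ gives $\dep(b(\beta))\le\ell(hs)=\dep(\beta)-\ell(b)$; conversely, if $g'$ satisfied $g'(b(\beta))\in\Delta$ with $\ell(g')<\dep(\beta)-\ell(b)$, then $g'b$ would be an element of length $<\dep(\beta)$ with $(g'b)(\beta)\in\Delta$, contradicting the definition of $\dep(\beta)$; hence $\dep(b(\beta))=\dep(\beta)-\ell(b)$ and $\ell(hs)=\dep(b(\beta))$. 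The same argument with $b$ replaced by $sb$ gives $\dep(sb(\beta))=\dep(\beta)-\ell(b)-1=\dep(b(\beta))-1$ and $\ell(h)=\dep(sb(\beta))$. In particular $\dep(b(\beta))\ge 1$, and if one wishes, the depth recurrence~\eqref{eq:Depth} now reads $B(\alpha_s,b(\beta))>0$.

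Now Proposition~\ref{prop:Palind0}, applied with the data $(b(\beta),hs,r)$ and $(sb(\beta),h,r)$, gives $\ell(s_{b(\beta)})=2\dep(b(\beta))+1$ and $\ell(s_{sb(\beta)})=2\dep(sb(\beta))+1=\ell(s_{b(\beta)})-2$. Combined with $s\,s_{b(\beta)}\,s=s_{sb(\beta)}$ this is the second assertion. For the first, $s\,s_{b(\beta)}=s_{sb(\beta)}\,s$ yields $\ell(s_{\alpha_s}s_{b(\beta)})=\ell(s\,s_{b(\beta)})\le\ell(s_{sb(\beta)})+1=\ell(s_{b(\beta)})-1<\ell(s_{b(\beta)})$, so $\alpha_s\in\Phi(s_{b(\beta)})$ by the description $\Phi(w)=\{\delta\in\Phi^+\mid\ell(s_\delta w)<\ell(w)\}$ of the inversion set. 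The only step that needs real care is the middle one: verifying that the minimality defining $\dep(\beta)$ descends to the left factors $hs$ and $h$ of $g$ (so that the depths add up exactly), and that $\alpha_r$ stays out of $\Phi(h)$ and $\Phi(hs)$ so that $b(\beta)$ and $sb(\beta)$ are genuinely positive roots; everything else is the conjugation identity together with Proposition~\ref{prop:Palind0}.
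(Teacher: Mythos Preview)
Your proof is correct, but it takes a different route from the paper's. The paper writes $g=asb$ as a reduced product (your $h$ is its $a$) and observes that, by Proposition~\ref{prop:Palind0}, $s_\beta=g^{-1}rg=b^{-1}\cdot s\cdot a^{-1}\cdot r\cdot a\cdot s\cdot b$ is a reduced product; hence the subproduct $s_{b(\beta)}=s\cdot a^{-1}\cdot r\cdot a\cdot s$ is itself reduced, and both conclusions are read off directly from this explicit reduced expression beginning and ending in $s$. That is one application of Proposition~\ref{prop:Palind0} and a subword extraction. Your argument instead computes the depths $\dep(b(\beta))=\dep(\beta)-\ell(b)$ and $\dep(sb(\beta))=\dep(b(\beta))-1$ via the suffix factorisations, applies Proposition~\ref{prop:Palind0} twice to obtain the two reflection lengths, and then uses the conjugation identity $s\,s_{b(\beta)}\,s=s_{sb(\beta)}$. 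This is a little longer, but it has the merit of making the depth drop $\dep(sb(\beta))=\dep(b(\beta))-1$ explicit, which is exactly the phenomenon exploited later in Propositions~\ref{prop:Inter} and~\ref{prop:Second}.
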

\begin{proof} Since $sb\leq_L g$ there is a prefix $a\in W$ of $g$ such that $g=asb$ is a reduced product. Since $g(\beta)=\alpha_r$ we have $b(\beta)=sa^{-1}(\alpha_r)$. Therefore
$$
s_{b(\beta)}= sa^{-1}ras.
$$
Since $b^{-1}sa^{-1}rasb=g^{-1}rg$ and $ g^{-1}rg$ is a reduced product by Proposition~\ref{prop:Palind0}, the product $sa^{-1}ras$ is also reduced. It follows immediately that $\alpha_s \in \Phi(s_{b(\beta)})$ and $\ell(ss_{b(\beta)}s)=\ell(s_{b(\beta)})-2$.
\end{proof}

\subsection{Reflections and the Bruhat order}  Recall that the Bruhat order $(W,\leq)$
arises also as the reflexive, transitive closure of the relation $\to$ on $W$ 
defined by $x\to y$ if there is $t \in T$ such that $y=t x$ with $\ell(x)<\ell(y)$. We say that a pair $x< y$ is a {\em covering in the Bruhat order}, which we denote by $x\lhd y$,  if for any $z\in W$ such that $x\leq z\leq y$ then we have $z=x$ or $z=y$. The chain property of Bruhat order implies that $x\lhd y$  is a {\em covering}  if $x<y$ and  $\ell(y)=\ell(x)+1$, i.e., if there is $t \in T$ such that $y=t x$ and  $\ell(y)=\ell(x)+1$. It is known that $u\leq_R v$ or $u\leq_L v$ implies that $u\leq v$.   We refer the reader to \cite[Chapter~2]{BjBr05} for more details.

\begin{ex}\label{ex:BruhatDi}  Let $W$ be a dihedral group $\mathcal D_m$ ($m\in\mathbb N_{\geq 2}\cup\{\infty\})$ generated by $S=\{s,t\}$ with Coxeter graph:
\begin{center}
\begin{tikzpicture}[sommet/.style={inner sep=2pt,circle,draw=blue!75!black,fill=blue!40,thick}]
	\node[sommet,label=above:$s$] (alpha) at (0,0) {};
	\node[sommet,label=above:$t$] (beta) at (1,0) {} edge[thick] node[auto,swap] {$m$} (alpha);
\end{tikzpicture}
\end{center}
Denote by $[s,t]_k$ the reduced word $st\cdots$ with $k$ letters; this word ends with a $s$ if $k$ is odd and with a $t$ if $k$ is even. Denote for $k\in\mathbb N$:
$$
\alpha_{s,k} =\left\{\begin{array}{ll}%
 [s,t]_k (\alpha_s)&\textrm{if $k$ is even}.\\%
 {}[s,t]_k (\alpha_t)&\textrm{if $k$ is odd}.%
\end{array}
\right.
$$ 
In particular $\alpha_{s,0}=\alpha_s$. By symmetry we can define $\alpha_{t,k}$ using the words $[t,s]_k$. We illustrate in Figure~\ref{fig:BruhatDi} the Hasse diagrams for the Bruhat order on dihedral groups with the edges representing the covering $x\lhd y$ labelled by the positive root $\beta$ such that $y=s_\beta x$.
\begin{figure}[h!]
\resizebox{.6\hsize}{!}{
\begin{tikzpicture}
	[scale=1,
	 pointille/.style={dashed},
	 axe/.style={color=black, very thick},
	 sommet/.style={inner sep=2pt,circle,draw=blue!75!black,fill=blue!40,thick,anchor=west}]
	 
\node[sommet]  (id)    [label=below:{\small{$e$}}]          at (0,0)    {};
\node[sommet]  (1)    [label=left:{\small{$s$}}]         at (-1,1)   {} edge[thick, red] (id) ;
\node[sommet]  (2)    [label=right:{\small{$t$}}]        at (1,1)    {} edge[thick,blue] (id);
\node[sommet]  (12)   [label=left:{\small{$st$}}]      at (-1,2)   {} edge[thick] (1) {} edge[thick,red] (2);
\node[sommet]  (21)   [label=right:{\small{$ts$}}]     at (1,2)    {} edge[thick] (2) {} edge[thick,blue] (1);
\node[sommet]  (121)  [label=left:{\small{$sts$}}]    at (-1,3)    {} edge[thick] (12) {} edge[thick,red] (21);
\node[sommet]  (212)  [label=right:{\small{$tst$}}]    at (1,3)    {} edge[thick] (21){} edge[thick,blue] (12) ;

\draw[pointille] (121) -- +(0,0.6);
\draw[pointille] (212) -- +(0,0.6);
\draw[pointille,blue] (121) -- +(1.2,0.6);
\draw[pointille,red] (212) -- +(-1.2,0.6);

\node[sommet]  (12k)  [label=left:{\small{$[s,t]_{k-1}$}}]    at (-1,5)    {};
\node[sommet]  (21k)  [label=right:{\small{$[t,s]_{k-1}$}}]    at (1,5)    {};
\draw[pointille] (12k) -- +(0,-0.6);
\draw[pointille] (21k) -- +(0,-0.6);
\draw[pointille,red] (12k) -- +(1.2,-0.6);
\draw[pointille,blue] (21k) -- +(-1.2,-0.6);
\node[sommet]  (12kk)  [label=left:{\small{$[s,t]_{k}$}}]    at (-1,6)    {} edge[thick] (12k) {} edge[thick,red] (21k);
\node[sommet]  (21kk)  [label=right:{\small{$[t,s]_{k}$}}]    at (1,6)    {} edge[thick,blue] (12k) {} edge[thick] (21k);
\node[sommet]  (12kkk)  [label=left:{\small{$[s,t]_{k+1}$}}]    at (-1,7)    {} edge[thick] (12kk) {} edge[thick,red] (21kk);
\node[sommet]  (21kkk)  [label=right:{\small{$[t,s]_{k+1}$}}]    at (1,7)    {} edge[thick,blue] (12kk) {} edge[thick] (21kk);

\draw[pointille] (12kkk) -- +(0,0.6);
\draw[pointille] (21kkk) -- +(0,0.6);
\draw[pointille,blue] (12kkk) -- +(1.2,0.6);
\draw[pointille,red] (21kkk) -- +(-1.2,0.6);

\node[sommet]  (12f)  [label=left:{\small{$[s,t]_{m-1}$}}]    at (-1,9)    {};
\node[sommet]  (21f)  [label=right:{\small{$[t,s]_{m-1}$}}]    at (1,9)    {};
\draw[pointille] (12f) -- +(0,-0.6);
\draw[pointille] (21f) -- +(0,-0.6);
\draw[pointille,red] (12f) -- +(1.2,-0.6);
\draw[pointille,blue] (21f) -- +(-1.2,-0.6);
\node[sommet]  (wo)    [label=above:{\small{$w_\circ=[s,t]_m$}}]          at (0,10)     {} edge[thick] (12f)  {} edge[thick] (21f) ;

\draw (-0.4,0.3) node[auto,swap,red] {\tiny{$\alpha_s$}};
\draw (0.7,0.3) node[auto,swap,blue] {{\tiny $\alpha_t$}};

\draw (-1.2,1.5) node[auto,swap] {\tiny{$\alpha_{s,1}$}};
\draw (-1.2,2.5) node[auto,swap] {\tiny{$\alpha_{s,2}$}};
\draw (-1.35,5.5) node[auto,swap] {\tiny{$\alpha_{s,k-1}$}};
\draw (-1.2,6.5) node[auto,swap] {\tiny{$\alpha_{s,k}$}};
\draw (-1,9.5) node[auto,swap] {\tiny{$\alpha_{s,m-1}$}};

\draw (1.5,1.5) node[auto,swap] {\tiny{$\alpha_{t,1}$}};
\draw (1.5,2.5) node[auto,swap] {\tiny{$\alpha_{t,2}$}};
\draw (1.65,5.5) node[auto,swap] {\tiny{$\alpha_{t,k-1}$}};
\draw (1.5,6.5) node[auto,swap] {\tiny{$\alpha_{t,k}$}};
\draw (1.2,9.5) node[auto,swap] {\tiny{$\alpha_{t,m-1}$}};

\end{tikzpicture}}
\caption{\cite[Figure~6]{DyHo16} The Hasse diagram of the Bruhat order on  the finite dihedral group $\mathcal D_m$. For $\mathcal D_\infty$, the Hasse diagram is the diagram with infinite vertices obtained by not considering the top part of the above diagram. The labels for the interior edges are as follows: the `parallel' red  ones corresponds to the label $\alpha_s$ and the `parallel' blue ones to the label  $\alpha_t$.  }
\label{fig:BruhatDi}
\end{figure}
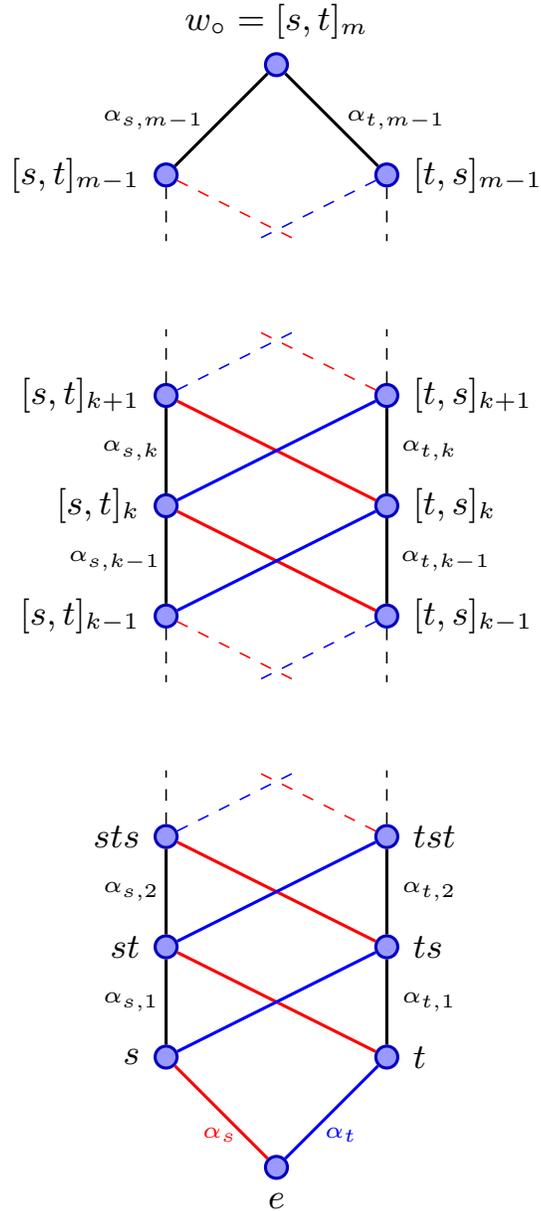
\end{ex}

\subsection{Reflection subgroups and the Bruhat order} We end this section by recalling some useful facts about reflection subgroups and, in particular, about maximal dihedral reflection subgroups. We follow the exposition given in \cite[\S2.8]{DyHo16}

A \emph{reflection subgroup} $W'$ of $W$ is a subgroup $W'=\langle s_\beta\,|\, \beta\in A\rangle$ generated by the reflections associated to the roots in some  $A\subseteq \Phi^+$. Write 
\begin{equation}\label{eq:DRS}
\Phi_{W'}:=\{\beta\in \Phi\,|\, s_{\beta}\in W'\} \ \textrm{ and }\ \Delta_{W'}:=\{\alpha\in \Phi^{+}  \,|\, \Phi(s_\alpha)\cap \Phi_{W'}=\{\alpha\}\}.
\end{equation}
The first author shows in~\cite{Dy90} that  $\Phi_{W'}$ is a  root system in $(V,B)$ with simple system $\Delta_{W'}$, called  the  {\em canonical simple system of $\Phi_{W'}$}.  Therefore the pair  $(W',\chi(W'))$ is a Coxeter system, with {\em  canonical simple  reflections}  
$$
\chi(W'):=\{s_{\alpha}\,|\, \alpha\in \Delta_{W'}\},
$$
and with corresponding positive roots  $\Phi_{W'}^{+}=\Phi_{W'}\cap \Phi^{+}  $;  see also~\cite{Dy10} (both notions depend on $(W,S)$ and not just $W$).   In particular:
\begin{equation}\label{eq:BasisAndDihedral}
B(\alpha,\beta)\leq 0,\ \forall \alpha\not = \beta \in \Delta_{W'} .
\end{equation}
Observe also that  $S\cap W'\subseteq \chi(W')$.

\smallskip

\subsubsection*{Coset representatives} We are now interested in decomposing elements of $W$ with respect to the cosets $W'w$ for $w\in W$. The set of \emph{minimal length right coset representatives} for $W'$ in $W$ is:
\begin{equation*}
X_{W'}:=\{v\in W\,|\, \ell(s_\beta v)>\ell(v),\forall \beta\in \Phi_{W'}  \}=\{v\in W\,|\, \Phi(v)\cap \Phi_{W'}=\emptyset \}.
\end{equation*}  
Any coset $W'w$ in $W$ contains an element of minimal length, which is in~$X_{W'}$. For an element $u\in W'$,  the inversion set $\Phi_{W'}(u)$ of $u$ in $\Phi_{W'}$ satisfies $\Phi_{W'}(u)=\Phi(u)\cap \Phi_{W'}$; the following result is known (see \cite[3.3(ii),3.4]{Dy91}).

\begin{prop} \label{prop:CosetRep} Let $W'$ be a reflection subgroup of $W$ and $w\in W$. Write $w=uv$ with $(u,v)\in  W'\times X_{W'}$. We have:
\begin{enumerate}
\item  $\Phi(w)\cap \Phi_{W'}=\Phi_{W'}(u)$;
\item $\ell(s_\beta w)<\ell(w)$ if and only if $\ell_{W'}(s_\beta u)<\ell_{W'}(u)$ for all $\beta\in \Phi_{W'}$.
\end{enumerate}
\end{prop}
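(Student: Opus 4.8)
The plan is to reduce both statements to the standard theory of minimal length coset representatives, applied to the Coxeter system $(W', \chi(W'))$ rather than to $(W,S)$. The key structural fact I would invoke is that the map $v \mapsto W'v$ restricted to $X_{W'}$ is a bijection onto $W'\backslash W$, and that the decomposition $w = uv$ with $u \in W'$, $v \in X_{W'}$ is the unique one satisfying $\ell(w) = \ell_{W'}(u) + \ell(v)$, where $\ell_{W'}$ is the length function of $(W', \chi(W'))$; moreover, every reduced expression of $u$ in $\chi(W')$ concatenated with a reduced word of $v$ in $S$ yields a reduced word of $w$ in $S$. This is the content of the references \cite[3.3, 3.4]{Dy91}, so I may assume it. The essential point is that passing to the reflection subgroup is compatible with lengths exactly because $v \in X_{W'}$ means $\Phi(v) \cap \Phi_{W'} = \emptyset$.

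For part (1): since $w = uv$ is a reduced product in the sense that $\ell(w) = \ell_{W'}(u) + \ell(v)$ and (more relevantly) $v$ is a suffix of $w$ with $u$ a prefix, Proposition~\ref{prop:InvBasics}(2) gives $\Phi(w) = \Phi(u) \sqcup u(\Phi(v))$. Now intersect both sides with $\Phi_{W'}$. On the first summand, $\Phi(u) \subseteq \Phi_{W'}$ because $u \in W'$ and every inversion of an element of $W'$ is a root of $\Phi_{W'}$ (here one uses that $\Phi_{W'}$ is a root system with the canonical simple system, and that $\Phi(u) = \Phi_{W'}(u)$ for $u \in W'$, which was noted in the text just above the proposition). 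On the second summand, I must check $u(\Phi(v)) \cap \Phi_{W'} = \emptyset$: since $W'$ is a reflection subgroup, $u \in W'$ permutes $\Phi_{W'}$, so $u(\Phi(v)) \cap \Phi_{W'} = u(\Phi(v) \cap \Phi_{W'}) = u(\emptyset) = \emptyset$, using precisely that $v \in X_{W'}$. Hence $\Phi(w) \cap \Phi_{W'} = \Phi(u) = \Phi_{W'}(u)$, which is (1).

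For part (2): this should follow from (1) together with the characterization $\Phi(w) = \{\beta \in \Phi^+ \mid \ell(s_\beta w) < \ell(w)\}$ and its analogue in $W'$. Indeed, for $\beta \in \Phi_{W'}$ we have $\ell(s_\beta w) < \ell(w) \iff \beta \in \Phi(w) \iff \beta \in \Phi(w) \cap \Phi_{W'} \iff \beta \in \Phi_{W'}(u) \iff \ell_{W'}(s_\beta u) < \ell_{W'}(u)$, where the middle equivalence is (1) and the last is the definition of the inversion set $\Phi_{W'}(u)$ in the Coxeter system $(W', \chi(W'))$. So (2) is essentially a restatement of (1) in length-theoretic terms.

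The main obstacle I anticipate is not any single step but making sure the ingredients borrowed from \cite{Dy90, Dy91} are deployed correctly: specifically, that $\Phi_{W'}(u) = \Phi(u) \cap \Phi_{W'}$ for $u \in W'$ (so that the length function $\ell_{W'}$ of the reflection subgroup has inversion sets compatible with the ambient ones), and that $w = uv$ with $v \in X_{W'}$ really is a reduced product in $(W,S)$ so that Proposition~\ref{prop:InvBasics}(2) applies. Both are classical and are stated in the surrounding text, so the proof is short; the only genuine verification is the disjointness $u(\Phi(v)) \cap \Phi_{W'} = \emptyset$, which hinges on $u$ stabilizing $\Phi_{W'}$ setwise.
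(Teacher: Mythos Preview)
Your argument has a genuine gap at its core: the decomposition $w=uv$ with $u\in W'$ and $v\in X_{W'}$ is \emph{not} a reduced product in $(W,S)$ in general. The paper says this explicitly in the paragraph following the proposition: ``note that this product is not reduced in general (it is a reduced product whenever $W'$ is a standard parabolic subgroup).'' Consequently you cannot invoke Proposition~\ref{prop:InvBasics}(2) to get $\Phi(w)=\Phi(u)\sqcup u(\Phi(v))$. Relatedly, your claimed identity $\ell(w)=\ell_{W'}(u)+\ell(v)$ and the assertion that a $\chi(W')$-reduced expression for $u$ concatenated with an $S$-reduced word for $v$ gives an $S$-reduced word for $w$ are both false: the letters of $\chi(W')$ are reflections in $T$, not in $S$, so no such concatenation even yields a word in $S$. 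A second error is the claim $\Phi(u)\subseteq \Phi_{W'}$ for $u\in W'$: take $W=S_3$, $W'=\langle s_1s_2s_1\rangle$, $u=s_1s_2s_1$; then $\Phi(u)=\Phi^+$ has three roots while $\Phi_{W'}^+$ has one. What the surrounding text actually states is $\Phi_{W'}(u)=\Phi(u)\cap\Phi_{W'}$, which is weaker.

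Your deduction of (2) from (1) is fine. For (1), the correct route bypasses reducedness entirely: for $\beta\in\Phi_{W'}^+$ one has $\beta\in\Phi(w)$ if and only if $v^{-1}u^{-1}(\beta)\in\Phi^-$. Since $u\in W'$, the root $u^{-1}(\beta)$ lies in $\Phi_{W'}$; and since $v\in X_{W'}$ means $\Phi(v)\cap\Phi_{W'}=\emptyset$, the element $v^{-1}$ sends $\Phi_{W'}^+$ into $\Phi^+$ (and hence $\Phi_{W'}^-$ into $\Phi^-$). Thus $v^{-1}u^{-1}(\beta)\in\Phi^-$ if and only if $u^{-1}(\beta)\in\Phi_{W'}^-$, i.e.\ $\beta\in\Phi_{W'}(u)$. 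The paper itself gives no proof and simply cites \cite[3.3(ii),3.4]{Dy91}, where this direct argument is carried out.
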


The second statement of the above proposition is referred to  as   {\em ``functoriality of the Bruhat graph''}  (for inclusions of reflection subgroups; see \cite{Dy91,Dy92}). This means that there is an isomorphism of edge-labelled graph between the Bruhat graph of $W'$ and the Bruhat graph of $W$ restricted to $W'w$, for any $w\in W$; see~\cite[Theorem~1.4]{Dy91}. In particular, if $\leq_{W'}$ denotes the Bruhat order on $W'$, we have:

\begin{equation}\label{eq:BruhatG}
u_1\leq_{W'} u_2 \implies u_1v\leq u_2v,\quad \textrm{for all  }u_i \in W',\ v\in  X_{W'}. 
\end{equation}
The converse of this statement is not true in general; a counterexample can be seen in type $I_2(4)$ with simple reflections $s,t$ and $W'=\{s,tst\}$.

This functoriality property implies also that the elements of $X_{W'}$ are precisely  those elements $v$ of $W$ which are of minimum length in their coset $W'v$, and that any element $w\in W$ can be uniquely expressed in the form $w=uv$ where $u\in W'$ and $v\in X_{W'}$; note that this product is not reduced in general (it is a reduced product whenever  $W'$ is a standard parabolic subgroup).  Functoriality of the Bruhat graph also implies the following useful alternative characterization of~$X_{W'}$:   
$$X_{W'}=\{v\in W\,|\, \ell(s_\beta v)>\ell(v),\forall \beta\in \Delta_{W'} \}.$$

\subsection{Maximal dihedral reflection subgroups}  A reflection subgroup $W'$ of rank~$2$ is well-known to be isomorphic to a dihedral group and is so called a  {\em dihedral reflection subgroup}. This following result gives a criterion for comparing depths of roots.

\begin{prop}\label{prop:DiDep} Let $\alpha,\beta\in \Phi^+$. Assume there is a dihedral reflection subgroup~$W'$ such that such that $\alpha\in \Delta_{W'}$ and $\beta\in \Phi^+_{W'}\setminus\Delta_{W'}$, then $\dep(\alpha)<\dep(\beta)$.
\end{prop}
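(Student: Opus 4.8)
The plan is to reduce the statement to a single length inequality. By Proposition~\ref{prop:Palind0}, applied in $(W,S)$ to each of $\alpha$ and $\beta$ (for any positive root $\gamma$ there is, by definition of depth, some $g\in W$ with $g(\gamma)\in\Delta$ and $\ell(g)=\dep(\gamma)$), one has $\ell(s_\alpha)=2\dep(\alpha)+1$ and $\ell(s_\beta)=2\dep(\beta)+1$. Hence $\dep(\alpha)<\dep(\beta)$ is equivalent to $\ell(s_\alpha)<\ell(s_\beta)$, and it suffices to establish the latter.

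To produce that inequality I would pass to the dihedral Coxeter system $(W',\chi(W'))$, where $\chi(W')=\{s_\alpha,s_{\alpha'}\}$ with $\Delta_{W'}=\{\alpha,\alpha'\}$. Since $\gamma\mapsto s_\gamma$ is a bijection from $\Phi^+_{W'}$ onto the reflections of $W'$ and $\beta\notin\Delta_{W'}$, the reflection $s_\beta$ is neither of the two canonical simple reflections of $W'$; by Proposition~\ref{prop:Palind-00} its $W'$-length is odd, hence at least $3$, and since every reduced word in a dihedral system alternates between the two generators, the letter $s_\alpha$ appears in every $\chi(W')$-reduced word for $s_\beta$. Therefore $s_\alpha\le_{W'}s_\beta$ in the Bruhat order of $(W',\chi(W'))$, and as $s_\alpha\neq s_\beta$ (because $\alpha\neq\beta$) this is strict: $s_\alpha<_{W'}s_\beta$.

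Finally I would transfer this comparison to $(W,S)$ via functoriality of the Bruhat graph: $e\in X_{W'}$, so taking $v=e$ in~\eqref{eq:BruhatG} gives $s_\alpha\le s_\beta$ in the Bruhat order of $(W,S)$; strictness is preserved since $s_\alpha\neq s_\beta$, so $s_\alpha<s_\beta$ and hence $\ell(s_\alpha)<\ell(s_\beta)$, completing the proof.

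The argument is short, and I do not expect a genuine obstacle; the one point that needs care is keeping track of \emph{which} Coxeter system each tool lives in — Proposition~\ref{prop:Palind-00} is used for $(W',\chi(W'))$ to get odd $W'$-length, Proposition~\ref{prop:Palind0} for $(W,S)$ to convert lengths of reflections into depths, and the passage from $W'$ to $W$ must go through functoriality of the Bruhat graph~\eqref{eq:BruhatG} rather than the weak order, since for a prescribed $\alpha\in\Delta_{W'}$ the reflection $s_\beta$ generally has $s_\alpha$ only as an interior letter of its reduced word, not as a prefix or suffix.
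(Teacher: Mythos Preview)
Your proof is correct and follows essentially the same route as the paper: establish $s_\alpha<_{W'}s_\beta$ in the dihedral Bruhat order, transfer this to $(W,S)$ via functoriality of the Bruhat graph with $v=e\in X_{W'}$, and then convert the resulting length inequality into a depth inequality using Proposition~\ref{prop:Palind0}. The only difference is that you spell out why $s_\alpha<_{W'}s_\beta$ holds (odd $W'$-length $\geq 3$ and alternating reduced words) where the paper simply cites this as known.
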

\begin{proof} Since $\alpha\in \Delta_{W'}$, $\beta\in \Phi^+_{W'}\setminus\Delta_{W'}$ and $(W',\chi(W'))$ is a Coxeter system of rank~$2$, it is known that $s_\alpha<_{W'}s_\beta$ in the Bruhat order on $W'$. By Eq.(\ref{eq:BruhatG}) (functoriality of Bruhat graphs), we have $s_\alpha< s_\beta$, since $s_\alpha,s_\beta\in W'$ and $e\in X_{W'}$. In particular, $\ell(s_\alpha)<\ell(s_\beta)$.  By Proposition~\ref{prop:Palind0}, we conclude that $\dep(\alpha)<\dep(\beta)$. 
\end{proof}

A dihedral reflection subgroup $W'$ is a {\em maximal dihedral reflection subgroup} if it is not contained in any other dihedral reflection subgroup but itself. This is equivalent to the following condition on the root subsystem $\Phi_{W'}$:
$$
\Phi_{W'}=\mathbb R\Delta_{W'}\cap \Phi.
$$ 
Recall that for $X\subseteq V$, $\cone_\Phi(X)=\cone(X)\cap \Phi$. The following useful result gives the form of inversion sets in maximal dihedral reflection subgroup.

\begin{prop}\label{prop:InvMDRG} Let $W'$ be a maximal dihedral reflection subgroup. The inversion set of $u\in W'$, $u\not = e$,  is of the form
$\Phi_{W'}(u)=\cone_{\Phi}(\alpha,\beta)$ with $\alpha\in \Delta_{W'}$ and $\beta\in \Phi^+_{W'}$. 
\end{prop}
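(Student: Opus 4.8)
The plan is to reduce the problem to the known structure of inversion sets in a dihedral Coxeter system, and then translate that back into the root system $\Phi_{W'}$ using the fact that $W'$ is \emph{maximal} dihedral. First I would recall from Example~\ref{ex:BruhatDi} and the standard theory of dihedral groups that, writing $(W',\chi(W'))$ as a Coxeter system of rank $2$ with $\chi(W')=\{s_{\alpha_0},s_{\beta_0}\}$ for $\Delta_{W'}=\{\alpha_0,\beta_0\}$, any non-identity $u\in W'$ has a reduced word of the form $[s_{\alpha_0},s_{\beta_0}]_k$ (or the analogous word starting with $s_{\beta_0}$), with $k=\ell_{W'}(u)$. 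By Proposition~\ref{prop:InvBasics}(1) applied inside the Coxeter system $(W',\chi(W'))$, the inversion set $\Phi_{W'}(u)$ consists of the $k$ positive roots of $\Phi_{W'}$ obtained as initial segments of that word act on simple roots; concretely, $\Phi_{W'}(u)=\{\alpha_{\alpha_0,0},\alpha_{\alpha_0,1},\dots,\alpha_{\alpha_0,k-1}\}$ in the notation of Example~\ref{ex:BruhatDi} (with one of the two endpoints being a simple root of $W'$, namely $\alpha_{\alpha_0,0}=\alpha_0\in\Delta_{W'}$).

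Next I would identify this set as a cone. In a rank-$2$ root system with canonical simple system $\{\alpha_0,\beta_0\}$, the positive roots all lie in the plane $\mathbb{R}\alpha_0+\mathbb{R}\beta_0$, and they are linearly ordered ``angularly'' between $\alpha_0$ and $\beta_0$; an inversion set of a dihedral group element, being the set of positive roots sent to negative, is precisely an initial angular segment, hence of the form $\cone(\alpha,\beta)\cap\Phi_{W'}$ where $\alpha,\beta$ are its two extreme roots. One of the two extreme roots is necessarily a simple root of $W'$ (the first inversion is always a simple reflection, so $\alpha_0\in\Phi_{W'}(u)$ and it is the ``innermost'' extreme root), so we may take $\alpha\in\Delta_{W'}$; the other extreme root $\beta$ lies in $\Phi^+_{W'}$. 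Thus $\Phi_{W'}(u)=\cone_{\Phi_{W'}}(\alpha,\beta)$.

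Finally I would upgrade $\cone_{\Phi_{W'}}$ to $\cone_{\Phi}$ using maximality of $W'$: since $W'$ is a maximal dihedral reflection subgroup, $\Phi_{W'}=\mathbb{R}\Delta_{W'}\cap\Phi=\mathbb{R}\{\alpha,\beta\}\cap\Phi$, so any root of $\Phi$ lying in $\cone(\alpha,\beta)\subseteq\mathbb{R}\{\alpha,\beta\}$ already lies in $\Phi_{W'}$; hence $\cone_\Phi(\alpha,\beta)=\cone(\alpha,\beta)\cap\Phi=\cone(\alpha,\beta)\cap\Phi_{W'}=\cone_{\Phi_{W'}}(\alpha,\beta)=\Phi_{W'}(u)$, which is the asserted form. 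The main obstacle I anticipate is making the ``angular order'' argument precise in a way that works uniformly for finite and infinite dihedral $W'$ — i.e., checking carefully that the inversion set of a dihedral element really is an extremal cone $\cone(\alpha,\beta)$ with one generator simple, rather than just hand-waving the picture; this is where I would invoke the explicit description of roots and Bruhat order in $\mathcal{D}_m$ from Example~\ref{ex:BruhatDi} (the roots $\alpha_{s,k}$) and note that $\Phi(u)$ is an ``interval'' $\{\alpha_{s,0},\dots,\alpha_{s,k-1}\}$ in that enumeration, whose $\cone$ is exactly the wedge between its endpoints.
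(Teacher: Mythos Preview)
Your proposal is correct and follows essentially the same two-step approach as the paper: first identify $\Phi_{W'}(u)$ as $\cone_{\Phi_{W'}}(\alpha,\beta)$ with $\alpha\in\Delta_{W'}$, then use maximality ($\Phi_{W'}=\mathbb{R}\Delta_{W'}\cap\Phi$) together with $\cone(\alpha,\beta)\subseteq\mathbb{R}\Delta_{W'}$ to upgrade $\cone_{\Phi_{W'}}$ to $\cone_\Phi$. The only difference is that the paper dispatches the first step in one line as ``well-known,'' whereas you spell it out via the explicit enumeration of dihedral roots from Example~\ref{ex:BruhatDi}; your anticipated ``obstacle'' is thus not a real gap but simply the content the paper takes for granted.
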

\begin{proof} It is well-known that inversion sets in dihedral groups are of the form: for $u\in W'$, $u\not = e$,  $\Phi_{W'}(u)=\cone_{\Phi_{W'}}(\alpha,\beta)$, with $\alpha\in \Delta_{W'}$ and $\beta\in \Phi^+_{W'}$. By definition of maximal dihedral reflection subgroups we have:
\begin{eqnarray*}
\Phi_{W'}(u)=\cone_{\Phi_{W'}}(\alpha,\beta)&=&\cone(\alpha,\beta)\cap \Phi_{W'} \\
&=&\cone(\alpha,\beta)\cap \mathbb R\Delta_{W'}\cap \Phi\\
& =& \cone(\alpha,\beta)\cap  \Phi ,\ \textrm{ since $\cone(\alpha,\beta)\subseteq \mathbb R\Delta_{W'}$}\\
&=&\cone_{\Phi}(\alpha,\beta). \qedhere
\end{eqnarray*}
\end{proof}

Any dihedral reflection subgroup is contained in a unique maximal dihedral reflection subgroup. 

\begin{defi} Let $\alpha,\beta\in \Phi$ such that $\mathbb R\alpha\not = \mathbb R\beta$. We denote by $\mathcal M_{\alpha,\beta}$  the unique maximal dihedral reflection subgroup  containing the reflections $s_\alpha$ and $s_\beta$.  For simplicity, if $s=s_\alpha\in T$ and $t=s_\beta\in T$, we write $\mathcal M_{s,t}=\mathcal M_{\alpha,\beta}$.
\end{defi}

\begin{remark} 
\begin{enumerate}
\item  We have $\Phi_{\mathcal M_{\alpha,\beta}}=(\mathbb R\alpha\oplus\mathbb R\beta)\cap \Phi$.
\item The finite maximal dihedral reflection subgroups of $(W,S)$ are precisely the finite parabolic subgroups of rank $2$, that is, the conjugates of the standard parabolic subgroups $W_{s,t}=\langle s,t\rangle$ for $s,t\in S$ distinct such that the order $m_{s,t}$ of $st$ is finite.  Conversely, any conjugate of a rank $2$ finite parabolic subgroup is maximal~\cite[Theorem~3.11(b)]{Dy21}.
\end{enumerate}
\end{remark}

 The next lemma will be useful in \S\ref{se:ShortInv}.

\begin{lem}\label{lem:BruhatKey} Let $z\in W$, $s\in S$ and $t\in T$ such that $z< sz<tsz<sts z$. Assume that $\ell(tsz)=\ell(sz)+1$. Then $\chi(\mathcal M_{s,t}) = \{s,t\}$ and  $\ell(sts)\not = \ell(t)-2$.
\end{lem}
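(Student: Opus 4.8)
The plan is to reduce the whole statement to a short computation inside the maximal dihedral reflection subgroup $W':=\mathcal M_{s,t}$. First note that $s\neq t$, since otherwise $tsz=z$, contradicting $z<sz<tsz$; hence $W'$ is a well-defined rank-$2$ (dihedral) reflection subgroup. Because $s=s_{\alpha_s}\in S\cap W'\subseteq\chi(W')$ and $|\chi(W')|=2$, we may write $\chi(W')=\{s,r\}$ for a unique reflection $r\neq s$. Thus the assertion $\chi(\mathcal M_{s,t})=\{s,t\}$ is exactly the assertion $t=r$, and once this is known the inequality $\ell(sts)\neq\ell(t)-2$ follows at once: writing $t=s_\beta$ with $\beta\in\Phi^+$, we then have $\alpha_s,\beta\in\Delta_{W'}$ with $\alpha_s\neq\beta$, so $B(\alpha_s,\beta)\leq 0$ by \eqref{eq:BasisAndDihedral}; by \eqref{eq:Depth} this gives $\dep(s(\beta))\geq\dep(\beta)$, and since $sts=s_{s(\beta)}$ Proposition~\ref{prop:Palind0} yields $\ell(sts)=2\dep(s(\beta))+1\geq 2\dep(\beta)+1=\ell(t)$, so in particular $\ell(sts)\neq\ell(t)-2$.

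To prove $t=r$ I would pass to the coset $W'z$, which contains all of $z,sz,tsz,stsz$ because $s,t,sts\in W'$. Let $v\in X_{W'}$ be its minimal length representative and write $z=u_0v$ with $u_0\in W'$; then $sz=u_1v$, $tsz=u_2v$, $stsz=u_3v$ with $u_1=su_0$, $u_2=tu_1$, $u_3=su_2$ all in $W'$. Applying functoriality of the Bruhat graph (Proposition~\ref{prop:CosetRep}(2)) to the three steps $z<sz<tsz<stsz$ converts the ambient length increases into $\ell_{W'}(u_0)<\ell_{W'}(u_1)<\ell_{W'}(u_2)<\ell_{W'}(u_3)$. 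Since $s$ is a canonical simple reflection of $W'$ and $u_1=su_0$, $u_3=su_2$, the first and third of these are in fact coverings in $W'$; in particular $\ell_{W'}(su_1)<\ell_{W'}(u_1)$, so in the dihedral group $(W',\chi(W'))$ we have $u_1=[s,r]_p$ with $p=\ell_{W'}(u_1)\geq 1$, while $\ell_{W'}(su_2)>\ell_{W'}(u_2)$ together with $u_2\neq e$ forces $u_2=[r,s]_q$ with $q=\ell_{W'}(u_2)>p$ (here $[s,r]_p$ and $[r,s]_q$ denote the alternating reduced words of Example~\ref{ex:BruhatDi} for the generators $s,r$ of $W'$).

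The crucial point is to improve $q>p$ to $q=p+1$, and this is precisely where the hypothesis $\ell(tsz)=\ell(sz)+1$, i.e.\ $sz\lhd tsz$, enters. If instead $\ell_{W'}(u_2)\geq\ell_{W'}(u_1)+2$, then the chain property of the Bruhat order on $W'$ produces some $u'\in W'$ with $u_1<_{W'}u'<_{W'}u_2$, and \eqref{eq:BruhatG} then gives $sz=u_1v< u'v< u_2v=tsz$, contradicting that $sz\lhd tsz$ is a covering. Hence $q=p+1$, and so $u_2=[r,s]_{p+1}=r\cdot[s,r]_p=ru_1$ — prepending $r$ to a reduced word for $u_1$, which begins with $s\neq r$, is again reduced. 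Comparing $u_2=tu_1$ with $u_2=ru_1$ and cancelling $u_1$ gives $t=r$, as required. (Note $\ell_{W'}(u_3)=p+2$, so $m_{s,r}\geq p+2\geq 3$; the dihedral manipulations used are valid in every such $W'$, and the degenerate value $m_{s,r}=2$ is automatically incompatible with the hypotheses.)

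I expect the cover-transport argument in the last paragraph to be the only genuine obstacle; everything else is bookkeeping with the coset decomposition together with elementary combinatorics of dihedral groups.
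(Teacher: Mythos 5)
Your proof is correct and takes essentially the same route as the paper's: decompose $z$ along the coset $\mathcal M_{s,t}\,z$, transfer the chain $z<sz<tsz<stsz$ into the dihedral group via Proposition~\ref{prop:CosetRep} and Eq.~(\ref{eq:BruhatG}), and identify $t$ with the second canonical generator $r$ through the alternating-word combinatorics of Example~\ref{ex:BruhatDi}. The only differences are cosmetic: you make explicit the transport of the middle covering (which the paper absorbs into ``functoriality of the Bruhat graph'') and you spell out the deduction $\ell(sts)\neq\ell(t)-2$ from $\chi(\mathcal M_{s,t})=\{s,t\}$, which the paper leaves implicit.
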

\begin{proof} Write $W'=\mathcal M_{s,t}$ and $z=z' z''$ with $z'\in W'$ and $z''\in X_{W'}$. Since $S\cap W'\subseteq\chi(W')$, we have $s\in \chi(W')$.  Write $\chi(W')=\{s,r\}$. One just has to show that $r=t$. By assumption, we have $z\lhd sz\lhd tsz\lhd sts z$, since $s\in S$ and $\ell(tsz)=\ell(sz)+1$. By functoriality of the Bruhat graph (Proposition~\ref{prop:CosetRep}), we have therefore $z'\lhd sz'\lhd tsz'\lhd sts z'$ in the Coxeter system $(W',\chi(W'))$.   In particular, $\ell_{W'}(tsz')=\ell_{W'}(z')+2$. We use the notation of Example~\ref{ex:BruhatDi}: since $z<sz'$ and $tsz'<stsz'$, we must have $z'=[r,s]_k$ and $tsz'=[r,s]_{k+2}$ for some $k\in\mathbb N$. So 
$$
rs[r,s]_k=[r,s]_{k+2} = tsz=ts[r,s]_k,
$$
implying $r=t$.
\end{proof}

\section{Short inversion posets}\label{se:ShortInv}

As Proposition~\ref{prop:InvBasics} illustrates, inversion sets are useful tools to study elements of~$W$. Among all inversions of an element of $W$, the short inversions span all the others. The key  to prove Theorem~\ref{thm:Main} is to exhibit an order on the short inversions and to show that any short inversion is {\em sandwiched} between a {\em left descent-root} and a  {\em right descent-root}.

\subsection{Short inversions}  We think of $\Phi(w)$ as a polyhedral cone in $\Phi\subseteq V$ since  $\Phi(w)=\cone_\Phi(\Phi(w))$.  The {\em set of short inversions of $\Phi(w)$} is the set
$$
\Phi^1(w)=\{\beta\in \Phi^+ \mid \ell(s_\beta w )=\ell(w)-1\}=\{\beta\in \Phi^+ \mid s_\beta w \lhd w\}\subseteq \Phi(w).
$$
The first author showed in \cite[Proposition 1.4 (c)]{Dy94-1} that $\Phi^1(w)$ is a basis of $\cone(\Phi(w))$:  the set of extreme rays of $\cone_\Phi(\Phi(w))$ is indeed $\{\mathbb R_{\geq 0}\beta\mid \beta \in \Phi^1(w)\}$. 

\begin{ex}\label{ex:H3} Consider $(W,S)$ of type $H_3$. The set of simple reflections is $S=\{1,2,3\}$ as in Figure~\ref{fig:H3}. The classical geometric representation of $(W,S)$ is on a Euclidean linear space $(V,B)$ of dimension $3$. 
In Figure~\ref{fig:H3}, we draw the set of positive roots $\Phi^+$ in the projective space $\mathbb PV$. The polyhedral cone $\cone(\Delta)$ is then represented as a polytope in $\mathbb PV$, here the triangle with vertices $\alpha_1,\alpha_2,\alpha_3$. The advantage of this projective view of root systems is to represent the inversion set $\Phi(w)$ of $w\in W$ as a polytope - an {\em inversion polytope} - with set of vertices $\Phi^1(w)$; for more information on projective root systems, see for instance~\cite{HoLaRi14,HoLa16,DyHoRi16,DyHo16}.

Consider the element $w=312121321$ of length $9$.  We use Proposition~\ref{prop:InvBasics} to compute $\Phi(w)$.  First, observe that the element $12121$ is the longest element of the standard (dihedral) parabolic subgroup $W_{\{1,2\}}$, so $\Phi(12121)=\Phi^+_{\alpha_1,\alpha_2}$. Therefore:
\begin{eqnarray*}
\Phi(w)&=&\Phi(312121)\sqcup 312121(\Phi(321))\\
&=& \{\alpha_3\}\sqcup 3(\Phi(12121))\sqcup 312121(\Phi(321))\\
&=&\{\alpha_3\}\sqcup 3(\Phi^+_{\alpha_1,\alpha_2})\sqcup 312121(\{\alpha_3,\alpha_2+\alpha_3,32(\alpha_1) \})\\
&=&\{\alpha_3\}\sqcup\Phi^+_{\alpha_1,\alpha_2+\alpha_3}\sqcup\{\gamma,\nu,\beta\},
\end{eqnarray*}
where $\gamma=312121(\alpha_3)$, $\nu=312121(\alpha_2+\alpha_3)$ and $\beta=31212132(\alpha_1)$. The set of short inversions of $w$ is 
$$
\Phi^1(w)= \{\alpha_1,\alpha_3, \alpha_2+\alpha_3,\beta\},
$$
and $\Phi(w)=\cone_\Phi(\Phi^1(w))$; see Figure~\ref{fig:H3}. Observe in Figure~\ref{fig:H3} that for any two distinct short inversions $\mu_1,\mu_2$ in $\Phi^1(w)$, at least one of them is an endpoint of the segment $(\mathbb R\mu_1\oplus \mathbb R\mu_2)\cap \Phi$. In other words, at least one of them is a simple root in the maximal dihedral reflection subgroup $\mathcal M_{\mu_1,\mu_2}$. For instance $\alpha_2+\alpha_3$ is a simple root in $\mathcal M_{\alpha_2+\alpha_3,\beta}$, but $\beta$ is not since $\Delta_{\mathcal M_{\alpha_2+\alpha_3,\beta}}=\{1(\alpha_2),\alpha_2+\alpha_3\}$.

\begin{figure}[h!]
\resizebox{\hsize}{!}{
\begin{tikzpicture}
	[scale=2,
	 q/.style={teal,line join=round},
	 racine/.style={blue},
	 racinesimple/.style={blue},
	 racinedih/.style={blue},
	 sommet/.style={inner sep=2pt,circle,draw=black,fill=blue,thick,anchor=base},
	 rotate=0]
 \tikzstyle{every node}=[font=\small]
\def\grosseursimple{0.025}
\node[anchor=south west,inner sep=0pt] at (0,0) {\includegraphics[width=8cm]{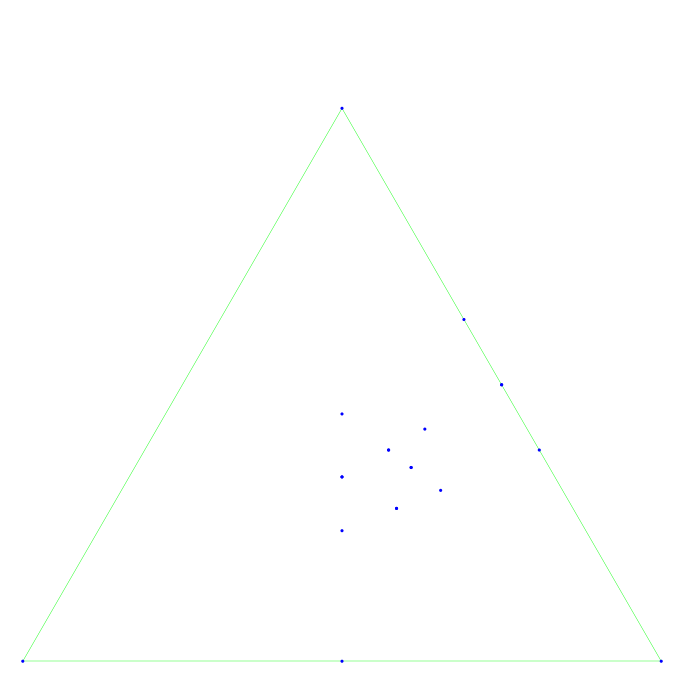}};
\coordinate (ancre) at (0,3);

\node[sommet,label=above:$1$] (gamma) at ($(ancre)+(0.25,0.43)$) {};
\node[sommet,label=below right :$2$] (beta) at ($(ancre)+(0.5,0)$) {} edge[thick] node[auto,swap] {$5$} (gamma) ;
\node[sommet,label=below left:$3$] (alpha) at (ancre) {}
 edge[thick] (beta)  ;

\draw[q,fill = teal, opacity=0.4] (0.13,0.13) -- (2,0.13) -- (2.41,1.27) -- (2,3.37) -- cycle;
\node[label=left :{$\alpha_3$}] (a) at (0.13,0.13) {};
\fill[racinesimple] (0.13,0.13) circle (\grosseursimple);
\node[label=right :{$\alpha_2$}] (b) at (3.87,0.13) {};
\fill[racinesimple] (3.87,0.13) circle (\grosseursimple);
\node[label=above :{$\alpha_1$}] (g) at (2,3.37) {};
\fill[racinesimple] (2,3.37) circle (\grosseursimple);
\node[label=below :{\tiny $\alpha_2+\alpha_3$}]  at (2,0.2) {};
\fill[racinedih] (2,0.13) circle (\grosseursimple);
\node[label={[xshift=0.2cm, yshift=-0.3cm] \tiny $\beta$}]  at (2.41,1.27) {};
\fill[racinedih] (2.41,1.27) circle (\grosseursimple);

\node[label={\fontsize{6}{8}\selectfont $1(\alpha_2)$}] at (2.85,2.05) {};
\node[label={\fontsize{6}{8}\selectfont   $2(\alpha_1)$}]  at (3.4,1.2) {};
\node[label={\fontsize{6}{8}\selectfont   $12(\alpha_1)$}]  at (3.15,1.6) {};

\node[label={\tiny  $32(\alpha_1)$}]  at (2,0.6) {};
\node[label={\tiny  $31(\alpha_2)$}]  at (2,1.5) {};
\node[label={\tiny  $312(\alpha_1)$}]  at (1.9,1.15) {};

\node[label={\tiny  $\gamma$}]  at (2.3,1.3) {};
\node[label={\tiny  $\nu$}]  at (2.3,0.95) {};

\node[label=above :{\color{teal} $\Phi(w)$}] at (1.3,0.8) {};


\end{tikzpicture}}
\caption{The projective root system of type $H_3$ with Coxeter graph given in the upper left; the roots are the blue dots. The shaded polygon is the {\em inversion polytope} of the element $w=312121321$. The inversion set $\Phi(w)$ is constituted of the roots contained in the inversion polytope of $w$.  The set of short inversions $\Phi^1(w)$ consists of the roots that are the vertices of that polytope.}
\label{fig:H3}
\end{figure}
\end{ex}

The following proposition shows that the observation at the end of Example~\ref{ex:H3} is true in general.

\begin{prop}\label{prop:BasisAndDihedral} Let $w\in W$ and $\alpha,\beta\in \Phi^1(w)$ with $\alpha\not = \beta$. Then $\alpha\in \Delta_{\mathcal M_{\alpha,\beta}}$ or $\beta\in \Delta_{\mathcal M_{\alpha,\beta}}$. In particular:
\begin{enumerate}
\item If $\Delta_{\mathcal M_{\alpha,\beta}}=\{\alpha,\alpha'\}$ and $\beta\not = \alpha'$, then $\alpha'\notin \Phi(w)$.
\item If $\Delta_{\mathcal M_{\alpha,\beta}}=\{\alpha,\beta\}$, then $\Phi^+_{\mathcal M_{\alpha,\beta}}\subseteq \Phi(w)$ and  $\mathcal M_{\alpha,\beta}$ is finite.
\end{enumerate}
\end{prop}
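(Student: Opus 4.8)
The plan is to reduce everything to elementary geometry inside the plane $P=\mathbb R\alpha\oplus\mathbb R\beta$ spanned by the two roots (a genuine plane, since $\alpha$ and $\beta$ are distinct positive roots, so $\mathbb R\alpha\neq\mathbb R\beta$ and $\mathcal M_{\alpha,\beta}$ is defined). Set $W'=\mathcal M_{\alpha,\beta}$, so that $\Phi_{W'}=\Phi\cap P$. The first step is the identity
\[
\Phi(w)\cap \Phi_{W'}=\cone_{\Phi}(\alpha,\beta).
\]
To prove it, recall from \cite[Prop.~1.4(c)]{Dy94-1} that $\cone(\Phi(w))$ is pointed with extreme rays exactly $\mathbb R_{\ge 0}\gamma$ for $\gamma\in\Phi^1(w)$; in particular $\mathbb R_{\ge 0}\alpha$ and $\mathbb R_{\ge 0}\beta$ are extreme rays. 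If $\alpha=x+y$ with $x,y\in P\cap\cone(\Phi(w))\subseteq\cone(\Phi(w))$, extremality of $\mathbb R_{\ge 0}\alpha$ in $\cone(\Phi(w))$ forces $x,y\in\mathbb R_{\ge 0}\alpha$; so $\mathbb R_{\ge 0}\alpha$, and likewise $\mathbb R_{\ge 0}\beta$, are extreme rays of the pointed $2$-dimensional cone $P\cap\cone(\Phi(w))$. A pointed cone in a plane is the cone on its (at most two) extreme rays, whence $P\cap\cone(\Phi(w))=\cone(\alpha,\beta)$; intersecting with $\Phi$ and using $\Phi(w)=\cone_\Phi(\Phi(w))$ together with $\Phi\cap P=\Phi_{W'}$ yields the displayed identity.

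Next I would read off that $\Phi(w)\cap\Phi_{W'}$ is a $W'$-inversion set and compare the two descriptions. Writing $w=uv$ with $u\in W'$ and $v\in X_{W'}$, Proposition~\ref{prop:CosetRep}(1) gives $\Phi(w)\cap\Phi_{W'}=\Phi_{W'}(u)$; since this set contains $\alpha$, it is nonempty, so $u\neq e$, and Proposition~\ref{prop:InvMDRG} gives $\Phi_{W'}(u)=\cone_\Phi(\gamma,\delta)$ with $\gamma\in\Delta_{W'}$ and $\delta\in\Phi^+_{W'}$. Now in the dihedral root system $\Phi_{W'}$ the positive roots form a single angular chain running between the two canonical simple roots, and for distinct $x,y\in\Phi^+_{W'}$ the set $\cone_{\Phi}(x,y)$ is exactly the closed angular interval from $x$ to $y$, from which $x$ and $y$ are recovered as its two extreme rays. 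Comparing $\cone_\Phi(\alpha,\beta)=\Phi(w)\cap\Phi_{W'}=\cone_\Phi(\gamma,\delta)$ — noting $\gamma\neq\delta$, else this set is a single ray and cannot contain the distinct roots $\alpha,\beta$ — forces $\{\alpha,\beta\}=\{\gamma,\delta\}$; since $\gamma\in\Delta_{W'}$, one of $\alpha,\beta$ lies in $\Delta_{\mathcal M_{\alpha,\beta}}$, which is the first assertion.

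The two items then follow from the displayed identity. For (1): by hypothesis $\alpha\in\Delta_{W'}=\{\alpha,\alpha'\}$ and $\beta\notin\{\alpha,\alpha'\}$, so $\Phi(w)\cap\Phi_{W'}=\cone_\Phi(\alpha,\beta)$ is the angular interval from the simple root $\alpha$ to the non-simple root $\beta$, which stops strictly before the other simple root $\alpha'$; as $\alpha'\in\Phi_{W'}$, this gives $\alpha'\notin\Phi(w)$. For (2): if $\Delta_{W'}=\{\alpha,\beta\}$, then $\Phi(w)\cap\Phi_{W'}=\cone_\Phi(\alpha,\beta)$ is the angular interval between the two simple roots, i.e.\ all of $\Phi^+_{W'}$, so $\Phi^+_{\mathcal M_{\alpha,\beta}}\subseteq\Phi(w)$; since $\Phi(w)$ is finite (of cardinality $\ell(w)$), $\Phi^+_{W'}$ is finite and hence $\mathcal M_{\alpha,\beta}$ is finite.

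The one genuinely non-formal point is the first step — showing $P\cap\cone(\Phi(w))=\cone(\alpha,\beta)$, equivalently that no root of $\Phi(w)$ lying in the plane $P$ falls outside the sector bounded by $\alpha$ and $\beta$; once this convexity fact is in hand, the rest is a translation into the combinatorics of dihedral root systems already packaged in Propositions~\ref{prop:CosetRep} and~\ref{prop:InvMDRG}.
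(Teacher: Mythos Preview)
Your proof is correct and follows essentially the same approach as the paper: both use the extreme-ray characterization of $\Phi^1(w)$ from \cite{Dy94-1}, the coset decomposition of Proposition~\ref{prop:CosetRep}, and the dihedral description of Proposition~\ref{prop:InvMDRG}. The only difference is expository order: you first establish $\Phi(w)\cap\Phi_{W'}=\cone_\Phi(\alpha,\beta)$ by an explicit planar argument and then match it against $\cone_\Phi(\gamma,\delta)$, whereas the paper starts from $\Phi_{W'}(u)=\cone_\Phi(\alpha',\gamma)$ and invokes extremality of $\alpha,\beta$ in $\cone(\Phi(w))$ to force $\{\alpha,\beta\}=\{\alpha',\gamma\}$ directly.
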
 
\begin{proof} Write  $w=uv$ with $u\in \mathcal M_{\alpha,\beta}$ and $v \in X_{\mathcal M_{\alpha,\beta}}$. By Proposition~\ref{prop:CosetRep}~(1), $\Phi_{\mathcal M_{\alpha,\beta}}(u)=\Phi(w)\cap \Phi_{\mathcal M_{\alpha,\beta}}$ is a non-empty inversion set, since $\alpha,\beta\in\Phi(w)$. By Proposition~\ref{prop:InvMDRG}, we know that $\Phi_{\mathcal M_{\alpha,\beta}}(u)=\cone_\Phi(\alpha',\gamma)$ with $\alpha'\in \Delta_{\mathcal M_{\alpha,\beta}}$ and $\gamma\in \Phi^+_{\mathcal M_{\alpha,\beta}}$. Since $\cone_\Phi(\alpha',\gamma)\subseteq \Phi(w)$ and $\alpha,\beta$ span extreme rays of $\cone(\Phi(w))$, we must have $\alpha=\alpha'\in \Delta_{\mathcal M_{\alpha,\beta}}$ (and $\beta=\gamma$) or $\beta=\alpha'\in \Delta_{\mathcal M_{\alpha,\beta}}$ (and $\alpha=\gamma$). (1) follows easily from this description and (2)  follows easily since $\Phi(w)$ is a finite set.
\end{proof}

\subsection{Descent-roots}  The well-known left and right descent sets of $w\in W$ have their natural counterparts in $\Phi^1(w)$:
\begin{itemize}
\item The {\em left descent set} $D_L(w)=\{s \in S\mid sw \lhd w\}$ is in bijection with the set of {\em left descent-roots:} $\Phi^L(w)=\Phi(w)\cap \Delta$. 
\item The {\em right descent set} $D_R(w)=\{s \in S\mid ws \lhd w\}$ is in bijection with the set of {\em right descent-roots:} 
$
\Phi^R(w)=\{-w(\alpha_s)\mid s\in D_R(w)\}.
$
\end{itemize}

\begin{ex}[Continuation of Example~\ref{ex:H3}] Since $w=31.2121321=13.2121321$, we have $D_L(w)=\{1,3\}$. Hence  $\Phi^L(w)=\{\alpha_1,\alpha_3\}$. We also have $\Phi^R(w)=\{\alpha_1,\beta\}$. Indeed $D_R(w)=\{1,3\}$,  since $w=3.12121.321=3.2121.232.1=3212132.31$. Furthermore, $\beta=31212132(\alpha_1)$ and $\alpha_1=32121321(\alpha_3)$.  Both $\Phi^L(w)$ and $\Phi^R(w)$ are the set of vertices of a face of the inversion polytope of $w$. Furthermore, there is a (unique in this case) hyperplane supporting the face $\Phi^R(w)$ that separates $\Phi(w)$ from $\Phi^+\setminus \Phi(w)$.
\end{ex}

\begin{prop}\label{prop:DescentRoot} Let $w\in W$, then $\Phi^L(w)$ and $\Phi^R(w)$ are subsets of $\Phi^1(w)$. Moreover,  $\beta\in \Phi(w)$ satisfies $\beta\in \Phi^R(w)$ if and only if $\Phi(w)\setminus\{\beta\}$ is the inversion set of some $u\in W$. 
\end{prop}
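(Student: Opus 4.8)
The plan is to derive both assertions directly from Proposition~\ref{prop:InvBasics}, parts (2) and (3), together with the elementary fact that $\ell(sw)=\ell(w)\pm 1$ for $s\in S$. For the two inclusions, first take $\alpha_s\in\Phi^L(w)=\Phi(w)\cap\Delta$: since $s=s_{\alpha_s}$ and $\ell(s_{\alpha_s}w)<\ell(w)$, the length property forces $\ell(sw)=\ell(w)-1$, so $\alpha_s\in\Phi^1(w)$. Next take $\beta=-w(\alpha_s)\in\Phi^R(w)$ with $s\in D_R(w)$: then $w(\alpha_s)\in\Phi^-$ so $\beta\in\Phi^+$, and $s_\beta=s_{w(\alpha_s)}=wsw^{-1}$ gives $s_\beta w=ws$, which has length $\ell(w)-1$; hence $\beta\in\Phi^1(w)$.

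For the ``moreover'' statement I would handle the two implications separately, both via the decomposition $\Phi(w)=\Phi(u)\sqcup u(\Phi(v))$ for a reduced product $w=uv$. If $\beta\in\Phi^R(w)$, write $\beta=-w(\alpha_s)$ with $s\in D_R(w)$ and set $u=ws$. Then $\ell(w)=\ell(u)+1$, so $w=us$ is a reduced product and Proposition~\ref{prop:InvBasics}(2) yields $\Phi(w)=\Phi(u)\sqcup\{u(\alpha_s)\}$; since $u(\alpha_s)=ws(\alpha_s)=-w(\alpha_s)=\beta$, we conclude $\Phi(w)\setminus\{\beta\}=\Phi(u)$, as desired.

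Conversely, suppose $\Phi(u)=\Phi(w)\setminus\{\beta\}$ for some $u\in W$. Then $\Phi(u)\subsetneq\Phi(w)$, so the order isomorphism of Proposition~\ref{prop:InvBasics}(3) gives $u\leq_R w$, i.e.\ $w=uv$ is a reduced product with $v=u^{-1}w$; and since $\ell(v)=\ell(w)-\ell(u)=|\Phi(w)|-|\Phi(u)|=1$, we have $v=s\in S$. Now $\ell(ws)=\ell(u)<\ell(w)$ shows $s\in D_R(w)$, and Proposition~\ref{prop:InvBasics}(2) applied to $w=us$ gives $\Phi(w)=\Phi(u)\sqcup\{u(\alpha_s)\}$, so $\beta=u(\alpha_s)=-w(\alpha_s)\in\Phi^R(w)$.

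I do not expect a genuine obstacle here: the argument is a straightforward repackaging of the fact that the elements covered by $w$ in the right weak order are exactly the products $ws$ with $s\in D_R(w)$, whose inversion sets are precisely the sets $\Phi(w)\setminus\{-w(\alpha_s)\}$. The only points requiring a little care are tracking which products are reduced and the identity $u(\alpha_s)=-w(\alpha_s)$ when $w=us$; everything else is bookkeeping.
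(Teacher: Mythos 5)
Your proof is correct and follows essentially the same route as the paper: both inclusions via the length computations $\ell(sw)=\ell(w)-1$ and $s_\beta w=ws$, and the ``moreover'' part via the reduced product $w=us$ and Proposition~\ref{prop:InvBasics}. The only difference is that you spell out the converse implication explicitly (using the order isomorphism of Proposition~\ref{prop:InvBasics}(3) to get $u\leq_R w$ and $\ell$-counting to force $v\in S$), a step the paper leaves to the reader; your argument for it is sound.
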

\begin{proof} Let $\alpha\in \Phi^L(w)$, then there is $s\in S$ such that $\alpha=\alpha_s\in\Phi(w)$. So $\ell(s_\alpha w)=\ell(sw)<\ell(w)$.  Since $\ell(sw)=\ell(w)\pm 1$, we conclude that $s_\alpha w\lhd w$ and  $\alpha\in \Phi^1(w)$. Now let $\beta = -w(\alpha_s)\in \Phi^R(w)$. Since $s\in D_R(w)$, there is $u\in W$ such that $w=us$ is a reduced product. So 
$$
\beta = -w(\alpha_s) = -us(\alpha_s)=u(\alpha_s)\in \Phi^+.
$$
Now, $s_\beta w= s_{u(\alpha_s)}w=usu^{-1}(us)=u\lhd w$. Therefore $\beta\in \Phi^1(w)$. The last statement follows from Proposition~\ref{prop:InvBasics} since the product $w=us$ is reduced:
$$
\Phi(w)=\Phi(u)\sqcup u(\Phi(s))=\Phi(u)\sqcup \{u(\alpha_s)\}=\Phi(u)\sqcup \{\beta\}.\qedhere
$$  
\end{proof}

\begin{remark} The statement that there is a hyperplane supporting the face $\Phi^R(w)$ that separates $\Phi(w)$ from $\Phi^+\setminus \Phi(w)$ for any $w\in W$ is true in general, which is not difficult to prove using Proposition~\ref{prop:DescentRoot} and the characterization of  inversion sets as sets separated by an hyperplane from their complements in $\Phi^+$ (see for instance  \cite[Proposition 2.11]{HoLa16}).  
 \end{remark}

\subsection{Short inversion graphs}\label{se:PosetBasisInversion} 
Let $w\in W$. For $\alpha,\beta\in \Phi^1(w)$, we write $\alpha \precinv_w \beta$ if $\beta \notin \Delta_{\mathcal M_{\alpha,\beta}}$.  By Proposition~\ref{prop:BasisAndDihedral}, this is equivalent to  $\alpha \in \Delta_{\mathcal M_{\alpha,\beta}}$ and $\beta \notin \Delta_{\mathcal M_{\alpha,\beta}}$. 
  
\begin{defi}[Charles~\cite{Chb20}]
The {\em short inversion graph} of $w\in W$ is the digraph defined as follows:
\begin{itemize} 
\item the set of vertices is $\Phi^1(w)$;
\item   $(\alpha,\beta)$ is an edge if $\alpha\precinv_w \beta$. 
\end{itemize}
\end{defi}

\begin{remark} The short inversion graph was previously introduced in \cite{Chb20} in the context of rank 3 Coxeter systems and under the name {\em bipodality graph}. In this article, the proof of \cite[Conjecture~2]{DyHo16} given by the author for rank 3 is heavily dependent of  the fact that hyperplanes are projective lines in this case.
\end{remark}

The following result is a direct consequence of Proposition~\ref{prop:DiDep}.

\begin{prop}\label{prop:SIGDep} Let $w\in W$ and $\alpha,\beta \in \Phi^1(w)$. If $\alpha\precinv \beta$, then $\dep(\alpha)<\dep(\beta)$. 
\end{prop}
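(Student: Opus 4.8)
The plan is to reduce the statement directly to Proposition~\ref{prop:DiDep} by unwinding the definition of $\precinv_w$. First I would observe that, since $\alpha,\beta\in\Phi^1(w)$ are distinct, they span two distinct extreme rays of the polyhedral cone $\cone(\Phi(w))$ (recall $\Phi^1(w)$ is the set of extreme rays by \cite[Proposition~1.4(c)]{Dy94-1}); in particular $\mathbb R\alpha\neq\mathbb R\beta$, so the maximal dihedral reflection subgroup $W':=\mathcal M_{\alpha,\beta}$ is well-defined.

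Next I would invoke the characterization of $\precinv_w$ recorded just before the statement: by Proposition~\ref{prop:BasisAndDihedral}, the relation $\alpha\precinv_w\beta$ is equivalent to $\alpha\in\Delta_{W'}$ together with $\beta\notin\Delta_{W'}$. Since $s_\beta\in W'$ by the very definition of $\mathcal M_{\alpha,\beta}$, we have $\beta\in\Phi_{W'}$, and because $\beta\in\Phi^+$ this gives $\beta\in\Phi^+_{W'}$; combined with $\beta\notin\Delta_{W'}$ we obtain $\beta\in\Phi^+_{W'}\setminus\Delta_{W'}$, while $\alpha\in\Delta_{W'}$. This is precisely the hypothesis of Proposition~\ref{prop:DiDep} for the dihedral reflection subgroup $W'$, which therefore yields $\dep(\alpha)<\dep(\beta)$, as desired.

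I do not expect any genuine obstacle: the mathematical content lies entirely in Propositions~\ref{prop:BasisAndDihedral} and~\ref{prop:DiDep}, and the only points requiring (trivial) care are that $\mathcal M_{\alpha,\beta}$ is defined (distinctness of the rays $\mathbb R\alpha,\mathbb R\beta$) and that the membership $\beta\in\Phi^+_{W'}$ holds, both of which are immediate from the definitions of $\Phi_{W'}$ and $\mathcal M_{\alpha,\beta}$.
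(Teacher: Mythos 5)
Your proof is correct and is exactly the argument the paper intends: the paper gives no written proof, stating only that the proposition ``is a direct consequence of Proposition~\ref{prop:DiDep},'' and your unwinding of $\precinv_w$ via Proposition~\ref{prop:BasisAndDihedral} to place $\alpha\in\Delta_{\mathcal M_{\alpha,\beta}}$ and $\beta\in\Phi^+_{\mathcal M_{\alpha,\beta}}\setminus\Delta_{\mathcal M_{\alpha,\beta}}$ is precisely that deduction. The extra care you take (distinctness of the rays so $\mathcal M_{\alpha,\beta}$ is defined, and $\beta\in\Phi^+_{\mathcal M_{\alpha,\beta}}$) is fine and matches the paper's implicit reasoning.
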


\begin{ex}[Continuation of Example~\ref{ex:H3}]\label{ex:H3.2}  With the notation of Example~\ref{ex:H3} and Figure~\ref{fig:H3}, the short inversion graph of $w=312121321$ is:
\smallskip
\begin{center}
\begin{tikzpicture}
 \tikzstyle{every node}=[font=\small]
\def\grosseursimple{0.025}
\node[draw=blue,circle,fill=blue!55,minimum size=2pt,inner sep=2pt,label=above:$\alpha_1$] (a1) at (0,0) {};
\node[draw=blue,circle,fill=blue!55,minimum size=2pt,inner sep=2pt,label=above:$\alpha_3$] (a3) at (2,0) {};
\node[draw=blue,circle,fill=blue!55,minimum size=2pt,inner sep=2pt,label=above:$\alpha_2+\alpha_3$] (a23) at (4,0) {};
\node[draw=blue,circle,fill=blue!55,minimum size=2pt,inner sep=2pt,label=above:$\beta$] (b) at (6,0) {};
\draw[-latex,line width = 0.7,color=blue] (a3) -- (a23);
\draw[-latex,line width = 0.7,color=blue] (a23) -- (b);
\end{tikzpicture}
\end{center}
\smallskip
Indeed, we have (see Figure~\ref{fig:H3}):
\begin{itemize}
\item  $\alpha_3 \precinv_w \alpha_2+\alpha_3$, since $\Delta_{\mathcal M_{\alpha_3,\alpha_2+\alpha_3}} = \{\alpha_2,\alpha_3\}$;
\item $ \alpha_2+\alpha_3\precinv_w \beta$, since $\Delta_{\mathcal M_{\alpha_2+\alpha_3,\beta}} = \{\alpha_2+\alpha_3,1(\alpha_2)\}$;
\item there are no other relations between short inversions of $w$, since 
$$
\Delta_{\mathcal M_{\alpha_1,\alpha_3}} = \{\alpha_1,\alpha_3\},\ \Delta_{\mathcal M_{\alpha_3,\beta}} = \{\alpha_3,\beta\}, \Delta_{\mathcal M_{\alpha_1,\beta}} = \{\alpha_1,\beta\}
$$
and $\Delta_{\mathcal M_{\alpha_1,\alpha_2+\alpha_3}} = \{\alpha_1,\alpha_2+\alpha_3\}$.
\end{itemize}
We observe in particular that $\dep(\alpha_3)=0<\dep(\alpha_2+\alpha_3)=1<\dep(\beta)= 6$, since $\beta=31212132(\alpha_1)=212132(\alpha_1)$.
\end{ex}

\begin{remark}[On computing the short inversion graph] Let $w\in W$. The computation of $\Phi^1(w)$ knowing the inversion set $\Phi(w)$ is easy: we just have identify which of the $\beta\in \Phi(w)$ verifies $\ell(s_\beta w)=\ell(w)-1$. However, computing the short inversion graph of $w$ is more difficult. In this article, we used SageMath~\cite{sage} and the definition to compute the short inversion graph of $w$. More precisely,  let $\alpha,\beta\in \Phi^1(w)$, then:
\begin{enumerate}[(i)]
\item If $|B(\alpha,\beta)|\geq 1$ then it is well-known that $\mathcal M_{\alpha,\beta}$ is infinite (see for instance \cite[Proposition~1.5]{HoLaRi14}). Since $\alpha,\beta\in \Phi(w)$, we have $B(\alpha,\beta)\geq 1$ by \cite[Proposition~2.15]{DyHo16}. Since $\mathcal M_{\alpha,\beta}$ is an infinie dihedral reflection subgroup, we have  either $\alpha\in \Phi(\beta)$ or $\beta\in \Phi(\alpha)$. Then $\beta \notin \Delta_{\mathcal M_{\alpha,\beta}}$ if and only if $\alpha\in \Phi(\beta)$. Finally, we obtain the following criterion in this case: $\alpha\precinv \beta$ if and only if $\ell( s_\alpha s_\beta)<\ell(s_\beta)$.
\item If $|B(\alpha,\beta)|< 1$, then it is well known that $\mathcal M_{\alpha,\beta}$ is finite (see \cite[Proposition~1.5]{HoLaRi14} again). In this case, we compute explicitly $\Phi_{\mathcal M_{\alpha,\beta}}$ and $\Delta_{\mathcal M_{\alpha,\beta}}$, which requires computing $\Phi^+$ up to a certain value of the depth statistic. 
\end{enumerate}
Our algorithm is not efficient and can be improved. In~\S\ref{ss:Edges1} and \S\ref{ss:Edges2} below, we give more in-depth criteria to find the set of edges of a short inversion graphs. 
\end{remark}

\begin{ex}\label{ex:Hypo}  Consider $(W,S)$ with $S=\{1,2,3,4\}$ and Coxeter graph: 
\begin{center}
\begin{tikzpicture}
	[scale=2,
	 q/.style={teal,line join=round},
	 racine/.style={blue},
	 racinesimple/.style={blue},
	 racinedih/.style={blue},
	 sommet/.style={inner sep=2pt,circle,draw=black,fill=blue,thick,anchor=base},
	 rotate=0]
 \tikzstyle{every node}=[font=\small]
\def\grosseursimple{0.025}
\coordinate (ancre) at (0,3);

\node[sommet,label=above left :$1$] (a1) at ($(ancre)+(0,0.5)$) {}  ;
\node[sommet,label=above right:$2$] (a2) at ($(ancre)+(0.5,0.5)$) {} edge[thick] node[auto,swap] {$4$} (a1) ;
\node[sommet,label=below right :$3$] (a3) at ($(ancre)+(0.5,0)$) {} edge[thick] node[auto,swap] {$4$} (a2) ;
\node[sommet,label=below left:$4$] (a4) at (ancre) {} edge[thick] node[auto,swap] {} (a3) edge[thick] node[auto,swap,left] {$4$} (a1)    ;
\end{tikzpicture}
\end{center}
This is an indefinite Coxeter system, i.e., not finite nor affine. 

\smallskip

\noindent Consider $w=412343$, then $\Phi^L(w)= \{\alpha_4\}$, $\Phi^R(w)=\{1(\alpha_4),412(\alpha_3)\}$ and 
$$
\Phi^1(w)=\{\alpha_4,41(\alpha_2),412(\alpha_3),1(\alpha_4)\}.
$$
The short inversion graph of $w$ is then:
\smallskip
\begin{center}
\begin{tikzpicture}
 \tikzstyle{every node}=[font=\small]
\def\grosseursimple{0.025}
\node[draw=blue,circle,fill=blue!55,minimum size=2pt,inner sep=2pt,label=below:$\alpha_4$] (a1) at (0,0) {};
\node[draw=blue,circle,fill=blue!55,minimum size=2pt,inner sep=2pt,label=above right:$41(\alpha_2)$] (a2) at (2,2) {};
\node[draw=blue,circle,fill=blue!55,minimum size=2pt,inner sep=2pt,label=right:$412(\alpha_3)$] (a3) at (2,4) {};
\node[draw=blue,circle,fill=blue!55,minimum size=2pt,inner sep=2pt,label=left:$1(\alpha_4)$] (a4) at (-2,4) {};
\draw[-latex,line width = 0.7,color=blue] (a1) -- (a2);
\draw[-latex,line width = 0.7,color=blue] (a1) -- (a3);
\draw[-latex,line width = 0.7,color=blue] (a1) -- (a4);
\draw[-latex,line width = 0.7,color=blue] (a2) -- (a3);
\end{tikzpicture}
\end{center}
Now, for $w^{-1}=343214$, we have $\Phi^L(w^{-1})= \{\alpha_3,\alpha_4\}$, $\Phi^R(w^{-1})=\{4321(\alpha_4)\}$ and 
$$
\Phi^1(w^{-1})=\{\alpha_3,\alpha_4,43(\alpha_2),4321(\alpha_4)\}.
$$
The short inversion graph of $w^{-1}$ is then:
\smallskip
\begin{center}
\begin{tikzpicture}
 \tikzstyle{every node}=[font=\small]
\def\grosseursimple{0.025}
\node[draw=blue,circle,fill=blue!55,minimum size=2pt,inner sep=2pt,label=below:$4321(\alpha_4)$] (a1) at (0,0) {};
\node[draw=blue,circle,fill=blue!55,minimum size=2pt,inner sep=2pt,label=above right:$43(\alpha_2)$] (a2) at (2,2) {};
\node[draw=blue,circle,fill=blue!55,minimum size=2pt,inner sep=2pt,label=right:$\alpha_4$] (a3) at (2,4) {};
\node[draw=blue,circle,fill=blue!55,minimum size=2pt,inner sep=2pt,label=left:$\alpha_3$] (a4) at (-2,4) {};
\draw[latex-,line width = 0.7,color=blue] (a1) -- (a2);
\draw[latex-,line width = 0.7,color=blue] (a1) -- (a3);
\draw[latex-,line width = 0.7,color=blue] (a1) -- (a4);
\draw[latex-,line width = 0.7,color=blue] (a2) -- (a3);
\end{tikzpicture}
\end{center}
Observe that the short inversion graph of $w^{-1}$ is the opposite graph of the short inversion graph of $w$.
\end{ex}

For a digraph $\Gamma$, we denote by $\Gamma^\op$ the {\em opposite digraph of $\Gamma$}, which is obtained by reversing all the edges of $\Gamma$. 

\begin{prop}\label{prop:Op} Let $w\in W$. The map $\alpha\mapsto -w^{-1}(\alpha)$ induces an isomorphism between the digraph $(\Phi^1 (w),\precinv_w)$ and $(\Phi^1 (w^{-1}),\precinv_{w^{-1}})^\op$. In particular $\alpha\precinv_w \beta$ if and only if $-w^{-1} (\beta)\precinv_{w^{-1}}-w^{-1} (\alpha)$.
\end{prop}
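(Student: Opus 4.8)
The plan is to study the map $\phi\colon\alpha\mapsto-w^{-1}(\alpha)$ in three stages: first that it restricts to a bijection $\Phi^1(w)\to\Phi^1(w^{-1})$, then that it intertwines the maximal dihedral reflection subgroups and inversion data attached to pairs of short inversions, and finally that it exchanges the ``simple'' and ``non-simple'' endpoint of such a pair, which is exactly what reverses the edges.

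For the bijection: since $s_{-w^{-1}(\alpha)}=s_{w^{-1}(\alpha)}=w^{-1}s_\alpha w$, we get $s_{-w^{-1}(\alpha)}w^{-1}=w^{-1}s_\alpha$, so $\ell(s_{-w^{-1}(\alpha)}w^{-1})=\ell(s_\alpha w)$ while $\ell(w^{-1})=\ell(w)$; hence $\alpha\in\Phi^1(w)$ iff $-w^{-1}(\alpha)\in\Phi^1(w^{-1})$. One checks $-w^{-1}(\alpha)\in\Phi^+$ from $\alpha\in\Phi(w)=\Phi^+\cap w(\Phi^-)$, and that the analogous map $\gamma\mapsto-w(\gamma)$ for $w^{-1}$ is inverse to $\phi$; this gives the bijection on vertex sets. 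Next fix $\alpha\neq\beta$ in $\Phi^1(w)$, put $W'=\mathcal M_{\alpha,\beta}$, and write $w=uv$ with $u\in W'$, $v\in X_{W'}$. From $s_{\phi(\alpha)}=w^{-1}s_\alpha w$ (and likewise for $\beta$) we get $\mathcal M_{\phi(\alpha),\phi(\beta)}=w^{-1}W'w=:W''$, which is again a maximal dihedral reflection subgroup, with $\Phi_{W''}=w^{-1}(\Phi_{W'})$; moreover $\phi$ maps $\Phi_{W'}$ onto $\Phi_{W''}$, and using $\phi(\Phi(w))=\Phi(w^{-1})$ together with $\Phi(w)\cap\Phi_{W'}=\Phi_{W'}(u)$ (Proposition~\ref{prop:CosetRep}), one obtains $\Phi_{W''}(u'')=\phi(\Phi_{W'}(u))$, where $w^{-1}=u''v''$ with $u''\in W''$, $v''\in X_{W''}$. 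Since $\alpha,\beta$ are extreme rays of $\cone(\Phi(w))$ lying in $\Phi_{W'}(u)$, Proposition~\ref{prop:InvMDRG} (as in the proof of Proposition~\ref{prop:BasisAndDihedral}) forces $\Phi_{W'}(u)=\cone_\Phi(\alpha,\beta)$ with $\{\alpha,\beta\}$ its two extreme rays, and since $\phi$ is a linear isomorphism of $\mathbb R\Delta_{W'}$ onto $\mathbb R\Delta_{W''}$ it maps these to the two extreme rays $\{\phi(\alpha),\phi(\beta)\}$ of $\Phi_{W''}(u'')$.

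It remains to identify $\Delta_{W''}\cap\{\phi(\alpha),\phi(\beta)\}$. The key point is the effect of $w^{-1}$ on positivity: for $\delta\in\Phi^+$ one has $w^{-1}(\delta)\in\Phi^-$ iff $\delta\in\Phi(w)$. Listing the positive roots of the rank-two system $\Phi_{W'}$ in angular order as $\delta_1,\delta_2,\dots$ (so that $\Delta_{W'}$ consists of the two extreme ones and the inversion sets of $W'$ are the initial/final segments), we may assume $\Phi_{W'}(u)=\{\delta_1,\dots,\delta_k\}$, whence by the positivity rule $\Phi^+_{W''}=\{w^{-1}(\delta_i)\mid i>k\}\sqcup\{-w^{-1}(\delta_i)\mid i\le k\}$. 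As $w^{-1}$ is a linear isomorphism of $\mathbb R\Delta_{W'}$ onto $\mathbb R\Delta_{W''}$ it preserves the cyclic order of rays, so this set is a contiguous arc of $\Phi_{W''}$, and therefore, being a positive system, its two endpoints are $\Delta_{W''}$; these endpoints are $w^{-1}(\delta_{k+1})$ and $-w^{-1}(\delta_k)$ (in the degenerate case $k=|\Phi^+_{W'}|$, i.e.\ $u=w_\circ^{W'}$ and $W'$ finite, read this as $\Delta_{W''}=\phi(\Delta_{W'})$). Comparing with $\phi(\delta_1)=-w^{-1}(\delta_1)$ and $\phi(\delta_k)=-w^{-1}(\delta_k)$: one has $\phi(\delta_k)\in\Delta_{W''}$ always, while $\phi(\delta_1)\in\Delta_{W''}$ only if $\delta_1=\delta_k$, i.e.\ only in the excluded one-root case; on the other hand $\delta_1\in\Delta_{W'}$ always and $\delta_k\in\Delta_{W'}$ exactly when $k=|\Phi^+_{W'}|$. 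Thus in the non-degenerate case $\phi$ sends the simple endpoint of $\Phi_{W'}(u)$ to the non-simple endpoint of $\Phi_{W''}(u'')$ and conversely, while in the degenerate case both endpoints are simple on both sides; either way $\alpha\in\Delta_{W'}$ and $\beta\notin\Delta_{W'}$ if and only if $\phi(\beta)\in\Delta_{W''}$ and $\phi(\alpha)\notin\Delta_{W''}$. By the definition of $\precinv$ (\S\ref{se:PosetBasisInversion}) and Proposition~\ref{prop:BasisAndDihedral}, this says precisely $\alpha\precinv_w\beta\iff\phi(\beta)\precinv_{w^{-1}}\phi(\alpha)$, so together with the first step $\phi$ is the asserted isomorphism $(\Phi^1(w),\precinv_w)\to(\Phi^1(w^{-1}),\precinv_{w^{-1}})^{\op}$.

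The step I expect to be the main obstacle is the last one: conjugation by $w$ does not respect canonical simple systems, so there is no way to read $\Delta_{W''}$ off from $W'$, $\Delta_{W'}$ and $\Phi(w)$ alone — the relevant endpoints $\delta_k,\delta_{k+1}$ genuinely depend on the coset representative $u$ — and one must descend to the two-dimensional picture, using the cyclic order of rays in $\mathbb R\Delta_{W'}$ together with the behaviour of $w^{-1}$ on positivity, to see how the simple roots of the two subgroups correspond.
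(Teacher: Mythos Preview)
Your proof is correct, but it takes a substantially more elaborate route than the paper's.  Both arguments begin the same way (the bijection on $\Phi^1$ and the identification $\mathcal M_{\phi(\alpha),\phi(\beta)}=w^{-1}W'w=:W''$), but they diverge at the key step of showing that $\phi(\alpha)=-w^{-1}(\alpha)$ is \emph{not} a canonical simple root of $W''$.  You compute $\Delta_{W''}$ explicitly: you list $\Phi^+_{W'}$ in angular order, locate the inversion set as an initial segment $\{\delta_1,\dots,\delta_k\}$, use the sign rule for $w^{-1}$ to see that $\Phi^+_{W''}$ is again a contiguous arc, and read off its endpoints as $\{w^{-1}(\delta_{k+1}),\,-w^{-1}(\delta_k)\}$.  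This is correct (and the infinite-dihedral case works too once one notes that the other branch of $\Phi^+_{W'}$, starting from the second simple root, is disjoint from $\Phi(w)$ and therefore stays positive under $w^{-1}$), and it yields both directions of the biconditional at once.

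The paper avoids all of this.  It first observes that the roles of $w$ and $w^{-1}$ are symmetric, so only one implication needs proof.  Then, assuming $\alpha\precinv_w\beta$ and writing $\Delta_{W'}=\{\alpha,\gamma\}$, it notes $\gamma\notin\Phi(w)$ by Proposition~\ref{prop:BasisAndDihedral}(1), so $w^{-1}(\gamma)\in\Phi^+_{W''}$; writing $\beta=a\alpha+b\gamma$ with $a,b>0$ gives
\[
-w^{-1}(\alpha)=\tfrac{1}{a}\bigl(-w^{-1}(\beta)\bigr)+\tfrac{b}{a}\,w^{-1}(\gamma),
\]
a strictly positive combination of two positive roots of $W''$, hence $-w^{-1}(\alpha)\notin\Delta_{W''}$.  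That is the entire argument.  So your final remark---that one is forced to descend to the two-dimensional cyclic-order picture because $\Delta_{W''}$ cannot be read off otherwise---is not quite right: one does not need to identify $\Delta_{W''}$ at all, only to exhibit $-w^{-1}(\alpha)$ as a nontrivial positive combination inside $\Phi^+_{W''}$, and the ingredients for that ($-w^{-1}(\beta)$ and $w^{-1}(\gamma)$) are already at hand.  Your approach buys an explicit description of $\Delta_{W''}$; the paper's buys a three-line proof.
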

\begin{proof} The map $\alpha\mapsto -w^{-1}(\alpha)$ is a bijection between $\Phi(w)$ and $\Phi(w^{-1})$; indeed:
$$
\Phi(w^{-1})=\Phi^+\cap w^{-1}(\Phi^-) = w^{-1}(w(\Phi^+)\cap \Phi^-)=-w^{-1}(w(\Phi^-)\cap \Phi^+)=-w^{-1}(\Phi(w)).
$$ 
Now $\beta\in \Phi^1(w)$ if and only if $-w^{-1}(\beta)\in \Phi^1(w^{-1})$ since
$$
\ell(s_\beta w)-\ell(w) = \ell(w^{-1}s_\beta) -\ell(w^{-1})=\ell(s_{-w^{-1}(\beta)}w^{-1})-\ell(w^{-1}).
$$

Since the role of $w$ and $w^{-1}$ are interchangeable, it remains only to show that if $\alpha\precinv_w \beta$ then $-w^{-1} (\beta)\precinv_{w^{-1}}-w^{-1} (\alpha)$. Let $\alpha\precinv_w \beta$. Write $W'=\mathcal M_{\alpha,\beta}$, then it is well-known that $\mathcal M_{-w^{-1}(\alpha),-w^{-1}(\beta)}=w^{-1}W'w$.  By definition, we have to show that $-w^{-1}(\alpha)\notin \Delta_{w^{-1}W'w}$. We know that $\alpha\in \Delta_{W'}$ and $\beta\notin\Delta_{W'}$ by definition. Write $\Delta_{W'}=\{\alpha,\gamma\}$, then there is $a,b>0$ such that $\beta=a\alpha+b\gamma$. Therefore $\alpha=1/a \beta + b/a (-\gamma)$. Notice that $\gamma\notin \Phi(w)$ by Proposition~\ref{prop:BasisAndDihedral}~(1). Hence $w^{-1}(\gamma)=-w^{-1}(-\gamma)\in \Phi^+\cap \Phi_{w^{-1}W'w}=\Phi^+_{w^{-1}W'w}$ and 
$$
-w^{-1}(\alpha)=1/a (-w^{-1}(\beta))+ b/a (-w^{-1}(-\gamma)),
$$ 
with $1/a,b/a>0$ and $-w^{-1}(\beta)\in \Phi^+_{w^{-1}W'w}$. Therefore $-w^{-1}(\alpha)$ can not be a simple root of $w^{-1}W'w$; so by definition $-w^{-1} (\beta)\precinv_{w^{-1}}-w^{-1} (\alpha)$.
\end{proof}

\subsection{Short inversion posets}\label{se:PosetBasisInversion} For $w\in W$, we define the relation $\preceq_w$ to be  the transitive and reflexive closure of $\precinv_w$, which turns out to be a partial order on~$\Phi^1(w)$.

\begin{prop}\label{prop:Above} The relation $\preceq_w$ is a partial order on $\Phi^1(w)$. Moreover, for any reduced word  $w=s_1\cdots s_k$ consider  the following  total order $\leq$  on~$\Phi(w)$:
$$
\alpha_{s_1} < s_1(\alpha_{s_2})< \cdots < s_1\cdots s_{k-1}(\alpha_{s_k})
$$
Then $\alpha\preceq_w \beta$ implies $\alpha\leq \beta$ and $\dep(\alpha)\leq\dep(\beta)$ for any $\alpha,\beta\in \Phi^1(w)$.
\end{prop}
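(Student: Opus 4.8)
The plan is to dispatch the three assertions separately, the first two being immediate and the last carrying the real content. The relation $\preceq_w$ is reflexive and transitive by construction, so the only thing at issue is antisymmetry, and I would obtain it from the sharper statement: \emph{if $\alpha\preceq_w\beta$ and $\alpha\neq\beta$, then $\dep(\alpha)<\dep(\beta)$}. Such a relation is witnessed by a chain $\alpha=\gamma_0\precinv_w\gamma_1\precinv_w\cdots\precinv_w\gamma_n=\beta$ with $n\ge 1$, along which Proposition~\ref{prop:SIGDep} forces $\dep(\gamma_0)<\dep(\gamma_1)<\cdots<\dep(\gamma_n)$. Antisymmetry follows at once (two opposite strict relations would give $\dep(\alpha)<\dep(\beta)<\dep(\alpha)$), and so does the asserted implication $\alpha\preceq_w\beta\Rightarrow\dep(\alpha)\le\dep(\beta)$, the case $\alpha=\beta$ being trivial.

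For the order inequality, since $\le$ is a total order, hence transitive, it suffices to prove that a single covering step $\alpha\precinv_w\beta$ already gives $\alpha<\beta$, for a fixed reduced word $w=s_1\cdots s_k$. Write $w_p=s_1\cdots s_p$ and $\beta_p=s_1\cdots s_{p-1}(\alpha_{s_p})$, so that $\Phi(w_p)=\{\beta_1,\dots,\beta_p\}$ by Proposition~\ref{prop:InvBasics}(1) and $\beta_p<\beta_q\iff p<q$. Put $W'=\mathcal M_{\alpha,\beta}$; from $\alpha\precinv_w\beta$ and Proposition~\ref{prop:BasisAndDihedral} we have $\alpha\in\Delta_{W'}$ and $\beta\in\Phi^+_{W'}\setminus\Delta_{W'}$. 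Thus the goal is to show that among the indices $p$ with $\beta_p\in\Phi_{W'}$, the one for which $\beta_p=\alpha$ is the smallest.

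The core step is then as follows. Decompose $w=uv$ with $u\in W'$, $v\in X_{W'}$. By Proposition~\ref{prop:CosetRep}(1) and Proposition~\ref{prop:InvMDRG}, $\Phi(w)\cap\Phi_{W'}=\Phi_{W'}(u)=\cone_\Phi(\delta,\gamma)$ with $\delta\in\Delta_{W'}$ and $\gamma\in\Phi^+_{W'}$; since $\alpha,\beta$ span extreme rays of $\cone(\Phi(w))$ and lie inside the two-dimensional cone $\cone_\Phi(\delta,\gamma)\subseteq\cone(\Phi(w))$, they are its extreme rays, so $\{\delta,\gamma\}=\{\alpha,\beta\}$, and $\beta\notin\Delta_{W'}$ forces $\delta=\alpha$, $\gamma=\beta$; an elementary coordinate computation then shows that $\alpha$ is the \emph{only} root of $\Delta_{W'}$ lying in $\cone_\Phi(\alpha,\beta)$. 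Now let $q$ be the least index with $\beta_q\in\Phi_{W'}$ and decompose $w_q=u_qv_q$ likewise; by minimality of $q$ one has $\Phi(w_q)\cap\Phi_{W'}=\{\beta_q\}$, hence $\ell_{W'}(u_q)=1$, so $u_q$ is a canonical simple reflection of $(W',\chi(W'))$ and $\beta_q\in\Delta_{W'}$. As $\beta_q\in\Phi_{W'}(u)=\cone_\Phi(\alpha,\beta)$, the identification above gives $\beta_q=\alpha$; meanwhile $\beta\in\Phi_{W'}(u)$ and $\beta\neq\alpha$ give $\beta=\beta_r$ for some $r>q$. Hence $\alpha=\beta_q<\beta_r=\beta$, and transitivity of $\le$ completes the proof that $\alpha\preceq_w\beta\Rightarrow\alpha\le\beta$.

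I expect the core step to be the only real obstacle: it is where the combinatorics of the reduced word must be matched with the geometry of the dihedral subgroup $\mathcal M_{\alpha,\beta}$. The mechanism that makes it work is that inversion sets in a dihedral Coxeter group are initial segments of one of the two linear orders on its positive system, so the first root of $\Phi_{W'}$ encountered along the word is necessarily \emph{simple} in $W'$; one then needs the extreme-ray description of $\Phi^1(w)$ both to pin $\Phi(w)\cap\Phi_{W'}$ down to $\cone_\Phi(\alpha,\beta)$ and, crucially, to exclude the possibility $\Phi(w)\cap\Phi_{W'}=\Phi^+_{W'}$, which would force $\beta\in\Delta_{W'}$, contradicting $\alpha\precinv_w\beta$. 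Everything else is bookkeeping with Propositions~\ref{prop:CosetRep} and~\ref{prop:InvBasics}.
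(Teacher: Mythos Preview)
Your proof is correct and relies on the same ingredients as the paper (the dihedral description of $\Phi(w)\cap\Phi_{W'}$ via Propositions~\ref{prop:CosetRep} and~\ref{prop:InvMDRG}, plus the extreme-ray characterisation of $\Phi^1(w)$). The main difference is one of economy in the ``core step'': you locate the \emph{least} index $q$ with $\beta_q\in\Phi_{W'}$, argue that $\beta_q\in\Delta_{W'}$, and then identify $\beta_q=\alpha$; the paper instead looks directly at the prefix $u=s_1\cdots s_i$ for which $\beta=\beta_i$, notes that the other simple root $\alpha'$ of $W'$ is absent from $\Phi(w)\supseteq\Phi(u)$ by Proposition~\ref{prop:BasisAndDihedral}(1), and concludes from the dihedral inversion structure that $\alpha$ must already lie in $\Phi(u)$, hence $\alpha=\beta_j$ for some $j<i$. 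Your route extracts a little more structure (that the first root of $\Phi_{W'}$ encountered along the word is always simple in $W'$), which is true and pleasant but not needed here; the paper's route is a couple of lines shorter because it never has to introduce $q$ or decompose $w_q$.
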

\begin{proof} By definition of  $\preceq_w$, one just has to show that $\preceq_w$ is antisymmetric. This is a direct consequence of the `moreover' statement. Notice first that the statement on the depth statistic is a direct consequence of transitivity and Proposition~\ref{prop:SIGDep}. 

Let $w=s_1\cdots s_k$ be a reduced word and define the following  total order on $\Phi(w)$:
$$
\alpha_{s_1} < s_1(\alpha_{s_2})< \cdots < s_1\cdots s_{k-1}(\alpha_{s_k})
$$
Let $\alpha,\beta\in \Phi^1(w)$ such that $\alpha\precinv_w \beta$.  We show that $\alpha<\beta$. By definition $\alpha \in \Delta_{\mathcal M_{\alpha,\beta}}$ and $\beta \notin \Delta_{\mathcal M_{\alpha,\beta}}$. Let $\alpha'\in\Delta_{\mathcal M_{\alpha,\beta}}$ the other simple root. So $\alpha'\notin \Phi(w)$ by Proposition~\ref{prop:BasisAndDihedral}.
 Let $1\leq i\leq k$ such that $\beta = s_1\cdots s_{i-1}(\alpha_{s_i})$. Write $u=s_1\cdots s_i\leq_R w$. Since $\alpha'\notin\Phi(u)$, we must have $\alpha\in \Phi(u)$ by  Proposition~\ref{prop:BasisAndDihedral} again. Therefore there is $j<i$ such that $\alpha = s_1\cdots s_{j-1}(\alpha_{s_j})$ and so $\alpha<\beta$. 
\end{proof}

\begin{remark} (1) The relation $\precinv_w$ is not the cover relation for $\preceq_w$. In fact, if $\beta$ covers $\alpha$ in $(\Phi^1(w),\preceq_w)$, then $\alpha\precinv_w \beta$ but the converse is not necessarily true; see for instance Example~\ref{ex:Cover} below.  

\smallskip
\noindent (2) In the proof of Proposition~\ref{prop:Above}, the total order on $\Phi(w)$ is in fact the restriction of an {\em admissible order} on $\Phi^+$ to $\Phi(w)$. Admissible orders on $\Phi^+$ are in bijection to {\em reflection orders}, which plays a role in Kazhdan-Lusztig theory; see for instance \cite[\S5.2]{BjBr05} or \cite[\S2.1]{Dy19} for more details and references. 
\end{remark}

\begin{ex}\label{ex:Cover} We consider $(W,S)$ as in Example~\ref{ex:Hypo}.  Write $w=1234232314$, then $\Phi^L(w)= \{\alpha_1\}$, $\Phi^R(w)=\{123432321(\alpha_4)\}$ and 
$$
\Phi^1(w)=\{\alpha_1,1(\alpha_2),31(\alpha_2),\gamma=1234232(\alpha_1),1(\alpha_4),\beta=123432321(\alpha_4)\}.
$$
The short inversion graph of $w$ is then:
\smallskip
\begin{center}
\begin{tikzpicture}
 \tikzstyle{every node}=[font=\small]
\def\grosseursimple{0.025}
\node[draw=blue,circle,fill=blue!55,minimum size=2pt,inner sep=2pt,label=below:$\alpha_1$] (a1) at (0,0) {};
\node[draw=blue,circle,fill=blue!55,minimum size=2pt,inner sep=2pt,label=above right:$1(\alpha_2)$] (a2) at (2,2) {};
\node[draw=blue,circle,fill=blue!55,minimum size=2pt,inner sep=2pt,label=right:$31(\alpha_2)$] (a3) at (5,2.5) {};
\node[draw=blue,circle,fill=blue!55,minimum size=2pt,inner sep=2pt,label=above:$\gamma$] (a4) at (2,4) {};
\node[draw=blue,circle,fill=blue!55,minimum size=2pt,inner sep=2pt,label=left:$1(\alpha_4)$] (a5) at (-2,3) {};
\node[draw=blue,circle,fill=blue!55,minimum size=2pt,inner sep=2pt,label=above:$\beta$] (a6) at (0,6) {};
\draw[-latex,line width = 0.7,color=blue] (a1) -- (a2);
\draw[-latex,line width = 0.7,color=blue] (a2) -- (a3);
\draw[-latex,line width = 0.7,color=blue] (a1) -- (a3);
\draw[-latex,line width = 0.7,color=blue] (a2) -- (a4);
\draw[-latex,line width = 0.7,color=blue] (a3) -- (a4);
\draw[-latex,line width = 0.7,color=blue] (a1) -- (a4);
\draw[-latex,line width = 0.7,color=blue] (a1) -- (a5);
\draw[-latex,line width = 0.7,color=blue] (a1) -- (a6);
\draw[-latex,line width = 0.7,color=blue] (a2) -- (a6);
\draw[-latex,line width = 0.7,color=blue] (a3) -- (a6);
\draw[-latex,line width = 0.7,color=blue] (a4) -- (a6);
\draw[-latex,line width = 0.7,color=blue] (a5) -- (a6);
\end{tikzpicture}
\end{center}
Now, the Hasse diagram of the short inversion poset of $w$ is: 
\smallskip
\begin{center}
\begin{tikzpicture}
 \tikzstyle{every node}=[font=\small]
\def\grosseursimple{0.025}
\node[draw=blue,circle,fill=blue!55,minimum size=2pt,inner sep=2pt,label=below:$\alpha_1$] (a1) at (0,0) {};
\node[draw=blue,circle,fill=blue!55,minimum size=2pt,inner sep=2pt,label=above right:$1(\alpha_2)$] (a2) at (2,1.5) {};
\node[draw=blue,circle,fill=blue!55,minimum size=2pt,inner sep=2pt,label=right:$31(\alpha_2)$] (a3) at (2,3) {};
\node[draw=blue,circle,fill=blue!55,minimum size=2pt,inner sep=2pt,label=above:$\gamma$] (a4) at (2,4.5) {};
\node[draw=blue,circle,fill=blue!55,minimum size=2pt,inner sep=2pt,label=left:$1(\alpha_4)$] (a5) at (-2,3) {};
\node[draw=blue,circle,fill=blue!55,minimum size=2pt,inner sep=2pt,label=above:$\beta$] (a6) at (0,6) {};
\draw[-latex,line width = 0.7,color=blue] (a1) -- (a2);
\draw[-latex,line width = 0.7,color=blue] (a2) -- (a3);
\draw[-latex,line width = 0.7,color=blue] (a3) -- (a4);
\draw[-latex,line width = 0.7,color=blue] (a1) -- (a5);
\draw[-latex,line width = 0.7,color=blue] (a4) -- (a6);
\draw[-latex,line width = 0.7,color=blue] (a5) -- (a6);
\end{tikzpicture}
\end{center}
Observe that any short inversion is {\em sandwiched} between a left descent-root and a right descent-root.
\end{ex}

\begin{remark} Let $w\in W$. In $(\Phi^1(w),\preceq_w)$, the elements of $\Phi^L(w)$ are minimal and therefore  incomparable. By Proposition \ref{prop:Op}, the elements of $\Phi^R(w)$ are maximal in $(\Phi^1(w),\preceq_w)$, and hence also incomparable.
\end{remark}

We state now the main result of this section. 

\begin{thm}\label{thm:BasisInversion} Let $w\in W$. For the poset $(\Phi^1(w),\preceq_w)$,  the minimal elements are the left-descent roots in $\Phi^L(w)$ and the maximal elements are the right-descent roots in $\Phi^R(w)$. In other words,  for any $\beta\in \Phi^1(w)$ there is $\alpha\in \Phi^L(w)$ and $\gamma\in \Phi^R(w)$ such that $\alpha\preceq _w\beta\preceq_w \gamma$. 
\end{thm}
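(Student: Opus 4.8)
The plan is to reduce to a single local claim and prove that claim by induction on depth. First, by Proposition~\ref{prop:Op} applied to $w$ and to $w^{-1}$, the assertion ``the maximal elements of $(\Phi^1(w),\preceq_w)$ are exactly $\Phi^R(w)$, for all $w$'' is equivalent to ``the minimal elements of $(\Phi^1(w),\preceq_w)$ are exactly $\Phi^L(w)$, for all $w$'': indeed $\alpha\mapsto-w^{-1}(\alpha)$ is a poset anti-isomorphism $(\Phi^1(w),\preceq_w)\to(\Phi^1(w^{-1}),\preceq_{w^{-1}})$ carrying $\Phi^R(w)$ onto $\Phi^L(w^{-1})$ (using $D_R(w)=D_L(w^{-1})$). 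The Remark preceding the theorem already gives $\Phi^L(w)\subseteq\{\text{minimal elements}\}$, since a simple root $\alpha_s$ lies in $\Delta_{W'}$ for every reflection subgroup $W'\ni s$ (because $\Phi(s)=\{\alpha_s\}$), so nothing is $\precinv_w$-below $\alpha_s$. Combining this with Proposition~\ref{prop:SIGDep} (depth strictly decreases along $\precinv_w$, so iterating terminates), it suffices to prove the following claim: \emph{if $\beta\in\Phi^1(w)$ and $\beta\notin\Delta$, then $\alpha\precinv_w\beta$ for some $\alpha\in\Phi^1(w)$}\ $(\star)$. Indeed $(\star)$ forces every minimal $\beta$ into $\Delta\cap\Phi(w)=\Phi^L(w)$, and iterating $(\star)$ downward shows every $\beta\in\Phi^1(w)$ dominates an element of $\Phi^L(w)$.

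I would prove $(\star)$ by induction on $\dep(\beta)\ge 1$, uniformly in $w$. By Proposition~\ref{prop:Palind0}, pick $g\in W$, $r\in S$ with $g(\beta)=\alpha_r$ and $\ell(g)=\dep(\beta)$, and let $s\in S$ be the last letter of a reduced word $g=g'\!\cdot s$; since $g'$ is reduced with $g'(s(\beta))=\alpha_r$ and $\ell(g')=\dep(\beta)-1$, we get $\dep(s(\beta))=\dep(\beta)-1$, hence $B(\alpha_s,\beta)>0$ by~\eqref{eq:Depth}. \emph{Case I: $s\in D_L(w)$.} Then $\alpha_s\in\Phi^L(w)\subseteq\Phi^1(w)$ and $\alpha_s\in\Delta_{\mathcal M_{\alpha_s,\beta}}$; were $\beta$ also in $\Delta_{\mathcal M_{\alpha_s,\beta}}$ we would have $\Delta_{\mathcal M_{\alpha_s,\beta}}=\{\alpha_s,\beta\}$, so $B(\alpha_s,\beta)\le 0$ by~\eqref{eq:BasisAndDihedral}, a contradiction; hence $\alpha_s\precinv_w\beta$ — in fact a left-descent root lies directly below $\beta$. \emph{Case II: $s\notin D_L(w)$}, so $\alpha_s\notin\Phi(w)$ and $w_1:=sw$ has $\Phi(w_1)=\{\alpha_s\}\sqcup s(\Phi(w))$ (Proposition~\ref{prop:InvBasics}(2)). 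Put $\beta_1:=s(\beta)\in\Phi^+$; then $\beta_1\in\Phi(w_1)$, and I claim $\beta_1\in\Phi^1(w_1)$, equivalently $\ell(ss_\beta w)=\ell(w)$. If instead $\ell(ss_\beta w)=\ell(w)-2$, then with $z:=ss_\beta w$ and $t:=s_\beta$ one has $z<sz<tsz<stsz$ (namely $z<s_\beta w<w<w_1$) with $\ell(tsz)=\ell(sz)+1$, so Lemma~\ref{lem:BruhatKey} yields $\ell(sts)=\ell(s_{\beta_1})\ne\ell(s_\beta)-2$, contradicting $\ell(s_{\beta_1})=2\dep(\beta_1)+1=\ell(s_\beta)-2$ (Proposition~\ref{prop:Palind0}). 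Thus $\beta_1\in\Phi^1(w_1)$ with $\dep(\beta_1)=\dep(\beta)-1$.

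Since $s\in S$ permutes $\Phi^+\setminus\{\alpha_s\}$, the map $\mu\mapsto s(\mu)$ is an isomorphism of short-inversion graphs $(\Phi^1(w),\precinv_w)\xrightarrow{\sim}(\Phi^1(w_1)\setminus\{\alpha_s\},\precinv_{w_1})$ (functoriality under $w\mapsto sw$), with $\alpha_s$ minimal in $(\Phi^1(w_1),\preceq_{w_1})$. Applying the inductive hypothesis to $(w_1,\beta_1)$ (when $\dep(\beta_1)=0$, simply $\beta_1\in\Phi^L(w_1)$) and transporting the resulting relation back through $s$ — a $\preceq_{w_1}$-chain below $\beta_1$ that does not bottom out at $\alpha_s$ avoids $\alpha_s$ altogether (its vertices have strictly increasing depth) and hence transports to a $\preceq_w$-chain ending at $\beta$ — proves $(\star)$ for $(w,\beta)$ \emph{unless} the chain is forced through $\alpha_s$, i.e.\ unless $\alpha_s\precinv_{w_1}\beta_1$. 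In that residual configuration, writing $w=uv$ with $u\in\mathcal M_{\alpha_s,\beta}$ and $v\in X_{\mathcal M_{\alpha_s,\beta}}$, Propositions~\ref{prop:CosetRep}(1) and~\ref{prop:InvMDRG} give $\Phi(w)\cap\Phi_{\mathcal M_{\alpha_s,\beta}}=\cone_\Phi(\zeta,\beta)$ with $\zeta\in\Delta_{\mathcal M_{\alpha_s,\beta}}$ (as $\beta$, being an extreme ray of $\cone(\Phi(w))$ and neither $\alpha_s$ nor the other canonical simple root of $\mathcal M_{\alpha_s,\beta}$, must be the non-simple endpoint of that arc); since $\alpha_s\notin\Phi(w)$, this forces the other canonical simple root $\delta$ of $\mathcal M_{\alpha_s,\beta}$ into $\Phi(w)$, whence $\dep(\delta)<\dep(\beta)$ by Proposition~\ref{prop:DiDep}. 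Then either $\delta\in\Phi^1(w)$, giving $\delta\precinv_w\beta$ and hence $(\star)$, or $\delta$ decomposes over $\Phi^1(w)$ and one argues with the subgroups $\mathcal M_{\mu,\beta}$ for $\mu$ a constituent of $\delta$ (using also that no root of $\Phi^1(w)$ other than $\beta$ lies in the plane of $\mathcal M_{\alpha_s,\beta}$).

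The reductions of the first paragraph, the length bookkeeping, and the $w\mapsto sw$ functoriality of short-inversion graphs are routine. The genuine difficulty is Case II, and within it the residual configuration $\alpha_s\precinv_{w_1}\beta_1$: there one must extract, from the tight compatibility between the extreme rays of $\cone(\Phi(w))$ and the canonical simple systems of maximal dihedral reflection subgroups (Propositions~\ref{prop:BasisAndDihedral},~\ref{prop:InvMDRG},~\ref{prop:DiDep}, and possibly the edge criteria of \S\ref{ss:Edges1}--\S\ref{ss:Edges2}), a short inversion lying $\precinv_w$-below $\beta$. I expect this to be the main obstacle and the step demanding the most delicate analysis.
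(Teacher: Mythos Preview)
Your overall architecture---reduce via Proposition~\ref{prop:Op} to the claim $(\star)$ and then induct on $\dep(\beta)$---matches the paper exactly, and your Case~I is precisely Proposition~\ref{prop:First}. The part of Case~II that establishes $\beta_1:=s(\beta)\in\Phi^1(w_1)$ via Lemma~\ref{lem:BruhatKey} is also correct and is essentially the content of Lemma~\ref{prop:BruhatKey} specialized to the single step $x=s$. One minor imprecision: the map $\mu\mapsto s(\mu)$ is \emph{not} an isomorphism of short-inversion graphs in the direction you state; by Proposition~\ref{prop:Key1}(2)--(3) one only has $\Phi^1(w_1)\setminus\{\alpha_s\}\hookrightarrow\Phi^1(w)$ via $\gamma\mapsto s(\gamma)$, which is the direction you actually use, so the argument in the non-residual part survives.

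The genuine gap is exactly where you identify it: the residual configuration in which every $\alpha_1\precinv_{w_1}\beta_1$ equals $\alpha_s$ (this includes the base case $\dep(\beta_1)=0$, which you do not really treat). Your analysis there is incomplete. You correctly deduce that the second canonical simple root $\delta$ of $\mathcal M_{\alpha_s,\beta}$ lies in $\Phi(w)$, but when $\delta\notin\Phi^1(w)$ your sentence ``$\delta$ decomposes over $\Phi^1(w)$ and one argues with the subgroups $\mathcal M_{\mu,\beta}$'' is not a proof: there is no reason a priori why $\beta\notin\Delta_{\mathcal M_{\mu,\beta}}$ for such a constituent $\mu$, and no termination argument is given. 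The paper avoids this trap by \emph{not} recursing one step at a time. Instead of passing from $w$ to $sw$, it climbs all the way up along the suffix structure of $g$: either $gw$ is reduced (Proposition~\ref{prop:Inter}), in which case a parabolic decomposition $g=g_1g_2$ with $g_1\in W_{\{r,t\}}$ produces $g_2^{-1}(\alpha_s)\precinv_w\beta$ directly; or $gw$ is not reduced (Proposition~\ref{prop:Second}), in which case one takes the \emph{longest} suffix $x\leq_L g$ with $xw$ reduced, and the obstruction letter $t$ gives $x^{-1}(\alpha_t)\precinv_w\beta$. In both cases the predecessor is exhibited explicitly in $\Phi^1(w)$, with no residual case to handle. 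Your single-step recursion is essentially trying to reconstruct Proposition~\ref{prop:Second} one letter at a time, and the residual case is precisely where you would be forced to take a longer $x$; completing your argument would amount to re-deriving Propositions~\ref{prop:Inter} and~\ref{prop:Second}.
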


We dedicate the remainder of this section to the proof of this theorem. 

\subsection{Short inversions and reduced products}

The following two results are critical to prove Theorem~\ref{thm:BasisInversion}.

\begin{prop}\label{prop:Key1} Let $x,w\in W$ and $\alpha,\gamma\in\Phi^+\setminus\Phi(x)$. 
\begin{enumerate}
\item We have $\alpha\in \Phi(xw)$ if and only if $x^{-1}(\alpha)\in \Phi(w)$.
\item If $xw$ is a reduced product and $\alpha\in \Phi^1(xw)$ then $x^{-1}(\alpha)\in \Phi^1(w)$.
\item If $xw$ is a reduced product and $\alpha\precinv_{xw}\gamma$ then $x^{-1} (\alpha) \precinv_w x^{-1} (\gamma)$.
\end{enumerate}
\end{prop}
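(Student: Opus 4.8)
The plan is to treat the three parts in order: (1) is a direct computation from the definition of inversion sets; (2) follows from (1) together with Dyer's description of $\Phi^1$ as the extreme rays of a cone; and (3) follows from (2) by a short linear‑algebra argument inside a maximal dihedral reflection subgroup. For (1) I would work straight from $\Phi(u)=\Phi^+\cap u(\Phi^-)$. Since $\alpha\in\Phi^+$ and $\alpha\notin\Phi(x)$, we have $\alpha\notin x(\Phi^-)$, so $x^{-1}(\alpha)\in\Phi^+$. Then $\alpha\in\Phi(xw)$ is equivalent to $w^{-1}x^{-1}(\alpha)\in\Phi^-$, and since $x^{-1}(\alpha)$ is already positive this is equivalent to $x^{-1}(\alpha)\in\Phi^+\cap w(\Phi^-)=\Phi(w)$. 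No reduced‑product hypothesis enters here.

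For (2), recall from the discussion preceding Example~\ref{ex:H3} that $\Phi^1(u)$ is exactly the set of roots of $\Phi(u)$ spanning extreme rays of the polyhedral cone $\cone(\Phi(u))$. As $xw$ is a reduced product, Proposition~\ref{prop:InvBasics}(2) gives $\Phi(xw)=\Phi(x)\sqcup x(\Phi(w))$; since $\alpha\in\Phi^1(xw)\subseteq\Phi(xw)$ and $\alpha\notin\Phi(x)$, this forces $\alpha\in x(\Phi(w))$, whence $x^{-1}(\alpha)\in\Phi(w)$ (consistent with (1)). Now $x$ acts on $V$ as a linear automorphism, so it sends the extreme ray $\mathbb R_{\geq 0}\alpha$ of $\cone(\Phi(xw))$ to an extreme ray of $\cone(x^{-1}(\Phi(xw)))$; and $\cone(\Phi(w))\subseteq\cone(x^{-1}(\Phi(xw)))$ because $x(\Phi(w))\subseteq\Phi(xw)$. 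Since an extreme ray of a cone that happens to lie in a subcone is also an extreme ray of that subcone, $\mathbb R_{\geq 0}x^{-1}(\alpha)$ is an extreme ray of $\cone(\Phi(w))$, and as $x^{-1}(\alpha)\in\Phi(w)$ we conclude $x^{-1}(\alpha)\in\Phi^1(w)$.

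For (3), assume $\alpha\precinv_{xw}\gamma$. By (2) we already have $x^{-1}(\alpha),x^{-1}(\gamma)\in\Phi^1(w)$, so by the definition of $\precinv_w$ it suffices to prove $x^{-1}(\gamma)\notin\Delta_{\mathcal M_{x^{-1}(\alpha),x^{-1}(\gamma)}}$. Put $W'=\mathcal M_{\alpha,\gamma}$ and $\Delta_{W'}=\{\alpha,\alpha'\}$, where $\alpha\in\Delta_{W'}$ and $\gamma\notin\Delta_{W'}$ is the meaning of $\alpha\precinv_{xw}\gamma$; then $\gamma=a\alpha+b\alpha'$ with $a,b>0$ (if $a=0$ then $\gamma$ would be a positive multiple of the simple root $\alpha'$, hence equal to $\alpha'$), and $\alpha,\gamma$ are linearly independent. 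By Proposition~\ref{prop:BasisAndDihedral}(1), $\alpha'\notin\Phi(xw)$, so $\alpha'\notin\Phi(x)$ (as $\Phi(x)\subseteq\Phi(xw)$), giving $x^{-1}(\alpha')\in\Phi^+$; likewise $x^{-1}(\alpha),x^{-1}(\gamma)\in\Phi^+$. Conjugation by $x$ carries $W'$ to the maximal dihedral reflection subgroup $W'':=x^{-1}W'x=\mathcal M_{x^{-1}(\alpha),x^{-1}(\gamma)}$ and $\Phi_{W'}$ to $\Phi_{W''}$, so all three roots $x^{-1}(\alpha),x^{-1}(\alpha'),x^{-1}(\gamma)$ lie in $\Phi^+\cap\Phi_{W''}=\Phi^+_{W''}=\cone_\Phi(\Delta_{W''})$ with $|\Delta_{W''}|=2$. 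Applying $x^{-1}$ to $\gamma=a\alpha+b\alpha'$ yields $x^{-1}(\gamma)=a\,x^{-1}(\alpha)+b\,x^{-1}(\alpha')$ with $a,b>0$. If $x^{-1}(\gamma)$ were a simple root of $W''$, write $\Delta_{W''}=\{x^{-1}(\gamma),\delta\}$ and expand $x^{-1}(\alpha),x^{-1}(\alpha')$ in $\{x^{-1}(\gamma),\delta\}$ with nonnegative coefficients; substituting into the last identity forces the $\delta$‑coefficients of both to vanish, so $x^{-1}(\alpha)$ is a positive multiple of $x^{-1}(\gamma)$, hence $\alpha\in\mathbb R\gamma$, contradicting linear independence. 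Therefore $x^{-1}(\gamma)\notin\Delta_{W''}$, i.e.\ $x^{-1}(\alpha)\precinv_w x^{-1}(\gamma)$.

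I expect (3) to be the delicate point: the canonical simple system $\Delta_{W'}$ is defined relative to $(W,S)$ and is \emph{not} simply transported by $x^{-1}$ (since $x^{-1}(\Phi^+)\neq\Phi^+$ in general), which is why one must argue by comparing positive cones inside $W''$ rather than by moving simple systems directly --- the same maneuver already used in the proof of Proposition~\ref{prop:Op}.
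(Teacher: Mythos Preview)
Your proof is correct. Part~(1) is identical to the paper's argument. Part~(3) follows the same setup as the paper --- both hinge on showing $\alpha'\notin\Phi(x)$ via Proposition~\ref{prop:BasisAndDihedral}(1) and $\Phi(x)\subseteq\Phi(xw)$ --- but you conclude by a direct linear-algebra contradiction, whereas the paper observes more succinctly that $x^{-1}(\Delta_{W'})\subseteq\Phi^+$ forces $x^{-1}(\Delta_{W'})=\Delta_{x^{-1}W'x}$ (since $x^{-1}$ carries $\Phi^+_{W'}$ bijectively onto $\Phi^+_{x^{-1}W'x}$ once both simple roots go to positive roots), from which $x^{-1}(\gamma)\notin\Delta_{x^{-1}W'x}$ is immediate. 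Your argument is a fine unpacking of that same fact.

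Part~(2) is where you genuinely diverge. The paper argues by length: from $x^{-1}(\alpha)\in\Phi(w)$ one has $\ell(s_{x^{-1}(\alpha)}w)\leq\ell(w)-1$, and then the triangle inequality together with $\ell(s_\alpha xw)=\ell(xw)-1$ and $\ell(xw)=\ell(x)+\ell(w)$ gives $\ell(x^{-1}\cdot s_\alpha xw)\geq\ell(s_\alpha xw)-\ell(x)=\ell(w)-1$, pinning the value. Your route instead exploits Dyer's geometric characterization of $\Phi^1$ as extreme rays: $x^{-1}$ is linear, so it sends extreme rays of $\cone(\Phi(xw))$ to extreme rays of $x^{-1}(\cone(\Phi(xw)))$, and then you use the elementary fact that an extreme ray of a cone lying in a subcone is extreme there too, applied to $\cone(\Phi(w))\subseteq x^{-1}(\cone(\Phi(xw)))$. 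Both approaches are short; yours has the advantage of making the convex-geometric content of $\Phi^1$ do the work (and of avoiding any possible slip in the length inequalities), while the paper's has the advantage of not needing to invoke the extreme-ray description and staying purely combinatorial.
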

\begin{proof} (1). First, notice that  by assumption $x^{-1}(\alpha)\in \Phi^+$ and 
$$
(xw)^{-1}(\alpha)=w^{-1}x^{-1}(\alpha)=w^{-1}(x^{-1}(\alpha)).
$$
Therefore:
$
\alpha\in\Phi(xw) \iff w^{-1}(x^{-1}(\alpha))\in \Phi^- \iff x^{-1}(\alpha)\in \Phi(w).
$ 

\noindent (2). Assume that $\alpha\in \Phi^1(xw)$. By (1) we have  $x^{-1}(\alpha)\in \Phi(w)$. Therefore
$$
\ell(s_{x^{-1}(\alpha)}w)=\ell(x^{-1}s_\alpha xw)<\ell(w),
$$
and $\ell(x^{-1}s_\alpha xw)\leq\ell(w)-1$. Now, by classical properties of the length function we obtain:
$$
\ell(x^{-1}s_\alpha xw)\geq\ell(xw)-\ell(x^{-1}s_\alpha)\geq \ell(xw)-\ell(x^{-1})-1=\ell(w)-1;
$$
the last equality is a consequence of $xw$ being a reduced product. Therefore $\ell(s_{x^{-1}(\alpha)}w)=\ell(x^{-1}s_\alpha xw)=\ell(w)-1$ and $x^{-1}(\alpha)\in \Phi^1(w)$.

\noindent (3). Set $W'=\mathcal M_{\alpha, \gamma}$. So  $\mathcal M_{x^{-1} (\alpha),x^{-1} (\gamma)}=x^{-1}W'x$. Thanks to (2), we just have to prove that $x^{-1} (\gamma)$ is not a simple root of $x^{-1}W'x$. Since $\alpha\precinv_{xw} \gamma$, $\alpha,\gamma\in \Phi^1(xw)$  by definition and $\alpha\in\Delta_{W'}$ by Proposition~\ref{prop:BasisAndDihedral}. Let $\alpha'$ be the other simple root in $\Delta_{W'}$. Since $\alpha\precinv_{xw} \gamma$, $\gamma\notin\Delta_{W'}$ therefore $\alpha'\notin \Phi(xw)$ by Proposition~\ref{prop:BasisAndDihedral}~(1). Since $xw$ is reduced, $\Phi(x)\subseteq\Phi(xw)$ by     Proposition~\ref{prop:InvBasics}, and so $\alpha'\notin \Phi(x)$. Therefore $x^{-1}(\alpha')\in\Phi^+$. So $x^{-1}(\Delta_{W'})\subseteq \Phi^+$ and therefore $x^{-1}(\Delta_{W'})=\Delta_{x^{-1}W'x}$ by maximality of $W'$. Hence $x^{-1} (\gamma)\notin \Delta_{x^{-1}W'x}$ and $x^{-1} (\alpha) \precinv_w x^{-1} (\gamma)$.
\end{proof}

\begin{cor}\label{cor:Key1} Let $x,w\in W$ and $\beta\in\Phi(w)$.  Write $w'=s_\beta w$ and assume that $xw$ and $xw'$ are reduced products. Then $x(\beta)\in \Phi(xw)$. Furthermore
 $\beta\in \Phi^1(w)$ if and only if $x(\beta)\in \Phi^1(xw)$.
\end{cor}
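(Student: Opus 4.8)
The plan is to reduce the whole statement to length bookkeeping, using the conjugation identity $s_{x(\beta)} = x s_\beta x^{-1}$. First I would observe that, since $xw$ is a reduced product, Proposition~\ref{prop:InvBasics}~(2) gives $\Phi(xw)=\Phi(x)\sqcup x(\Phi(w))$. As $\beta\in\Phi(w)$, this immediately yields $x(\beta)\in x(\Phi(w))\subseteq\Phi(xw)$, and moreover $x(\beta)\notin\Phi(x)$ (in particular $x(\beta)\in\Phi^+$). That settles the first assertion $x(\beta)\in\Phi(xw)$.

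For the \emph{furthermore} part, write $w'=s_\beta w$ and note that $s_{x(\beta)}(xw)=x s_\beta x^{-1}\,xw=x s_\beta w=xw'$. Since $\beta\in\Phi(w)$ we have $\ell(w')<\ell(w)$, and $\beta\in\Phi^1(w)$ exactly when $\ell(w')=\ell(w)-1$. Now I would invoke that \emph{both} $xw$ and $xw'$ are reduced products to write $\ell(xw)=\ell(x)+\ell(w)$ and $\ell(xw')=\ell(x)+\ell(w')$, whence
$$
\ell(xw)-\ell\bigl(s_{x(\beta)}(xw)\bigr)=\ell(xw)-\ell(xw')=\ell(w)-\ell(w').
$$
Therefore $x(\beta)\in\Phi^1(xw)$, i.e. $\ell\bigl(s_{x(\beta)}(xw)\bigr)=\ell(xw)-1$, if and only if $\ell(w')=\ell(w)-1$, i.e. if and only if $\beta\in\Phi^1(w)$, which is the desired equivalence.

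I do not expect a genuine obstacle here; the only point requiring attention is that the argument really uses both hypotheses — ``$xw$ reduced'' to locate $x(\beta)$ inside $\Phi(xw)$ and to split $\ell(xw)$, and ``$xw'$ reduced'' to split $\ell(xw')$ — and that additivity of length over reduced products is precisely what converts the short-inversion condition into the numerical identity above. Alternatively, one could derive $x(\beta)\in\Phi(xw)$ together with the implication $x(\beta)\in\Phi^1(xw)\Rightarrow\beta\in\Phi^1(w)$ from Proposition~\ref{prop:Key1}~(1)--(2) applied with $\alpha=x(\beta)$; but the reverse implication still needs the reduced-product hypothesis on $xw'$, so the self-contained length computation is the cleanest route and I would present it that way.
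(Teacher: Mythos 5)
Your proof is correct and follows essentially the paper's route: the first claim is obtained exactly as in the paper from $\Phi(xw)=\Phi(x)\sqcup x(\Phi(w))$, and the paper's proof of the converse implication is precisely your computation $\ell(xw')=\ell(x)+\ell(w')=\ell(x)+\ell(w)-1=\ell(xw)-1$ combined with $s_{x(\beta)}\,xw=xw'$. The only (harmless) difference is that the paper deduces the forward implication by citing Proposition~\ref{prop:Key1}(2), which needs only $xw$ reduced, whereas your single symmetric length identity uses both reduced-product hypotheses to give the two directions at once — a variant you yourself flag in your closing remark.
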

\begin{proof} Write $\alpha=x(\beta)$. Since $xw$ is a reduced product, we have $\Phi(xw)=\Phi(x)\sqcup x(\Phi(w))$ by Proposition~\ref{prop:InvBasics}. Hence  $\alpha\in \Phi(xw)\setminus\Phi(x)$. By Proposition~\ref{prop:Key1}, we have $\alpha=x(\beta)\in \Phi^1(xw)$ implies $\beta=x^{-1}(\alpha)\in \Phi^1(w)$. Conversely, if $\beta\in\Phi^1(w)$, then $\ell(w')=\ell(s_{\beta}w)=\ell(x^{-1}s_\alpha xw)=\ell(w)-1$. Therefore, since $xw$ and $xw'$ are reduced products, we have:
$$
\ell(xw') = \ell(x)+\ell(w') =\ell(x)+\ell(w)-1=\ell(xw)-1.
$$
Finally, we obtain:
$$
\ell(s_\alpha xw)=\ell(xx^{-1}s_\alpha xw)=\ell(xs_\beta w)=\ell(xw') =\ell(xw)-1,
$$
which means that $\alpha=x(\beta)\in \Phi^1(xw)$.
\end{proof}

\subsection{Some edges in short inversion graphs}\label{ss:Edges1} The key to prove Theorem~\ref{thm:BasisInversion} is to explicitly construct, for $w\in W$ and for each $\beta\in \Phi^1(w)\setminus\Phi^L(w)$, a short inversion $\alpha\in \Phi^1(w)$ such that $\alpha\precinv_w\beta$.  For such a $\beta\in \Phi^1(w)\setminus\Phi^L(w)$, the idea consists in considering $g\in W$ such that $g(\beta)\in\Delta$ and $\ell(g)=\dep(\beta)$, which exists by definition of the depth statistic. The first proposition below handles the case where $D_L(w)\cap D_R(g)$ is not empty.

\begin{prop}\label{prop:First} Let $w\in W$ and $\beta\in \Phi^1(w)$ with $\dep(\beta)>0$. Then $\alpha_s\precinv_w \beta$ for all  $s\in D_L(w)$ such that  $\dep(s(\beta))<\dep(\beta)$. In this case, $\dep(\alpha_s)=0<\dep(\beta)$. 
\end{prop}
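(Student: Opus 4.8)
The plan is to reduce the relation $\alpha_s\precinv_w\beta$ to a single sign condition on $B$, which the depth hypothesis supplies, and then to read that condition off from the canonical simple system of the maximal dihedral reflection subgroup $\mathcal M_{\alpha_s,\beta}$.

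First I would dispose of the easy parts. Since $s\in D_L(w)$, the simple root $\alpha_s$ lies in $\Phi(w)\cap\Delta=\Phi^L(w)$, hence in $\Phi^1(w)$ by Proposition~\ref{prop:DescentRoot}; and $\dep(\alpha_s)=0$ because $\alpha_s$ is simple, so the hypothesis $\dep(\beta)>0$ already yields the last assertion $\dep(\alpha_s)=0<\dep(\beta)$. In particular $\beta\neq\alpha_s$; since $\Phi^+$ is in bijection with $T$ (a positive root proportional to $\alpha_s$ must equal $\alpha_s$), we get $\mathbb R\alpha_s\neq\mathbb R\beta$, so $\mathcal M_{\alpha_s,\beta}$ is well defined and $\alpha_s,\beta$ form a genuine pair of short inversions. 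It then remains to prove $\beta\notin\Delta_{\mathcal M_{\alpha_s,\beta}}$, which by definition is exactly $\alpha_s\precinv_w\beta$.

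Next I would translate the hypothesis. Because $\beta\neq\alpha_s$, the root $s(\beta)$ is again positive, so the depth recursion~\eqref{eq:Depth} applies and gives that $\dep(s(\beta))<\dep(\beta)$ is equivalent to $B(\alpha_s,\beta)>0$. Now set $W'=\mathcal M_{\alpha_s,\beta}$. Since $s=s_{\alpha_s}\in S\cap W'\subseteq\chi(W')$, the simple reflection $s$ is a canonical simple reflection of $W'$, i.e.\ $\alpha_s\in\Delta_{W'}$; write $\Delta_{W'}=\{\alpha_s,\alpha'\}$. By~\eqref{eq:BasisAndDihedral} we have $B(\alpha_s,\alpha')\leq 0$, whereas $B(\alpha_s,\beta)>0$, so $\beta\neq\alpha'$. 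Combined with $\beta\neq\alpha_s$ this gives $\beta\notin\{\alpha_s,\alpha'\}=\Delta_{W'}$, that is, $\alpha_s\precinv_w\beta$, as wanted. (Note that in this argument one does not even need Proposition~\ref{prop:BasisAndDihedral}, since $\Delta_{W'}$ has only two elements and both have been excluded directly.)

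I do not expect a serious obstacle: the statement is short and boils down to combining~\eqref{eq:Depth} with the elementary fact that a simple reflection contained in a reflection subgroup is one of its canonical simple reflections. The only place requiring a little care is the equivalence between the depth inequality and the sign of $B(\alpha_s,\beta)$, together with checking that $s(\beta)$ is a positive root so that~\eqref{eq:Depth} is applicable; both are taken care of by the observation that $\dep(\beta)>0$ forces $\beta\neq\alpha_s$.
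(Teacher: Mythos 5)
Your proof is correct and follows essentially the same route as the paper's: note $\alpha_s\in\Phi^L(w)\subseteq\Phi^1(w)$ and $\alpha_s\in\Delta_{\mathcal M_{\alpha_s,\beta}}$ because $S\cap\mathcal M_{\alpha_s,\beta}\subseteq\chi(\mathcal M_{\alpha_s,\beta})$, translate the depth hypothesis via Eq.~(\ref{eq:Depth}) into $B(\alpha_s,\beta)>0$, and conclude $\beta\notin\Delta_{\mathcal M_{\alpha_s,\beta}}$ from Eq.~(\ref{eq:BasisAndDihedral}). Your extra remarks (checking $\beta\neq\alpha_s$ so that $s(\beta)\in\Phi^+$ and the maximal dihedral subgroup is defined) are harmless refinements of the same argument.
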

\begin{proof} Let $s\in D_L(w)$ such that $\dep(s(\beta))<\dep(\beta)$. In particular, $\alpha_s\in \Phi^L(w)\subseteq \Phi_1(w)$ and $\alpha_s\in \Delta_{\mathcal M_{\alpha_s,\beta}}$, since $S\cap \mathcal M_{\alpha_s,\beta}\subseteq \chi(\mathcal M_{\alpha_s,\beta})$. Since $\dep(s(\beta))<\dep(\beta)$, we have $B(\alpha_s,\beta)>0$ by Eq.~(\ref{eq:Depth}). So $\beta\notin \Delta_{\mathcal M_{\alpha_s,\beta}}$ by Eq~(\ref{eq:BasisAndDihedral}); hence $\alpha_s\precinv_w \beta$.
\end{proof}

\begin{ex}[Continuation of Example~\ref{ex:H3.2}] Here $w=312121321$ in $W$ of type $H_3$. The relation $\alpha_3\precinv_w \alpha_2+\alpha_3$ is obtained by Proposition~\ref{prop:First} with $s=3\in D_L(w)$ and $\beta=\alpha_2+\alpha_3$ since $\dep(3(\alpha_2+\alpha_3))=\dep(\alpha_2)=0<\dep(\alpha_2+\alpha_3)=1$.
\end{ex}

The second case is more subtle: assume now that $sw$ is a reduced product for  $s\in D_R(g)$.   So there is  $s\leq_L x\leq_L g$ such that $xw$ is a reduced product; take such a maximal $x$ with respect to $\leq_L$. We have to deal with two cases: $x=g$ and $x<_L g$. Before dealing with these cases, we need a lemma.

\begin{lem}\label{prop:BruhatKey} Let $w\in W$ and $\beta\in \Phi^1(w)$ and write $w'=s_\beta w$. Let $g\in W$ such that $g(\beta)\in \Delta$ and $\ell(g)=\dep(\beta)$. For 
any $x\leq_L g$ such that $xw$ is a reduced product, the product $xw'$ is also reduced. 
\end{lem}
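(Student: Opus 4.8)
The plan is to prove the statement by induction on $\ell(x)$, ranging over all $x\leq_L g$ for which $xw$ is a reduced product. The base case $x=e$ is immediate, since $ew'=w'$ is a reduced product. For the inductive step I would assume $\ell(x)\geq 1$, fix a reduced word for $g$ exhibiting $x$ as a suffix, and write $x=sx'$ as a reduced product with $s\in S$, where $x'$ is again a suffix of $g$ with $\ell(x')=\ell(x)-1$. Since $xw=s(x'w)$ is a reduced product, so is $x'w$, and the inductive hypothesis then gives that $x'w'$ is a reduced product. Because $xw'=s(x'w')$, the goal becomes $\ell(s\,x'w')=\ell(x'w')+1$, i.e.\ $\alpha_s\notin\Phi(x'w')$; using $\Phi(x'w')=\Phi(x')\sqcup x'(\Phi(w'))$ (Proposition~\ref{prop:InvBasics}) together with $\alpha_s\notin\Phi(x')$ (as $x=sx'$ is reduced), this reduces to showing that $\delta:=x'^{-1}(\alpha_s)$, which is a positive root since $\alpha_s\notin\Phi(x')$, does not lie in $\Phi(w')$.

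The heart of the argument is a sign condition: I would show $B(\beta,\delta)>0$. Put $\beta'=x'(\beta)$ and $\alpha=x(\beta)=s(\beta')$. One verifies that $\alpha\in\Phi^+$ (as $xw$ is reduced and $\beta\in\Phi(w)$), that $\delta\notin\Phi(w)$ (since $xw=s(x'w)$ reduced forces $\alpha_s\notin\Phi(x'w)=\Phi(x')\sqcup x'(\Phi(w))$), and that $\beta'\in\Phi^+$ (otherwise $s(\beta')=\alpha\in\Phi^+$ would force $\beta'=-\alpha_s$, hence $\beta=-\delta$, which is impossible). Now write $g=zx$ as a reduced product (possible since $x\leq_L g$); the hypothesis $\ell(g)=\dep(\beta)$ pins down the depths: from $z(\alpha)=g(\beta)\in\Delta$ we get $\dep(\alpha)\leq\ell(z)$, while from $g=(zs)x'$ being a reduced product with $\ell(zs)=\ell(z)+1$ we get that every $h$ with $h(\beta')\in\Delta$ satisfies $\dep(\beta)\leq\ell(h)+\ell(x')$, whence $\ell(h)\geq\ell(g)-\ell(x')=\ell(z)+1$ and so $\dep(\beta')\geq\ell(z)+1$. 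Therefore $\dep(s(\beta'))=\dep(\alpha)<\dep(\beta')$, so Eq.~(\ref{eq:Depth}) forces $B(\alpha_s,\beta')>0$, and by $W$-invariance of $B$ this says $B(\beta,\delta)=B(\beta',\alpha_s)>0$.

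With the sign condition available I would finish using $\beta\in\Phi^1(w)$. Since $w'=s_\beta w$, one has $w'^{-1}(\delta)=w^{-1}\bigl(s_\beta(\delta)\bigr)=w^{-1}(\delta)-c\,w^{-1}(\beta)$ with $c=2B(\beta,\delta)/B(\beta,\beta)>0$. As $\delta\in\Phi^+\setminus\Phi(w)$ forces $w^{-1}(\delta)\in\Phi^+$ and $\beta\in\Phi(w)$ forces $-w^{-1}(\beta)\in\Phi^+$, the root $w'^{-1}(\delta)$ is a nonnegative linear combination of positive roots, hence is itself positive; thus $\delta\notin\Phi(w')$. This yields $\alpha_s\notin\Phi(x'w')$, so $\ell(xw')=\ell(x'w')+1=\ell(x)+\ell(w')$, i.e.\ $xw'$ is a reduced product, completing the induction.

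I expect the main obstacle to be the sign condition $B(\beta,x'^{-1}(\alpha_s))>0$ in the inductive step. It really does require the hypotheses $x\leq_L g$ and $\ell(g)=\dep(\beta)$: without them the conclusion of the lemma itself fails, for instance when $w=st$ in the infinite dihedral group, $\beta=\alpha_s\in\Phi^1(w)$ and $w'=t$, where $tw=tst$ is a reduced product but $tw'=t\cdot t=e$ is not --- and there $\dep(\beta)=0$, so $g=e$ and $t\not\leq_L g$, which is exactly why the choice $x=t$ is barred. Once the sign condition is in hand the remaining geometric step is a routine reflection-formula computation; alternatively, instead of the depth estimate one can deduce $\dep(\beta')=\dep(\alpha)+1$ directly from Corollary~\ref{cor:Palind} together with Proposition~\ref{prop:Palind0}.
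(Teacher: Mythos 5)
Your proof is correct, and while it shares the paper's induction skeleton (induct on $\ell(x)$, write $x=sx'$ reduced with $x'\leq_L x\leq_L g$, get $x'w'$ reduced from the inductive hypothesis, and reduce the goal to $\alpha_s\notin\Phi(x'w')$), the decisive mechanism is genuinely different. The paper argues by contradiction: assuming $sx'w'<x'w'$ it builds the Bruhat chain $xw'<x'w'<x'w<xw$ and invokes Lemma~\ref{lem:BruhatKey} (functoriality of Bruhat graphs in the maximal dihedral reflection subgroup $\mathcal M_{s,t}$ with $t=s_{x'(\beta)}$) to contradict the palindromic length drop $\ell(s s_{x'(\beta)} s)=\ell(s_{x'(\beta)})-2$ supplied by Corollary~\ref{cor:Palind}; note this step uses $\ell(w')=\ell(w)-1$, i.e.\ $\beta\in\Phi^1(w)$, to verify the hypothesis $\ell(tsz)=\ell(sz)+1$ of that lemma. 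You instead prove positivity directly: the hypothesis $\ell(g)=\dep(\beta)$ pins down $\dep(x(\beta))<\dep(x'(\beta))$, so Eq.~(\ref{eq:Depth}) gives $B(\alpha_s,x'(\beta))>0$ (your parenthetical alternative via Corollary~\ref{cor:Palind} and Proposition~\ref{prop:Palind0} is essentially the paper's identity $(\star)$), and then the reflection formula exhibits $w'^{-1}\bigl(x'^{-1}(\alpha_s)\bigr)$ as a nonnegative combination of the positive roots $w^{-1}\bigl(x'^{-1}(\alpha_s)\bigr)$ and $-w^{-1}(\beta)$, hence positive. What each buys: the paper's route stays purely combinatorial (lengths, Bruhat order, dihedral reflection subgroups) and recycles lemmas used elsewhere in the article; your route trades that for an explicit computation with the bilinear form, avoids Lemma~\ref{lem:BruhatKey} entirely, and — despite your remark that you ``finish using $\beta\in\Phi^1(w)$'' — never actually needs the short-inversion hypothesis, only $\beta\in\Phi(w)$, so it proves a slightly more general statement. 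Your counterexample showing the necessity of $x\leq_L g$ and $\ell(g)=\dep(\beta)$ is also correct.
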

\begin{proof} We proceed by induction on $\ell(x)$. If $\ell(x)=0$ there is nothing to show. Now let $s\in D_L(x)$ and write $x= sx'$, which is a reduced product.  Notice that $x'<_Lx=sx'\leq_L g$. On one hand, by Corollary~\ref{cor:Palind}, we have:
$$
(\star)\qquad \ell(s_{x(\beta)})=\ell(ss_{x'(\beta)}s) = \ell(s_{x'(\beta)})-2.
$$
On the other hand, since $x'w$ is a suffix of $xw$, $x'w$ is also a reduced product.  Therefore,  $x'w'$ is a reduced product by induction.  Assume by contradiction that $xw'=sx'w'$ is not a reduced product, that is, $sx'w'<x'w'$. We use Lemma~\ref{lem:BruhatKey}: write $z=sx'w'=xw'$ and $t=x's_\beta x'^{-1}=s_{x'(\beta)}$. We have therefore:
$$
z=sx'w'<sz=x'w'<x'w=tx'w'=tsz<xw=stsz .
$$
Since $x'w$ and $x'w'$ are reduced products and $\beta\in \Phi^1(w)$, we have 
$$
\ell(tsz)=\ell(x'w)=\ell(x')+\ell(w) = \ell(x')+\ell(w')+1=\ell(x'w')+1=\ell(sz)+1.
$$
So by Lemma~\ref{lem:BruhatKey} we have $\ell(sts)\not = \ell(t)-2$. But $t=s_{x'(\beta)}$ and $sts=s_{x(\beta)}$, so we obtain a contradiction with $(\star)$. Therefore $xw'$ is a reduced product. 
\end{proof}

The following proposition deals now with the case $x=g$.

\begin{prop}\label{prop:Inter} Let $g,w\in W$ and $\beta\in \Phi^1(w)$ such that $g(\beta)=\alpha_r\in \Delta$ and $\dep(\beta)=\ell(g)$.  Assume that  $gw$ is a reduced product. Let  $t\in D_L(g)$ and consider the maximal dihedral reflection subgroup $W_I$ with $I=\{r,t\}$. Write $g=g_1g_2$ with $g_1\in W_I$ and $g_2\in X_{W_I}$.   
Then   $g_2^{-1}(\alpha_s)\precinv_w \beta$ where $\{s\}=I\setminus D_R(g_1)$. In particular:  $\dep(g_2^{-1}(\alpha_s))<\dep(\beta)$.
\end{prop}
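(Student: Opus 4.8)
The plan is to establish the two conditions that, by Proposition~\ref{prop:BasisAndDihedral}, together amount to $g_2^{-1}(\alpha_s)\precinv_w\beta$: that $g_2^{-1}(\alpha_s)\in\Phi^1(w)$ and that $\beta\notin\Delta_{\mathcal M_{g_2^{-1}(\alpha_s),\beta}}$; the final depth inequality is then immediate from Proposition~\ref{prop:SIGDep}. Two structural observations underpin everything. First, since $\alpha_r,\alpha_t\in\Delta$ are simple roots of $(W,S)$ no positive root lies strictly below them, so $\alpha_r,\alpha_t\in\Delta_{W_I}$, and as $W_I$ has rank $2$ this forces $\Delta_{W_I}=\{\alpha_r,\alpha_t\}$ and $\chi(W_I)=\{r,t\}$. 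Second, and this is the crux, the coset decomposition $g=g_1g_2$ is in fact a \emph{reduced product}. To see this I would first harvest information from $gw$: by Lemma~\ref{prop:BruhatKey} (with $x=g\leq_L g$) the product $gs_\beta w$ is reduced, so Corollary~\ref{cor:Key1} gives $g(\beta)=\alpha_r\in\Phi^1(gw)$, while $t\in D_L(g)$ together with $\Phi(g)\subseteq\Phi(gw)$ gives the simple left-descent root $\alpha_t\in\Phi^1(gw)$. Proposition~\ref{prop:BasisAndDihedral}(2), applied to the pair $\alpha_r,\alpha_t$ whose maximal dihedral subgroup is $W_I$ with $\Delta_{W_I}=\{\alpha_r,\alpha_t\}$, then yields that $W_I$ is finite and $\Phi^+_{W_I}\subseteq\Phi(gw)$.

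Next I would pin down $g_1$ inside $W_I$. From $g^{-1}(\alpha_r)=\beta\in\Phi^+$ we get $\alpha_r\notin\Phi(g)$, i.e.\ $r\notin D_L(g)$, hence $r\notin D_L^{W_I}(g_1)$ by functoriality (Proposition~\ref{prop:CosetRep}(2)); combined with $t\in D_L^{W_I}(g_1)$ this means $g_1$ is the element of the finite dihedral group $W_I$ whose canonical reduced word is $trtr\cdots$ of some length $k$ with $1\leq k\leq m_{rt}-1$ --- in particular $g_1\neq e$ and $g_1\neq w_\circ(W_I)$, so $D_R(g_1)\cap I=D_R^{W_I}(g_1)$ is a singleton and $\{s\}=I\setminus D_R(g_1)$ is well defined with $s\notin D_R^{W_I}(g_1)$. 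The essential point is that $r,t\in S$, so $trtr\cdots$ of length $k$ is already a word in $S$, and since $\ell_{W_I}\leq\ell$ it is reduced in $W$; thus $\ell(g_1)=k=\ell_{W_I}(g_1)$ and $\Phi(g_1^{-1})\subseteq\Phi_{W_I}$. As $g_2\in X_{W_I}$ gives $\Phi(g_2)\cap\Phi_{W_I}=\emptyset$, we get $\Phi(g_1^{-1})\cap\Phi(g_2)=\emptyset$, which is precisely the criterion for $g=g_1g_2$ to be a reduced product. Since $gw$ is reduced, it follows that $g_2w$ (and $g_1\,(g_2w)$) is a reduced product as well.

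For the membership $g_2^{-1}(\alpha_s)\in\Phi^1(w)$ I would set $\delta=g_1(\alpha_s)$. Because $s\notin D_R^{W_I}(g_1)$ one has $\delta\in\Phi^+_{W_I}\subseteq\Phi(gw)$, and, using $s_\delta g_1=g_1s$ and $\Phi(g)\cap\Phi_{W_I}=\Phi_{W_I}(g_1)$, also $\delta\notin\Phi(g)$; so by $\Phi(gw)=\Phi(g)\sqcup g(\Phi(w))$ (Proposition~\ref{prop:InvBasics}) we obtain $g_2^{-1}(\alpha_s)=g^{-1}(\delta)\in\Phi(w)$. Hence $\alpha_s=g_2(g_2^{-1}(\alpha_s))\in g_2(\Phi(w))\subseteq\Phi(g_2w)$ is a simple left-descent root of the reduced product $g_2w$, so $\alpha_s\in\Phi^1(g_2w)$; since $\alpha_s\notin\Phi(g_2)$, Proposition~\ref{prop:Key1}(2) with $x=g_2$ gives $g_2^{-1}(\alpha_s)\in\Phi^1(w)$.

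For the dihedral condition, note $g_2(\beta)=g_1^{-1}(\alpha_r)\in\Phi^+_{W_I}$, and this root is \emph{not} simple in $W_I$: otherwise $g_2(\beta)\in\Delta$ with $\ell(g_2)=\ell(g)-k<\dep(\beta)$, contradicting $\dep(\beta)=\min\{\ell(h)\mid h(\beta)\in\Delta\}$. In particular $g_2(\beta)\notin\{\pm\alpha_s\}$, so $g_2^{-1}(\alpha_s)$ and $\beta$ span distinct lines and $\mathcal M_{g_2^{-1}(\alpha_s),\beta}=g_2^{-1}W_Ig_2$. Since $g_2\in X_{W_I}$ maps $\Phi^+_{W_I}$ into $\Phi^+$, the set $g_2^{-1}(\Delta_{W_I})$ is a simple system of $\Phi_{g_2^{-1}W_Ig_2}$ lying in $\Phi^+$, hence equals $\Delta_{g_2^{-1}W_Ig_2}$; as $g_2(\beta)\notin\Delta_{W_I}$ this gives $\beta\notin\Delta_{\mathcal M_{g_2^{-1}(\alpha_s),\beta}}$. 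Proposition~\ref{prop:BasisAndDihedral} now yields $g_2^{-1}(\alpha_s)\precinv_w\beta$, and Proposition~\ref{prop:SIGDep} gives $\dep(g_2^{-1}(\alpha_s))<\dep(\beta)$. I expect the reduced-product fact of the second paragraph to be the main obstacle: it is what makes all the transfers between $\Phi^1(w)$, $\Phi^1(g_2w)$ and $\Phi^1(gw)$ legitimate, and it rests entirely on the slightly delicate observation that $g_1$ --- living in a dihedral reflection subgroup generated by two genuine simple reflections --- already admits a reduced word in $S$.
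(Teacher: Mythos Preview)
Your proof is correct and follows essentially the same route as the paper, with two minor differences in packaging. First, to obtain $\alpha_s\in\Phi(g_2w)$ you pass through $\Phi^+_{W_I}\subseteq\Phi(gw)$ (via Proposition~\ref{prop:BasisAndDihedral}(2) applied to $\alpha_r,\alpha_t\in\Phi^1(gw)$), whereas the paper argues instead that $\alpha_u\notin\Phi(g_2w)$ for $\{u\}=D_R(g_1)$ (since $ug_2w$ is reduced) and then invokes Proposition~\ref{prop:InvMDRG}. Second, you transfer $\alpha_s\in\Phi^1(g_2w)$ down to $g_2^{-1}(\alpha_s)\in\Phi^1(w)$ by Proposition~\ref{prop:Key1}(2) and then verify $\beta\notin\Delta_{\mathcal M_{g_2^{-1}(\alpha_s),\beta}}$ directly, while the paper first establishes $\alpha_s\precinv_{g_2w} g_2(\beta)$ and then pushes the whole relation through Proposition~\ref{prop:Key1}(3). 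These are interchangeable.

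One remark: you flag the fact that $g=g_1g_2$ is a reduced product as ``the main obstacle'' and prove it by hand, but this is immediate. Since $r,t\in S$, the subgroup $W_I$ is a \emph{standard} parabolic subgroup, and the paper recalls (just before Proposition~\ref{prop:CosetRep}) that in this case the coset factorization $w=uv$ with $u\in W_I$, $v\in X_{W_I}$ is always reduced. Your second paragraph can therefore be replaced by a single sentence; the paper simply asserts it in step~(ii). A small slip: it is $g_2^{-1}$, not $g_2$, that sends $\Phi^+_{W_I}$ into $\Phi^+$ (this is what $g_2\in X_{W_I}$ gives); your conclusion $g_2^{-1}(\Delta_{W_I})=\Delta_{g_2^{-1}W_Ig_2}$ is nonetheless correct.
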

\begin{proof} Observe that $W_I$ is maximal since $r,t\in S$. Also $r\notin D_L(g)$, otherwise $r\in D_L(g_1)$ and so
$g^{-1}_1(\alpha_r) \in \Phi_I^-.$ Since $g_2\in X_{W_I}$ we obtain therefore
$$
\beta = g^{-1}_2( g^{-1}_1(\alpha_r)) \in \Phi^-,
$$
which is a contradiction. In particular $g_1$ is not the longest element in $W_I$, and so $D_R(g_1)\subsetneq I$. Since $t\in D_L(g_1)$, $g_1\not = e$. Therefore  $\{u\}=D_R(g_1)$, $\{s\}= I\setminus D_R(g_1)$; moreover  $I=\{r,t\}=\{s,u\}$ and $\Delta_{W_I}=\{\alpha_s,\alpha_u\}$. Write 
$$
\gamma:=g_2(\beta)=g^{-1}_1(\alpha_r)\in \Phi^+_{W_I}\setminus \Phi(g_2)\textrm{ and }w'=s_\gamma w.
$$  
We first prove the following facts:
\begin{enumerate}[(i)]

\item $\gamma\in\Phi^1(g_2w)$. Indeed $g_2\leq_L g$ and so the suffix $g_2w$ of $gw$ is also a reduced product. By Lemma~\ref{prop:BruhatKey}, the products $gw'$ and $xw'$ are also reduced. Therefore, by Corollary~\ref{cor:Key1}, we have 
$$
g^{-1}_1 (\alpha_r)=g_2(\beta)=\gamma\in \Phi^1(g_2w).
$$

\item $\alpha_s\in \Phi^1(g_2w)$. Indeed,  $g_1g_2w$ is a reduced product since $g=g_1g_2$ and $gw$ are reduced products. Since $u$ is the last letter of the (unique) reduced word for $g_1$, $ug_2$  and $ug_2w$ are reduced  product. Therefore $\alpha_u\in \Phi_I^+\setminus \Phi(g_2w)$. Since $g_1^{-1}(\alpha_r) \in \Phi(g_2w)\cap \Phi_I$, we must have $\alpha_s\in \Phi(g_2w)$ by Proposition~\ref{prop:InvMDRG}; and $\alpha_s\in \Phi^1(g_2w)$ since $\alpha_s$ is a simple root in $\Delta$. 

\item $\gamma\in\Phi^1(g_2w)\setminus \Delta_{W_I}$. Indeed $\gamma=g_2(\beta)\notin \Delta_{W_I}$ since otherwise $g_2(\beta)\in \Delta_{W_I} \subseteq \Delta$ contradicts $\dep(\beta)=\ell(g)>\ell(g_2)$ ($g_1\not = e$). The rest of the statement follows by (i). 

\end{enumerate}

\noindent Now, by definition of the short inversion graph, (ii) and (iii) implies $\alpha_s\precinv_{g_2w} \gamma$. Since  $g_2\in X_{W_I}$, we have $\alpha_s\in \Phi^+\setminus\Phi(g_2)$; and we notice above that $\gamma\in \Phi^+\setminus\Phi(g_2)$.  Therefore, by Proposition~\ref{prop:Key1}, we conclude  that $g_2^{-1}(\alpha_s)\in\Phi^1(w)$ and
$$
g_2^{-1}(\alpha_s)\precinv_{w} g_2^{-1}(\gamma)=\beta.
$$
\end{proof}     

\begin{ex}[Continuation of Example~\ref{ex:H3.2}] Here $w=312121321$ in $W$ of type $H_3$. The relation $\alpha_2+\alpha_3\precinv_w \beta$ is obtained by Proposition~\ref{prop:Inter} as follows. We have $g=231212$ and $g(\beta)=\alpha_1$. The product $gw = 231212.312121321$ is reduced. Take $t=2\in D_L(g)$ and $I=\{2,1\}$, so $g=g_1g_2$ where $g_1=21\in W_I$ and $g_2=3212\in X_{W_I}$. Therefore $D_R(g_1)=\{1\}$ and $s=2$. Hence
$$
g_2^{-1}(\alpha_2)=2123(\alpha_2)=\alpha_2+\alpha_3\precinv_w \beta. 
$$  
\end{ex}

Finally, we deal with the case $x<_L g$. 

\begin{prop}\label{prop:Second} Let $g,w\in W$ and $\beta\in \Phi^1(w)$ such that $g(\beta)\in \Delta$ with $\dep(\beta)=\ell(g)$. Assume that $gw$ is not a reduced product. Take   $x\in W$ and  $t\in S$ such that $x<_L tx\leq_L g$, $xw$ is a reduced product but $txw$ is not reduced. Then $x^{-1}(\alpha_t)\precinv_w\beta$. In particular, $\dep(x^{-1}(\alpha_t))<\dep(\beta)$
\end{prop}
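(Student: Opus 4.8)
The plan is to mimic the structure of Proposition~\ref{prop:Inter}, replacing the pair $\{r,t\}$ with the pair arising from the failure of reducedness at the step $tx$. Write $z = xw$, which is a reduced product by hypothesis, and set $s = x^{-1}(\alpha_t) \in \Phi^+$ (it is positive since $x <_L tx$, i.e. $t \notin D_L(x^{-1})$, equivalently $\alpha_t \notin \Phi(x^{-1})$, so $x^{-1}(\alpha_t) \in \Phi^+$; more carefully, $x^{-1}$ sends $\alpha_t$ to a positive root because $x <_L tx$ means $\ell(tx) = \ell(x)+1$, hence $x^{-1}(\alpha_t) \notin \Phi^-$ is wrong — I must be careful here and instead use that $tx$ being a \emph{reduced} product $t \cdot x$ means $\alpha_t \notin x(\Phi^-)$, but $s = x^{-1}(\alpha_t)$ positive is what I want; I will track this sign carefully in the write-up via $\Phi((tx)^{-1}) = \Phi(x^{-1}t) = \{\alpha_t\} \sqcup t(\Phi(x^{-1}))$ type identities). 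The key object is the maximal dihedral reflection subgroup $W' = \mathcal{M}_{\alpha_t', \beta'}$ for suitable roots; concretely I expect to show $\mathcal{M}_{x^{-1}(\alpha_t),\beta}$ is the relevant one and that $x^{-1}(\alpha_t) \precinv_w \beta$ directly.

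**Key steps.** First I would record that $\beta \in \Phi^1(w)$ and, by Proposition~\ref{prop:BruhatKey} applied with the reduced product $xw$ (valid since $x \leq_L g$), the product $xw'$ is also reduced, where $w' = s_\beta w$. Hence by Corollary~\ref{cor:Key1}, $x(\beta) \in \Phi^1(xw) = \Phi^1(z)$. Second, I would analyze the failure $txw = tz$ non-reduced: since $t \in S$ this means $tz \lhd z$, i.e. $\alpha_t \in \Phi^L(z) \subseteq \Phi^1(z)$. Third — the heart of the argument — I must show $\alpha_t \precinv_z x(\beta)$, i.e. that in $\mathcal{M}_{\alpha_t, x(\beta)}$ the root $x(\beta)$ is not simple while $\alpha_t$ is. Since $\alpha_t \in \Delta$, it is automatically a canonical simple root of the dihedral subgroup it generates with anything, so I just need $x(\beta) \notin \Delta_{\mathcal{M}_{\alpha_t, x(\beta)}}$; by Proposition~\ref{prop:BasisAndDihedral} one of the two is simple, and if $x(\beta)$ were simple too then $\mathcal{M}_{\alpha_t,x(\beta)}$ would be finite with both $\alpha_t, x(\beta)$ simple, forcing (by Proposition~\ref{prop:BasisAndDihedral}(2)) the whole positive system into $\Phi(z)$ — I'd derive a contradiction with maximality of $x$ with respect to $\leq_L$ among suffixes $x$ of $g$ with $xw$ reduced, or alternatively with $\ell(g) = \dep(\beta)$ via a depth computation using Proposition~\ref{prop:SIGDep}. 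This is where I'd lean on Lemma~\ref{lem:BruhatKey}: from $x <_L tx \leq_L g$ and $tx w$ non-reduced, the configuration $xw < txw$ is violated, and the lemma (or its contrapositive) constrains $\chi(\mathcal{M}_{t, \text{reflection}})$ precisely, pinning down that $x(\beta) \notin \Delta$.

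**Transporting back to $w$.** Once $\alpha_t \precinv_z x(\beta)$ is established in $\Phi^1(z) = \Phi^1(xw)$, I apply Proposition~\ref{prop:Key1}(3): since $xw$ is a reduced product and $\alpha_t, x(\beta) \in \Phi^+ \setminus \Phi(x)$ (for $\alpha_t$: need $\alpha_t \notin \Phi(x)$, which holds because $x <_L tx$; for $x(\beta)$: it lies in $x(\Phi(w)) \subseteq \Phi(xw) \setminus \Phi(x)$ by Proposition~\ref{prop:InvBasics}(2)), we conclude $x^{-1}(\alpha_t) \precinv_w x^{-1}(x(\beta)) = \beta$. The "in particular" clause on depths is then immediate from Proposition~\ref{prop:SIGDep}: $\alpha \precinv_w \beta \implies \dep(\alpha) < \dep(\beta)$, so $\dep(x^{-1}(\alpha_t)) < \dep(\beta)$.

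**Main obstacle.** The delicate point is the third step: verifying $x(\beta) \notin \Delta_{\mathcal{M}_{\alpha_t, x(\beta)}}$. The naive worry is the "degenerate" case where $x(\beta)$ happens to be simple in that dihedral subgroup, which would make $\alpha_t$ and $x(\beta)$ incomparable rather than $\alpha_t \precinv_z x(\beta)$. I expect this is excluded precisely by the non-reducedness of $txw$ together with the maximality of $x$: if $x(\beta)$ were simple, one could show $tx \leq_L$ some $x'$ with $x'w$ still reduced (absorbing the dihedral subgroup), contradicting maximality — so the real content is a careful bookkeeping of reduced products inside the rank-$2$ subgroup $\mathcal{M}_{\alpha_t, x(\beta)}$, exactly parallel to the case analysis $(i)$–$(iii)$ in the proof of Proposition~\ref{prop:Inter}, and invoking Lemma~\ref{lem:BruhatKey} to get the sharp length relation $\ell(s_{x(\beta)} \cdot) $ that forbids the simple case.
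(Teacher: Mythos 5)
Your outer frame matches the paper: reduce to $z=xw$ via Lemma~\ref{prop:BruhatKey} and Corollary~\ref{cor:Key1} to get $x(\beta)\in\Phi^1(xw)$, note $\alpha_t\in\Phi^L(xw)\subseteq\Phi^1(xw)$ because $txw$ is not reduced, and transport $\alpha_t\precinv_{xw}x(\beta)$ back to $x^{-1}(\alpha_t)\precinv_w\beta$ with Proposition~\ref{prop:Key1}(3) (the checks $\alpha_t\notin\Phi(x)$, $x(\beta)\notin\Phi(x)$ are correct), with the depth claim from Proposition~\ref{prop:SIGDep}. But the central point, $x(\beta)\notin\Delta_{\mathcal M_{\alpha_t,x(\beta)}}$, is exactly what you leave unproved, and the strategy you sketch for it does not work as stated. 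First, you lean on ``maximality of $x$ with respect to $\leq_L$'', which is not a hypothesis of the proposition: it must hold for \emph{any} $x,t$ with $x<_L tx\leq_L g$, $xw$ reduced, $txw$ not reduced (the paper's remark after the proposition only observes that a maximal choice exists, it is not assumed). Second, appealing to Lemma~\ref{lem:BruhatKey} for ``the sharp length relation'' points the wrong way: its conclusion is $\ell(sts)\neq\ell(t)-2$, and in this paper it is used only inside the proof of Lemma~\ref{prop:BruhatKey} to produce a contradiction, not to rule out the simple case here. Third, the alternative ``depth computation using Proposition~\ref{prop:SIGDep}'' is circular, since that proposition deduces depth inequalities \emph{from} the relation $\precinv$ you are trying to establish.

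The mechanism the paper uses, and which your sketch never reaches, is Corollary~\ref{cor:Palind}: since $x<_L tx\leq_L g$, $g(\beta)\in\Delta$ and $\ell(g)=\dep(\beta)$, one gets $\alpha_t\in\Phi(s_{x(\beta)})$ (and $\ell(t\,s_{x(\beta)}\,t)=\ell(s_{x(\beta)})-2$). One must also check $\alpha_t\neq x(\beta)$, which your proposal omits; it holds because otherwise $\dep(\beta)=\ell(x)<\ell(g)$, contradicting $\dep(\beta)=\ell(g)$. Then $\alpha_t$ is a root of $\mathcal M_{\alpha_t,x(\beta)}$ lying in $\Phi(s_{x(\beta)})\setminus\{x(\beta)\}$, so by the very definition of the canonical simple system in Eq.~(\ref{eq:DRS}) the root $x(\beta)$ cannot belong to $\Delta_{\mathcal M_{\alpha_t,x(\beta)}}$, giving $\alpha_t\precinv_{xw}x(\beta)$ directly --- no finiteness of $\mathcal M_{\alpha_t,x(\beta)}$, no use of Proposition~\ref{prop:BasisAndDihedral}(2), and no maximality of $x$ are needed. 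In short: the skeleton and the transport step are right, but the heart of the argument is missing and the route you propose for it would not close the gap.
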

\begin{proof} Since $gw$ is not reduced, there is a suffix $x$ of $g$ and a simple reflection $t\in S$ such that $x<_L tx\leq_L g$ and $xw$ is a reduced product but $txw$ is not reduced. Write $w'=s_\beta w$. By Lemma~\ref{prop:BruhatKey}, the product $xw'$ is also reduced. Therefore, by Corollary~\ref{cor:Key1}, we have $x(\beta)\in \Phi^1(xw)$ since $\beta\in \Phi^1(w)$. Now $t\in D_L(xw)$ since $txw$ is not a reduced product. So $\alpha_t \in \Phi^1(xw)$. Notice that $\alpha_t\not =x(\beta)$ since otherwise  $\dep(\beta)=\ell(x)<\ell(g)$, a contradiction.  Since $\alpha_t\in \Phi(s_{x(\beta)})\setminus\{x(\beta)\}$, by Corollary~\ref{cor:Palind}, then $x(\beta)\notin \Delta_{\mathcal M_{\alpha_t,x(\beta)}}$, by Eq.~(\ref{eq:DRS}).  Hence $\alpha_t\precinv_{xw} x(\beta)$.  Finally, since $tx>_L x$, we have $\alpha_t\notin\Phi(x)$. Also $x(\beta)\notin\Phi(x)$ since $x^{-1}(x(\beta))=\beta\in \Phi^+$. So by Proposition~\ref{prop:Key1}, $x^{-1}(\alpha_t)\precinv_w \beta$.
\end{proof}

\begin{remark} The choice of $x$ is not unique in Proposition~\ref{prop:Second}. However, among all the possible choices of such $x$,  there is a longest one: the join $\bigvee_L G$ (for the left weak order) of the set 
$
G=\{x \in W\mid x\leq_L g,\ xw\textrm{ reduced}\}.
$ 
The join $\bigvee_L G$ is also an element of $G$ by \cite[Corollary~1.6~(b)]{Dy19}.
\end{remark}

\begin{ex}[Continuation of Example~\ref{ex:H3.2}] Here $w=312121321$ in $W$ of type $H_3$. The relation $\alpha_2+\alpha_3\precinv_w \beta$ is obtained by Proposition~\ref{prop:Second} as follows. Consider different $g$ than in the preceding example: take $g=123212$ and here also $g(\beta)=\alpha_1$. The product $gw = 123212.312121321=321 23 1 2 1 2 3 1$ is not reduced, but the product $xw$ is with $x=3212\leq_L g$. Here $t=2$ and $x<_L tx\leq_L g$. We obtain as in the preceding example that
 $
x^{-1}(\alpha_2)=2123(\alpha_2)=\alpha_2+\alpha_3\precinv_w \beta. 
$
\end{ex}

\subsection{Proof of Theorem~\ref{thm:BasisInversion}}

The following statement is a direct corollary of Proposition~\ref{prop:First}, Proposition~\ref{prop:Inter} and Proposition~\ref{prop:Second}.

\begin{cor}\label{cor:Key2} Let $w\in W$ and $\beta\in \Phi^1(w)$. Then either $\beta\in \Delta$ or there is $\alpha\in \Phi^1(w)$ such that $\alpha\precinv_w\beta$.
\end{cor}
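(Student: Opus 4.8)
The plan is to reduce to the case $\beta\notin\Delta$ and then run the case analysis that is tailor-made for Propositions~\ref{prop:First},~\ref{prop:Inter} and~\ref{prop:Second}.

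First I would dispose of the trivial case: if $\beta\in\Delta$ there is nothing to prove, so assume $\beta\notin\Delta$. Since $\dep(\gamma)=0$ holds exactly when $\gamma\in\Delta$ (one implication is the observation $\dep(\alpha_s)=0$, the other is immediate because $\ell(g)=0$ forces $g=e$), we get $\dep(\beta)\geq 1$. By definition of the depth statistic, fix $g\in W$ with $g(\beta)=\alpha_r\in\Delta$ and $\ell(g)=\dep(\beta)$; then $g\neq e$, so $D_L(g)\neq\emptyset$ and $D_R(g)\neq\emptyset$. Choose any $s\in D_R(g)$ and write $g=g's$ as a reduced product, so that $\ell(g')=\dep(\beta)-1$ and $g'(s(\beta))=g(\beta)=\alpha_r\in\Delta$; hence $\dep(s(\beta))\leq\ell(g')<\dep(\beta)$.

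Next I would split on whether $s\in D_L(w)$. If $s\in D_L(w)$, then Proposition~\ref{prop:First} applies directly to $w$, $\beta$ and $s$ (we have $\dep(\beta)>0$, $s\in D_L(w)$, and $\dep(s(\beta))<\dep(\beta)$), giving $\alpha_s\precinv_w\beta$ with $\alpha_s\in\Phi^L(w)\subseteq\Phi^1(w)$. Otherwise $s\notin D_L(w)$, so $\ell(sw)=\ell(w)+1$ and $sw$ is a reduced product. Consider the finite set $G=\{x\in W\mid x\leq_L g\text{ and }xw\text{ is a reduced product}\}$; it contains $e$ and $s$, so it has a $\leq_L$-maximal element $x_0$ (one may take $x_0=\bigvee_L G$, which belongs to $G$ by \cite[Corollary~1.6~(b)]{Dy19}). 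If $x_0=g$, then $gw$ is a reduced product, and Proposition~\ref{prop:Inter} applied to $g$, $w$, $\beta$ and any $t\in D_L(g)$ produces an element of $\Phi^1(w)$ which is $\precinv_w\beta$. If $x_0<_L g$, write $g=ax_0$ as a reduced product with $a\neq e$, pick $t\in D_R(a)$, and write $a=a't$ reduced; then $x_0<_L tx_0\leq_L g$, the product $x_0w$ is reduced, and $tx_0w$ is not a reduced product (else $tx_0\in G$, against the maximality of $x_0$); moreover $gw$ itself is not a reduced product (else $g\in G$ would be its greatest element, contradicting $x_0<_L g$). So Proposition~\ref{prop:Second}, applied with this $x_0$ and $t$, gives $x_0^{-1}(\alpha_t)\precinv_w\beta$ with $x_0^{-1}(\alpha_t)\in\Phi^1(w)$. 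In each branch we have found $\alpha\in\Phi^1(w)$ with $\alpha\precinv_w\beta$, completing the proof.

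Since each of the three propositions is essentially ready-made for its branch, there is no serious obstacle here; the only points requiring a line of justification are that $s\in D_R(g)$ forces $\dep(s(\beta))<\dep(\beta)$ (so that Proposition~\ref{prop:First} can be invoked) and that maximality of $x_0$ in $G$ supplies exactly the data needed by Proposition~\ref{prop:Second}, namely a simple reflection $t$ with $x_0<_L tx_0\leq_L g$ for which $x_0w$ is reduced but $tx_0w$ is not, together with the fact that $gw$ is then not a reduced product.
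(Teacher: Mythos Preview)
Your proof is correct and follows essentially the same approach as the paper: the paper states Corollary~\ref{cor:Key2} as a direct consequence of Propositions~\ref{prop:First},~\ref{prop:Inter} and~\ref{prop:Second} without writing out the case analysis, but the discussion preceding those propositions outlines exactly the split you carry out (choose $g$ realizing the depth, branch on whether some $s\in D_R(g)$ lies in $D_L(w)$, and in the remaining case take a maximal suffix $x_0\leq_L g$ with $x_0w$ reduced). Your write-up simply makes that case analysis explicit, and the details you supply (that $s\in D_R(g)$ forces $\dep(s(\beta))<\dep(\beta)$, and that maximality of $x_0$ yields the $t$ needed for Proposition~\ref{prop:Second}) are all sound.
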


\begin{proof}[Proof of Theorem~\ref{thm:BasisInversion}]  By Proposition~\ref{prop:Op}, it is enough to show that for any short inversion $\beta\in \Phi^1(w)$ there is $\alpha\in \Phi^L(w)=\Delta\cap \Phi^1(w)$ such that $\alpha\preceq_w \beta$. We proceed by induction on $\dep(\beta)\in\mathbb N$. If $\dep(\beta)=0$, that is $\beta \in \Delta$, we are done. If $\beta\notin \Delta$, there is  $\beta'\precinv_w \beta$, by Corollary~\ref{cor:Key2}. We conclude by induction that there is $\alpha\in \Phi^L(w)$ such that $\alpha\preceq_w\beta'\precinv_w \beta$.
\end{proof}

\subsection{On finding all edges of a short inversion graph}\label{ss:Edges2}   We could ask the question whether all edges of the short inversion graph of $w\in W$ arise as in Proposition~\ref{prop:First}, Proposition~\ref{prop:Inter} and Proposition~\ref{prop:Second}? Unfortunately it is not true. For instance, consider $(W,S)$ to be the universal Coxeter system of rank $3$, whose Coxeter graph is
\smallskip
\begin{center}
\begin{tikzpicture}
	[scale=2,
	 q/.style={teal,line join=round},
	 racine/.style={blue},
	 racinesimple/.style={blue},
	 racinedih/.style={blue},
	 sommet/.style={inner sep=2pt,circle,draw=black,fill=blue,thick,anchor=base},
	 rotate=0]
 \tikzstyle{every node}=[font=\small]
\def\grosseursimple{0.025}
\coordinate (ancre) at (0,3);

\node[sommet,label=above right:$1$] (a2) at ($(ancre)+(0.25,0.5)$) {};
\node[sommet,label=below right :$2$] (a3) at ($(ancre)+(0.5,0)$) {} edge[thick] node[auto,swap,right] {$\infty$} (a2) ;
\node[sommet,label=below left:$3$] (a4) at (ancre) {} edge[thick] node[auto,swap,below] {$\infty$} (a3) edge[thick] node[auto,swap,left] {$\infty$} (a2);
\end{tikzpicture}
\end{center}
Take $w=123$, then the short inversion graph (left-hand side)  and the Hasse diagram of the short inversion poset (right-hand side) of $w$ are:
\begin{center}
\begin{tikzpicture}
 \tikzstyle{every node}=[font=\small]
\def\grosseursimple{0.025}
\node[draw=blue,circle,fill=blue!55,minimum size=2pt,inner sep=2pt,label=below:$\alpha_1$] (a1) at (0,0) {};
\node[draw=blue,circle,fill=blue!55,minimum size=2pt,inner sep=2pt,label=above right:$1(\alpha_2)$] (a2) at (3,1) {};
\node[draw=blue,circle,fill=blue!55,minimum size=2pt,inner sep=2pt,label=right:$12(\alpha_3)$] (a3) at (2,2) {};
\draw[-latex,line width = 0.7,color=blue] (a1) -- (a2);
\draw[-latex,line width = 0.7,color=blue] (a1) -- (a3);
\draw[-latex,line width = 0.7,color=blue] (a2) -- (a3);

\node[draw=blue,circle,fill=blue!55,minimum size=2pt,inner sep=2pt,label=below:$\alpha_1$] (b1) at (5,0) {};
\node[draw=blue,circle,fill=blue!55,minimum size=2pt,inner sep=2pt,label=above right:$1(\alpha_2)$] (b2) at (8,1) {};
\node[draw=blue,circle,fill=blue!55,minimum size=2pt,inner sep=2pt,label=right:$12(\alpha_3)$] (b3) at (7,2) {};
\draw[-latex,line width = 0.7,color=blue] (b1) -- (b2);
\draw[-latex,line width = 0.7,color=blue] (b2) -- (b3);
\end{tikzpicture}
\end{center}
\smallskip
\noindent The relations $\alpha_1\precinv_w 12(\alpha_3)$ and $\alpha_1\precinv_w 1(\alpha_2)$ arise from Proposition~\ref{prop:First}. Indeed,  $1\in D_L(w)$, $\dep(1.12(\alpha_3))=\dep(2(\alpha_3))=1<\dep(12(\alpha_3))=2$ and $\dep(1.1(\alpha_2))=0<1 =\dep(1(\alpha_2)$. 

However, the relation $1(\alpha_2)\precinv_w 12(\alpha_3)$ does not arise from either Proposition~\ref{prop:First}, Proposition~\ref{prop:Inter} or Proposition~\ref{prop:Second}. Indeed, write $\beta=12(\alpha_3)$ and write $g=21$, which is the unique element of $W$ such that $g(\beta)\in\Delta$ (there is no relation in $(W,S)$). Since $gw$ is not reduced, only Proposition~\ref{prop:Second} could happen, with $x=e\leq_L xt=1\leq_L g=21$. So we only recover the relation $x^{-1}(\alpha_t)=e(\alpha_1)=\alpha_1\precinv_w \beta$. 

We could also ask the question whether all covers of the short inversion poset of $w\in W$ arise as in Proposition~\ref{prop:First}, Proposition~\ref{prop:Inter} and Proposition~\ref{prop:Second}? The example above shows also that it is not true. 

\begin{question} It would be interesting to have more in-depth understanding of the short inversion digraph, the short inversion poset and its cover relations.
\end{question}

\section{$m$-Small roots and $m$-low elements}\label{se:Low}

Let $(W,S)$ be a Coxeter system and $m\in \mathbb N$. In this section, we provide, as a consequence of Theorem~\ref{thm:BasisInversion}, a key characterization of $m$-low elements: an element $w\in W$ is $m$-low if and only if $\Phi^R(w)$ consists of $m$-small roots, see Theorem~\ref{thm:Main} below.

\subsection{Dominance order}  Introduced by Brink and Howlett in \cite{BrHo93}, the {\em dominance order}  is the partial order $\preceq_\dom$ on $\Phi^+$ defined as follows:
$$
\alpha \preceq_\dom \beta \iff (\forall w \in W,\ \beta\in \Phi(w) \implies \alpha\in \Phi(w)).
$$
The dominance order is extended to a partial order on the whole root system $\Phi$ by Fu~\cite{Fu12} as follow: for $\alpha,\beta \in \Phi$, we have
\begin{equation}\label{eq:InftyDepth}
\alpha \preceq_\dom \beta \iff (\forall w \in W,\ w(\beta)\in \Phi^-\implies w(\alpha)\in \Phi^-).
\end{equation}
The following proposition summarizes the principal properties of $(\Phi,\preceq_\dom)$.

\begin{prop}[{\cite[Lemma 3.2]{Fu12}}]\label{prop:DomFu}  Let $\alpha,\beta \in \Phi$.
\begin{enumerate}[(i)]
\item We have $\alpha\preceq_\dom \beta$ if and only if $-\beta\preceq_\dom -\alpha$.
\item If $\alpha,\beta\in \Phi^+$, then $\alpha\preceq_\dom \beta$ if and only if $\dep(\alpha)\leq \dep(\beta)$ and $B(\alpha,\beta)\geq 1$.
\item If $\alpha\in \Phi^-$ and $\beta\in \Phi^+$, then $\alpha\preceq_\dom \beta$ if and only if $B(\alpha,\beta)\geq 1$.
\item The dominance order is $W$-invariant: $\alpha\preceq_\dom \beta$ if and only if $w(\alpha)\preceq_\dom w(\beta)$ for any $w\in W$.
\item We have a dominance relation between $\alpha$ and $\beta$  if and only if $B(\alpha,\beta)\geq 1$.
\end{enumerate}
\end{prop}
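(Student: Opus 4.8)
The plan is to settle the purely formal items (i) and (iv) first, then to isolate the real content in part (v) and reduce it to a rank-two computation, and finally to derive (ii) and (iii) from (v) together with a short argument about depths. First I would record that the two descriptions of $\preceq_\dom$ agree on $\Phi^+$: for $\alpha,\beta\in\Phi^+$ one has $\beta\in\Phi(w)\iff w^{-1}(\beta)\in\Phi^-$, and $w\mapsto w^{-1}$ is a bijection of $W$, so (\ref{eq:InftyDepth}) restricts to the original definition. Item (i) is then immediate by negating the defining implication: $\alpha\not\preceq_\dom\beta$ means there is $w$ with $w(\beta)\in\Phi^-$ and $w(\alpha)\in\Phi^+$, and since $w(\gamma)\in\Phi^+\iff w(-\gamma)\in\Phi^-$ this says exactly that $w(-\alpha)\in\Phi^-$ and $w(-\beta)\in\Phi^+$, i.e. $-\beta\not\preceq_\dom-\alpha$. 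For (iv), the condition ``$w(\alpha)\preceq_\dom w(\beta)$'' unfolds to ``for all $u\in W$, $uw(\beta)\in\Phi^-\Rightarrow uw(\alpha)\in\Phi^-$'', and as $u$ runs over $W$ so does $uw$, which is $\alpha\preceq_\dom\beta$.

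The heart of the proof is (v). Since $B$ is $W$-invariant and, by (iv), so is the property of having a dominance relation, and since the degenerate case $\mathbb R\alpha=\mathbb R\beta$ (where $\alpha=\pm\beta$; a dominance relation holds precisely when $\alpha=\beta$, matching $B(\alpha,\beta)=1$) can be checked directly, I may assume $\mathbb R\alpha\neq\mathbb R\beta$ and put $W'=\mathcal M_{\alpha,\beta}$, with its canonical positive system $\Phi_{W'}^+=\Phi_{W'}\cap\Phi^+$. The key reduction is that $\alpha\preceq_\dom\beta$ holds in $W$ if and only if it holds in the Coxeter system $(W',\chi(W'))$, that is, when the defining implication is tested only for $u\in W'$ against $\Phi_{W'}^\pm$. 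The forward direction is trivial, as an element of $W'$ preserves $\Phi_{W'}$, so on $\Phi_{W'}$ the conditions ``$\in\Phi^-$'' and ``$\in\Phi_{W'}^-$'' agree. For the converse, if $w\in W$ witnesses $\alpha\not\preceq_\dom\beta$, write $w^{-1}=u_1u_2$ with $u_1\in W'$ and $u_2\in X_{W'}$; by Proposition~\ref{prop:CosetRep}(1), $\Phi(w^{-1})\cap\Phi_{W'}=\Phi_{W'}(u_1)$, and a short check using the equivalence $w(\gamma)\in\Phi^-\iff\gamma\in\Phi(w^{-1})$ (for $\gamma\in\Phi^+$) shows that $w(\gamma)\in\Phi^-\iff u_1^{-1}(\gamma)\in\Phi_{W'}^-$ for every $\gamma\in\Phi_{W'}$; hence $u_1^{-1}\in W'$ reproduces the sign pattern of $w$ on $\Phi_{W'}$ and witnesses $\alpha\not\preceq_\dom\beta$ already in $W'$.

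Granting the reduction, (v) becomes an explicit dihedral computation. Writing $\Delta_{W'}=\{\mu,\nu\}$ with $c:=-B(\mu,\nu)\ge 0$ — so $W'$ is finite precisely when $c<1$, by (\ref{eq:BasisAndDihedral}) — and using the known description of roots and inversion sets in dihedral groups (Example~\ref{ex:BruhatDi}, Proposition~\ref{prop:InvMDRG}), one checks that the inversion sets $\Phi_{W'}(u)$ are exactly the initial segments from either end of the two linearly ordered ``sides'' of $\Phi_{W'}^+$. From this: if $W'$ is finite the only dominance relations among roots are the trivial $\gamma\preceq_\dom\gamma$, and at the same time $B(\gamma,\delta)<1$ for all distinct positive roots; if $W'$ is infinite there is a dominance relation precisely between two positive roots on the same side, or between one positive and one negative root on opposite sides, and these are exactly the pairs with $B(\gamma,\delta)\ge 1$ (the cross inner products being $\le-c\le-1$, the same-side ones $\ge 1$). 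Matching up signs via (i) then proves (v) in full.

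With (v) in hand, (ii) and (iii) follow quickly. For (iii), with $\alpha\in\Phi^-$ and $\beta\in\Phi^+$: $\alpha\preceq_\dom\beta$ forces $B(\alpha,\beta)\ge 1$ by (v); conversely $B(\alpha,\beta)\ge 1$ gives a dominance relation, which cannot be $\beta\preceq_\dom\alpha$ (taking $w=e$ in the defining implication would force $\beta\in\Phi^-$), so it is $\alpha\preceq_\dom\beta$. For (ii), with $\alpha,\beta\in\Phi^+$, the only new point is that $\alpha\preceq_\dom\beta$ and $\alpha\neq\beta$ imply $\dep(\alpha)<\dep(\beta)$: pick $g$ with $g(\beta)=\alpha_r\in\Delta$ and $\ell(g)=\dep(\beta)$; by (iv) $g(\alpha)\preceq_\dom\alpha_r$, so applying the defining implication with $w=s_r$ gives $s_r(g(\alpha))\in\Phi^-$, and therefore $g(\alpha)\in\Phi^+$ would force $g(\alpha)\in\Phi(s_r)=\{\alpha_r\}$, i.e. $\alpha=\beta$ — impossible; hence $g(\alpha)\in\Phi^-$, so $\alpha\in\Phi(g^{-1})$, and reading off a reduced word for $g^{-1}$ by Proposition~\ref{prop:InvBasics}(1) yields $\dep(\alpha)\le\dep(\beta)-1$. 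Combined with (v), antisymmetry of $\preceq_\dom$, and this depth inequality, every implication in (ii) is then formal. I expect the only genuine obstacle to be the dihedral computation in (v): correctly organizing the finite, affine, and Lorentzian sub-cases and keeping track of signs; everything else is short and formal.
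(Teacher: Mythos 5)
The paper does not actually prove this proposition: it is imported verbatim as \cite[Lemma 3.2]{Fu12}, so there is no internal argument to compare yours against. Your blind proof is, as far as I can check, correct and self-contained, and it follows the same general strategy as the literature it replaces (Brink--Howlett/Fu): the formal parts (i) and (iv) are exactly as you say; your reduction of (v) to the maximal dihedral reflection subgroup $\mathcal M_{\alpha,\beta}$ is sound, since Proposition~\ref{prop:CosetRep}(1) together with the equivalence $w(\gamma)\in\Phi^-\iff\gamma\in\Phi(w^{-1})$ (and its mirror for $\gamma\in\Phi^-$) really does show that the component $u_1\in W'$ of $w^{-1}$ reproduces the sign pattern of $w$ on $\Phi_{W'}$, so dominance in $W$ restricted to $\Phi_{W'}$ coincides with dominance in $(W',\chi(W'))$; and your derivations of (iii) and of the depth inequality in (ii) (via $g(\alpha)\in\Phi^+\cap s_r(\Phi^-)=\{\alpha_r\}$ forcing $\alpha=\beta$, then Proposition~\ref{prop:InvBasics}(1) giving $\dep(\alpha)\le\dep(\beta)-1$) are clean and avoid any circular use of the paper's Corollary~\ref{cor:DomFu}. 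Two small caveats: the rank-two classification in (v) is only sketched, as you acknowledge, and it is the genuine computational content (same-side positive pairs have $B\ge 1$ and dominate, cross-side positive pairs have $B\le -c\le -1$, finite $W'$ has all pairwise products of distinct non-proportional roots in $(-1,1)$ and no nontrivial dominance); and your classification as literally stated omits the negative--negative pairs, which you must (and implicitly do) recover from (i) since $B(-\gamma,-\delta)=B(\gamma,\delta)$. With those details written out, your argument would serve as a complete proof of the cited lemma.
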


As a corollary, we have these two extra properties.

\begin{cor}\label{cor:DomFu} Let $\alpha,\beta \in \Phi$ such that $\alpha\preceq_\dom \beta$. 
 $(vi)$ If $\alpha\in \Phi^+$ then $\beta\in \Phi^+$; $(vii)$ If $\beta\in \Phi^-$ then $\alpha\in \Phi^-$.
\end{cor}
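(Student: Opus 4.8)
The plan is to derive both (vi) and (vii) directly from the definition of the dominance order on the full root system $\Phi$, namely Equation~(\ref{eq:InftyDepth}), by specializing the universally quantified $w\in W$ to the identity $w=e$.

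First I would prove (vii). Assume $\alpha\preceq_\dom\beta$ and $\beta\in\Phi^-$. Applying Equation~(\ref{eq:InftyDepth}) with $w=e\in W$, from $e(\beta)=\beta\in\Phi^-$ we obtain $e(\alpha)=\alpha\in\Phi^-$, which is exactly the conclusion of (vii).

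Then (vi) follows immediately by contraposition: suppose $\alpha\preceq_\dom\beta$ and $\alpha\in\Phi^+$. Since $\Phi=\Phi^+\sqcup\Phi^-$, we have $\alpha\notin\Phi^-$, so by (vii) we cannot have $\beta\in\Phi^-$; hence $\beta\in\Phi^+$. (Alternatively one can argue (vi) directly in the same way: if $\beta\in\Phi^-$, then $e(\beta)\in\Phi^-$ forces $\alpha=e(\alpha)\in\Phi^-$, contradicting $\alpha\in\Phi^+$.)

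There is no genuine obstacle here: the statement is an immediate unwinding of the definition \eqref{eq:InftyDepth}. One could instead deduce it from Proposition~\ref{prop:DomFu} — part (i) lets one pass between $\alpha\preceq_\dom\beta$ and $-\beta\preceq_\dom-\alpha$, and parts (ii)--(iii) then pin down the signs — but invoking \eqref{eq:InftyDepth} with $w=e$ is the most economical route and is the one I would record.
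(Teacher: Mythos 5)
Your proof is correct and takes essentially the same route as the paper: both arguments hinge on specializing $w=e$ in Eq.~(\ref{eq:InftyDepth}), which literally yields (vii), with (vi) then following by contraposition since $\Phi=\Phi^+\sqcup\Phi^-$. The paper's only cosmetic difference is that it attributes the $w=e$ specialization to (vi) and then deduces (vii) using Proposition~\ref{prop:DomFu}~(i); your organization avoids that extra appeal, but the underlying argument is the same.
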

\begin{proof} $(vi)$ is proved directly taking $w=e$ in Eq.~(\ref{eq:InftyDepth}). The proof of $(vii)$ follows from $(vi)$ together with Proposition~\ref{prop:DomFu}~$(i)$.
\end{proof}

\subsection{Dominance-depth and $m$-small roots} Brink and Howlett in \cite{BrHo93} introduced, in relation to the dominance order, another depth-statistic:  the {\em dominance-depth $\dep_\infty:\Phi^+\to\mathbb N$} is defined by
$$
\dep_\infty(\beta) = |\{\alpha\in \Phi^+\setminus\{\beta\}\mid \alpha\prec_\dom \beta\}|.
$$

The following result is an analog to Eq.~(\ref{eq:Depth}) for dominance-depth, see for instance \cite[Proposition 3.3]{DyHo16}:   for $\beta\in \Phi^+$ and $s\in S$, $s\not = s_\beta$, we have:
\begin{equation}\label{eq:DomDepth}
\dep_\infty(s(\beta))=
\left\{
\begin{array}{ll}
\dep_\infty(\beta)-1&\textrm{if }B(\alpha_s,\beta)\geq 1\\
\dep_\infty(\beta)&\textrm{if }B(\alpha_s,\beta)\in ]-1,1[\\
\dep_\infty(\beta)+1&\textrm{if }B(\alpha_s,\beta)\leq -1
\end{array}
\right.
\end{equation}
In particular, $\dep_\infty(\alpha_s)=0$ for all $s\in S$. 

\begin{remark} In~\cite[\S5.1]{DyHo16}, the authors introduced families of {\em depths}, {\em lengths}  and {\em weak orders} on positive root systems. In particular,  the following equivalent definition is provided for the dominance-depth: 
$$
\dep_\infty(\beta) = \frac{|\{\alpha\in \Phi(s_\beta)\mid B(\alpha,\beta)\geq 1\}|-1}{2}.
$$
\end{remark}

Let $m\in \mathbb N$. The set of {\em $m$-small roots} is the set of positive roots that dominate at most $m$ distinct proper positive roots:
$$
\Sigma_m=\{\beta\in \Phi^+\mid \dep_\infty(\beta)\leq m\}.
$$
Obviously $\Phi^+=\bigcup_{m\in \mathbb N} \Sigma_m$. The  $m$-small roots are defined in the introduction in relation with parallelism; this relationship is discussed  in~\S\ref{ss:Parallel}. The following theorem, due to Brink and Howlett~\cite{BrHo93} for $m=0$ and Fu~\cite{Fu12} for all $m$, implies that the sets of $m$-small roots provides a decomposition of the positive roots into finite sets whenever $S$ is finite. 

\begin{thm}[Brink-Howlett, Fu]\label{thm:BrFu} The set $\Sigma_m$ is finite whenever $S$ is finite for any $m\in \mathbb N$. 
\end{thm}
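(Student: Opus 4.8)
The plan is to argue by induction on $m$, the base case $m=0$ being the substantial one (this is Brink--Howlett's theorem, and the inductive step is Fu's). For $m=0$ I would realise $\Sigma_0$ as the vertex set of a directed graph $\mathcal G$, with an edge $\gamma\to s(\gamma)$ whenever $s\in S$, $s(\gamma)\in\Sigma_0$ and $\dep(s(\gamma))=\dep(\gamma)+1$. The first point is that $\mathcal G$ steers every small root towards $\Delta$: if $\beta\in\Sigma_0\setminus\Delta$ then there is $s\in S$ with $B(\alpha_s,\beta)>0$ (otherwise $B(\beta,\beta)\le 0$, impossible for a root), and necessarily $B(\alpha_s,\beta)\in(0,1)$, since $B(\alpha_s,\beta)\ge 1$ together with $\dep(\alpha_s)=0\le\dep(\beta)$ would give $\alpha_s\prec_\dom\beta$ by Proposition~\ref{prop:DomFu}, contradicting $\dep_\infty(\beta)=0$; then \eqref{eq:Depth} and \eqref{eq:DomDepth} show $s(\beta)\in\Sigma_0$ with $\dep(s(\beta))=\dep(\beta)-1$. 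Iterating, every small root is joined to a simple root by a path of $\mathcal G$ along which depth strictly decreases, and $\mathcal G$ is locally finite (all degrees $\le|S|$). The crux is then a \emph{uniform depth bound}: some $N=N(W,S)$ with $\dep(\beta)\le N$ for all $\beta\in\Sigma_0$. Granting this, $\Sigma_0$ is exhausted by paths of length $\le N$ out of $\Delta$ in a graph of out-degree $\le|S|$, so $|\Sigma_0|\le|S|\sum_{k=0}^{N}|S|^{k}<\infty$.

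To prove the depth bound I would analyse, for $\beta\in\Sigma_0$, a geodesic realising $\dep(\beta)$: choosing $g\in W$ with $g(\beta)\in\Delta$ and $\ell(g)=\dep(\beta)=:d$ and a reduced word for $g$, one gets a chain $\gamma_0,\gamma_1,\dots,\gamma_d=\beta$ of small roots with $\gamma_0\in\Delta$, each $\gamma_i$ obtained from $\gamma_{i-1}$ by a simple reflection, $\dep(\gamma_i)=i$, and $B(\alpha,\gamma_{i-1})\in(-1,0)$ for the relevant simple root $\alpha$ (this range because $\gamma_{i-1}$ and $\gamma_i$ are both small, by \eqref{eq:DomDepth}). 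Since any two distinct small roots $\mu,\nu$ satisfy $B(\mu,\nu)<1$ (Proposition~\ref{prop:DomFu}, as neither dominates the other), a chain that is too long cannot be fitted, with these metric constraints, into the finite-dimensional subspace spanned by $\Delta$: one locates $i<j$ for which $\gamma_i$ and $\gamma_j$ are forced ``close enough'' that $B(\gamma_i,\gamma_j)\ge 1$, whence $\gamma_i\prec_\dom\gamma_j$ and $\dep_\infty(\gamma_j)\ge 1$, a contradiction. This pigeonhole/convexity argument, the technical heart of \cite{BrHo93}, bounds $d$ in terms of $(W,S)$.

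For the step $m-1\to m$ I would use a covering argument. If $\beta\in\Sigma_m\setminus\Sigma_{m-1}$ then $\dep_\infty(\beta)=m\ge 1$, so $\beta$ properly dominates exactly $m$ positive roots; let $\gamma$ be $\preceq_\dom$-maximal among them. The positive roots properly dominated by $\gamma$ form a subset of those dominated by $\beta$ that omits $\gamma$, so $\dep_\infty(\gamma)\le m-1$, that is, $\gamma\in\Sigma_{m-1}$, which is finite by induction. It remains to check that $\beta\mapsto\gamma$ is finitely many-to-one; this follows since $\beta$ covers $\gamma$ in $(\Phi,\preceq_\dom)$ and, using \eqref{eq:DomDepth} and Proposition~\ref{prop:DomFu}, a fixed root has at most a bounded number $c(W,S)$ of covers in the dominance order. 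Hence $|\Sigma_m|\le(1+c(W,S))\,|\Sigma_{m-1}|<\infty$. The main obstacle throughout is the uniform depth bound for $0$-small roots (equivalently, the original Brink--Howlett finiteness statement), together with the analogous bound on the number of dominance covers of a root; both consist of upgrading the bilinear-form inequalities of Proposition~\ref{prop:DomFu} to genuine finiteness via a pigeonhole argument inside the span of $\Delta$.
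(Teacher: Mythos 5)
Note first that the paper does not prove this statement at all: it is quoted from \cite{BrHo93} ($m=0$) and \cite{Fu12} (general $m$), so your argument has to stand on its own, and it does not. In the base case the reduction is fine as bookkeeping (the descent step $\beta\mapsto s(\beta)$ staying inside $\Sigma_0$ with depth dropping by one is correct, and a uniform depth bound would indeed give $|\Sigma_0|\le|S|\sum_{k\le N}|S|^k$), but the uniform depth bound \emph{is} the Brink--Howlett theorem, and the mechanism you sketch for it fails. The only constraints you extract are that distinct small roots $\mu,\nu$ satisfy $B(\mu,\nu)<1$ and that consecutive roots along the chain pair to a value in $(-1,0)$; such pairwise inequalities in a finite-dimensional space force nothing, since any infinite family of pairwise distinct unit vectors already has all pairwise inner products strictly less than $1$ (and for infinite $W$ the form $B$ is not even positive definite, so there is no compactness to pigeonhole against). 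What \cite{BrHo93} actually establish, by a careful analysis of dihedral reflection subgroups, is that the values $B(\alpha_s,\gamma)$ with $\gamma$ small and $|B(\alpha_s,\gamma)|<1$ lie in a \emph{finite} set depending only on $(W,S)$, and only with that discreteness in hand does a finiteness argument close. So the ``technical heart'' you defer is precisely the content of the theorem, and the sketch offered in its place is not a viable substitute.

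The inductive step has an analogous gap. Choosing a $\preceq_\dom$-maximal root $\gamma$ among those properly dominated by $\beta$ and concluding $\dep_\infty(\gamma)\le m-1$ is correct, but the finiteness of the fibers of $\beta\mapsto\gamma$ rests on the claim that a fixed positive root has at most $c(W,S)$ covers in dominance order, which you say ``follows'' from Proposition~\ref{prop:DomFu} and Eq.~(\ref{eq:DomDepth}). It does not: those results only translate dominance into inequalities for $B$ and record how $\dep_\infty$ changes under a single simple reflection; they give no control whatsoever on the number of minimal elements above a given root. Nor can you bound the covers by the induction hypothesis, since a cover of $\gamma$ may dominate arbitrarily many roots incomparable with $\gamma$ and hence need not lie in any $\Sigma_k$ with $k$ controlled by $m$ --- invoking finiteness of covers inside $\Sigma_m$ would be circular. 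Finiteness (let alone a uniform bound) of dominance covers is itself a nontrivial finiteness assertion of the same nature as the theorem; Fu's proof for general $m$ instead extends the Brink--Howlett analysis of the admissible values of $B(\alpha_s,\beta)$ rather than counting covers. As written, the proposal reduces the theorem to two unproved statements that between them carry all of its mathematical content.
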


\subsection{$m$-small inversion sets and $m$-low elements}\label{ss:LowElements}  The {\em $m$-small inversion set of $w\in W$} is the set:
$$
\Sigma_m(w)=\Phi(w)\cap\Sigma_m.
$$
The {\em set $L_m$ of $m$-low elements} is, see \cite{DyHo16} for more details:
$$
L_m=\{w\in W\mid \Phi(w)=\cone_\Phi(\Sigma(w)) \}=\{w\in W\mid \Phi^1(w)\subseteq \Sigma_m \}.
$$

\begin{ex} 
\begin{enumerate}

\item If $W$ is finite, then $\Sigma_m=\Sigma_0=\Phi^+$ for all $m\in \mathbb N$. Hence $L_m=L_0=W$.

\item The set $L_0$ (resp. $L_1$) in affine type $\tilde B_2$ consists of the elements in the blue regions in Figure~\ref{fig:Aff1}~(a) (resp. Figure~\ref{fig:Aff2}~(a)). 

\item The set $L_0$ of a non-affine Coxeter arrangement consists of the elements in the blue regions in Figure~\ref{fig:Hyp1}.
\end{enumerate}
\end{ex}

If $S$ is finite, the set $\Sigma_m$ is finite (Brink and Howlett, Fu, see~Theorem~\ref{thm:BrFu}). Therefore the set $L_m$ is also finite. Actually, the set of $m$-low elements is a finite\footnote{If $S$ is finite.} Garside shadow, that is, $L_m$ contains $S$ and is closed under taking suffixes and under taking join in the right weak order. This result in the case $m=0$ was proven in \cite{DyHo16} and is the key to show the existence of a finite Garside family in the corresponding Artin-Tits monoid, see \cite{DDH14} for more information. The result for all $m$ is proven in the case of affine Weyl groups in \cite{DyHo16} and in full generality in \cite[Corollary~1.7]{Dy21}.

The key notion to prove that $L_m$ is a Garside shadow is {\em bipodality}: a set $A\subseteq \Phi^+$ is {\em bipodal} if for any $\beta\in A$ and maximal dihedral reflection subgroup $W'$ such that $\beta\in \Phi_{W'}\setminus \Delta_{W'}$ we have $\Delta_{W'}\subseteq A$; see \cite{DyHo16,Dy21} for more information. 

\begin{thm}[{\cite[Corollary~1.7]{Dy21}}]\label{thm:Dyer} Let $m\in \mathbb N$. The set $\Sigma_m$ is bipodal and the set $L_m$ is a Garside shadow.  
\end{thm}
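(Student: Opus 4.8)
The statement packages two assertions, and the plan is to prove bipodality of $\Sigma_m$ first, then deduce the Garside-shadow property of $L_m$ from it together with Theorem~\ref{thm:BasisInversion}; finiteness of $L_m$ when $S$ is finite then comes for free, since $\Sigma_m$ is finite by Theorem~\ref{thm:BrFu}, every $w\in L_m$ satisfies $\Phi(w)=\cone_\Phi(\Phi^1(w))$ with $\Phi^1(w)\subseteq\Sigma_m$, and $w\mapsto\Phi(w)$ is injective by Proposition~\ref{prop:InvBasics}, so $L_m$ injects into the power set of $\Sigma_m$.

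For bipodality: fix $\beta\in\Sigma_m$ and a maximal dihedral reflection subgroup $W'$ with $\beta\in\Phi^+_{W'}\setminus\Delta_{W'}$, $\Delta_{W'}=\{\alpha,\gamma\}$; I must show $\dep_\infty(\alpha)\le m$ and $\dep_\infty(\gamma)\le m$, and by symmetry it suffices to prove the sharper $\dep_\infty(\alpha)\le\dep_\infty(\beta)$. Write $\beta=a\alpha+b\gamma$; then $a,b\ge 1$ (a non-simple positive root of a dihedral reflection group has both canonical-simple coordinates $\ge 1$, since $B(\alpha,\gamma)\le-\cos(\pi/m_{W'})$ with $m_{W'}\ge 3$). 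By Proposition~\ref{prop:DiDep}, $\dep(\alpha)<\dep(\beta)$; and by Proposition~\ref{prop:DomFu}(ii), $\dep_\infty(\alpha)=|\{\delta\in\Phi^+\mid B(\delta,\alpha)\ge 1,\ \dep(\delta)<\dep(\alpha)\}|$, likewise for $\beta$. For $\delta$ in the first set with $B(\delta,\gamma)\ge 0$ one gets $B(\delta,\beta)=aB(\delta,\alpha)+bB(\delta,\gamma)\ge aB(\delta,\alpha)\ge 1$ and $\dep(\delta)<\dep(\alpha)<\dep(\beta)$, so $\delta$ lies in the second set. The genuine work — and what I expect to be the main obstacle — is to extend this to an injection of the first set into the second that also disposes of the $\delta$ with $B(\delta,\gamma)<0$; for such $\delta$ one replaces it by $s_\gamma(\delta)=\delta+(-2B(\gamma,\delta))\gamma$, which is again positive as it lies in $\cone(\Delta)$, and checks that the resulting assignment is injective and still lands in $\{\delta'\mid\delta'\prec_\dom\beta\}$, using the maximality relation $\Phi_{W'}=\mathbb R\Delta_{W'}\cap\Phi$ and the recursion~\eqref{eq:DomDepth}. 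This is precisely the bipodality half of \cite[Corollary~1.7]{Dy21}, whose bookkeeping I would follow.

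Granting bipodality, I turn to $L_m$. That $S\subseteq L_m$ is immediate: $\Phi^1(s)=\{\alpha_s\}$ and $\dep_\infty(\alpha_s)=0$. The pivotal consequence of bipodality and Theorem~\ref{thm:BasisInversion} is: \emph{if $\Phi^R(w)\subseteq\Sigma_m$, then $w\in L_m$.} Indeed, given $\beta\in\Phi^1(w)$, Theorem~\ref{thm:BasisInversion} provides a chain $\beta=\beta_0\precinv_w\beta_1\precinv_w\cdots\precinv_w\beta_r$ with $\beta_r\in\Phi^R(w)$; each step $\beta_i\precinv_w\beta_{i+1}$ puts $\beta_i\in\Delta_{W''}$ and $\beta_{i+1}\in\Phi^+_{W''}\setminus\Delta_{W''}$ for the maximal dihedral $W''=\mathcal M_{\beta_i,\beta_{i+1}}$, so bipodality of $\Sigma_m$ turns $\beta_{i+1}\in\Sigma_m$ into $\Delta_{W''}\subseteq\Sigma_m$, hence $\beta_i\in\Sigma_m$; descending from $\beta_r\in\Phi^R(w)\subseteq\Sigma_m$ gives $\beta\in\Sigma_m$, so $\Phi^1(w)\subseteq\Sigma_m$. (With $\Phi^R(w)\subseteq\Phi^1(w)$ from Proposition~\ref{prop:DescentRoot}, this is the equivalence $w\in L_m\iff\Phi^R(w)\subseteq\Sigma_m$.) Closure under $\vee_R$ follows quickly: if $z=u\vee_R v$ with $u,v\in L_m$ and $\gamma\in\Phi^R(z)$, then $\Phi(z)\setminus\{\gamma\}=\Phi(z')$ with $z=z's$ a reduced product and $s\in D_R(z)$ (Proposition~\ref{prop:DescentRoot}), so $z'<_R z=u\vee_R v$ and hence $z'\not\ge_R u$ or $z'\not\ge_R v$, say $\Phi(u)\not\subseteq\Phi(z')=\Phi(z)\setminus\{\gamma\}$; since $\Phi(u)\subseteq\Phi(z)$ this forces $\gamma\in\Phi(u)$, and $\gamma$, being on an extreme ray of $\cone(\Phi(z))\supseteq\cone(\Phi(u))$, is on an extreme ray of $\cone(\Phi(u))$, i.e.\ $\gamma\in\Phi^1(u)\subseteq\Sigma_m$. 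Thus $\Phi^R(z)\subseteq\Sigma_m$ and $z\in L_m$ by the boxed fact.

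It remains to see $L_m$ is closed under suffixes, which reduces to showing $sw\in L_m$ for $w\in L_m$ and $s\in D_L(w)$. Here $w=s(sw)$ is a reduced product, so $\Phi(sw)=s(\Phi(w)\setminus\{\alpha_s\})$ and $\Phi^1(sw)=s(E)$, where $E$ is the set of roots on extreme rays of $\cone(\Phi(w)\setminus\{\alpha_s\})$: namely $\Phi^1(w)\setminus\{\alpha_s\}$ together with, for each $\mu\in\Phi^1(w)$ spanning a two-dimensional face of $\cone(\Phi(w))$ with $\alpha_s$, the root of $\Phi(w)$ on that face adjacent to $\alpha_s$. For $\nu\in\Phi^1(w)\setminus\{\alpha_s\}$: since $\nu,\alpha_s\in\Phi(w)$ one has $B(\alpha_s,\nu)>-1$ (if $|B(\alpha_s,\nu)|\ge 1$ then $B(\alpha_s,\nu)\ge 1$ by \cite[Proposition~2.15]{DyHo16}), so $\dep_\infty(s(\nu))\le\dep_\infty(\nu)\le m$ by~\eqref{eq:DomDepth}, and $s(\nu)\in\Sigma_m$. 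For a new ray attached to $\mu$: $\alpha_s$ is a canonical simple root of $W'':=\mathcal M_{\alpha_s,\mu}$ (as $\alpha_s\in\Delta\cap W''\subseteq\chi(W'')$), and a direct computation inside $W''$ shows that $\cone(\alpha_s,\mu)\cap\Phi(w)$ is the arc from $\alpha_s$ to $\mu$ in $W''$ and that its new extreme ray is $s(\alpha')$ with $\{\alpha_s,\alpha'\}=\Delta_{W''}$; applying $s$ returns $\alpha'\in\Sigma_m$, which holds directly if $\mu=\alpha'$ (as $\mu\in\Phi^1(w)\subseteq\Sigma_m$) and by bipodality of $\Sigma_m$ otherwise (applied to $\mu\in\Phi^+_{W''}\setminus\Delta_{W''}$). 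Hence $\Phi^1(sw)=s(E)\subseteq\Sigma_m$, so $sw\in L_m$, and $L_m$ is a (finite, when $S$ is finite) Garside shadow.
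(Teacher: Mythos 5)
First, a caveat about the comparison: the paper does not prove Theorem~\ref{thm:Dyer} at all; it is quoted from \cite[Corollary~1.7]{Dy21}, so your write-up has to be judged as a standalone argument. Judged that way, the bipodality half is not proved. You reduce it to injecting the roots dominated by a canonical simple root $\alpha$ of $W'$ into those dominated by $\beta$, you settle the roots $\delta\prec_\dom\alpha$ with $B(\delta,\gamma)\ge 0$, and then you declare the remaining work (the substitution $\delta\mapsto s_\gamma(\delta)$, its injectivity, avoidance of collisions with the identity branch, and the verification that the images still strictly dominate below $\beta$) to be bookkeeping you would take from \cite{Dy21}. That remaining work is exactly the substance of the cited result, so the hard half of the theorem stays a citation rather than a proof; this is no worse than the paper's own treatment, but it is a deferral, not an argument.

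For the $L_m$ half, your key implication $\Phi^R(w)\subseteq\Sigma_m\Rightarrow w\in L_m$, derived from Theorem~\ref{thm:BasisInversion} plus bipodality, is correct (it is in effect the paper's own later route to Theorem~\ref{thm:Main} via Corollary~\ref{cor:PosetBasisInversion}, and there is no circularity since Theorem~\ref{thm:BasisInversion} does not rely on Theorem~\ref{thm:Dyer}), and the join-closure argument is sound. The suffix-closure step, however, has a genuine gap: you assert that the extreme rays of $\cone(\Phi(w)\setminus\{\alpha_s\})$ are those of $\cone(\Phi(w))$ other than $\mathbb R_{\ge 0}\alpha_s$ together with roots lying on two-dimensional faces of $\cone(\Phi(w))$ containing $\alpha_s$. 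That is not a general fact of polyhedral geometry (deleting a generator can promote a generator from the interior of the cone: with apex rays $a,b,c,d$ of a cone over a non-parallelogram quadrilateral and an extra generator $x=a+c$, removing $a$ makes $x$ extreme although $x$ lies on no two-dimensional face through $a$), and you give no argument that inversion sets exclude such configurations; the asserted computation inside $W''$ is also not carried out. The gap is repairable with tools you already invoke: since $sw<w$ one has $D_R(sw)\subseteq D_R(w)$; for $r\in D_R(sw)$ the root $-sw(\alpha_r)=s(-w(\alpha_r))$ satisfies $-w(\alpha_r)\in\Phi^R(w)\subseteq\Sigma_m$ and $B(\alpha_s,-w(\alpha_r))>-1$ because $\alpha_s$ and $-w(\alpha_r)$ both lie in $\Phi(w)$ (\cite[Proposition~2.15]{DyHo16}), so \eqref{eq:DomDepth} gives $\dep_\infty(-sw(\alpha_r))\le\dep_\infty(-w(\alpha_r))\le m$; hence $\Phi^R(sw)\subseteq\Sigma_m$ and $sw\in L_m$ by your own key implication, with no analysis of $\Phi^1(sw)$ needed.
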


The above theorem has the following useful consequence on short inversions graphs.

\begin{cor}\label{cor:PosetBasisInversion} Let $w\in W$, $\alpha,\beta\in \Phi^1(w)$ with $\alpha \preceq_w \beta$, then  $\dep_\infty(\alpha)\leq\dep_\infty(\beta)$.
\end{cor}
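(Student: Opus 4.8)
The plan is to reduce immediately to the generating relation $\precinv_w$ and then feed everything into the bipodality of $\Sigma_m$ provided by Theorem~\ref{thm:Dyer}. Since $\preceq_w$ is by definition the reflexive–transitive closure of $\precinv_w$ on $\Phi^1(w)$, any pair $\alpha\preceq_w\beta$ is linked by a finite chain $\alpha=\gamma_0\precinv_w\gamma_1\precinv_w\cdots\precinv_w\gamma_k=\beta$ (with $k=0$ in the trivial case $\alpha=\beta$), so it suffices to establish $\dep_\infty(\gamma_i)\leq\dep_\infty(\gamma_{i+1})$ for a single elementary step. Hence I may assume $\alpha\precinv_w\beta$ from now on.

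For this elementary step I would argue as follows. Put $W'=\mathcal M_{\alpha,\beta}$. By the definition of the short inversion graph together with Proposition~\ref{prop:BasisAndDihedral}, the relation $\alpha\precinv_w\beta$ means precisely that $\alpha\in\Delta_{W'}$ while $\beta\in\Phi^+_{W'}\setminus\Delta_{W'}$, and $W'$ is a maximal dihedral reflection subgroup with $s_\beta\in W'$. Now choose $m=\dep_\infty(\beta)$, so that $\beta\in\Sigma_m$ by the very definition of $m$-small roots. Since $\Sigma_m$ is bipodal by Theorem~\ref{thm:Dyer}, and $\beta\in\Phi_{W'}\setminus\Delta_{W'}$ for the maximal dihedral reflection subgroup $W'$, bipodality forces $\Delta_{W'}\subseteq\Sigma_m$; in particular $\alpha\in\Sigma_m$, that is $\dep_\infty(\alpha)\leq m=\dep_\infty(\beta)$, which is exactly the desired inequality.

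I do not expect a genuine obstacle here: the whole content is the observation that $\dep_\infty(\alpha)\leq\dep_\infty(\beta)$ is equivalent to the implication ``$\beta\in\Sigma_m\Rightarrow\alpha\in\Sigma_m$ for every $m\in\mathbb N$'', and that this implication is nothing but an instance of bipodality once $\beta$ is recognized as a non-simple root of $\mathcal M_{\alpha,\beta}$ and $\alpha$ as one of its simple roots. The only points deserving a line of care are the reduction to $\precinv_w$ through the transitive closure, and the translation of the definitional relation $\alpha\precinv_w\beta$ into ``$\alpha\in\Delta_{W'}$ and $\beta\in\Phi^+_{W'}\setminus\Delta_{W'}$'' — both of which are immediate from Proposition~\ref{prop:BasisAndDihedral} and the definition of $\precinv_w$. (Alternatively, one could bypass Theorem~\ref{thm:Dyer} and argue directly: in the case where $\mathcal M_{\alpha,\beta}$ is infinite one gets $B(\alpha,\beta)\geq 1$, hence $\alpha\prec_\dom\beta$ by Proposition~\ref{prop:DomFu}(ii) using $\dep(\alpha)<\dep(\beta)$ from Proposition~\ref{prop:SIGDep}, and then $\dep_\infty$ is monotone along $\prec_\dom$; but the finite dihedral case is more delicate and the bipodality route handles both uniformly.)
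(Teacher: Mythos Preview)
Your proof is correct and follows essentially the same route as the paper's own proof: reduce to the generating relation $\precinv_w$, set $p=\dep_\infty(\beta)$ so that $\beta\in\Sigma_p$, and apply the bipodality of $\Sigma_p$ from Theorem~\ref{thm:Dyer} to conclude that $\alpha\in\Delta_{\mathcal M_{\alpha,\beta}}\subseteq\Sigma_p$. Your additional remarks (the equivalent reformulation via ``$\beta\in\Sigma_m\Rightarrow\alpha\in\Sigma_m$ for all $m$'' and the sketch of an alternative argument in the infinite dihedral case) are sound but not needed.
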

\begin{proof} Set $p=\dep_\infty(\beta)$. In particular, $\beta\in \Sigma_p$. It is enough to show that if $\alpha\precinv_w\beta$ then $\dep_\infty(\alpha)\leq\dep_\infty(\beta)$, by definition of $\preceq_w$.
Write $W'=\mathcal M_{\alpha,\beta}$, then by definition of $\precinv_w$, $\beta\in \Phi^+_{W'}\setminus\Delta_{W'}$ and $\alpha\in \Delta_{W'}$. Therefore $\alpha\in \Delta_{W'}\subseteq \Sigma_p$ since $\Sigma_p$ is bipodal by Theorem~\ref{thm:Dyer}. Hence  $\dep_\infty(\alpha)\leq p=\dep_\infty(\beta)$.
 \end{proof}

As a direct consequence of Theorem~\ref{thm:BasisInversion} and Corollary~\ref{cor:PosetBasisInversion}, we obtain the following theorem. 

\begin{thm}\label{thm:Main} Let $w\in W$ and set $d_w=\max\{\dep_\infty(\gamma)\mid \gamma\in \Phi^R(w)\}$.
\begin{enumerate}
\item The infinity-depth on $\Phi^1(w)$ reaches its maximum on $\Phi^R(w)$:  
$$
\dep_\infty(\beta)\leq d_w,\ \textrm{for all } \beta \in \Phi^1(w).
$$
\item  The element $w$ is a $d_w$-low element; 
\item For $m\in\mathbb N$, $w\in L_m$ if and only if $m\geq d_w$.
\end{enumerate}
\end{thm}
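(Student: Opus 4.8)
The plan is to obtain Theorem~\ref{thm:Main} as an essentially formal consequence of Theorem~\ref{thm:BasisInversion} and Corollary~\ref{cor:PosetBasisInversion}; almost no new computation is involved. First I would dispose of the trivial case $w=e$, where $\Phi^1(e)=\Phi^R(e)=\emptyset$ and one sets $d_e=0$ with the convention $\max\emptyset=0$, so that all three assertions hold since $e\in L_0$.

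For part (1), fix $\beta\in\Phi^1(w)$. By Theorem~\ref{thm:BasisInversion} there exists $\gamma\in\Phi^R(w)$ with $\beta\preceq_w\gamma$, and then Corollary~\ref{cor:PosetBasisInversion} gives $\dep_\infty(\beta)\le\dep_\infty(\gamma)$. Since $\dep_\infty(\gamma)\le d_w$ by the definition of $d_w$, this yields $\dep_\infty(\beta)\le d_w$ for every $\beta\in\Phi^1(w)$. I would also record the slightly sharper statement that $\max\{\dep_\infty(\beta)\mid\beta\in\Phi^1(w)\}=d_w$: the inequality ``$\le$'' is what was just proved, and ``$\ge$'' holds because $\Phi^R(w)\subseteq\Phi^1(w)$ by Proposition~\ref{prop:DescentRoot}, so the value $d_w$ is already attained on $\Phi^1(w)$.

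Parts (2) and (3) then follow immediately from the characterization $w\in L_m\iff\Phi^1(w)\subseteq\Sigma_m\iff\dep_\infty(\beta)\le m$ for all $\beta\in\Phi^1(w)$, recalled in \S\ref{ss:LowElements} together with the definition $\Sigma_m=\{\beta\in\Phi^+\mid\dep_\infty(\beta)\le m\}$. Indeed, taking $m=d_w$ in part (1) shows $w\in L_{d_w}$, which is (2); and in general $w\in L_m$ iff $\max\{\dep_\infty(\beta)\mid\beta\in\Phi^1(w)\}\le m$, which by the sharpened form of (1) is equivalent to $d_w\le m$, proving (3).

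The only point requiring any care — and it is minor — is making sure the maximum in the definition of $d_w$ is genuinely the maximum of $\dep_\infty$ over all of $\Phi^1(w)$ and not merely over $\Phi^R(w)$, i.e. that no short inversion outside the right-descent roots has strictly larger infinity-depth; this is exactly what Theorem~\ref{thm:BasisInversion} (every short inversion lies below some right-descent root in $\preceq_w$) together with the monotonicity of $\dep_\infty$ along $\preceq_w$ (Corollary~\ref{cor:PosetBasisInversion}) provide, so there is no real obstacle.
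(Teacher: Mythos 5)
Your proof is correct and follows exactly the route the paper takes: the theorem is stated there as a direct consequence of Theorem~\ref{thm:BasisInversion} (every short inversion is bounded above in $\preceq_w$ by a right-descent root) and Corollary~\ref{cor:PosetBasisInversion} (monotonicity of $\dep_\infty$ along $\preceq_w$), combined with the characterization $L_m=\{w\mid \Phi^1(w)\subseteq\Sigma_m\}$. Your added observations (the $w=e$ convention and that $d_w$ is attained on $\Phi^1(w)$ since $\Phi^R(w)\subseteq\Phi^1(w)$) are harmless refinements of the same argument.
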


The following corollary is a proof of \cite[Conjecture~2]{DyHo16}, which is key to proving Theorem~\ref{thm:Main1}.

\begin{cor}\label{cor:Main} Let $m\in \mathbb N$. The map $\lambda_m:L_m\to \Lambda_m=\{\Sigma_m(w)\mid w\in W\}$, defined by $w\mapsto \Sigma_m(w)$, is a bijection. 
\end{cor}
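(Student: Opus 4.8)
The plan is to prove separately that $\lambda_m$ is injective and that it is surjective onto $\Lambda_m$; both facts will follow quickly from Theorem~\ref{thm:Main} together with the basic properties of inversion sets recalled in \S\ref{se:Combin}. Note first that $\lambda_m$ is well-defined: for $w\in L_m$ the set $\Sigma_m(w)=\Phi(w)\cap\Sigma_m$ is by definition an element of $\Lambda_m$. The only nontrivial inputs will be Dyer's spanning result $\Phi(w)=\cone_\Phi(\Phi^1(w))$ recalled at the start of \S\ref{se:ShortInv} (used for injectivity) and Theorem~\ref{thm:Main}(3) (used for surjectivity).

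For injectivity, I would argue as follows. Let $w\in L_m$. By the characterization $L_m=\{w\in W\mid \Phi^1(w)\subseteq\Sigma_m\}$ we have $\Phi^1(w)\subseteq\Phi(w)\cap\Sigma_m=\Sigma_m(w)$, and hence
\[
\Phi(w)=\cone_\Phi(\Phi^1(w))\subseteq\cone_\Phi(\Sigma_m(w))\subseteq\cone_\Phi(\Phi(w))=\Phi(w),
\]
so that $\cone_\Phi(\Sigma_m(w))=\Phi(w)$. Thus $\Sigma_m(w)$ recovers $\Phi(w)$, and by Proposition~\ref{prop:InvBasics}(3) it recovers $w$. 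Consequently, if $w,v\in L_m$ satisfy $\Sigma_m(w)=\Sigma_m(v)$, then $\Phi(w)=\Phi(v)$ and therefore $w=v$.

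For surjectivity I would induct on $\ell(w)$, proving the stronger statement that every $w\in W$ satisfies $\Sigma_m(w)=\Sigma_m(w')$ for some $w'\in L_m$ (this suffices since every element of $\Lambda_m$ has the form $\Sigma_m(w)$ with $w\in W$). If $w\in L_m$ — in particular if $w=e$, as $\Phi^1(e)=\emptyset\subseteq\Sigma_m$ — take $w'=w$. Otherwise Theorem~\ref{thm:Main}(3) gives $d_w>m$, so there is a right descent root $\gamma\in\Phi^R(w)$ with $\dep_\infty(\gamma)=d_w>m$, i.e.\ $\gamma\notin\Sigma_m$. Writing $\gamma=-w(\alpha_s)$ with $s\in D_R(w)$ and setting $u=ws$, the product $w=us$ is reduced, $\ell(u)=\ell(w)-1$, and $u(\alpha_s)=-w(\alpha_s)=\gamma$, so Proposition~\ref{prop:InvBasics}(2) yields $\Phi(w)=\Phi(u)\sqcup\{\gamma\}$. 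Intersecting with $\Sigma_m$ and using $\gamma\notin\Sigma_m$ gives $\Sigma_m(w)=\Sigma_m(u)$, and the induction hypothesis applied to $u$ produces $w'\in L_m$ with $\Sigma_m(w')=\Sigma_m(u)=\Sigma_m(w)$.

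I do not expect a genuine obstacle here — the corollary is essentially packaged so as to be an immediate consequence of Theorem~\ref{thm:Main}. The only points that need a moment of care are the identification $u(\alpha_s)=-w(\alpha_s)=\gamma$ in the surjectivity step and the two boundary facts $e\in L_m$ and $\Phi^1(w)\subseteq\Sigma_m(w)$ for $w\in L_m$; all of these are routine.
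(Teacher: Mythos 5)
Your proof is correct and follows essentially the same route as the paper: both hinge on Theorem~\ref{thm:Main} together with the decomposition $\Sigma_m(w)=\Sigma_m(ws)\sqcup(\{-w(\alpha_s)\}\cap\Sigma_m)$ from Proposition~\ref{prop:InvBasics}, your induction on length being just the inductive phrasing of the paper's choice of a minimal-length representative of $A\in\Lambda_m$. The only cosmetic difference is that you prove injectivity directly from $\Phi(w)=\cone_\Phi(\Sigma_m(w))$ for $w\in L_m$, whereas the paper cites \cite[Proposition~3.26~(b)]{DyHo16} for that step; your argument is a valid self-contained substitute.
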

\begin{proof} The map $\lambda_m$ is injective by \cite[Proposition~3.26~(b)]{DyHo16}. Now let $A\in \Lambda_m$. Choose $w\in W$ of minimal length such that $A=\Sigma_m(w)$. Let $r\in D_R(w)$, then by Proposition~\ref{prop:InvBasics} and the definition of small inversion sets we have:
\begin{eqnarray*}
\Sigma_m(w)&=&\Phi(w)\cap \Sigma_m = (\Phi(wr)\cap \Sigma_m) \sqcup (\{-w(\alpha_r) \}\cap \Sigma_m)\\
&=&\Sigma_m(wr)\sqcup (\{-w(\alpha_r) \}\cap \Sigma_m).
\end{eqnarray*}
Since $w$ is of minimal length, $\Sigma_m(wr)\not = A$, therefore $-w(\alpha_r)\in \Sigma_m$. In other words, $\Phi^R(w)\subseteq \Sigma_m$. We conclude that  $w\in L_m$ by Theorem~\ref{thm:Main}.
\end{proof}

The next  proposition is key to proving Theorem~\ref{thm:Main2} and Theorem~\ref{thm:Main3}.

\begin{prop}\label{cor:Main2} Let $m\in \mathbb N$,  $w\in L_m$ and $s\in S$. Then $sw\in L_{m+1}$. Moreover:
\begin{enumerate}
\item $sw\in L_{m+1}\setminus L_m$ if and only if $w<sw$ and  there is $r\in D_R(w)$ such that $\dep_\infty(-sw(\alpha_r))=m+1$.
\item If the conditions above hold, then  $\alpha_s\prec_\dom -sw(\alpha_r)$ for any $r\in D_R(w)$ with $\dep_\infty(-sw(\alpha_r))=m+1$.
 \end{enumerate}
\end{prop}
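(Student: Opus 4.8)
The plan is to reduce at once to the case $w<sw$ and then to describe $\Phi^R(sw)$ explicitly, after which all three assertions follow from Theorem~\ref{thm:Main} together with the $\infty$-depth recursion of Equation~(\ref{eq:DomDepth}). First I would dispose of the case $sw<w$: then $sw$ is a suffix of $w$, so $sw\in L_m\subseteq L_{m+1}$ because $L_m$ is a Garside shadow (Theorem~\ref{thm:Dyer}); in particular $sw\in L_m$, so the situation in (1) cannot occur, which already shows that $sw\in L_{m+1}\setminus L_m$ forces $w<sw$. So from now on assume $sw=s\cdot w$ is a reduced product.

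The crux is the following combinatorial lemma: \emph{if $w<sw$ then $D_R(w)\subseteq D_R(sw)$, and every $r\in D_R(sw)\setminus D_R(w)$ satisfies $w(\alpha_r)=\alpha_s$.} Both parts are short root computations. For the inclusion, $r\in D_R(w)$ means $w(\alpha_r)\in\Phi^-$; one cannot have $w(\alpha_r)=-\alpha_s$, since that would give $wr=sw$ and hence $\ell(wr)=\ell(w)+1$, contradicting $r\in D_R(w)$, so $sw(\alpha_r)=s(w(\alpha_r))\in\Phi^-$. For the second part, $r\notin D_R(w)$ gives $w(\alpha_r)\in\Phi^+$ while $r\in D_R(sw)$ gives $s(w(\alpha_r))\in\Phi^-$, and a positive root sent by $s$ to a negative root must be $\alpha_s$. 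Combining this with the (same) observation that no $\gamma\in\Phi^R(w)$ can equal $\alpha_s$ yields
$$\Phi^R(sw)=s\bigl(\Phi^R(w)\bigr)\cup\Phi'',\qquad \Phi''\subseteq\{\alpha_s\},$$
with $s\bigl(\Phi^R(w)\bigr)\subseteq\Phi^+$ and $\Phi''=\{\alpha_s\}$ exactly when $D_R(sw)\neq D_R(w)$.

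Everything then falls out. For $sw\in L_{m+1}$: by Theorem~\ref{thm:Main}(3) it suffices to bound $\dep_\infty$ on $\Phi^R(sw)$ by $m+1$; we have $\dep_\infty(\alpha_s)=0$, and for $\gamma\in\Phi^R(w)\subseteq\Phi^1(w)\subseteq\Sigma_m$ (using $w\in L_m$) Equation~(\ref{eq:DomDepth}) gives $\dep_\infty(s(\gamma))\leq\dep_\infty(\gamma)+1\leq m+1$, valid since $s_\gamma\neq s$ for $\gamma\in\Phi^+\setminus\{\alpha_s\}$. For the backward direction of (1): if $r\in D_R(w)$ with $\dep_\infty(-sw(\alpha_r))=m+1$, then $-sw(\alpha_r)\in\Phi^R(sw)\subseteq\Phi^1(sw)$ is not $m$-small, so $sw\notin L_m$. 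For the forward direction: $sw\notin L_m$ rules out $sw<w$ and produces $\beta\in\Phi^1(sw)$ with $\dep_\infty(\beta)=m+1$ (the value is $\leq m+1$ as $sw\in L_{m+1}$); by Theorem~\ref{thm:Main}(1) the maximum of $\dep_\infty$ on $\Phi^1(sw)$ is attained at some $\gamma\in\Phi^R(sw)$, so $\dep_\infty(\gamma)=m+1$, and since $\dep_\infty(\alpha_s)=0$ the description of $\Phi^R(sw)$ forces $\gamma=-sw(\alpha_r)$ for some $r\in D_R(w)$. Finally for (2): writing $\gamma:=-sw(\alpha_r)=s(\gamma')$ with $\gamma':=-w(\alpha_r)\in\Phi^R(w)\subseteq\Sigma_m$, the inequality $\dep_\infty(s(\gamma'))=m+1>m\geq\dep_\infty(\gamma')$ forces, via Equation~(\ref{eq:DomDepth}), that $B(\alpha_s,\gamma')\leq-1$; since $s$ preserves $B$ and $s(\alpha_s)=-\alpha_s$, this gives $B(\alpha_s,\gamma)=B(s(\alpha_s),\gamma')=-B(\alpha_s,\gamma')\geq1$, and as $\alpha_s,\gamma\in\Phi^+$ with $\dep(\alpha_s)=0\leq\dep(\gamma)$, Proposition~\ref{prop:DomFu}(ii) yields $\alpha_s\preceq_\dom\gamma$, which is strict because the $\infty$-depths of $\alpha_s$ and $\gamma$ differ.

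I expect the main obstacle to be the combinatorial lemma on $\Phi^R(sw)$: it is tempting to believe that left multiplication by $s$ merely permutes right descent roots via $\gamma\mapsto s(\gamma)$, but one must carefully account for the new right descents that $s$ can create, the point being that each of them contributes exactly the root $\alpha_s$; once this is in hand, the rest is bookkeeping with the $\infty$-depth recursion and the characterizations of dominance in Proposition~\ref{prop:DomFu}.
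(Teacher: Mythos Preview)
Your proof is correct and follows essentially the same route as the paper's. Your ``combinatorial lemma'' describing $\Phi^R(sw)$ is exactly the exchange-condition dichotomy the paper uses (its cases $(i)$ $sw=wr$ and $(ii)$ $r\in D_R(w)$), just packaged as a structural statement about $\Phi^R(sw)$ rather than argued pointwise; if anything, your treatment of when $\dep_\infty$ actually jumps by one (needed for part~(2)) is slightly more careful than the paper's write-up.
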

\begin{proof} In order to show that $sw\in L_{m+1}$, we only need to show that $d_{sw}\leq m+1$ by Theorem~\ref{thm:Main}. Observe also that $sw\in L_{m+1}\setminus L_m$ if and only if $d_{sw}=m+1$.  Assume first that $sw< w$, then $sw$ is a suffix of $w$. Therefore $sw \in L_m$, since $L_m$ is a Garside shadow. In particular, $d_{sw}=m<m+1$ so $(1)-(2)$ cannot happen and $sw\in L_{m+1}$ .

We assume from now on that $sw> w$. Let $r\in D_R(sw)$. Then by the exchange condition we have two cases.
\begin{enumerate}[$(i)$]
\item  $sw=wr$: in this case  we have $s=wrw^{-1}$ and so $w(\alpha_r)=\alpha_s$. Hence $\dep_\infty(-sw(\alpha_r))=\dep_\infty(\alpha_s)=0<m+1$. 

\item  $r\in D_R(w)$: in this case  we have $-w(\alpha_r)\in \Phi^R(w)$. By Theorem~\ref{thm:Main}, we have  $\dep_\infty(-w(\alpha_r))\leq m$ since $w\in L_m$. Therefore  $\dep_\infty(-sw(\alpha_r))= \dep_\infty(-w(\alpha_r))+ 1\leq m+1$ and $B(\alpha_s,-w(\alpha_r))\leq -1$ by Eq.~(\ref{eq:DomDepth}).
\end{enumerate}
 In both cases, we have $d_{sw}\leq m+1$ so $sw\in L_{m+1}$.

\smallskip
\noindent We now prove $(1)$. Assume first that $sw\notin L_m$, we have $\Phi^R(sw)\not\subset\Sigma_m$. Therefore there is $r\in D_R(sw)$ such that $\dep_\infty(-sw(\alpha_r))\geq m+1$. In view of $(i)$ and $(ii)$ above, we deduce that $r\in D_R(w)$ and $\dep_\infty(-sw(\alpha_r))= m+1$. Conversely, assume there is $r\in D_R(w)$ such that $\dep_\infty(-sw(\alpha_r))=m+1$. We already know that $sw\in L_{m+1}$. Since $D_R(w)\subseteq D_R(sw)$, we have $d_{sw}=m+1$ since $-sw(\alpha_r)\in \Phi^R(sw)$. Therefore $sw\in L_{m+1}\setminus L_m$.

\smallskip
\noindent Finally, for $(2)$: let $r\in D_R(w)$ with $\dep_\infty(-sw(\alpha_r))=m+1$.  By $(ii)$ above, we have the inequality $B(\alpha_s,-w(\alpha_r))\leq -1$, which is equivalent to $B(\alpha_s,-sw(\alpha_r))\geq 1$. Therefore we have $\alpha_s\prec_\dom -sw(\alpha_r)$ by Proposition~\ref{prop:DomFu}~{\it (ii)}.
\end{proof}

\section{Chambered sets for Coxeter systems}\label{se:CS}

Let $(W,S)$ be a Coxeter system.   In order to provide the definition of $m$-Shi arrangement ($m\in \mathbb N$), and then proofs for Theorem~\ref{thm:Main1} and Theorem~\ref{thm:Main2}, we need first some space or set $\mathcal U(W,S)$  with a natural action of $W$ as a reflection group, in which the Coxeter arrangement $\mathcal A(W,S)$ is realized and the regions of $\mathcal A(W,S)$ are in bijection with the elements of $W$. 

Examples of possible $\mathcal U(W,S)$ include:  the {\em Tits cone} (see~\cite[\S5.13]{Hu90});  the {\em Coxeter complex} or the {\em Davis complex} (see~\cite[\S3 and \S12.3]{AbBr08}); an affine Euclidean space for finite or affine Coxeter groups (see~\cite[\S10]{AbBr08}), a hyperbolic model for weakly hyperbolic Coxeter systems (see for instance~\cite{AbBr08,HPR20}) or  the {\em imaginary cone} for finit rank, irreducible, indefinite Coxeter systems~\cite{Dy19-1,DyHoRi16}. Other examples involve suitable ``reflection'' actions by $W$  on more general manifolds, or on  the Cayley graph (see  \cite{Da08}) and even the left regular action of  $W$ on itself (by left translation). 

To deal with these and other situations uniformly,  we abstract  the situation  to that of a pair $(\mathcal U,C)$ called a {\em chambered set for $(W,S)$}.  A reader familiar with one of the examples above may proceed directly to \S\ref{se:LowShi} knowing the terminology used for chambered sets (chambers, hyperplanes and walls, etc.) is compatible with the classical ones. 

\subsection{Chambered sets}\label{ss:CS} A pair $(\mathcal U, C)$ is a {\em chambered set\footnote{The notion of chambered set was introduced by the first author, who will study it and variants more systematically in a forthcoming publication.} for $(W,S)$}, or a {\em chambered $W$-set} for short, if the following conditions are satisfied: 
\begin{enumerate}[(i)]
\item $\mathcal U$ is a left $W$-set; 
\item $C$ is  a set of orbit representative for $W$ on $\mathcal U$;
\item for each $a\in C$, the stabilizer of $a$ in $W$ is a standard parabolic subgroup:  there is~$J_{a}\subseteq S$ such that
$
W_{J_a}=\{w\in W\mid wa=a\}.
$ 
\end{enumerate}
The set $C$ is called the {\em fundamental chamber}, and the subsets $C_{w}:=wC$ of $\mathcal U$ are called  {\em chambers}.  For $X\subseteq W$, write $XC=\bigcup_{w\in X}C_w$. Then:
 $$
 \mathcal U=\bigcup_{w\in W}C_{w}=WC.
 $$ 
 A subset $J$ of $S$ is called {\em facetal} if there is some $a\in C$ with stabilizer $W_{J}$, i.e., such that $J_{a}=J$.  A chambered $W$-set $(\mathcal U,C)$  is said to be:
 \begin{enumerate}
 \item[(iv)]  {\em solid} if  $\emptyset$ is a facetal  subset of $S$ i.e. $C$ contains a point $a_0$ with trivial stabilizer.  
 \item[(v)] {\em walled} if for each $s\in S$, $\{s\}$ is a facetal subset of $S$:  there is some $a_{s}\in C$ with stabilizer $W_{\{s\}}=\langle s\rangle=\{e,s\}$. 
 \end{enumerate}
 For any facetal subset $J$ of $S$, the set $C^{(J)}:=\{x\in C\mid J_{x}=J\}$ is called {\em a facet  of $C$}.  The facets of $C$ form a partition of $C$. The {\em facets of $\mathcal U$} are by definition  the subsets $wF$ for $w\in W$ and $F$ a facet of $C$. The  facets of $\mathcal U$ form a partition of $\mathcal U$.

We impose some non-degeneracy conditions subsequently (such as walled and solid) but for now allow the possibility that, for instance,  $\mathcal U=C=\emptyset$.

\begin{prop}\label{prop:CS1} 
\begin{enumerate}
\item Let $x,y\in W$ and $s_{1}\cdots s_{n}$ be a reduced word for $x^{-1}y$. Let $J=\{s_{1},\ldots, s_{n}\}$. Then $C_{x}\cap C_{y}=\bigcap_{u\in W_{J}}C_{xu}$ and $xW_{J}x^{-1}$ stabilizes $C_{x}\cap C_{y}$ pointwise. 
 \item Suppose $(\mathcal U,C)$ is solid. Then for $X,Y\subseteq W$, one has $XC\subseteq YC\iff X\subseteq Y$.
 In particular, the $W$-action on the set of chambers is simply transitive.
\end{enumerate}
  \end{prop}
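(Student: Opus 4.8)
The plan is to reduce both parts to the fundamental chamber via the $W$-action and then exploit the one defining feature of a chambered set that we really need: by condition (ii), $C$ meets each $W$-orbit in exactly one point.

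\emph{Part (1).} Since $\mathcal U$ is a $W$-set, I would first rewrite $C_x\cap C_y=x\bigl(C\cap wC\bigr)$ with $w:=x^{-1}y$, noting that $w=s_1\cdots s_n$, being a product of the $s_i$, lies in $W_J$. It then suffices to establish, for $x=e$, that $C\cap wC=\bigcap_{u\in W_J}uC$ and that $W_J$ fixes $C\cap wC$ pointwise; applying $x$ recovers the stated claim (using $x\cdot e=x$ and $xw=y$ to see $C_x$, $C_y$ among the $C_{xu}$). The inclusion $\bigcap_{u\in W_J}uC\subseteq C\cap wC$ is immediate because $e,w\in W_J$. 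For the converse, take $p\in C\cap wC$ and write $p=wq$ with $q\in C$; then $p$ and $q=w^{-1}p$ lie in the same $W$-orbit and both lie in $C$, so $p=q$, whence $w\in\operatorname{Stab}_W(p)$, which by condition (iii) equals $W_{J_p}$ for some facetal $J_p\subseteq S$. Now I would invoke the standard fact that every letter of a reduced word for an element of a standard parabolic subgroup $W_{J_p}$ belongs to $J_p$; applied to $w=s_1\cdots s_n$ this yields $J\subseteq J_p$, hence $W_J\subseteq\operatorname{Stab}_W(p)$. Thus $p$ is fixed by all of $W_J$, which simultaneously shows $p\in uC$ for every $u\in W_J$ and that $W_J$ fixes $C\cap wC$ pointwise.

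\emph{Part (2).} The implication $X\subseteq Y\Rightarrow XC\subseteq YC$ is trivial. For the converse, solidity furnishes $a_0\in C$ with trivial stabilizer. Given $x\in X$, the point $xa_0$ lies in $XC\subseteq YC$, hence in $yC$ for some $y\in Y$, so $y^{-1}xa_0\in C$; as $a_0$ and $y^{-1}xa_0$ lie in the same orbit and both in $C$, they coincide, so $y^{-1}x\in\operatorname{Stab}_W(a_0)=\{e\}$ and $x=y\in Y$. Specializing to singletons, $w\mapsto C_w$ is injective, hence a bijection from $W$ onto the set of chambers; the $W$-action $v\cdot(wC)=(vw)C$ on chambers is then transitive, and free because $(vw)C=wC$ forces $vw=w$ and so $v=e$; hence it is simply transitive.

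The only genuinely nontrivial ingredient is the reduced-word characterization of standard parabolics used in Part~(1); the rest is bookkeeping with the orbit-representative property. I expect the delicate point to be precisely that step: one must not simply record $w\in W_{J_p}$ but upgrade it to $W_J\subseteq W_{J_p}$, which requires the statement about all letters of a reduced expression rather than membership of the single element $w$.
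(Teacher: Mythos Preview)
Your proof is correct and follows essentially the same approach as the paper's: both reduce to $x=e$, use that $C$ is a set of orbit representatives to force $p=q$ (resp.\ $a=b$), and then invoke the fact that the letters of a reduced word for an element of $W_{J_p}$ lie in $J_p$ to upgrade $w\in W_{J_p}$ to $W_J\subseteq W_{J_p}$. Your treatment of Part~(2) is likewise identical to the paper's, with the simple-transitivity consequence spelled out a bit more explicitly.
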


 \begin{proof}  (1) Translating by $x^{-1}$, we may and do assume that  $x=e$.
 It will suffice to show that $W_{J}$ stabilizes $C\cap C_{y}$ pointwise  for $u\in W_{J}$. For then, if $a\in C\cap C_{y}$ and $u\in W_{J}$, one would have $a=ua\in uC=C_{u}$.
  Let $a\in C\cap C_{y}$, say $a=yb$ where $b\in C$. The definitions give $a=b$ and $y\in W_{J_{a}}$. Since $s_{1}\cdots s_{n}$ is a  reduced expression for $x^{-1}y=y$, it follows that
 $s_{i}\in J_{a}$ for $i=1,\ldots, n$ and thus $J\subseteq J_{a}$.
 Thus, $W_{J}$ stabilizes $a$ as required.
 
 \medskip
 
 \noindent (2)  The implication $\Longleftarrow$ is trivial. 
 Conversely, suppose $XC\subseteq YC$. Let $x\in X$. Then $xa_0\in XC\subseteq YC$, so $xa_0=ya$ for some $a\in C$. Then $y^{-1}xa_0=a$. Therefore $a=a_0$, by Condition~(ii) of the definition of chambered $W$-sets. Therefore $y^{-1}x\in W_{J_{a_0}}=\{e\}$, which implies $x=y\in Y$ as required.
 \end{proof}

We discuss now many of the classical examples mentioned in the introduction of this section but regarded  as chambered $W$-sets, with their standard $W$-action and fundamental chambers.

 \begin{ex}[Tits cone] The Tits cone $\mathcal U$ of the standard geometric representation of a Coxeter group (with linearly independent  simple roots), in the dual space of the linear span of the roots (see e.g. \cite{Hu90}) has a natural structure of chambered $W$-set  with the standard fundamental chamber $C$. Then  every subset of $S$ is facetal. For more general classes  of reflection representation (see for example \cite{Dy19-1}, \cite{Dy94-1}, \cite{Fu18}) allowing linearly dependent simple roots, the set of facetal subsets depends on the facial structure of  the cone spanned by simple roots, and on the ambient quadratic space (or an associated bilinear form). 
\end{ex}
 
\begin{ex} For any chambered $W$-set $(\mathcal U,C)$ and $W$-invariant subset $U_{0}$ of $\mathcal U$, one has a chambered $W$-set $(U_{0},U_{0}\cap C)$. The interior of the Tit's cone
 (for $(W,S)$ of finite rank with reflection representation in a finite-dimensional real vector space) is a natural example.
  \end{ex}
 
  \begin{ex}  For any $J\subseteq S$, one has a chambered $W$-set 
 $W/W_{J},\{1W_{J}\})$ where $W$ acts by left translation on the coset space $W/W_{J}$ and $1=1_{W}$
 The only facetal subset  of $S$ is $J$.     Every chambered $W$-set is a disjoint union (more precisely, coproduct  in a suitable category of chambered $W$-sets) of ones of this type.       
 The regular  chambered $W$-set arises in case $J=\emptyset$ and was mentioned already  above.
  \end{ex}
 
 \begin{ex}[Coxeter complex] As for the Tits cone, the  Coxeter complex may be defined
  in several ways. For finite rank $(W,S)$, it has a realization  as geometric simplicial complex \cite{AbBr08}; the corresponding topological space 
    has an underlying natural structures of chambered $W$-set in which the 
   facetal subsets are the proper subsets of $S$ ($S$ itself corresponds to the empty-simplex). The face poset (including the empty simplex) also has a structure of chambered $W$-set, with all subsets of $S$ facetal. This  extends naturally to arbitrary Coxeter systems (of possibly infinite rank)
   \cite{Ti74}, where the underlying poset is called a chamber complex. 
   
    There are also various other  realizations of  the Coxeter complex as a ''chamber complex,''  which have an underlying $W$-set (perhaps with additional structure) of which certain subsets are identified as chambers. These also  have underlying chambered $W$-sets for which the chambers $C_{w}$  are the chambers for the chamber complex.    In the realization 
    \cite[Ch IV, \S1, Exercise 16]{Bo68}, the facetal subsets are the sets $S\setminus \{s\}$ for $s\in S$ (note in this case, one has $C\subseteq \bigcup_{s\in S}C_{s}$ even though the $W$-action on chambers is simply transitive). In  the realization of the Coxeter complex as a $W$-metric space~\cite{AbBr08} the underlying $W$-set is just the regular left $W$-set, but there is an additional structure of $W$-valued metric; in the corresponding chambered $W$-set, the only facetal subset is $\emptyset$.   
\end{ex}

\begin{ex}[Discrete reflection groups] For a finite Coxeter group acting as Euclidean reflection group on a Euclidean space, the Euclidean space has a structure of chambered $W$-set  with the usual fundamental chamber \cite{Hu90}; then every subset of $S$ is facetal. For an irreducible affine Weyl group, realized as irreducible  affine (and discrete) reflection group \cite{Hu90}, \cite{Bo68},  the affine space on which it acts may be regarded as a chambered $W$-set with  a chosen fundamental alcove as fundamental chamber. The facetal subsets are then the proper subsets of $S$. A similar discussion applies to irreducible compact hyperbolic simplex reflection groups (\cite{Hu90},\cite{Da08}); for discrete reflection groups in a hyperbolic space (\cite{Vi71,Vi93}), the facetal subsets depend on the facial structure of the fundamental chamber, see for more details \cite{DyHoRi16,HPR20}. 
 \end{ex}
 
\begin{ex}[Cayley graph of $(W,S)$] The Cayley graph may be realized as an abstract graph, or  as a one-dimensional (abstract or geometric) simplicial  complex or regular CW-complex \cite{Da08}.  It has a    natural structure as chambered $W$-sets in which the facetal subsets are $\emptyset$ and the singleton subsets of $S$. 
 \end{ex}

\begin{ex}[Davis complex] The Davis complex (see for instance \cite{Da08}) may be regarded as a certain  convex cell complex.  Its poset of non-empty faces identifies with the inclusion ordered subset of all   spherical subsets\footnote{A subset $J$ of $S$ is said to be spherical if $W_{J}$ is finite} of $S$.   It and its face poset (of non-empty faces) have natural structures of chambered $W$-set  with the spherical subsets of $S$ as the facetal subsets.
\end{ex}

\begin{ex}[Imaginary cone] The imaginary cone in the sense of \cite{Dy19-1} has a natural structure of chambered $W$-set, with a certain fundamental domain as fundamental chamber. The fundamental domain is the intersection of the cone spanned by simple roots (in a suitable  realization of $W$ as Coxeter group associated to a suitable root system) with the negative of the fundamental chamber for the corresponding Tits cone. The description of the facetal subsets can be obtained with some effort from results in \cite{Dy19}, but is not simple, especially for $(W,S)$ of infinite rank, and will not be given here. We  make only the following remarks for certain $W$, assumed irreducible for simplicity.  If $W$ is is finite, affine or locally finite (that is, every finite rank standard parabolic subgroup is finite), then $W$ acts trivially on the imaginary cone and the only facetal subset is $S$,
 while if $W$ is finite rank, infinite and  non-affine, the facetal subsets include all spherical subsets of $S$.
 \end{ex}

\subsection{Walls and reflection hyperplanes} From now on fix a chambered $W$-set $(\mathcal U,C)$ and fix also a root system $\Phi$ for $(W,S)$. Recall that the set of reflections of $(W,S)$ is 
$$
T=\{ s_\alpha\mid \beta\in \Phi\}=\{ s_\alpha\mid \beta\in \Phi^+\}.
$$

For a root $\alpha\in \Phi$, we define 
 $X_{\alpha}:=\{w\in W\mid w^{-1}(\alpha)\in \Phi^{+}\}.$ Notice that $W=X_\alpha\sqcup X_{-\alpha}$.   The  {\em closed half-spaces associated to  $\alpha$} are:
$$
H_{\alpha}^{+}:=
X_{\alpha}C \quad\textrm{and}\quad
H_{\alpha}^{-}:=H_{-\alpha}^{+}.
$$
 The {\em hyperplane\footnote{In many situations, the standard terminology   refers to  walls instead of  hyperplanes.} associated to $\alpha$} is defined by:
 $$
H_{\alpha}=H_{-\alpha}:= H_{\alpha}^{+}\cap H_{\alpha}^{-}.
 $$

The result below may be significantly strengthened but for simplicity we prove only the minimum needed in this article. 

\begin{prop}\label{prop:CS2} Let $\alpha\in \Phi$. 
\begin{enumerate}
\item   For all $w\in W$, we have $wH_{\alpha}^{+}=H_{w(\alpha)}^{+}$,  $wH_{\alpha}^{-}=H_{w(\alpha)}^{-}$ and $w H_{\alpha} =H_{w(\alpha)}$.
\item If $\alpha\in \Phi^+$ then:
$$
H_{\alpha}^{+}=\bigcup_{s_\alpha w >w} C_w\quad \textrm{ and } \quad H_{\alpha}^{-}=\bigcup_{s_\alpha w <w} C_w. 
$$
In particular, the fundamental chamber $C$ is contained in  $H_\alpha^+$ if $\alpha\in \Phi^+$. 
\item We have:
$$
H_{\alpha}=\{a\in \mathcal U\mid s_\alpha a=a\}=\bigcup_{w\in W}(C_{w}\cap C_{s_\alpha w}).
$$ 
\end{enumerate}
In particular,   $H_\alpha$ is the set of fixed points for the action of the reflection $s_\alpha$ on $\mathcal U$.
 \end{prop}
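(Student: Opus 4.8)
The plan is to prove the three statements in order, since each feeds the next. For (1), I would argue directly from the definition $H_\alpha^+ = X_\alpha C$ with $X_\alpha = \{v \in W \mid v^{-1}(\alpha) \in \Phi^+\}$. Given $w \in W$, one computes $w X_\alpha = \{wv \mid v^{-1}(\alpha) \in \Phi^+\}$; writing $u = wv$, the condition $v^{-1}(\alpha) \in \Phi^+$ becomes $u^{-1}w(\alpha) \in \Phi^+$, i.e. $u \in X_{w(\alpha)}$. Hence $w X_\alpha = X_{w(\alpha)}$, and applying this to the chamber decomposition gives $w H_\alpha^+ = w X_\alpha C = X_{w(\alpha)} C = H_{w(\alpha)}^+$. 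The statement for $H_\alpha^-$ follows since $H_\alpha^- = H_{-\alpha}^+$ and $w(-\alpha) = -w(\alpha)$, and the statement for $H_\alpha$ follows by intersecting. The only mild subtlety is that $C$ need not be $W$-stable, so one must genuinely use $wC_v = C_{wv}$ rather than $wC = C$.

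For (2), fix $\alpha \in \Phi^+$. For $w \in W$ the condition $w \in X_\alpha$ reads $w^{-1}(\alpha) \in \Phi^+$, which by the standard description of inversion sets (the second displayed description in \S on inversion sets, $\Phi(w) = \{\beta \in \Phi^+ \mid \ell(s_\beta w) < \ell(w)\}$, applied with $\beta = \alpha$) is equivalent to $\alpha \notin \Phi(w)$, i.e. to $\ell(s_\alpha w) > \ell(w)$. Indeed $w^{-1}(\alpha) \in \Phi^-$ exactly when $\alpha \in \Phi^+ \cap w(\Phi^-) = \Phi(w)$. Thus $X_\alpha = \{w \mid s_\alpha w > w\}$ and $X_{-\alpha} = W \setminus X_\alpha = \{w \mid s_\alpha w < w\}$, which yields the two displayed unions after applying $C$ chamber-wise. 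Since $e \in X_\alpha$ (as $\alpha \in \Phi^+$), the fundamental chamber $C = C_e \subseteq H_\alpha^+$.

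For (3), I first show $H_\alpha = \{a \in \mathcal U \mid s_\alpha a = a\}$. The inclusion that requires work is: if $a \in H_\alpha^+ \cap H_\alpha^-$, then $s_\alpha a = a$. Write $a \in C_w$ for a (unique, once we invoke solidity --- but actually uniqueness is not needed here) $w$, so $a = wb$ with $b \in C$; WLOG by translating we may try to reduce to a convenient $w$, but it is cleaner to argue as follows. Using (2) and its $H_\alpha^-$ analogue together with $H_\alpha^- = H_{-\alpha}^+$, membership of $a$ in both half-spaces forces $a \in C_w \cap C_{w'}$ for some $w$ with $s_\alpha w > w$ and some $w'$ with $s_\alpha w' < w'$; by the simple-transitivity/disjointness of facets one checks $w' = s_\alpha w$, so $a \in C_w \cap C_{s_\alpha w}$. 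Then Proposition~\ref{prop:CS1}(1), applied with $x = w$, $y = s_\alpha w$ (note $x^{-1}y = s_\alpha \in T$, a reflection, so a reduced word $s_1\cdots s_n$ for it and $J = \{s_1, \ldots, s_n\}$ satisfy $s_\alpha \in W_J$ by Proposition~\ref{prop:Palind-00}), shows $xW_Jx^{-1}$ stabilizes $C_w \cap C_{s_\alpha w}$ pointwise, and $s_\alpha = w s_\alpha w^{-1} \cdot$ --- more directly, $w s_\beta w^{-1}$ style bookkeeping gives $s_\alpha \in w W_J w^{-1}$, hence $s_\alpha a = a$. Conversely if $s_\alpha a = a$ with $a \in C_w$, then $a = s_\alpha a \in s_\alpha C_w = C_{s_\alpha w}$, so $a \in C_w \cap C_{s_\alpha w}$, and by (2) this point lies in both $H_\alpha^+$ and $H_\alpha^-$, hence in $H_\alpha$. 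This simultaneously establishes the last displayed equality $H_\alpha = \bigcup_{w\in W}(C_w \cap C_{s_\alpha w})$ and the final sentence of the proposition.

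The main obstacle I anticipate is the forward inclusion in (3): deducing $s_\alpha a = a$ from $a \in H_\alpha^+ \cap H_\alpha^-$. One must correctly pin down that a point in both half-spaces lies in $C_w \cap C_{s_\alpha w}$ for a single $w$ (rather than merely in $C_w \cap C_{w'}$ for unrelated $w, w'$), and then feed the reflection $s_\alpha$ --- which has an \emph{odd} palindromic reduced word by Proposition~\ref{prop:Palind-00} --- into the pointwise-stabilizer statement of Proposition~\ref{prop:CS1}(1). Everything else is routine manipulation of the sets $X_\alpha$ and the identity $\Phi(w) = \Phi^+ \cap w(\Phi^-)$.
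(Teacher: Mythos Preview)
Your arguments for (1) and (2) are correct and match the paper's (brief) treatment.

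The gap is in (3), precisely at the step you yourself flag as the main obstacle. You assert that from $a\in C_{w}\cap C_{w'}$ with $s_\alpha w>w$ and $s_\alpha w'<w'$ one can ``by simple-transitivity/disjointness of facets'' conclude $w'=s_\alpha w$. This inference is unjustified: Proposition~\ref{prop:CS2} is stated for an arbitrary chambered $W$-set, with no solidity hypothesis, so simple transitivity on chambers is not available; and even in the solid case, distinct chambers are \emph{not} disjoint (their overlaps are exactly what produces $H_\alpha$), so disjointness of facets gives no control on which pair $(w,w')$ you land in. There is no a priori reason $w'$ should equal $s_\alpha w$. (There is also a minor slip: with $x=w$ and $y=s_\alpha w$ you have $x^{-1}y=w^{-1}s_\alpha w=s_{w^{-1}(\alpha)}$, not $s_\alpha$; the conclusion that $s_\alpha\in wW_Jw^{-1}$ is still salvageable, but only after the main gap is filled.)

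The paper closes this gap by a different mechanism: it takes \emph{arbitrary} $x,y$ with $a\in C_x\cap C_y$, $s_\alpha x>x$, $s_\alpha y<y$, picks a reduced word $s_1\cdots s_n$ for $x^{-1}y$, and invokes Proposition~\ref{prop:CS1}(1) to get $a\in C_{xu_i}$ for every prefix $u_i=s_1\cdots s_i$. Along this gallery the length condition $s_\alpha(xu_i)>xu_i$ must switch sign at some step $i$, which forces $s_\alpha=xu_{i-1}s_iu_{i-1}^{-1}x^{-1}\in xW_Jx^{-1}$; since $xW_Jx^{-1}$ fixes $a$ pointwise, $s_\alpha a=a$. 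This ``sign-change along a gallery'' argument is the missing idea in your proposal.
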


 \begin{proof}   (1) follows easily from the definitions. (2) follows from the well-known fact that if $\alpha\in \Phi^+$ then  $s_\alpha w>w$ if and only if $w^{-1}(\alpha) \in \Phi^+$. 
 
 \smallskip
 \noindent (3) Without loss of generality, we may assume $\alpha\in \Phi^+$. Write the equalities to be proven  as $H_\alpha=H_{\alpha}'=H_{\alpha}''$.  First note that by (2) we have:
 $$
 H_{\alpha}''\subseteq (\bigcup_{s_\alpha w >w}C_{w})\cap (\bigcup_{s_\alpha w <w}C_{w})=H_{\alpha}^{+}\cap H_{\alpha}^{-}=H_{\alpha} .
 $$
 We now prove $H_{\alpha}'\subseteq H_{\alpha}''$. Suppose $a\in H_{\alpha}'$, i.e.,  $a\in \mathcal U$ with $s_\alpha a=a$. Let $w\in W$ such that $a\in wC=C_w$.  Then $a=s_\alpha a\in s_\alpha wC$. So $a\in C_{w}\cap C_{s_\alpha w}\subseteq H_{\alpha}''$.  Thus,  $H_{\alpha}'\subseteq H_{\alpha}''$.  Therefore $H_{\alpha}'\subseteq H_{\alpha}''\subseteq H_\alpha$.
 
 Finally, we show $H_{\alpha}\subseteq H_{\alpha}'$.  Let $a\in H_{\alpha}$. By definition of $H_\alpha$ and (2) above, there are $x,y\in W$ such that  $a\in C_{x}$ where $s_\alpha x>x$ and $a\in C_{y}$ where $s_\alpha y<y$.   Choose a reduced expression $s_{1} \ldots s_{n}$ for  $x^{-1}y$. Let $J=\{s_,\dots , s_n\}$. By Proposition~\ref{prop:CS1}, $xW_{J}x^{-1}$ stabilizes $a$  and  we have 
 $a\in C_{xu_{i}}$ for $0=1,\ldots, n$ where $u_{i}:=s_{1}\cdots s_{i}\in W_J$. Ssince $s_\alpha x>x$ and $s_\alpha y<y$, there is some $i$ with $1\leq i\leq n$ such that $s_\alpha xu_{i-1}>xu_{i-1}$ but $s_\alpha xu_{i}<xu_{i}$. This implies that $s_\alpha =xu_{i-1}s_{i}u_{i-1}^{-1}x^{-1}=xu_iu_{i-1}^{-1}x^{-1}\in xW_{J}x^{-1}$ and $s_\alpha a=a$. Hence $a\in H_{\alpha}'$ and $H_{\alpha}\subseteq H_{\alpha}'$, concluding the proof.
 \end{proof}
 
 Nothing guarantees  that the set of hyperplanes $H_\alpha$ is in bijection with the set of reflections $T$ (or $\Phi^+$). The next proposition shows that this happens, for instance, in the case of  walled chambered sets (see Condition~(v) in \S\ref{ss:CS}).
 
 \begin{prop}\label{prop:CS3} Assume $(\mathcal U,C)$ is walled. Then for all $\alpha,\beta\in \Phi^+$:
  $H_\alpha=H_\beta$ if and only if $\alpha=\beta$.
\end{prop}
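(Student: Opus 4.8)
The plan is to prove the contrapositive: if $\alpha \neq \beta$ are positive roots, then $H_\alpha \neq H_\beta$. Since $(\mathcal U,C)$ is walled, for each $s \in S$ there is a point $a_s \in C$ whose stabilizer is exactly $\langle s \rangle = \{e,s\}$. The key observation is that walls can be detected pointwise: by Proposition~\ref{prop:CS2}(3), $H_\gamma$ is precisely the fixed-point set of the reflection $s_\gamma$ acting on $\mathcal U$, so $H_\alpha = H_\beta$ would force $s_\alpha$ and $s_\beta$ to have the same fixed-point set on $\mathcal U$. The strategy is to exhibit, when $\alpha \neq \beta$, a point fixed by one of these reflections but not the other.

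First I would reduce to the case where one of the two roots is simple. Suppose $H_\alpha = H_\beta$ with $\alpha \neq \beta$. Since $\alpha \in \Phi^+$, there is $w \in W$ with $w^{-1}(\alpha) = \alpha_s \in \Delta$ (take $w$ realizing $\dep(\alpha)$). Applying Proposition~\ref{prop:CS2}(1), $w^{-1}H_\alpha = H_{\alpha_s}$ and $w^{-1}H_\beta = H_{w^{-1}(\beta)}$; writing $\gamma := w^{-1}(\beta)$, which is a root (possibly negative) with $\mathbb R\gamma \neq \mathbb R\alpha_s$ and $s_\gamma = w^{-1}s_\beta w \neq s$, we get $H_{\alpha_s} = H_\gamma$, and we may replace $\gamma$ by $-\gamma$ if needed so that $\gamma \in \Phi^+$. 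So it suffices to show: for $s \in S$ and $\beta \in \Phi^+$ with $s_\beta \neq s$, one has $H_{\alpha_s} \neq H_\beta$.

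Now I would use the walled hypothesis directly. The point $a_s \in C \subseteq H_{\alpha_s}^+$ satisfies $s a_s = a_s$, so $a_s \in H_{\alpha_s}$ by Proposition~\ref{prop:CS2}(3). If $H_{\alpha_s} = H_\beta$, then $a_s \in H_\beta$, so $s_\beta a_s = a_s$, i.e., $s_\beta$ lies in the stabilizer of $a_s$, which is $\{e,s\}$. Since $s_\beta \neq e$ this forces $s_\beta = s$, contradicting $s_\beta \neq s$. Hence $H_{\alpha_s} \neq H_\beta$, and unwinding the reduction gives $\alpha = \beta$. The converse direction ($\alpha = \beta \implies H_\alpha = H_\beta$) is immediate from the definition $H_\alpha = H_{-\alpha} = H_\alpha^+ \cap H_\alpha^-$.

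The main obstacle — really the only subtlety — is making sure the reduction to a simple root is clean: one must check that $\gamma = w^{-1}(\beta)$ is genuinely a root with $s_\gamma \neq s$ (so that after possibly negating it we are in the stated situation with $s_{\beta'} \ne s$), and that the passage $\gamma \rightsquigarrow -\gamma$ is harmless since $H_\gamma = H_{-\gamma}$ by definition. Everything else is a direct application of Propositions~\ref{prop:CS2} and the walled condition; no new geometry is needed beyond the fixed-point description of hyperplanes.
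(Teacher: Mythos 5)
Your proof is correct and follows essentially the same route as the paper: reduce to the case of a simple root via the $W$-equivariance of hyperplanes (Proposition~\ref{prop:CS2}(1)), then use the fixed-point description of $H_\gamma$ (Proposition~\ref{prop:CS2}(3)) together with the walled point $a_s$, whose stabilizer $\{e,s\}$ forces $s_\beta=s$ and hence $\alpha=\beta$. The only difference is that you spell out the reduction to a simple root, which the paper leaves as an implicit ``without loss of generality.''
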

\begin{proof} The implication $\Longleftarrow$ is trivial. Let $\alpha,\beta\in \Phi^+$ such that $H_\alpha=H_\beta$. Without loss of generality, we assume that $\alpha\in \Delta$, i.e., $s:=s_\alpha\in S$. By proposition~\ref{prop:CS2}, we have:
$$
H_\alpha= \{a\in \mathcal U\mid sa=a\} = \{a\in \mathcal U\mid s_\beta a=a\} =H_\beta.
$$
Thanks to Condition~$(v)$ ($(\mathcal U,C)$ is walled), there is $a_s\in C$ such that $sa_s=a_s=s_\beta a_s$ and the stabilizer of $a_s$ in $W$ is $\{e,s\}$.  By Condition~(iii) of the definition of chambered $W$-set we have  $s_\beta \in \{e,s\}$, which implies $s=s_\beta$. Therefore $\alpha=\beta$, since they are both positive roots. 
\end{proof}

\subsection{Convex subsets in a chambered set} Let $(\mathcal U,C)$ be a solid chambered $W$-set (see Condition~(iv)). 

A subset $D$ of $\mathcal U$ is {\em convex in $\mathcal U$} if there is a subset  $\Gamma\subseteq\Phi$ such that
$$
D=\bigcap_{\alpha\in \Gamma}H_{\alpha}^{\epsilon_\alpha},
$$ 
where $\epsilon_\alpha\in \{ \pm\}$. We allow $\Gamma=\emptyset$, so $U$ itself is convex. A convex subset of $\mathcal U$ is a union of facets of $\mathcal U$.  We shall be  particularly concerned with convex subsets  which are unions of chambers.\footnote{It can be shown that if  a convex subset of $U$ contains a chamber, it is a union of chambers.}   Such a convex set $D$ is of the form $D=XC$ for a uniquely determined (by Proposition \ref{prop:CS1}(2))  subset $X$ of $W$. The subsets $X$ of $W$ arising in  this way from convex subsets $D$ of $\mathcal U$ are what are usually called convex subsets of $W$ (see \cite{Ti74}, \cite{Kr09}).  They are exactly the convex subsets of the left regular  chambered $W$-set $(W,\{e\})$. In particular, they are independent of the choice of solid  chambered $W$-set  $(U,C)$. 

The {\em convex closure $\conv(A)$} of a  subset $A$ of $\mathcal U$ is the 
intersection of all convex subsets of $\mathcal U$ which contain $A$. It is the inclusion-minimum convex subset of $\mathcal U$ which contains $A$. 

\subsection{Infinity-depth and parallelism in chambered $W$-sets}\label{ss:Parallel} Let $(\mathcal U,C)$ be a solid chambered $W$-set.  Let $\alpha,\beta\in \Phi^+$ be distinct. It is well-known that the dihedral reflection subgroup $W'$ generated by $s_\alpha$ and $s_\beta$ is infinite if and only if $|B(\alpha,\beta)|\geq 1$, see for instance \cite[Proposition 1.5]{HoLaRi14}. We also know that in this case:
\begin{itemize}
\item $\Delta_{W'}=\{\alpha,\beta\}$ if $B(\alpha,\beta)\leq -1$;
\item either $\alpha$ or $\beta$ is in $\Delta_{W'}$ if $B(\alpha,\beta)\geq 1$. Then, by Proposition~\ref{prop:DiDep}, we have either $\dep(\alpha)< \dep(\beta)$ or $\dep(\alpha)> \dep(\beta)$. 
\end{itemize}
We say that {\em $H_\alpha$ and $H_\beta$ are parallel} if $|B(\alpha,\beta)|\geq 1$.  If $ |B(\alpha,\beta)|> 1$, $H_\alpha$ and $H_\beta$ are often said to be {\em ultra-parallel}. 
The following  proposition gives an interpretation of the dominance order in $(\mathcal U,C)$. 

\begin{prop}\label{prop:DomHyp} Assume $(\mathcal U,C)$ is solid.  Let $\alpha,\beta\in \Phi^+$. Then:
\begin{enumerate}
\item  $B(\alpha,\beta)\geq 1$ if and only if $H_\alpha^+\subseteq H_\beta^+$ or $H_\beta^+\subseteq H_\alpha^+$;
\item $\alpha\preceq_\dom \beta$ if and only if $H_\alpha^+\subseteq H_\beta^+$;
\end{enumerate}
\end{prop}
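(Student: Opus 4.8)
The plan is to reduce both claims to the characterization of the dominance order recorded in Equation~(\ref{eq:InftyDepth}), using the consequence of solidity proved in Proposition~\ref{prop:CS1}(2).

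First I would establish (2). By definition $H_\gamma^{+}=X_\gamma C$ with $X_\gamma=\{w\in W\mid w^{-1}(\gamma)\in\Phi^{+}\}$ for $\gamma\in\{\alpha,\beta\}$, so since $(\mathcal U,C)$ is solid, Proposition~\ref{prop:CS1}(2) gives $H_\alpha^{+}\subseteq H_\beta^{+}$ if and only if $X_\alpha\subseteq X_\beta$. Next I would unwind the inclusion $X_\alpha\subseteq X_\beta$: it says that $w^{-1}(\alpha)\in\Phi^{+}$ implies $w^{-1}(\beta)\in\Phi^{+}$ for every $w\in W$, which---taking contrapositives and using $\Phi=\Phi^{+}\sqcup\Phi^{-}$---is equivalent to the statement that $w^{-1}(\beta)\in\Phi^{-}$ implies $w^{-1}(\alpha)\in\Phi^{-}$ for every $w\in W$. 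Replacing $w^{-1}$ by $v$, which again ranges over all of $W$, this is exactly the defining condition of $\alpha\preceq_\dom\beta$ in Equation~(\ref{eq:InftyDepth}), which proves (2).

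Then (1) is immediate: applying (2) to the pairs $(\alpha,\beta)$ and $(\beta,\alpha)$, the disjunction ``$H_\alpha^{+}\subseteq H_\beta^{+}$ or $H_\beta^{+}\subseteq H_\alpha^{+}$'' becomes ``$\alpha\preceq_\dom\beta$ or $\beta\preceq_\dom\alpha$'', i.e. the assertion that $\alpha$ and $\beta$ are dominance-comparable, and by Proposition~\ref{prop:DomFu}(v) this holds precisely when $B(\alpha,\beta)\geq 1$.

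I expect this to be essentially routine; the only step needing care is the bookkeeping with inverses and signs when passing from $X_\alpha\subseteq X_\beta$ to the implication defining $\preceq_\dom$, and making sure solidity is invoked at the single point where the half-space inclusion is converted into the index-set inclusion $X_\alpha\subseteq X_\beta$ --- this is where the hypothesis is genuinely used.
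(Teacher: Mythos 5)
Your proof is correct and follows essentially the same route as the paper: solidity via Proposition~\ref{prop:CS1}(2) converts the half-space inclusion $H_\alpha^+\subseteq H_\beta^+$ into $X_\alpha\subseteq X_\beta$, which is exactly the condition in Eq.~(\ref{eq:InftyDepth}) defining $\alpha\preceq_\dom\beta$, and then (1) follows from Proposition~\ref{prop:DomFu}(v). The only difference is that you spell out the contrapositive/inverse bookkeeping and both directions of (1) a bit more explicitly than the paper does.
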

\begin{proof}  By definition of the dominance order (Eq.~(\ref{eq:InftyDepth})) and Proposition~\ref{prop:CS1}(2),  we get  
  \[\alpha\preceq_\dom \beta\iff X_{\alpha}\subseteq X_{\beta}  \iff X_{\alpha}C\subseteq X_{\beta}C\iff 
  H_{\alpha}^{+}\subseteq H_{\beta}^{+}.\] This proves (2). In particular, $H_\alpha^+\subseteq H_\beta^+$ implies $B(\alpha,\beta)\geq 1$ by Proposition~\ref{prop:DomFu}~$(v)$, which proves (1) since the role of $\alpha$ and $\beta$ are interchangeable.  
\end{proof}

Let $\alpha,\beta\in \Phi^+$. We say that a hyperplane {\em $H_\alpha$ separates $H_\beta$ from $C$} if  $H_\alpha^+\subset H_\beta^+$. Note that, in this case, $H_\alpha$ and $H_\beta$ are parallel by Proposition~\ref{prop:DomHyp}. We obtain therefore an useful way of computing the infinity-depth: for $\beta\in \Phi^+$ we have
\begin{equation}\label{eq:DepH}
\dep_\infty(\beta)=|\{\alpha\in \Phi^+ \mid H_\alpha \textrm{ separates }C\textrm{ and }H_\beta\} |.
\end{equation}
For instance, in Figure~\ref{fig:Hyp2}(a), the black hyperplane (geodesic) bounding $23$ has infinite-depth equal to $2$, since it is separated from $C=e$ by the two parallel geodesics.

For an affine Weyl group, two roots are comparable in dominance order if and only if the corresponding reflecting hyperplanes (in the realization of the group as an affine reflection groups) are parallel (or equal).  Since parallelism is an equivalence relation on affine hyperplanes, the dominance order is a disjoint union of chains; see \cite{DyHo16} for an explicit description. The next corollary follows immediately.  

\begin{cor}\label{cor:Para} Assume $(W,S)$ is an affine Coxeter system and $\mathcal U$ is Euclidean space. Let $\alpha,\beta,\gamma\in \Phi^+$ such that $\alpha\not = \beta$, $\alpha\prec_\dom \gamma$ and $\beta\prec_\dom \gamma$. Then either $\alpha\prec_\dom \beta\prec_\dom  \gamma$ or $\beta\prec_\dom \alpha\prec_\dom  \gamma$. 
\end{cor}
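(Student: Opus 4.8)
The plan is to prove Corollary~\ref{cor:Para} by exploiting the fact, recalled just above, that for an affine Coxeter system the dominance order on $\Phi^+$ is a disjoint union of chains, with two distinct roots comparable in $\preceq_\dom$ precisely when their reflecting hyperplanes are parallel (equivalently, by Proposition~\ref{prop:DomHyp}, when one of the closed half-spaces $H_\bullet^+$ contains the other). So the whole statement is essentially an observation about chains, once one checks that the three roots lie in a common chain.

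First I would note that $\alpha\prec_\dom\gamma$ and $\beta\prec_\dom\gamma$ give, by Proposition~\ref{prop:DomFu}~$(v)$ (or directly by Proposition~\ref{prop:DomHyp}(1)), that $B(\alpha,\gamma)\geq 1$ and $B(\beta,\gamma)\geq 1$; in particular $H_\alpha$ is parallel to $H_\gamma$ and $H_\beta$ is parallel to $H_\gamma$. Since parallelism of affine hyperplanes is an equivalence relation (two distinct hyperplanes are parallel iff they are parallel in the usual Euclidean sense, or equal), it follows that $H_\alpha$ is parallel to $H_\beta$ (they are either parallel in the Euclidean sense or equal; but if $H_\alpha=H_\beta$ then $\alpha=\beta$ since both are positive roots, contradicting $\alpha\neq\beta$, so in fact $H_\alpha\parallel H_\beta$ with $H_\alpha\neq H_\beta$). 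Hence $|B(\alpha,\beta)|\geq 1$, and consequently $\alpha$ and $\beta$ are $\preceq_\dom$-comparable: by Proposition~\ref{prop:DomFu}~$(v)$ together with Corollary~\ref{cor:DomFu}$(vi)$ (both are in $\Phi^+$), exactly one of $\alpha\prec_\dom\beta$ or $\beta\prec_\dom\alpha$ holds.

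Next I would use transitivity of $\preceq_\dom$ to assemble the chain. Suppose, without loss of generality (the two cases are symmetric in $\alpha$ and $\beta$), that $\alpha\prec_\dom\beta$. Together with $\beta\prec_\dom\gamma$ this already gives $\alpha\prec_\dom\beta\prec_\dom\gamma$, which is the first alternative; the case $\beta\prec_\dom\alpha$ gives $\beta\prec_\dom\alpha\prec_\dom\gamma$, the second. One subtlety to record carefully: I must make sure the strict inequalities are genuinely strict, i.e. that $\alpha\neq\beta$, $\beta\neq\gamma$, $\alpha\neq\gamma$, which is immediate from the hypotheses ($\alpha\neq\beta$ is assumed, and $\alpha\prec_\dom\gamma$, $\beta\prec_\dom\gamma$ are strict). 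Thus $\preceq_\dom$ restricted to $\{\alpha,\beta,\gamma\}$ is a $3$-element chain, and the two possible linear orders on it, both with $\gamma$ at the top, are exactly the two alternatives in the statement.

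The only real point requiring care — and the step I expect to be the mildest of obstacles rather than a genuine difficulty — is the assertion that parallelism of the reflecting hyperplanes is an equivalence relation in the affine case; this is what forces $\alpha$ and $\beta$ to be comparable. This is a standard fact about affine hyperplane arrangements (hyperplanes of the form $\{x : \langle v,x\rangle = c\}$ with $v$ ranging over a finite set of directions closed under the finite Weyl group action), and it is precisely the content of the sentence preceding the corollary in the text (``Since parallelism is an equivalence relation on affine hyperplanes, the dominance order is a disjoint union of chains''). So I would simply cite that remark, or alternatively cite the explicit description of the dominance order for affine Weyl groups in \cite{DyHo16}, from which the corollary is read off directly: $\alpha,\beta,\gamma$ lie in one chain, hence are totally ordered, and $\gamma$ being above both $\alpha$ and $\beta$ pins down the order to the two stated possibilities.
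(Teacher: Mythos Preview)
Your proposal is correct and follows essentially the same approach as the paper: the paper simply remarks that, in the affine case, parallelism of reflecting hyperplanes is an equivalence relation, so the dominance order is a disjoint union of chains, and states that the corollary ``follows immediately.'' Your argument is just a careful unpacking of that one-line justification.
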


\subsection{Hyperplane arrangements in a  chambered $W$-set}\label{ss:HypCS} Let $(\mathcal U,C)$ be a solid and walled chambered $W$-set. 

 On  one hand, being solid implies, by Proposition~\ref{prop:CS1}, that the $W$-action on the set of chambers of $(\mathcal U,C)$ is simply transitive. In particular $W$ is in bijection with the set of chambers of $(\mathcal U,C)$.  On the other hand, being walled  implies, by Proposition~\ref{prop:CS3},  that the set of hyperplanes $H_\alpha$ is in bijection with the set of reflections $T$ (or $\Phi^+$).

\begin{defi} The {\em Coxeter arrangement of $(W,S)$ in $(\mathcal U,C)$} is the collection of hyperplanes 
$$
\mathcal A:= \mathcal A(W,S)=\{H_\alpha\mid \alpha \in \Phi^+\}.
$$
More generally, for any $\Gamma\subseteq \Phi^+$, we consider the {\em $\Gamma$-arrangement of hyperplanes}  $\mathcal A_\Gamma:=\{H_\alpha\mid \alpha \in \Gamma\}$; notice that $\mathcal A_{\Phi^+}=\mathcal A$. 
\end{defi}
By Proposition~\ref{prop:CS3},  $\mathcal A_\Gamma$  is in bijection with $\Gamma\subseteq \Phi^+$. In particular,  the Coxeter arrangement $\mathcal A$  is in bijection with $ \Phi^+$ (and therefore the set of reflections~$T$). 

\begin{ex} A Coxeter arrangement in the affine case of type $\tilde B_2$, realized as an affine hyperplane arrangement in the affine Euclidean space $\mathbb R^2$, is given in Figures~\ref{fig:Aff1}  and  \ref{fig:Aff2}. A non-affine Coxeter arrangement, realized in the hyperbolic plane $\mathbb H^2$, is given in Figure~\ref{fig:Hyp1}.
\end{ex}

Let $\Gamma\subseteq \Phi^+$. The arrangement $\mathcal{A}_{\Gamma}$ determines certain subsets of $\mathcal U$ as follows. First, the open regions of $\mathcal{A}_{\Gamma}$ are defined to be the subsets of $\mathcal U$ which arise as  a non-empty intersection of the form $\bigcap_{\alpha\in \Gamma} (H_{\alpha}^{\epsilon_{\alpha}}\setminus H_{\alpha})$ where for each $\alpha\in \Gamma$, $\epsilon_{\alpha}\in \{\pm\}$. The closed regions of $\mathcal A_\Gamma$ are the non-empty intersections $\bigcap_{\alpha\in \Gamma} H_{\alpha}^{\epsilon_{\alpha}}$.
In many (but not all)  of the examples in \S\ref{ss:CS} in which $(W,S)$ is of finite rank and  $\mathcal U$ has a natural structure of topological space,  $\{H_{\alpha}\mid \alpha\in \Gamma\}$   is a locally  finite\footnote{This is automatic if $\Gamma$ is finite.} set of closed subspaces of $\mathcal U$, the open regions are the (open) connected components of $\mathcal U\setminus \bigcup_{\alpha\in \Gamma} H_{\alpha}$, and the closed regions are the topological closures of the open regions.     

In general, both open and closed regions are unions of facets. Henceforward, by a {\em closed region for the arrangement  $\mathcal A_\Gamma$}, we shall always mean a closed region which is a union of  chambers. Such closed regions are the  (convex) subsets $XC$ of $\mathcal U$  such that $X$ is a non-empty (convex) subset $X= \bigcap_{\alpha\in \Gamma}X_{\epsilon_\alpha \alpha}$ of $W$. The subsets $X$ of $W$ such that $XW$ is a closed region for  $\mathcal A_\Gamma$ form  a partition of $W$. In particular, there is an equivalence relation $\sim_{\Gamma}$ of $W$ such that $x\sim_{\Gamma}y$ means that $C_{x}$ and $C_{y}$ are contained in the same closed region for  $\mathcal A_\Gamma$. For the Coxeter arrangement,  the closed regions are the chambers.

\section{Extended Shi arrangements and low elements}\label{se:LowShi}  

Let $(W,S)$ be a Coxeter system and $m\in \mathbb N$.    Let $(\mathcal U,C)$ be a solid and walled chambered $W$-set. As mentioned in the beginning of \S\ref{se:CS}, the reader may think of $\mathcal U$ as the Tits cone, the Coxeter complex, or any other well-known example of a  space or set $\mathcal U$  with a natural action of $W$ as a reflection group, in which the Coxeter arrangement $\mathcal A(W,S)$ is realized and the regions of $\mathcal A(W,S)$ are in bijection with the elements of $W$. 

We remark that the results we give below on $m$-Shi arrangements  are essentially combinatorial in nature, and in particular, they are independent of the choice of  a solid and walled chambered $W$-set. In fact, the result can be extended to solid (but not necessarily walled) chambered sets by reformulating them in terms of arrangements of pairs of opposite closed half-spaces $(\{H_\alpha^+,H_\alpha^- \})_{\alpha \in \Gamma}$ instead of arrangements $(H_\alpha)_{\alpha\in \Gamma}$ of hyperplanes.

\smallskip

In this section we first introduce extended Shi arrangements and prove Theorem~\ref{thm:Main1}. Then we prove Theorem~\ref{thm:Main2}, settling~\cite[Conjecture~3]{HoNaWi16}. Finally,  we obtain as a byproduct a direct proof of Thiel's Theorem~\ref{thm:Main3}, that is, if $(W,S)$ is an affine Coxeter system with underlying finite Weyl group $W_0$,  the union of the chambers $C_{w^{-1}}$ for $w\in L_m$ is a convex subset of  the Euclidean space. 
Then we  provide in a counterexample of the convexity of the inverses of $L_m$ if $m>0$ and $(W,S)$ is neither affine nor finite (i.e. indefinite) and we end by discussing enumeration of $m$-low elements (and therefore of $m$-Shi regions).

\subsection{Extended Shi arrangements and proof of Theorem~\ref{thm:Main1}} Let $m\in \mathbb N$. The {\em (extended) $m$-Shi arrangement}  $\shi_m(W,S)$ in $\mathcal U$ is the arrangement of {\em $m$-small hyperplanes}:
$$
\shi_m(W,S)=\{H_\beta\mid \beta \in \Sigma_m\},
$$
which, by Eq.~(\ref{eq:DepH}), consists of the hyperplanes in $\mathcal A$ that are separated from $C$ by at most $m$ parallel hyperplanes.

The closed regions in $(U,C)$ for $\shi_m(W,S)$ are called the  {\em $m$-Shi regions}. The corresponding equivalence relation $\sim_{\Sigma_m}$ on W  (see \S\ref{ss:HypCS})  is abbreviated  $\sim_m$  in this case. We have:
$$
u\sim_m v \iff C_u\textrm{ and } C_v\textrm{ are contained in the same $m$-Shi region.}
$$

\begin{ex} See Figures~\ref{fig:Aff1}~(a), \ref{fig:Aff2}~(a) and \ref{fig:Hyp1}~(a) where the blue chambers correspond to the  $m$-low elements and are the unique minimal chamber of their corresponding $m$-Shi region. For $m=0$, observe that the small hyperplanes (thick blue lines) do not have any other hyperplanes between them and $C$.  In Figure~\ref{fig:Aff2}, the $1$-small hyperplanes are constituted of the small hyperplanes plus hyperplanes that have exactly one hyperplane between them and $C$. 
\end{ex}

\begin{prop}\label{prop:ShiEq} Let $m\in\mathbb N$ and $u,v\in W$, then: 
$$
u\sim_m v\iff \Sigma_m(u)=\Sigma_m(v).
$$ 
In other words, two chambers $C_u$ and $C_u$ are in the same $m$-Shi region if and only if  $u$ and $v$ have the same $m$-small inversion set.
\end{prop}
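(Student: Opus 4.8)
The plan is to unfold the definitions of $\sim_m$ and of closed regions from \S\ref{ss:HypCS}, and to match chambers to closed regions via the half-space description in Proposition~\ref{prop:CS2}. First I would record the elementary fact that for $\alpha\in\Phi^+$ and $w\in W$, Proposition~\ref{prop:CS2}(2) gives $C_w\subseteq H_\alpha^-$ if and only if $s_\alpha w<w$, equivalently $\alpha\in\Phi(w)$, and $C_w\subseteq H_\alpha^+$ if and only if $\alpha\notin\Phi(w)$; since $W=X_\alpha\sqcup X_{-\alpha}$, these two alternatives are mutually exclusive and exhaustive, so $C_w$ lies in exactly one of $H_\alpha^+$, $H_\alpha^-$ for each $\alpha\in\Phi^+$.

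Next, recall from \S\ref{ss:HypCS} that the closed regions of $\shi_m(W,S)=\mathcal A_{\Sigma_m}$ which are unions of chambers are exactly the non-empty sets $XC$ with $X=\bigcap_{\alpha\in\Sigma_m}X_{\epsilon_\alpha\alpha}$ for a sign function $\epsilon\colon\Sigma_m\to\{+,-\}$, and that the corresponding subsets $X$ partition $W$. Combining this with the fact above, $C_w\subseteq XC$ for such an $X$ precisely when, for every $\alpha\in\Sigma_m$, one has $\epsilon_\alpha=-$ exactly when $\alpha\in\Phi(w)$; in other words, the closed region containing $C_w$ is the one indexed by the sign function sending $\alpha$ to $-$ if $\alpha\in\Sigma_m(w)$ and to $+$ otherwise. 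Consequently $C_u$ and $C_v$ lie in the same $m$-Shi region if and only if these two sign functions agree, i.e.\ if and only if $\Phi(u)\cap\Sigma_m=\Phi(v)\cap\Sigma_m$, which is precisely $\Sigma_m(u)=\Sigma_m(v)$; this yields both implications of the equivalence at once.

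There is no substantial obstacle here: the only point requiring care is the bookkeeping that each chamber $C_w$ is on a well-defined side of every hyperplane $H_\alpha$ with $\alpha\in\Sigma_m$ (so that the sign function attached to a chamber is well defined and $\sim_m$ genuinely coincides with the partition of $W$ into these sign classes), and that for the arrangement $\shi_m(W,S)$ the indexing roots may be taken in $\Phi^+$ so that $\epsilon_\alpha=-$ translates literally into $\alpha\in\Phi(w)$. Once this is set up, the statement is immediate from the definition $\Sigma_m(w)=\Phi(w)\cap\Sigma_m$.
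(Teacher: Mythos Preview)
Your proof is correct and follows essentially the same approach as the paper: both unwind the definition of $\sim_m$ via the half-space description in Proposition~\ref{prop:CS2}(2), translating ``$C_u$ and $C_v$ lie on the same side of every $H_\beta$ with $\beta\in\Sigma_m$'' into ``$\beta\in\Phi(u)\iff\beta\in\Phi(v)$ for all $\beta\in\Sigma_m$''. Your version is more explicit about the sign-function bookkeeping and the role of solidity (via Proposition~\ref{prop:CS1}(2)) in ensuring each chamber lies on a well-defined side of each hyperplane, whereas the paper's proof is a terse three-line unfolding of the same definitions.
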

\begin{proof} By definition $u\sim_m v$ if and only for each $\beta\in \Sigma_m$ we have $C_u\subseteq H_\beta^+$ if and only if $C_v\subseteq H_\beta^+ $. So for all $\beta \in \Sigma_m$, $H_\beta$ separates $C_u$ from $C$ if and only if $H_\beta$ separates $C_v$ from $C$. In other words, for all $\beta \in \Sigma_m$, $\beta\in \Phi(u)$ if and only if $\beta\in \Phi(v)$. The statement follows. 
\end{proof}

\begin{remark} In affine Weyl group and in the case $m=0$, the map $w\mapsto \Sigma_0(w)$ from $W$ to $\Lambda_0$ is the generalization of Shi's admissible sign type map~\cite{Shi88}; see also~\cite{ChHo22}. 
\end{remark}

The following theorem proves in particular Theorem~\ref{thm:Main1}.

\begin{thm}\label{thm:MainA} Let $m\in\mathbb N$.  For any $w\in W$, there is a unique $m$-low element $u\in L_m$  such that $u\sim_m w$. Moreover $u\leq_R w$. 
In particular, each region of  $\shi_m(W,S)$ contains a unique element of minimal length, which is a low element. 
\end{thm}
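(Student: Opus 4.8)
The plan is to reduce Theorem~\ref{thm:MainA} to the structural results already at hand, namely Theorem~\ref{thm:Main} (which identifies $m$-low elements via the $\infty$-depth of their right descent-roots), Corollary~\ref{cor:Main} (the bijection $\lambda_m\colon L_m\to\Lambda_m$), Proposition~\ref{prop:ShiEq} (which says $u\sim_m v$ iff $\Sigma_m(u)=\Sigma_m(v)$), and the fact that $L_m$ is a Garside shadow (Theorem~\ref{thm:Dyer}), in particular closed under taking suffixes. First I would fix $w\in W$ and let $A=\Sigma_m(w)\in\Lambda_m$. By Corollary~\ref{cor:Main} there is a \emph{unique} $u\in L_m$ with $\Sigma_m(u)=A$; by Proposition~\ref{prop:ShiEq}, $u$ is then the unique $m$-low element with $u\sim_m w$. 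This already disposes of existence and uniqueness of the $m$-low element in the $\sim_m$-class of $w$.

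The substantive part is showing $u\le_R w$, from which the rest follows. I would argue by descending induction on $\ell(w)-\ell(u)$ (equivalently, on $\ell(w)$ for fixed $u$), the base case $w=u$ being trivial. If $w\neq u$, pick $r\in D_R(w)$ and set $w'=wr$, a reduced product. As in the proof of Corollary~\ref{cor:Main}, $\Sigma_m(w)=\Sigma_m(w')\sqcup(\{-w(\alpha_r)\}\cap\Sigma_m)$. Two cases arise. If $-w(\alpha_r)\notin\Sigma_m$, then $\Sigma_m(w')=\Sigma_m(w)=A$, so $w'\sim_m w$, $w'\sim_m u$, and by induction $u\le_R w'\le_R w$. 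If $-w(\alpha_r)\in\Sigma_m$, then $-w(\alpha_r)\in A=\Sigma_m(u)\subseteq\Phi(u)$, and I need to show that one can nevertheless peel off a right descent of $w$ that is compatible with $u$; concretely, I want to produce some $s\in D_R(w)\cap D_R(u)$ — or at least some $s\in D_R(w)$ with $us\le_R$-compatible — so that passing from $w$ to $ws$ keeps $\Sigma_m$-equivalence or strictly decreases toward $u$. The cleanest route is: since $\Phi(u)\subseteq\Phi(w)$ would give $u\le_R w$ directly by Proposition~\ref{prop:InvBasics}(3), it suffices to prove $\Phi(u)\subseteq\Phi(w)$. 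So I would instead aim to show $\Phi^1(u)\subseteq\Phi(w)$ (since $\Phi(u)=\cone_\Phi(\Phi^1(u))=\cone_\Phi(\Sigma_m(u))$ as $u\in L_m$, and $\Sigma_m(u)=A\subseteq\Phi(w)$, and inversion sets are closed under $\cone_\Phi$ — this last closure is exactly the statement $\Phi(w)=\cone_\Phi(\Phi(w))$). That is: $\Phi(u)=\cone_\Phi(\Sigma_m(u))=\cone_\Phi(A)\subseteq\cone_\Phi(\Phi(w))=\Phi(w)$, where the inclusion uses $A\subseteq\Phi(w)$ and monotonicity of $\cone_\Phi$. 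Hence $u\le_R w$ by Proposition~\ref{prop:InvBasics}(3).

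With $u\le_R w$ established, the final sentence is immediate: among all $v$ with $v\sim_m w$, the unique $m$-low element $u$ satisfies $u\le_R v$ for every such $v$ (apply the just-proved statement to each $v$ in place of $w$, noting all such $v$ share the same $\Sigma_m$-value $A$ and hence the same associated $u$), so $u$ is the unique $\le_R$-minimum of its $m$-Shi region; since $x\le_R y$ implies $\ell(x)\le\ell(y)$ with equality only if $x=y$, $u$ is the unique element of minimal length in its region, and it is a low element by construction. I expect the main obstacle to be the clean identification $\Phi(u)=\cone_\Phi(\Sigma_m(u))$ for $u\in L_m$ together with the monotonicity argument for $\cone_\Phi$ on inversion sets — one must be careful that $\cone_\Phi(A)\subseteq\Phi(w)$ genuinely follows from $A\subseteq\Phi(w)$, using that $\Phi(w)$ is itself $\cone_\Phi$-closed (this is recorded at the start of \S\ref{se:ShortInv}). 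Everything else is bookkeeping with the already-proven Corollary~\ref{cor:Main} and Proposition~\ref{prop:ShiEq}.
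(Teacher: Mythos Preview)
Your proposal is correct and, once you abandon the inductive detour, lands on exactly the argument the paper gives: use Corollary~\ref{cor:Main} and Proposition~\ref{prop:ShiEq} for existence and uniqueness of $u\in L_m$ with $u\sim_m w$, then observe $\Phi(u)=\cone_\Phi(\Sigma_m(u))=\cone_\Phi(\Sigma_m(w))\subseteq\cone_\Phi(\Phi(w))=\Phi(w)$ to get $u\le_R w$. The induction you sketch in the middle is unnecessary scaffolding---the direct $\cone_\Phi$ chain you eventually write down is the entire proof.
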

\begin{proof} Let $w\in W$. By Corollary~\ref{cor:Main}, there is a unique low element $u\in L_m$ such that $\Sigma_m(u)=\Sigma_m(w)$. By Proposition~\ref{prop:ShiEq}, $u\sim_m w$. Moreover, by definition of the $m$-low elements, we have:
$$
\Phi(u)=\cone_\Phi(\Sigma_m(u))=\cone_\Phi(\Sigma_m(w))\subseteq \cone_\Phi(\Phi(w))=\Phi(w).
$$
By Proposition~\ref{prop:InvBasics} we conclude that $u\leq_R w$ and so $u$ is the unique minimal element in the $m$-Shi region containing $C_w$. 
\end{proof}

\begin{remark} In the terminology of \cite{PaYa21}, the above theorem means that any $m$-Shi arrangement is {\em gated} and that $L_m$ is the set of {\em gates} of $\shi_m(W,S)$.
\end{remark}

\subsection{$m$-Shi polyhedron} We now set up the proof of Theorem~\ref{thm:Main2} and Theorem~\ref{thm:Main3}. Let $m\in \mathbb N$. Consider the set 
$$
\mathcal B_m=\{x^{-1}(\alpha_s)\mid x\in L_m,\ s\in S,\ sx\notin L_m \}.
$$ 
Since the set $L_m$ is a Garside shadow (Theorem~\ref{thm:Dyer}), it is stable under taking suffixes, so $s\in S\setminus D_L(x)$ in the definition above. Observe also that  $\mathcal B_m\subseteq \Phi^+$ and that $|\mathcal B_m|\leq |L_m|\times |S|$.  Therefore the set $\mathcal B_m$ is finite (provided that $S$ is finite).

\begin{defi} We define the {\em $m$-Shi polyhedron} to be the convex set:
$$
\mathscr S_m=\bigcap_{\beta\in \mathcal B_m} H_\beta^+.
$$ 
\end{defi}

In the case of irreducible affine Weyl groups, Shi proved in~\cite[\S8]{Shi88} that $\mathscr S_0$ is a simplex with $|S|$ half-spaces in the above definition. 

\begin{question} The set of supporting hyperplanes of the facets of $\mathscr S_m$ is not the set of hyperplanes $H_\beta$ for $\beta\in \mathcal B_m$  in general.  Describe a set of non-redundant half-spaces for $\mathscr S_m$.
\end{question}

\begin{ex} See Figures~\ref{fig:Aff1}~(b), \ref{fig:Aff2}~(b) and \ref{fig:Hyp1}~(b) where the shaded regions correspond to the corresponding $m$-Shi polyhedron.
\end{ex}

\begin{prop}\label{prop:MainB} Let $m\in \mathbb N$, then the $m$-Shi polyhedron satisfies:
$$
\mathscr S_m\subseteq \bigcup_{w \in L_m} C_{w^{-1}}.
$$
\end{prop}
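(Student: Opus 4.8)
The plan is to reduce the inclusion to a purely combinatorial statement about inversion sets, and then exploit the fact (Theorem~\ref{thm:Dyer}) that $L_m$ is a Garside shadow, hence closed under taking suffixes. First I would note that $\mathscr S_m$ is convex and contains the fundamental chamber $C=C_e$: indeed $\mathcal B_m\subseteq\Phi^+$, and $C\subseteq H_\beta^+$ for every $\beta\in\Phi^+$ by Proposition~\ref{prop:CS2}(2). Since a convex subset of $\mathcal U$ that contains a chamber is a union of chambers, we get $\mathscr S_m=YC$ for a subset $Y\subseteq W$, uniquely determined by Proposition~\ref{prop:CS1}(2). Unwinding the definitions of $H_\beta^+$ and $X_\beta$, for any $v\in W$ and $\beta\in\Phi^+$ one has $C_v\subseteq H_\beta^+\iff v\in X_\beta\iff v^{-1}(\beta)\in\Phi^+\iff\beta\notin\Phi(v)$, so
\[
Y=\{v\in W\mid \Phi(v)\cap\mathcal B_m=\emptyset\}.
\]
As $\bigcup_{w\in L_m}C_{w^{-1}}$ is precisely the union of the chambers indexed by $(L_m)^{-1}:=\{w^{-1}\mid w\in L_m\}$, the desired inclusion $\mathscr S_m\subseteq\bigcup_{w\in L_m}C_{w^{-1}}$ becomes the set-theoretic statement $Y\subseteq(L_m)^{-1}$, i.e. the implication: if $\Phi(v)\cap\mathcal B_m=\emptyset$ then $v^{-1}\in L_m$.

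It then suffices to prove the contrapositive: if $v^{-1}\notin L_m$, then $\Phi(v)\cap\mathcal B_m\neq\emptyset$. Put $w=v^{-1}$ and fix a reduced word $w=s_1\cdots s_k$. Since $L_m$ is stable under taking suffixes, and $e\in L_m$ while $w\notin L_m$, there is a largest index $j$ with $s_j\cdots s_k\notin L_m$; set $x=s_{j+1}\cdots s_k$, $s=s_j$, and $w'=s_1\cdots s_{j-1}$. Then $x\in L_m$ and $sx=s_j\cdots s_k\notin L_m$ is a reduced product (a contiguous subword of a reduced word is reduced), so $s\in S\setminus D_L(x)$ and $\beta:=x^{-1}(\alpha_s)\in\mathcal B_m$; moreover $\beta\in\Phi^+$ because $sx>x$.

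Finally I would verify $\beta\in\Phi(v)=\Phi(w^{-1})$ by a one-line computation: since $w=w'\,s\,x$ with $\ell(w)=\ell(w')+1+\ell(x)$, we have $w(\beta)=(w x^{-1})(\alpha_s)=(w's)(\alpha_s)=-w'(\alpha_s)$, and $w'(\alpha_s)=w'(\alpha_{s_j})\in\Phi^+$ because $w's=s_1\cdots s_j$ is reduced (use Proposition~\ref{prop:InvBasics}(1)); hence $w(\beta)\in\Phi^-$, i.e. $\beta\in\Phi^+\cap w^{-1}(\Phi^-)=\Phi(w^{-1})=\Phi(v)$. This exhibits $\beta\in\Phi(v)\cap\mathcal B_m$, which contradicts $v\in Y$ and proves the contrapositive. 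The genuine content here is light; the only point requiring care is the bookkeeping with inverses (the polyhedron $\mathscr S_m$ is assembled from the half-spaces $H_\beta^+$ for $\beta\in\mathcal B_m$, whereas the target is indexed by $(L_m)^{-1}$), together with the reformulation $C_v\subseteq\mathscr S_m\iff\Phi(v)\cap\mathcal B_m=\emptyset$ — once these are set up correctly, the suffix-closure of $L_m$ closes the argument with no further obstacle.
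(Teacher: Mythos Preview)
Your proof is correct and follows essentially the same route as the paper. The paper isolates the combinatorial core as a separate lemma (Lemma~\ref{lem:Conv1}: $\Phi(w^{-1})\cap\mathcal B_m=\emptyset\Rightarrow w\in L_m$), proved by taking a minimal-length counterexample and peeling off a single left descent, whereas you fix a reduced word and locate the last index at which the suffix leaves $L_m$; these are the same idea phrased two ways. One small remark: your invocation of suffix-closure of $L_m$ is not actually needed to find the largest such $j$ (only $e\in L_m$ and $w\notin L_m$ are used), so that sentence can be trimmed.
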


The proof of this proposition is a consequence of the following lemma.

\begin{lem}\label{lem:Conv1} Let $m\in \mathbb N$ and $w\in W$ such that $\Phi(w^{-1})\cap \mathscr B_m=\emptyset$. Then $w\in L_m$.  In other words:
$
 L_m\supseteq \{w\in W\mid  \Phi(w^{-1})\cap \mathscr B_m=\emptyset\}.
$
\end{lem}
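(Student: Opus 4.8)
The plan is to establish the contrapositive: if $w\notin L_m$, then $\Phi(w^{-1})$ meets $\mathcal B_m$. The guiding idea is that leaving $L_m$ must happen at some specific step when we build $w$ up along a reduced word, and the "bad" letter at that step produces the required element of $\Phi(w^{-1})\cap\mathcal B_m$.

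First I would fix a reduced word $w=s_ks_{k-1}\cdots s_1$ and consider its suffixes $v_j:=s_js_{j-1}\cdots s_1$ for $0\le j\le k$, so that $v_0=e\in L_m$ (since $L_m$ is a Garside shadow by Theorem~\ref{thm:Dyer}, and indeed $S\subseteq L_m$) while $v_k=w\notin L_m$. Let $j\ge 1$ be the least index with $v_j\notin L_m$, and set $x:=v_{j-1}\in L_m$ and $s:=s_j$, so that $sx=v_j\notin L_m$. Because $v_j=s_js_{j-1}\cdots s_1$ is a reduced word, $\ell(sx)=\ell(x)+1$; hence $s\notin D_L(x)$ and $x^{-1}(\alpha_s)\in\Phi^+$. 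By the very definition of $\mathcal B_m$ (with $x\in L_m$, $s\in S\setminus D_L(x)$, $sx\notin L_m$), this shows $x^{-1}(\alpha_s)\in\mathcal B_m$.

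Next I would check that $x^{-1}(\alpha_s)\in\Phi(w^{-1})$. Writing $y:=s_ks_{k-1}\cdots s_j$, we have $w=yx$ with $\ell(w)=\ell(y)+\ell(x)$, so $w^{-1}=x^{-1}y^{-1}$ is a reduced product; Proposition~\ref{prop:InvBasics}(2) then gives $\Phi(w^{-1})=\Phi(x^{-1})\sqcup x^{-1}\big(\Phi(y^{-1})\big)$. Since $y^{-1}=s_js_{j+1}\cdots s_k$ is a reduced word and $sy^{-1}=s_{j+1}\cdots s_k$ has length $\ell(y^{-1})-1$, we have $\alpha_s\in\Phi(y^{-1})$, and therefore $x^{-1}(\alpha_s)\in x^{-1}\big(\Phi(y^{-1})\big)\subseteq\Phi(w^{-1})$. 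Combined with the previous paragraph, $x^{-1}(\alpha_s)\in\Phi(w^{-1})\cap\mathcal B_m$, which is the desired conclusion.

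The argument uses only the reduced-word description of inversion sets (Proposition~\ref{prop:InvBasics}), closure of $L_m$ under suffixes, and $e\in L_m$, so there is no genuine obstacle here; the one thing to watch is bookkeeping of the left/right conventions, since $\mathcal B_m$ is phrased through left multiplications $sx$ whereas the inversion-set computation proceeds naturally through the right factorization $w=yx$ and the passage to $w^{-1}$.
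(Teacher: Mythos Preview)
Your proof is correct and follows essentially the same idea as the paper's: both locate a point where one first leaves $L_m$ along a left-growing chain of prefixes of $w$, producing $x\in L_m$ with $sx\notin L_m$ and hence $x^{-1}(\alpha_s)\in\mathcal B_m\cap\Phi(w^{-1})$. The only cosmetic difference is that the paper phrases this as a minimal-counterexample argument (taking $w$ of minimal length in $W\setminus L_m$ with $\Phi(w^{-1})\cap\mathcal B_m=\emptyset$, so that $w=sg$ with $g\in L_m$ directly), whereas you argue the contrapositive by scanning a fixed reduced word for the first suffix $v_j\notin L_m$; the underlying mechanism is identical.
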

\begin{proof} Assume by contradiction that there is $w\in W\setminus L_m$ such that  $\Phi(w^{-1})\cap \mathscr B_m=\emptyset$; take $w$ of minimal length for this property. Since $e\in L_m$, $\ell(w)>0$. Take $s\in D_L(w)$, so $w=sg$ reduced; in particular with $\ell(g)<\ell(w)$. By Proposition~\ref{prop:InvBasics}, we have 
$$
\Phi(w^{-1})=\Phi(wg^{-1})\sqcup\{g^{-1}(\alpha_s)\}.
$$
Therefore $\Phi(g^{-1})\cap \mathscr B_m=\emptyset$. Hence, by minimality of $w$, $g\in L_m$ and $w=sg\notin L_m$. We conclude that $g^{-1}(\alpha_s)\in \mathscr B_m \cap \Phi(w^{-1}) =\emptyset$, a contradiction.
\end{proof}

\begin{proof}[Proof of Proposition~\ref{prop:MainB}] Assume that $C_{w^{-1}}\subseteq \mathscr S_m$, then $C_{w^{-1}}$ and $C$ are not separated by any $H_\beta$ for any $\beta\in \mathcal B_m$. Therefore $w\in L_m$ by Lemma~\ref{lem:Conv1}.
\end{proof}

The following corollary gives a useful criterion the equality to be satisfied in Proposition~\ref{prop:MainB}. 

\begin{cor}\label{cor:MainB} Let $m\in \mathbb N$, then the $m$-Shi polyhedron satisfies:
\begin{eqnarray*}
\mathscr S_m =  \bigcup_{w \in L_m} C_{w^{-1}}& \iff& L_m =\{w\in W\mid  \Phi(w^{-1})\cap \mathscr B_m=\emptyset\}\\
& \iff& L_m \subseteq \{w\in W\mid  \Phi(w^{-1})\cap \mathscr B_m=\emptyset\}.
\end{eqnarray*}
\end{cor}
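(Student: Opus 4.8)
The plan is to establish a chain of implications proving that all three statements are equivalent, using Proposition~\ref{prop:MainB} and Lemma~\ref{lem:Conv1} as the two inclusions we already have for free. Recall that Lemma~\ref{lem:Conv1} gives $L_m\supseteq\{w\in W\mid \Phi(w^{-1})\cap\mathscr B_m=\emptyset\}$ unconditionally, and Proposition~\ref{prop:MainB} gives $\mathscr S_m\subseteq\bigcup_{w\in L_m}C_{w^{-1}}$ unconditionally. So the only content is to relate the ``missing'' reverse inclusions.

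First I would translate the set $\{w\in W\mid\Phi(w^{-1})\cap\mathscr B_m=\emptyset\}$ into the language of chambers. For $w\in W$, the hyperplane $H_\beta$ separates $C_{w^{-1}}$ from $C$ if and only if $\beta\in\Phi(w^{-1})$; hence $\Phi(w^{-1})\cap\mathscr B_m=\emptyset$ is exactly the condition that $C_{w^{-1}}$ lies on the $C$-side of every $H_\beta$ with $\beta\in\mathcal B_m$, i.e.\ $C_{w^{-1}}\subseteq H_\beta^+$ for all $\beta\in\mathcal B_m$, i.e.\ $C_{w^{-1}}\subseteq\mathscr S_m$. Therefore, using the simple transitivity of the $W$-action on chambers (Proposition~\ref{prop:CS1}(2)) and the fact that $\mathscr S_m$, being convex, is a union of chambers once it contains one,
$$
\mathscr S_m=\bigcup\{C_{w^{-1}}\mid w\in W,\ \Phi(w^{-1})\cap\mathscr B_m=\emptyset\}.
$$
This identity is the key bookkeeping step; it identifies $\mathscr S_m$ as the union of chambers indexed (via $w\mapsto w^{-1}$) precisely by the set $\{w\in W\mid\Phi(w^{-1})\cap\mathscr B_m=\emptyset\}$.

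Given this identity, the equivalences follow formally. For the first ``$\iff$'': if $\mathscr S_m=\bigcup_{w\in L_m}C_{w^{-1}}$, then comparing with the displayed identity and invoking the uniqueness of the indexing set $X$ with $D=XC$ (Proposition~\ref{prop:CS1}(2) again, applied to the inverses) forces $L_m=\{w\in W\mid\Phi(w^{-1})\cap\mathscr B_m=\emptyset\}$; conversely, if those two sets are equal then taking unions of the corresponding chambers $C_{w^{-1}}$ and using the identity gives $\mathscr S_m=\bigcup_{w\in L_m}C_{w^{-1}}$. For the second ``$\iff$'': the forward direction is trivial, and the backward direction is immediate from Lemma~\ref{lem:Conv1}, which supplies the reverse inclusion $L_m\supseteq\{w\in W\mid\Phi(w^{-1})\cap\mathscr B_m=\emptyset\}$ for free, so that ``$\subseteq$'' upgrades to ``$=$''.

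I do not expect any serious obstacle here; the corollary is essentially a repackaging of Proposition~\ref{prop:MainB} and Lemma~\ref{lem:Conv1}. The one point requiring a little care is the passage from ``equality of unions of chambers'' to ``equality of index sets'': one must note that both $\mathscr S_m$ and $\bigcup_{w\in L_m}C_{w^{-1}}$ are unions of chambers and then apply the injectivity of $X\mapsto XC$ on subsets of $W$ from Proposition~\ref{prop:CS1}(2) (after applying the bijection $w\mapsto w^{-1}$ of $W$). Everything else is a direct logical manipulation of the two already-established inclusions.
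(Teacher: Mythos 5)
Your proof is correct and takes essentially the same route as the paper's: the entire content is the equivalence $C_{w^{-1}}\subseteq \mathscr S_m \iff \Phi(w^{-1})\cap \mathcal B_m=\emptyset$ (via $H_\beta^+=X_\beta C$ and Proposition~\ref{prop:CS1}(2)), combined with Proposition~\ref{prop:MainB} and Lemma~\ref{lem:Conv1}. The only difference is cosmetic: you package this as the identity $\mathscr S_m=\bigcup\{C_{w^{-1}}\mid \Phi(w^{-1})\cap\mathcal B_m=\emptyset\}$, whose ``$\subseteq$'' half rests on the footnoted fact that a convex set containing a chamber is a union of chambers --- the same fact already implicit in the statement of Proposition~\ref{prop:MainB}, so no genuinely new ingredient is introduced.
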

\begin{proof}  Assume first that $L_m \subseteq \{w\in W\mid  \Phi(w^{-1})\cap \mathscr B_m=\emptyset\}$.  If $w\in L_m$, then $\Phi(w^{-1})\cap \mathscr B_m=\emptyset$. So $C$ and $C_{w^{-1}}$ are not separated by any $H_\beta$ for any $\beta\in \mathcal B_m$. In particular $C_{w^{-1}}\subseteq \mathscr S_m$. We conclude by Proposition~\ref{prop:MainB}. Now, if there is equality in Proposition~\ref{prop:MainB}, then $C_{w^{-1}}\subseteq \mathscr S_m$ for any $w\in L_m$. Therefore, by definition, $\Phi(w^{-1})\cap \mathscr B_m=\emptyset$ for any $w\in L_m$, which concludes the proof.
\end{proof}

\subsection{Proof of Theorem~\ref{thm:Main2} and Theorem~\ref{thm:Main3}}  \label{se:Convex}

The following two theorems show Theorem~\ref{thm:Main2} and Theorem~\ref{thm:Main3}.

\begin{thm}\label{thm:MainB1} The $0$-Shi polyhedron is:
$$
\mathscr S_0=\bigcup_{w \in L_0} C_{w^{-1}}.
$$
\end{thm}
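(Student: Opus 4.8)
By Corollary~\ref{cor:MainB}, the claimed equality is equivalent to the inclusion
$L_0\subseteq\{w\in W\mid \Phi(w^{-1})\cap\mathscr B_0=\emptyset\}$, so the whole proof reduces to checking that for every $w\in L_0$ and every $\beta\in\mathcal B_0$ one has $\beta\notin\Phi(w^{-1})$. I would fix such a $w$ and such a $\beta$, and write $\beta=x^{-1}(\alpha_s)$ with $x\in L_0$, $s\in S$, $sx\notin L_0$ (so in particular $s\notin D_L(x)$, i.e.\ $x<sx$, using that $L_0$ is a Garside shadow and hence suffix-closed). The goal is then to show $\ell(s_\beta w^{-1})>\ell(w^{-1})$, equivalently $\beta\notin\Phi(w^{-1})$.

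The key tool is Proposition~\ref{cor:Main2} applied with $m=0$: since $x\in L_0$ and $sx\notin L_0$, we are in the situation where $sx\in L_1\setminus L_0$, $x<sx$, and there is $r\in D_R(x)$ with $\dep_\infty(-sx(\alpha_r))=1$; moreover part~(2) of that proposition gives $\alpha_s\prec_\dom -sx(\alpha_r)$. Applying $x^{-1}$ and using $W$-invariance of the dominance order (Proposition~\ref{prop:DomFu}(iv)), together with $\beta=x^{-1}(\alpha_s)$, I get a dominance relation of the form $\beta\prec_\dom x^{-1}(-sx(\alpha_r))$, and the right-hand root is a genuine positive root because its $\infty$-depth computation is preserved. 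So $\beta$ is strictly dominated by some positive root $\delta$ with $\dep_\infty(\delta)\le$ (something controlled), hence $\dep_\infty(\beta)=0$, i.e.\ $\beta\in\Sigma_0$. Wait—this shows $\beta$ is a small root; I then need to combine it with the hypothesis $w\in L_0$, which means $\Phi^1(w)\subseteq\Sigma_0$. The cleaner route: use Theorem~\ref{thm:Main} applied to $w^{-1}$. By Proposition~\ref{prop:Op}, $\Phi^1(w^{-1})$ is in anti-isomorphism with $\Phi^1(w)$ via $\alpha\mapsto-w(\alpha)$, and $w\in L_0$ forces (by Theorem~\ref{thm:Main}(3) and $d_w=0$) that every $\gamma\in\Phi^R(w)$ has $\dep_\infty(\gamma)=0$, which via Theorem~\ref{thm:Main}(1) applied to $w^{-1}$ (note $\Phi^R(w)=\Phi^L(w^{-1})$ up to sign, and the roles of left/right swap) yields that $\Phi^1(w^{-1})\subseteq\Sigma_0$ as well, i.e.\ $w^{-1}\in L_0$.

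So the actual argument I would run is: suppose for contradiction $\beta\in\Phi(w^{-1})$. Since $\Phi^1(w^{-1})$ spans $\Phi(w^{-1})$ as a cone and consists of small roots (as $w^{-1}\in L_0$), $\beta$ is a nonnegative combination of roots in $\Sigma_0\cap\Phi(w^{-1})$. But from the paragraph above $\beta$ is \emph{strictly} dominated by a positive root $\delta$, and — this is where I need to be careful — I want to derive that $\delta$ or one of the $\Delta_{\mathcal M}$-roots associated to the dominance relation lies in $\Phi(w^{-1})$, contradicting minimality or a separation property. Concretely: if $\beta\prec_\dom\delta$ and $\beta\in\Phi(w^{-1})$ then $\delta$ need not be in $\Phi(w^{-1})$ (dominance goes the wrong way), so instead I use that $\alpha_s\prec_\dom-sx(\alpha_r)$ with $\dep_\infty=1$ means the maximal dihedral subgroup $W'=\mathcal M_{\alpha_s,-sx(\alpha_r)}$ has $\Delta_{W'}$ containing $\alpha_s$, and bipodality of $\Sigma_0$ (Theorem~\ref{thm:Dyer}) controls the simple roots; translating by $x^{-1}$ and then tracking through $w$, the relevant descent root of $w$ would have to have positive $\infty$-depth, contradicting $d_w=0$. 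I expect \textbf{the main obstacle} to be exactly this bookkeeping: correctly transporting the dominance/bipodality data from the pair $(x,s)$ defining $\beta\in\mathcal B_0$ to a statement about $\Phi^R(w)$ or $\Phi^1(w^{-1})$, keeping straight which of $w$, $w^{-1}$, $x$, $sx$ each inversion set belongs to. Once that dictionary is set up, the contradiction with $d_w=0$ (Theorem~\ref{thm:Main}) closes the proof, and Corollary~\ref{cor:MainB} upgrades the inclusion to the desired equality $\mathscr S_0=\bigcup_{w\in L_0}C_{w^{-1}}$.
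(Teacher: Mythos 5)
Your frame is the paper's frame: reduce via Corollary~\ref{cor:MainB} to showing $\Phi(w^{-1})\cap\mathcal B_0=\emptyset$ for $w\in L_0$, feed in Proposition~\ref{cor:Main2} with $m=0$ to get $r\in D_R(x)$ with $\alpha_s\prec_\dom -sx(\alpha_r)$ and $\dep_\infty(-sx(\alpha_r))=1$, and aim for a contradiction with $d_w=0$ through Theorem~\ref{thm:Main}. But the ``bookkeeping'' you defer is precisely the content of the proof, and the two concrete steps you do write down are wrong. First, conjugating $\alpha_s\prec_\dom -sx(\alpha_r)$ by $x^{-1}$ does not give $\dep_\infty(\beta)=0$: being dominated by some root says nothing about $\beta$'s own dominance-depth, and in fact roots of $\mathcal B_0$ need not be small (in the group of Figure~\ref{fig:Hyp2}, take $x=2\in L_0$, $s=1$, $sx=12\notin L_0$; then $\beta=2(\alpha_1)\in\mathcal B_0$ has $\dep_\infty(\beta)=1$). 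Second, the claim $w\in L_0\Rightarrow w^{-1}\in L_0$ does not follow from Theorem~\ref{thm:Main} plus Proposition~\ref{prop:Op}: $d_{w^{-1}}$ is computed from $\Phi^R(w^{-1})=\{-w^{-1}(\alpha_t)\mid t\in D_L(w)\}$, whose dominance-depths are not controlled by the hypothesis $\Phi^R(w)\subseteq\Sigma_0$ (the anti-isomorphism of Proposition~\ref{prop:Op} does not preserve $\dep_\infty$), and no inversion-closure of $L_0$ is available in the paper; your subsequent use of ``$\Phi^1(w^{-1})\subseteq\Sigma_0$'' therefore has no basis.

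The missing idea is the reduction that makes the conjugation argument close. Because $L_0$ is suffix-closed (Garside shadow), one may replace $w$ by a suffix of minimal length with $\beta\in\Phi(w^{-1})$; for that $w$ one gets $\beta=u^{-1}(\alpha_s)=-w^{-1}(\alpha_t)\in\Phi^R(w^{-1})$ for some $t\in D_L(w)$ (here $u=x$ in your notation). This identification is what lets you transport the dominance relation by the specific element $wu^{-1}s$: by $W$-invariance (Proposition~\ref{prop:DomFu}(iv)), $\alpha_s\prec_\dom -su(\alpha_r)$ becomes $\alpha_t\prec_\dom -w(\alpha_r)$, with a \emph{simple} root on the left; Corollary~\ref{cor:DomFu} then forces $-w(\alpha_r)\in\Phi^+$, hence $r\in D_R(w)$ and $-w(\alpha_r)\in\Phi^R(w)$ with $\dep_\infty(-w(\alpha_r))\geq 1$, contradicting $w\in L_0$ by Theorem~\ref{thm:Main}. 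Without first pinning $\beta$ down as a right-descent root of $w^{-1}$, the conjugation does not produce $\alpha_t$ and neither your bipodality detour (Theorem~\ref{thm:Dyer}) nor the depth bound on $\Phi^R(w)$ can be brought to bear, so as written the proposal has a genuine gap at its central step.
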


\begin{thm}\label{thm:MainB2} Assume that $(W,S)$ is an affine Coxeter system and let $m\in \mathbb N$, then the $m$-Shi polyhedron is:
$$
\mathscr S_m=\bigcup_{w \in L_m} C_{w^{-1}}.
$$
\end{thm}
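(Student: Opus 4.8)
The plan is to reduce, via Corollary~\ref{cor:MainB}, to proving the inclusion $L_m\subseteq\{w\in W\mid \Phi(w^{-1})\cap\mathcal B_m=\emptyset\}$; the reverse inclusion always holds by Lemma~\ref{lem:Conv1}. Equivalently, I want to show that for every $w\in L_m$ and every $\beta\in\mathcal B_m$ one has $\beta\notin\Phi(w^{-1})$, that is $C_{w^{-1}}\subseteq H_\beta^+$. I would argue by contradiction, choosing $w\in L_m$ of minimal length admitting some $\beta\in\Phi(w^{-1})\cap\mathcal B_m$. Then $w\neq e$; picking $s\in D_L(w)$ and writing $w=sg$ as a reduced product, the fact that $L_m$ is a Garside shadow (Theorem~\ref{thm:Dyer}), hence closed under suffixes, gives $g\in L_m$, while Proposition~\ref{prop:InvBasics}(2) gives $\Phi(w^{-1})=\Phi(g^{-1})\sqcup\{g^{-1}(\alpha_s)\}$. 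Minimality of $\ell(w)$ forces $\Phi(g^{-1})\cap\mathcal B_m=\emptyset$, hence $\beta=g^{-1}(\alpha_s)=-w^{-1}(\alpha_s)$, so that $-w(\beta)=\alpha_s$; moreover $\Phi(w^{-1})\setminus\{\beta\}=\Phi(g^{-1})$ is an inversion set, so $\beta\in\Phi^R(w^{-1})$ by Proposition~\ref{prop:DescentRoot}, and $\beta$ is maximal in $(\Phi^1(w^{-1}),\preceq_{w^{-1}})$ by Theorem~\ref{thm:BasisInversion}.

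On the other hand, since $\beta\in\mathcal B_m$ I can also write $\beta=x^{-1}(\alpha_t)$ with $x\in L_m$, $t\in S$ and $tx\notin L_m$. Proposition~\ref{cor:Main2} then produces $x<tx$, a right descent $r\in D_R(x)\subseteq D_R(tx)$ with $\dep_\infty(-tx(\alpha_r))=m+1$, and the relation $\alpha_t\prec_\dom -tx(\alpha_r)$. Setting $\gamma=-x(\alpha_r)\in\Phi^R(x)$ one has $\dep_\infty(\gamma)=m$ (it is $\le m$ since $x\in L_m$ and $\Phi^R(x)\subseteq\Sigma_m$ by Theorem~\ref{thm:Main}, and $\dep_\infty(t(\gamma))=\dep_\infty(-tx(\alpha_r))=m+1$ forces equality), and writing $\delta=-tx(\alpha_r)=t(\gamma)$ and transporting $\alpha_t\prec_\dom\delta$ by $x^{-1}$ via the $W$-invariance of $\preceq_\dom$ (Proposition~\ref{prop:DomFu}(iv)), I obtain $\beta\prec_\dom\delta':=x^{-1}(\delta)$ with $\beta,\delta'\in\Phi^+$ (Corollary~\ref{cor:DomFu}) and $\delta'\neq\beta$. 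Thus the single root $\beta$ is at once a right-descent root of $w^{-1}$ with $-w(\beta)$ simple, and a $\mathcal B_m$-root strictly dominated by $\delta'$, where $\delta'$ comes from a descent root $\gamma\in\Phi^R(x)$ of $x$ pushed exactly one $\infty$-depth level beyond $\Sigma_m$.

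The crux --- and the only place affineness enters --- is to derive a contradiction from this configuration using Corollary~\ref{cor:Para}, namely that for an affine Coxeter system realized in Euclidean space the restriction of $\preceq_\dom$ to a parallelism class of hyperplanes is a chain on which $\dep_\infty$ is the rank function. Concretely, the saturated dominance chain $\alpha_t=\beta_0\lessdot\beta_1\lessdot\cdots\lessdot\beta_{m+1}=\delta$ maps under $x^{-1}$ to a saturated chain $\beta=x^{-1}(\beta_0)\lessdot\cdots\lessdot x^{-1}(\beta_{m+1})=\delta'$ of positive roots lying in the single parallelism class of $H_\beta$, so $\dep_\infty(\delta')=\dep_\infty(\beta)+m+1$; one must then analyse how the chambers $C_{x^{-1}}$, $C_{(tx)^{-1}}$, $C_{g^{-1}}$ and $C_{w^{-1}}$ sit relative to the $m+2$ parallel hyperplanes $H_{x^{-1}(\beta_i)}$ and combine this, through Proposition~\ref{prop:Op} relating $\Phi^1(w)$ and $\Phi^1(w^{-1})$, with $w\in L_m$ and Theorem~\ref{thm:Main} to reach a contradiction. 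This book-keeping inside one parallelism class is where I expect the real difficulty to be; that the chain structure is essential here is shown by the indefinite counterexample in Figure~\ref{fig:Hyp2}, where the theorem already fails for $m=1$.

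Finally, carrying out the same scheme at $m=0$ needs no chain structure --- this is precisely Theorem~\ref{thm:MainB1}, proved for arbitrary $(W,S)$ --- so that Theorem~\ref{thm:Main3} comes out as a genuine consequence of the proof of Theorem~\ref{thm:Main2}: one keeps the reduction and the induction verbatim and replaces the $m=0$ endgame by the affine chain argument of Corollary~\ref{cor:Para} in order to gain the remaining $m$ levels of $\infty$-depth.
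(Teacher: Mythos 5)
Your reduction is the same as the paper's (Corollary~\ref{cor:MainB} plus Lemma~\ref{lem:Conv1}, a minimal-length choice giving $\beta=-w^{-1}(\alpha_s)\in\Phi^R(w^{-1})$, Proposition~\ref{cor:Main2} at the $\mathcal B_m$-root $\beta=x^{-1}(\alpha_t)$, and the affine chain from Corollary~\ref{cor:Para}), but the proof stops exactly where it has to deliver: you arrive at the configuration ``$\beta\in\Phi^R(w^{-1})$ and $\beta\prec_\dom\delta'$'' and then say one ``must analyse how the chambers sit relative to the $m+2$ parallel hyperplanes \dots to reach a contradiction,'' conceding this is where the real difficulty lies. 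That missing step is the theorem. Worse, the configuration you transport to is not one that contradicts $w\in L_m$: by Theorem~\ref{thm:Main}, membership in $L_m$ constrains $\Phi^R(w)$ (equivalently $\Phi^1(w)$), not $\Phi^R(w^{-1})$, and $L_m$ is not known to be closed under inversion, so a right-descent root of $w^{-1}$ of large $\infty$-depth is not, by itself, in tension with $w\in L_m$. Your choice of transporting the dominance chain only by $x^{-1}$ (landing it above $\beta$) is what leaves you stranded.

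The paper's way out is a different transport: it applies the single element $wx^{-1}t$ (in your notation; $wu^{-1}s$ in the paper's) to the whole chain $\alpha_t\prec_\dom x(\beta_1)'\prec_\dom\cdots\prec_\dom -tx(\alpha_r)$ furnished by Corollary~\ref{cor:Para}. Using $\beta=x^{-1}(\alpha_t)=-w^{-1}(\alpha_s)$, this element sends $\alpha_t$ to the simple root $\alpha_s$ (your left descent of $w$) and $-tx(\alpha_r)$ to $-w(\alpha_r)$, so by $W$-invariance one gets a dominance chain of length $m+1$ ending at $-w(\alpha_r)$, whose bottom is simple; Corollary~\ref{cor:DomFu} then forces every term, in particular $-w(\alpha_r)$, to be positive, which is precisely what shows $r\in D_R(w)$ and hence $-w(\alpha_r)\in\Phi^R(w)$ with $\dep_\infty(-w(\alpha_r))\geq m+1$. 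Theorem~\ref{thm:Main} now gives $w\notin L_m$ immediately, with no chamber book-keeping, no use of Proposition~\ref{prop:Op}, and no need of your depth identity $\dep_\infty(\delta')=\dep_\infty(\beta)+m+1$. So the gap is concrete: you never produce a root of $\Phi^R(w)$ of $\infty$-depth exceeding $m$, and the route you chose does not obviously lead to one.
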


The proofs of both theorems are based on the following discussion. Let $w\in W$ such that $\Phi(w^{-1})\cap \mathscr B_m\not = \emptyset$. 
Then there is $u\in L_m$ and $s\in S$ such that $su\notin L_m$ and 
$$
\beta=u^{-1}(\alpha_s)\in \Phi(u^{-1}s)\cap \Phi(w^{-1}).
$$
By Corollary~\ref{cor:MainB}, we need  to prove, in order to prove Theorems~\ref{thm:MainB1}~and~\ref{thm:MainB2}, that:
\begin{itemize}
\item  $w\notin L_0$ in general;
\item  $w\notin L_m$ if $(W,S)$ is an affine Coxeter system. 
\end{itemize}
Since $L_m$ is closed under taking suffixes (it is a Garside Shadow), we may assume without loss of generality that $w$ of minimal length for $\beta\in \Phi(u^{-1}s)\cap \Phi(w^{-1})$. Therefore there is $t\in D_L(w)$ such that:
\begin{equation}\label{eq:beta}
\beta=u^{-1}(\alpha_s)=-w^{-1}(\alpha_t)\in \Phi^R(w^{-1}).
\end{equation}

Now, since $u\in L_m$ and $su\notin L_m$, there is $r\in D_R(su)$ such that $\dep_\infty(-su(\alpha_r))= m+1$ and $\alpha_s\prec_\dom -su(\alpha_r)$ by Proposition~\ref{cor:Main2}. Therefore, since $\Phi(su)=\{\alpha_s\}\sqcup s(\Phi(u))$, there are $m+1$ distinct positive roots $\beta_0=\alpha_s$, $s(\beta_1), \dots,s(\beta_m) \in s(\Phi(u))$ such that $s(\beta_i) \prec_\dom -su(\alpha_r)$ for all $0\leq i \leq m$.

By Proposition~\ref{prop:DomFu}, we obtain, by multiplying by $wu^{-1}s$ and by Eq.~(\ref{eq:beta}):
$$
wu^{-1}s(\alpha_s)=w(-u^{-1}(\alpha_s))=w(w^{-1}(\alpha_t))=\alpha_t \prec_\dom wu^{-1}s(-su(\alpha_r)) =-w(\alpha_r)
$$
and $wu^{-1}(\beta_i) \prec_\dom -w(\alpha_r)$ for all $1\leq i \leq m$. By Corollary~\ref{cor:DomFu}, we obtain that $-w(\alpha_r)\in \Phi^+$ and so $-w(\alpha_r)\in \Phi^R(w)$. Since $\alpha_t \prec_\dom -w(\alpha_r)$, it means that $w\notin L_0$.

\begin{proof}[Proof of Theorem~\ref{thm:MainB1}] The discussion above shows that if $w\in W$ is such that $\Phi(w^{-1})\cap \mathscr B_0\not = \emptyset$, then $w\notin L_0$. Therefore $L_0 \subseteq \{w\in W\mid  \Phi(w^{-1})\cap \mathscr B_0=\emptyset\}$. We conclude by Corollary~\ref{cor:MainB}.
\end{proof}

\begin{proof}[Proof of Theorem~\ref{thm:MainB2}] Corollary~\ref{cor:MainB}, it remains to show that if $w\in W$ such that $\Phi(w^{-1})\cap \mathscr B_m\not =\emptyset$, then $w\notin L_m$. 

By Corollary~\ref{cor:Para}, we can choose  the $\beta_i's$ in the discussion above such that:
$$
\alpha_s\prec_\dom s(\beta_1) \prec_\dom \cdots \prec_\dom s(\beta_m) \prec_\dom -su(\alpha_r).
$$
Now by multiplying by $wu^{-1}s$ we obtain:
$$
\alpha_t\prec_\dom wu^{-1}(\beta_1) \prec_\dom \cdots \prec_\dom wu^{-1}(\beta_m) \prec_\dom -w(\alpha_r).
$$
By Corollary~\ref{cor:DomFu}, there are all positive roots (since $\alpha_t$ is), hence $\dep_\infty(-w(\alpha_r))\geq m+1$. Since $-w(\alpha_r)\in \Phi^R(w)$, $w\notin L_m$ by Theorem~\ref{thm:Main}. 
\end{proof}

\begin{remark} In the proof of Theorem~\ref{thm:MainB2}, we need the following property (Corollary~\ref{cor:Para}): if $\alpha,\beta,\gamma\in \Phi^+$ are such that $\alpha\preceq_\dom \gamma$ and $\beta\preceq_\dom \gamma$, then either $\alpha\preceq_\dom \beta\preceq_\dom  \gamma$ or $\beta\preceq_\dom \alpha\preceq_\dom  \gamma$. This property arises from the transitivity of the parallelism relation in Euclidean geometry. Unfortunately, it is not true in non-Euclidean space.
\end{remark}

\subsection{Convexity and extended Shi arrangement in indefinite Coxeter system}

An analogous result of Theorem~\ref{thm:MainB2} for indefinite Coxeter systems, i.e., $(W,S)$ is neither finite nor affine: in general the $m$-Shi polyhedron is not equal to the union of the chambers of the inverses of the $m$-low elements. For instance, consider $(W,S)$ be the indefinite Coxeter system whose Coxeter graph is given in Figure~\ref{fig:Hyp2}. We have:
$$
L_0=\{e,1,2,3,13=31\}\textrm{ and } L_1=L_0\cup\{12,21,23,32,213\},
$$
see Figure~\ref{fig:Hyp2}~(a) where $L_0$ is in dark blue (together with the grey chamber which is $C$) and for $L_1$ add the light blue chambers. The $1$-Shi polyhedron is:
\begin{eqnarray*}
\mathscr S_1 &=& 2(H^+_{\alpha_1})\cap 2(H^+_{\alpha_3}) \cap 32(H^+_{\alpha_1})\cap 32(H^+_{\alpha_3})\\
                        &&   \cap 12(H^+_{\alpha_1})\cap 12(H^+_{\alpha_3}) \cap 132(H^+_{\alpha_1})\cap 132(H^+_{\alpha_3}),
\end{eqnarray*}
For instance, $12\in L_1$ but $132\notin L_1$.  Indeed $13(\alpha_2)\in \Phi^R(132)$ but the hyperplane $H_{13(\alpha_2)}=13(H_{\alpha_2})$  is parallel to $H_{\alpha_1}$ and $H_{\alpha_2}$, so $\dep_\infty(13(\alpha_2))=2$. Therefore $23(\alpha_1)=2(\alpha_1)\in \mathcal B_1$ and $\mathscr S_1\subseteq H^+_{2(\alpha_1)}$.

However, the chambers $C_{21}$ and $C_{23}$ are not in $\mathscr S_1$ but corresponds to $12,32\in L_1$, see Figure~\ref{fig:Hyp2}~(b).  Moreover:
$$
\mathscr S_1 = C\cup C_1\cup C_2\cup C_3 \cup C_{12}\cup C_{32}\cup C_{13}\subsetneq \bigcup_{w\in L_1} C_{w^{-1}}, 
$$
and the right-hand side is not convex because $C_{213}$ (in red see Figure~\ref{fig:Hyp2}~(b))  is not included in this union, since $132\notin L_1$.

\begin{question}  Classify the Coxeter graphs for which $\bigcup_{w\in L_m} C_{w^{-1}}$ is convex for all $m\geq 1$. (Or for some $m\geq 1$?)
\end{question}

\begin{question} 
Is the $m$-Shi polyhedron $\mathscr S_m$ the largest convex subset of $\bigcup_{w\in L_m} C_{w^{-1}}$?
\end{question}


\begin{figure}
\subfigure[The $0$ and $1$-Shi arrangements]{
\resizebox{0.7\hsize}{!}{
\begin{tikzpicture}
	[scale=1,
	 q/.style={teal,line join=round},
	 racine/.style={blue},
	 racinesimple/.style={blue},
	 racinedih/.style={blue},
	 sommet/.style={inner sep=2pt,circle,draw=black,fill=blue,thick,anchor=base},
	 rotate=0]
 \tikzstyle{every node}=[font=\small]
\def\grosseursimple{0.025}
\coordinate (ancre) at (5.3,3);
\node[anchor=center,inner sep=0pt] at (ancre) {\includegraphics[width=12cm]{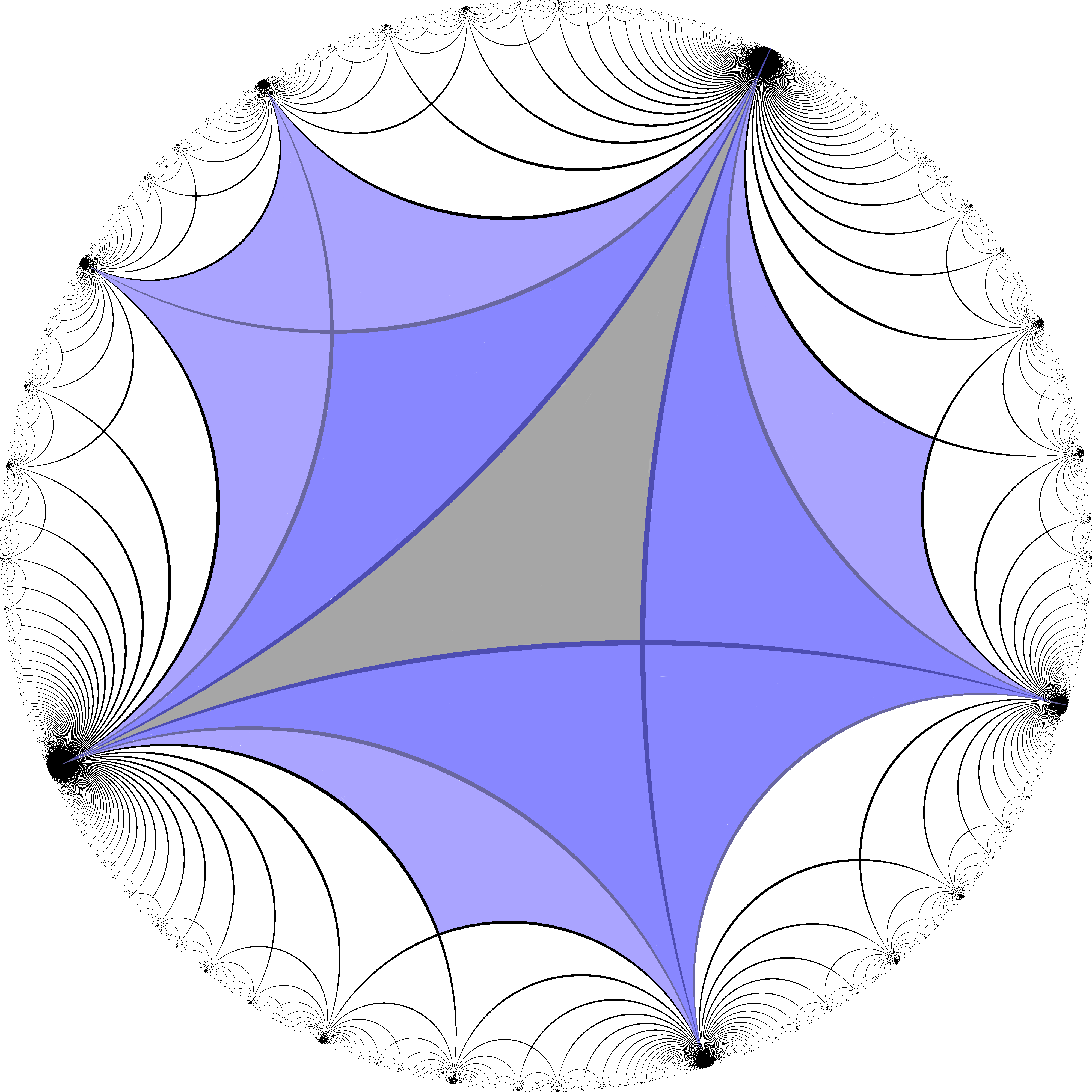}};

\node[sommet,label=above:$2$] (gamma) at ($(ancre)+(5.2,6)$) {};
\node[sommet,label=below right :$1$] (beta) at ($(ancre)+(6.4,5.4)$) {} edge[thick]  node[auto,swap] {$\infty$} (gamma) ;
\node[sommet,label=below left:$3$] (alpha) at ($(ancre)+(4.95,4.6)$) {}
 edge[thick] node[left,swap] {$\infty$} (gamma)  ;

\node[scale=1] at ($(ancre)+(0.2,0)$) {$e$};
\node[scale=1]  at ($(ancre)+(2,0)$) {$1$};
\node[scale=1]  at ($(ancre)+(-0.8,1.6)$) {$2$};
\node[scale=1] at ($(ancre)+(0.2,-2)$) {$3$};
\node[scale=1] at ($(ancre)+(2,-2)$) {$13$};
\node[scale=1]   at ($(ancre)+(-3,1.3)$) {$23$};
\node[scale=1] at ($(ancre)+(-0.6,3.1)$) {$21$};
\node[scale=1] at ($(ancre)+(-3.1,3)$) {$213$};
\node[scale=1]   at ($(ancre)+(3.4,1)$) {$12$};
\node[scale=1]   at ($(ancre)+(-0.8,-3.4)$) {$32$};
\node[scale=1]   at ($(ancre)+(2.9,-3.2)$) {$132$};


\end{tikzpicture}}}
\subfigure[The convexity counterexample for $m=1$: the chambers corresponding to inverses of the $1$-low elements are the shaded chambers except the chamber labelled by $213$ (in red). The $1$-Shi polyhedron is bounded by the red lines.]{ 
\resizebox{0.75\hsize}{!}{
\begin{tikzpicture}
	[scale=1,
	 q/.style={teal,line join=round},
	 racine/.style={blue},
	 racinesimple/.style={blue},
	 racinedih/.style={blue},
	 sommet/.style={inner sep=2pt,circle,draw=black,fill=blue,thick,anchor=base},
	 rotate=0]
 \tikzstyle{every node}=[font=\small]
\def\grosseursimple{0.025}
\coordinate (ancre) at (5.3,3);
\node[anchor=center,inner sep=0pt] at (ancre) {\includegraphics[width=12cm]{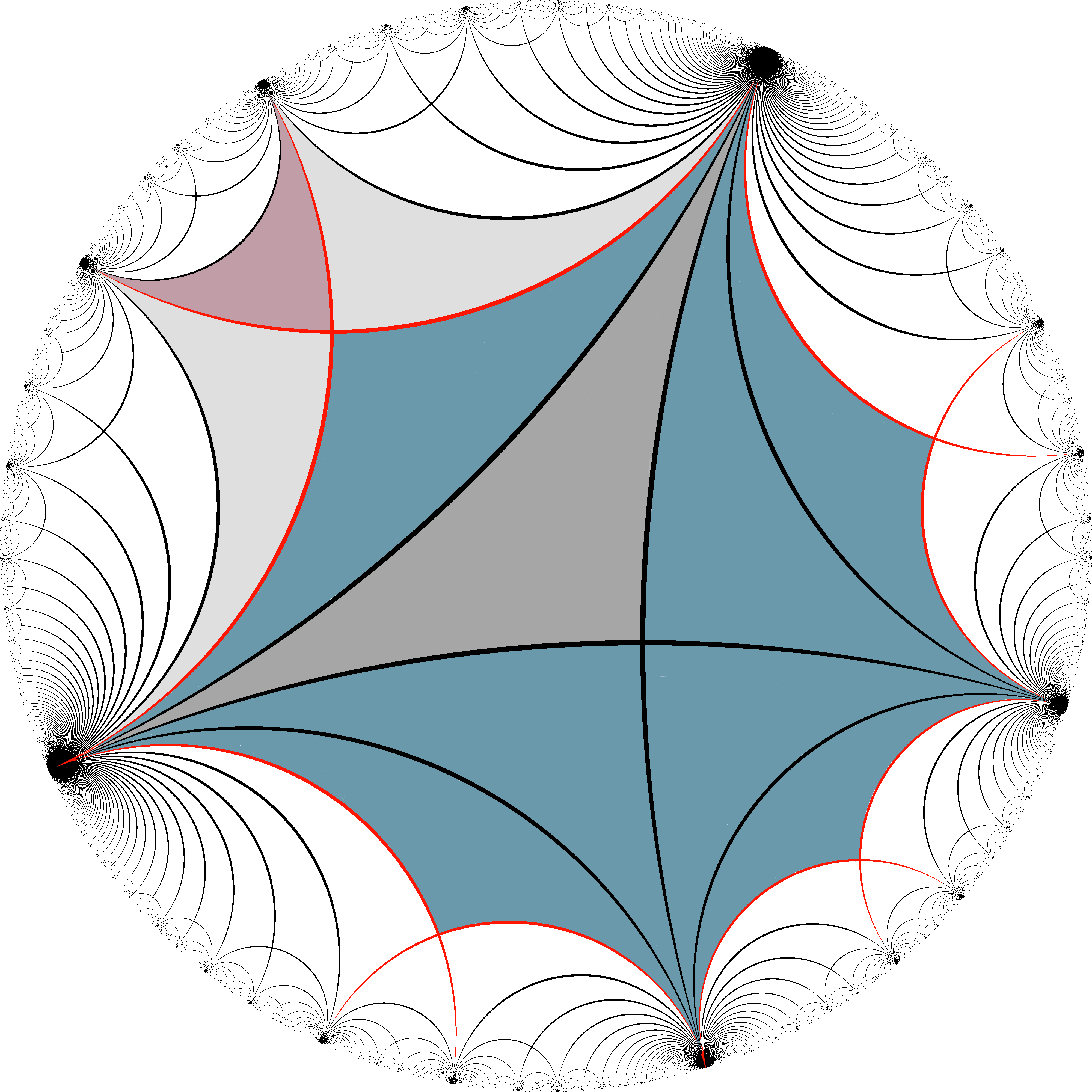}};

\node[scale=1] at ($(ancre)+(0.2,0)$) {$e$};
\node[scale=1]  at ($(ancre)+(2,0)$) {$1$};
\node[scale=1]  at ($(ancre)+(-0.8,1.6)$) {$2$};
\node[scale=1] at ($(ancre)+(0.2,-2)$) {$3$};
\node[scale=1] at ($(ancre)+(2,-2)$) {$13$};
\node[scale=1]   at ($(ancre)+(-3,1.3)$) {$23$};
\node[scale=1] at ($(ancre)+(-0.6,3.1)$) {$21$};
\node[scale=1] at ($(ancre)+(-3.1,3)$) {$213$};
\node[scale=1]   at ($(ancre)+(3.4,1)$) {$12$};
\node[scale=1]   at ($(ancre)+(-0.8,-3.4)$) {$32$};
\node[scale=1]   at ($(ancre)+(2.9,-3.2)$) {$132$};
\node[scale=1,color=red!60!black] at ($(ancre)+(-3,5.5)$) {$2(H_{\alpha_3})$};
\node[scale=1,color=red!60!black] at ($(ancre)+(-5.4,3.5)$) {$2(H_{\alpha_1})$};
\node[scale=1,color=red!60!black] at ($(ancre)+(6.7,1)$) {$12(H_{\alpha_1})$};
\node[scale=1,color=red!60!black] at ($(ancre)+(6.2,2.7)$) {$12(H_{\alpha_3})$};
\node[scale=1,color=red!60!black] at ($(ancre)+(-2.6,-5.8)$) {$32(H_{\alpha_1})$};
\node[scale=1,color=red!60!black] at ($(ancre)+(-0.8,-6.4)$) {$32(H_{\alpha_3})$};
\node[scale=1,color=red!60!black] at ($(ancre)+(5.6,-3.8)$) {$132(H_{\alpha_1})$};
\node[scale=1,color=red!60!black] at ($(ancre)+(4.2,-5.2)$) {$132(H_{\alpha_3})$};

\end{tikzpicture}}}
\caption{The $0$ and $1$-Shi arrangements and a counterexample of convexity for the indefinite Coxeter system whose Coxeter graph is on top righthand side of the picture.}
\label{fig:Hyp2}
\end{figure}

Following up the two above questions, we may also ask if the definition of $m$-small roots might be modified to obtain subarrangements of the Coxeter arrangement for which the results in Theorem~\ref{thm:Main1} holds and such that the union of the inverses of the minimal elements form a convex set. We propose hereafter such a conjectural family of subsarrangements. Fix $m\in \mathbb{N}$. 
We define a positive root $\alpha\in \Phi^{+}$ to be {\em $m$-medium} if there is no chain $\alpha_{0}\prec_{\dom}\alpha_{1}\prec_{\dom} \ldots \prec_{\dom} \alpha_{m+1}=\alpha$  of positive roots in dominance order.  Note that $0$-medium roots are the same as small (that is,  $0$-small) roots and that all $m$-small roots are $m$-medium.  Also, in affine type, dominance order is a coproduct (disjoint union) of chains, so $m$-medium roots are the same as  $m$-small roots.
 For finite rank Coxeter systems, it  can be deduced from the finiteness of the number of $m$-small roots for all $m$ that there are only finitely many $m$-medium roots  for all $m$  (see \cite{Dy19-1}[\S13.3-\S13.6] for a discussion, without proof, of stronger facts). 
  
  In analogy to the definition (see~\ref{ss:LowElements}) of $m$-low elements  of $W$, one may define an element $w$ of $W$ to be {\em $m$-medium-low} if $\Phi^{1}(w)$ consists of $m$-medium roots.  Similarly,  one may define the $m$-medium Shi arrangement as the arrangement of hyperplanes $H_{\beta}$ corresponding to $m$-medium roots $\beta$.

\begin{question}  Which of the main results  proved in \cite{Dy21,DyHo16} and this article for $m$-small roots, $m$-low elements and $m$-Shi arrangements also apply to their
 analogues defined above using $m$-medium roots instead of $m$-small roots.  In  particular, we ask whether the analogue of Theorem~\ref{thm:Main3} with $L_{m}$ replaced by the set $L_{m}'$ of medium-low elements holds for arbitrary Coxeter systems. 
\end{question}

The example below discusses the situation for  the (non-affine) group in Figure~\ref{fig:Hyp2}, for which the statement  corresponding to Theorem~\ref{thm:Main3}  is false for $L_{1}$ but true for $L_{1}'$. 

\begin{ex} Consider the $1$-medium Shi arrangement in Figure~\ref{fig:Hyp2}(a). This contains the hyperplanes of the $1$-Shi arrangement together with one additional hyperplane
separating the chamber labelled $13$ from that labelled $132$; the corresponding positive root $\beta$ dominates the two mutually orthogonal simple roots, so $\beta$ is not $1$-small, but those two simple roots are not comparable in dominance order, so $\beta$ is $1$-medium. 

Each region of the $1$-medium Shi arrangement in Figure~\ref{fig:Hyp2}(a)  contains a minimum chamber. The set of  elements of $W$ indexing these chambers is the set of $1$-medium-low elements, which is a Garside shadow. In Figure~\ref{fig:Hyp2}(b), the chambers corresponding to the inverses of the $1$-medium low elements are those corresponding to the inverses of the  $1$-low elements, together with the red chamber labeled as $213$, which make the union of these chambers convex. 
\end{ex}

\subsection{Enumeration of $m$-low elements}   By Theorem~\ref{thm:Main1}, we know that for any $m\in\mathbb N$  the enumeration of the set  $L_m$ of $m$-low elements is the same as the enumeration of the regions of the $m$-Shi arrangement $\shi_m(W,S)$. In particular the following are known:

\begin{itemize}
\item In the case of a finite Coxeter system $(W,S)$:  $L_m=W$ for all $m\in\mathbb N$. 

\item  For an affine Coxeter system $(W,S)$ with underlying finite Coxeter system $(W_0,S_0)$, denote $h$ to be the Coxeter number of $W_0$ and $n=|S_0|$.  Then for any $m\in\mathbb N$ we have:
\begin{equation}\label{eq:AffCard}
|L_m| = \big ((m+1)h+1\big)^n.
\end{equation}
In the case of the classical Shi arrangement, i.e. $m=0$, this formula was proven by Shi in~\cite{Shi88}. For an arbitrary extended Shi arrangement, this formula was first proven for classical affine Coxeter systems by Athanasiadis~\cite[Theorem~4.3]{At00} (see also \S7.2 of Athanasiadis' thesis) and for arbitrary affine Coxeter systems as a consequence of a result of Yoshinaga~\cite{Yo04}, see also~\cite[Theorem 5.1.16]{Arm09} for more details. In 2015,  Thiel~\cite{Th15,Th16} recovered this results by applying Shi's original method.

\item For indefinite types, we do not have closed formulas; see Table~\ref{tab:ex} for some examples. 
\end{itemize}

In the case of an affine Weyl group $(W,S)$ with underlying finite Weyl group $W_0$, Shi introduced the so-called Shi arrangement in~\cite{Shi88} to prove that the number of regions in the $0$-Shi arrangement  is $(h+1)^{n}$, where $h$ is the Coxeter number of $W_0$ and $n$ is the rank of $W_0$. Shi's proof in \cite{Shi88} is based on the following steps:
\begin{enumerate}
\item $\shi_0(W,S)$ is introduced and its regions are labelled by sign-types;
\item Any Shi-region contains a unique element of minimal length, which turns out to be a low element thanks to \cite{ChHo22}, or the more general Theorem~\ref{thm:MainA} above.
\item The union of the chambers $C_{w^{-1}}$ for $w\in L_0$ is a convex subset of $\mathcal U$, which is the Euclidean space in that case. 
\item Shi proves the formula for $m=0$ in the case of an affine Coxeter system by counting the number of chambers inside the Shi polyhedron $\mathscr S_0$.
\end{enumerate}
 The key result is that, in this case, the Shi polyhedron $\mathscr S_0$ is a dilatation of the fundamental chamber $C$ by a factor of $(h+1)$; see \cite[Lemma 8.5 and Corollary 8.6]{Shi88}.  The same method is used by Thiel in his thesis~\cite[Chapter 5]{Th15}  to recover the formula in Eq.~(\ref{eq:AffCard}): write $\mathscr S_m$ in term of the ribbons used in  \cite[Lemma 8.5]{Shi88} with $k_\alpha = -(m+1)$ and the `$m$' in Shi's article replaced by $(m+1)h+1$; this  implies  that $\mathscr S_m$ is a dilation of $C$ by a factor of $(m+1)h+1$.

Unfortunately, for indefinite Coxeter systems, this method cannot be applied since $\mathscr S_m$ is not a dilation of the fundamental chamber $C$; see for instance Figures~\ref{fig:Hyp1}~and~\ref{fig:Hyp2}. Actually, the number of $m$-small roots is not known even in hyperbolic Coxeter systems.

\begin{question} For an indefinite Coxeter system, enumerate $\Sigma_m$ and $L_m$ for all $m\in\mathbb N$.
\end{question}

\def\arraystretch{1.25}
\begin{center} 
\begin{tabular}{|c|cc|cc|cc|}
\hline
Coxeter graph of $(W,S)$ & $|\Sigma_0|$ & $|L_0|$  & $|\Sigma_1|$ & $|L_1|$ &$|\Sigma_2|$ & $|L_2|$  \\
  \hline \hline
  \begin{tikzpicture}
	[scale=1,
	 sommet/.style={inner sep=2pt,circle,draw=black,fill=blue,thick,anchor=base},
	 rotate=0,
	 baseline = 0]
 \tikzstyle{every node}=[font=\small]
\coordinate (ancre) at (0,-0.0);
\node  at ($(ancre)+(0,0.6)$) {};
\node[sommet] (a2) at ($(ancre)+(0.25,0.5)$) {};
\node[sommet] (a3) at ($(ancre)+(0.5,0)$) {} edge[thick] node[auto,swap,right] {$\infty$} (a2) ;
\node[sommet] (a4) at (ancre) {} edge[thick] node[auto,swap,below] {$\infty$} (a3) edge[thick] node[auto,swap,left] {$\infty$} (a2);
\end{tikzpicture}
&3&$4$&$9$&$10$&$21$&$22$\\
  \hline
    \begin{tikzpicture}
	[scale=1,
	 sommet/.style={inner sep=2pt,circle,draw=black,fill=blue,thick,anchor=base},
	 rotate=0,
	 baseline = 0]
 \tikzstyle{every node}=[font=\small]
\coordinate (ancre) at (0,-0.0);
\node  at ($(ancre)+(0,0.6)$) {};
\node[sommet] (a2) at ($(ancre)+(0.25,0.5)$) {};
\node[sommet] (a3) at ($(ancre)+(0.5,0)$) {} edge[thick] node[auto,swap,right] {$\infty$} (a2) ;
\node[sommet] (a4) at (ancre) {}  edge[thick] node[auto,swap,left] {$\infty$} (a2);
\end{tikzpicture}
&$3$&$5$&$7$&$10$&$14$&$19$\\
  \hline
    \begin{tikzpicture}
	[scale=1,
	 sommet/.style={inner sep=2pt,circle,draw=black,fill=blue,thick,anchor=base},
	 rotate=0,
	 baseline = 0]
 \tikzstyle{every node}=[font=\small]
\coordinate (ancre) at (0,-0.0);
\node  at ($(ancre)+(0,0.6)$) {};
\node[sommet] (a2) at ($(ancre)+(0.25,0.5)$) {};
\node[sommet] (a3) at ($(ancre)+(0.5,0)$) {} edge[thick] node[auto,swap,right] {} (a2) ;
\node[sommet] (a4) at (ancre) {} edge[thick] node[auto,swap,below] {$4$} (a3) edge[thick] node[auto,swap,left] {} (a2);
\end{tikzpicture}
&$7$&$18$&$13$&$40$&$20$&$70$\\
  \hline
   \begin{tikzpicture}
	[scale=1,
	 sommet/.style={inner sep=2pt,circle,draw=black,fill=blue,thick,anchor=base},
	 rotate=0,
	 baseline = 0]
 \tikzstyle{every node}=[font=\small]
\coordinate (ancre) at (0,-0.0);
\node  at ($(ancre)+(0,0.6)$) {};
\node[sommet] (a2) at ($(ancre)+(0.25,0.5)$) {};
\node[sommet] (a3) at ($(ancre)+(0.5,0)$) {} edge[thick] node[auto,swap,right] {} (a2) ;
\node[sommet] (a4) at (ancre) {}  edge[thick] node[auto,swap,left] {$7$} (a2);
\end{tikzpicture}
&$12$&$40$&$18$&$72$&$24$&$110$\\
  \hline
   \begin{tikzpicture}
	[scale=1,
	 sommet/.style={inner sep=2pt,circle,draw=black,fill=blue,thick,anchor=base},
	 rotate=0,
	 baseline = 0]
 \tikzstyle{every node}=[font=\small]
\coordinate (ancre) at (0,-0.0);
\node  at ($(ancre)+(0,0.6)$) {};
\node[sommet] (a1) at ($(ancre)+(0,0.5)$) {};
\node[sommet] (a2) at ($(ancre)+(0.5,0.5)$) {} edge[thick] node[auto,swap,right] {} (a1) ;
\node[sommet] (a3) at ($(ancre)+(0.5,0)$) {} edge[thick] node[auto,swap,right] {$4$} (a2) ;
\node[sommet] (a4) at (ancre) {} edge[thick] node[auto,swap,below] {$4$} (a3) edge[thick] node[auto,swap,left] {$4$} (a1);
\end{tikzpicture}
&$19$&$134$&$43$&$387$&$94$&$997$\\
  \hline
\end{tabular}
  \captionof{table}{Some examples  for $|\Sigma_m|$ and $|L_m|$ ($m=0,1,2$)}\label{tab:ex}
\end{center}


\bibliographystyle{plain}

\end{document}